\begin{document}


\newcommand{\alp}{\alpha}
\newcommand{\bet}{\beta}
\newcommand{\gam}{\gamma}
\newcommand{\del}{\delta}
\newcommand{\eps}{\epsilon}
\newcommand{\zet}{\zeta}
\newcommand{\tht}{\theta}
\newcommand{\iot}{\iota}
\newcommand{\kap}{\kappa}
\newcommand{\lam}{\lambda}
\newcommand{\sig}{\sigma}
\newcommand{\ups}{\upsilon}
\newcommand{\ome}{\omega}
\newcommand{\vep}{\varepsilon}
\newcommand{\vth}{\vartheta}
\newcommand{\vpi}{\varpi}
\newcommand{\vrh}{\varrho}
\newcommand{\vsi}{\varsigma}
\newcommand{\vph}{\varphi}
\newcommand{\Gam}{\Gamma}
\newcommand{\Del}{\Delta}
\newcommand{\Tht}{\Theta}
\newcommand{\Lam}{\Lambda}
\newcommand{\Sig}{\Sigma}
\newcommand{\Ups}{\Upsilon}
\newcommand{\Ome}{\Omega}


\newcommand{\frka}{{\mathfrak a}}    \newcommand{\frkA}{{\mathfrak A}}
\newcommand{\frkb}{{\mathfrak b}}    \newcommand{\frkB}{{\mathfrak B}}
\newcommand{\frkc}{{\mathfrak c}}    \newcommand{\frkC}{{\mathfrak C}}
\newcommand{\frkd}{{\mathfrak d}}    \newcommand{\frkD}{{\mathfrak D}}
\newcommand{\frke}{{\mathfrak e}}    \newcommand{\frkE}{{\mathfrak E}}
\newcommand{\frkf}{{\mathfrak f}}    \newcommand{\frkF}{{\mathfrak F}}
\newcommand{\frkg}{{\mathfrak g}}    \newcommand{\frkG}{{\mathfrak G}}
\newcommand{\frkh}{{\mathfrak h}}    \newcommand{\frkH}{{\mathfrak H}}
\newcommand{\frki}{{\mathfrak i}}    \newcommand{\frkI}{{\mathfrak I}}
\newcommand{\frkj}{{\mathfrak j}}    \newcommand{\frkJ}{{\mathfrak J}}
\newcommand{\frkk}{{\mathfrak k}}    \newcommand{\frkK}{{\mathfrak K}}
\newcommand{\frkl}{{\mathfrak l}}    \newcommand{\frkL}{{\mathfrak L}}
\newcommand{\frkm}{{\mathfrak m}}    \newcommand{\frkM}{{\mathfrak M}}
\newcommand{\frkn}{{\mathfrak n}}    \newcommand{\frkN}{{\mathfrak N}}
\newcommand{\frko}{{\mathfrak o}}    \newcommand{\frkO}{{\mathfrak O}}
\newcommand{\frkp}{{\mathfrak p}}    \newcommand{\frkP}{{\mathfrak P}}
\newcommand{\frkq}{{\mathfrak q}}    \newcommand{\frkQ}{{\mathfrak Q}}
\newcommand{\frkr}{{\mathfrak r}}    \newcommand{\frkR}{{\mathfrak R}}
\newcommand{\frks}{{\mathfrak s}}    \newcommand{\frkS}{{\mathfrak S}}
\newcommand{\frkt}{{\mathfrak t}}    \newcommand{\frkT}{{\mathfrak T}}
\newcommand{\frku}{{\mathfrak u}}    \newcommand{\frkU}{{\mathfrak U}}
\newcommand{\frkv}{{\mathfrak v}}    \newcommand{\frkV}{{\mathfrak V}}
\newcommand{\frkw}{{\mathfrak w}}    \newcommand{\frkW}{{\mathfrak W}}
\newcommand{\frkx}{{\mathfrak x}}    \newcommand{\frkX}{{\mathfrak X}}
\newcommand{\frky}{{\mathfrak y}}    \newcommand{\frkY}{{\mathfrak Y}}
\newcommand{\frkz}{{\mathfrak z}}    \newcommand{\frkZ}{{\mathfrak Z}}


\newcommand{\bfa}{{\mathbf a}}    \newcommand{\bfA}{{\mathbf A}}
\newcommand{\bfb}{{\mathbf b}}    \newcommand{\bfB}{{\mathbf B}}
\newcommand{\bfc}{{\mathbf c}}    \newcommand{\bfC}{{\mathbf C}}
\newcommand{\bfd}{{\mathbf d}}    \newcommand{\bfD}{{\mathbf D}}
\newcommand{\bfe}{{\mathbf e}}    \newcommand{\bfE}{{\mathbf E}}
\newcommand{\bff}{{\mathbf f}}    \newcommand{\bfF}{{\mathbf F}}
\newcommand{\bfg}{{\mathbf g}}    \newcommand{\bfG}{{\mathbf G}}
\newcommand{\bfh}{{\mathbf h}}    \newcommand{\bfH}{{\mathbf H}}
\newcommand{\bfi}{{\mathbf i}}    \newcommand{\bfI}{{\mathbf I}}
\newcommand{\bfj}{{\mathbf j}}    \newcommand{\bfJ}{{\mathbf J}}
\newcommand{\bfk}{{\mathbf k}}    \newcommand{\bfK}{{\mathbf K}}
\newcommand{\bfl}{{\mathbf l}}    \newcommand{\bfL}{{\mathbf L}}
\newcommand{\bfm}{{\mathbf m}}    \newcommand{\bfM}{{\mathbf M}}
\newcommand{\bfn}{{\mathbf n}}    \newcommand{\bfN}{{\mathbf N}}
\newcommand{\bfo}{{\mathbf o}}    \newcommand{\bfO}{{\mathbf O}}
\newcommand{\bfp}{{\mathbf p}}    \newcommand{\bfP}{{\mathbf P}}
\newcommand{\bfq}{{\mathbf q}}    \newcommand{\bfQ}{{\mathbf Q}}
\newcommand{\bfr}{{\mathbf r}}    \newcommand{\bfR}{{\mathbf R}}
\newcommand{\bfs}{{\mathbf s}}    \newcommand{\bfS}{{\mathbf S}}
\newcommand{\bft}{{\mathbf t}}    \newcommand{\bfT}{{\mathbf T}}
\newcommand{\bfu}{{\mathbf u}}    \newcommand{\bfU}{{\mathbf U}}
\newcommand{\bfv}{{\mathbf v}}    \newcommand{\bfV}{{\mathbf V}}
\newcommand{\bfw}{{\mathbf w}}    \newcommand{\bfW}{{\mathbf W}}
\newcommand{\bfx}{{\mathbf x}}    \newcommand{\bfX}{{\mathbf X}}
\newcommand{\bfy}{{\mathbf y}}    \newcommand{\bfY}{{\mathbf Y}}
\newcommand{\bfz}{{\mathbf z}}    \newcommand{\bfZ}{{\mathbf Z}}


\newcommand{\cala}{{\mathcal A}}
\newcommand{\calb}{{\mathcal B}}
\newcommand{\calc}{{\mathcal C}}
\newcommand{\cald}{{\mathcal D}}
\newcommand{\cale}{{\mathcal E}}
\newcommand{\calf}{{\mathcal F}}
\newcommand{\calg}{{\mathcal G}}
\newcommand{\calh}{{\mathcal H}}
\newcommand{\cali}{{\mathcal I}}
\newcommand{\calj}{{\mathcal J}}
\newcommand{\calk}{{\mathcal K}}
\newcommand{\call}{{\mathcal L}}
\newcommand{\calm}{{\mathcal M}}
\newcommand{\caln}{{\mathcal N}}
\newcommand{\calo}{{\mathcal O}}
\newcommand{\calp}{{\mathcal P}}
\newcommand{\calq}{{\mathcal Q}}
\newcommand{\calr}{{\mathcal R}}
\newcommand{\cals}{{\mathcal S}}
\newcommand{\calt}{{\mathcal T}}
\newcommand{\calu}{{\mathcal U}}
\newcommand{\calv}{{\mathcal V}}
\newcommand{\calw}{{\mathcal W}}
\newcommand{\calx}{{\mathcal X}}
\newcommand{\caly}{{\mathcal Y}}
\newcommand{\calz}{{\mathcal Z}}


\newcommand{\scra}{{\mathscr A}}
\newcommand{\scrb}{{\mathscr B}}
\newcommand{\scrc}{{\mathscr C}}
\newcommand{\scrd}{{\mathscr D}}
\newcommand{\scre}{{\mathscr E}}
\newcommand{\scrf}{{\mathscr F}}
\newcommand{\scrg}{{\mathscr G}}
\newcommand{\scrh}{{\mathscr H}}
\newcommand{\scri}{{\mathscr I}}
\newcommand{\scrj}{{\mathscr J}}
\newcommand{\scrk}{{\mathscr K}}
\newcommand{\scrl}{{\mathscr L}}
\newcommand{\scrm}{{\mathscr M}}
\newcommand{\scrn}{{\mathscr N}}
\newcommand{\scro}{{\mathscr O}}
\newcommand{\scrp}{{\mathscr P}}
\newcommand{\scrq}{{\mathscr Q}}
\newcommand{\scrr}{{\mathscr R}}
\newcommand{\scrs}{{\mathscr S}}
\newcommand{\scrt}{{\mathscr T}}
\newcommand{\scru}{{\mathscr U}}
\newcommand{\scrv}{{\mathscr V}}
\newcommand{\scrw}{{\mathscr W}}
\newcommand{\scrx}{{\mathscr X}}
\newcommand{\scry}{{\mathscr Y}}
\newcommand{\scrz}{{\mathscr Z}}


\newcommand{\AAA}{{\mathbb A}} 
\newcommand{\BB}{{\mathbb B}}
\newcommand{\CC}{{\mathbb C}}
\newcommand{\DD}{{\mathbb D}}
\newcommand{\EE}{{\mathbb E}}
\newcommand{\FF}{{\mathbb F}}
\newcommand{\GG}{{\mathbb G}}
\newcommand{\HH}{{\mathbb H}}
\newcommand{\II}{{\mathbb I}}
\newcommand{\JJ}{{\mathbb J}}
\newcommand{\KK}{{\mathbb K}}
\newcommand{\LL}{{\mathbb L}}
\newcommand{\MM}{{\mathbb M}}
\newcommand{\NN}{{\mathbb N}}
\newcommand{\OO}{{\mathbb O}}
\newcommand{\PP}{{\mathbb P}}
\newcommand{\QQ}{{\mathbb Q}}
\newcommand{\RR}{{\mathbb R}}
\newcommand{\SSS}{{\mathbb S}} 
\newcommand{\TT}{{\mathbb T}}
\newcommand{\UU}{{\mathbb U}}
\newcommand{\VV}{{\mathbb V}}
\newcommand{\WW}{{\mathbb W}}
\newcommand{\XX}{{\mathbb X}}
\newcommand{\YY}{{\mathbb Y}}
\newcommand{\ZZ}{{\mathbb Z}}

\newcommand{\phm}{\phantom}
\newcommand{\ds}{\displaystyle }
\newcommand{\smallstrut}{\vphantom{\vrule height 3pt }}
\def\bdm #1#2#3#4{\left(
\begin{array} {c|c}{\ds{#1}}
 & {\ds{#2}} \\ \hline
{\ds{#3}\vphantom{\ds{#3}^1}} &  {\ds{#4}}
\end{array}
\right)}
\newcommand{\wtd}{\widetilde }
\newcommand{\bsl}{\backslash }
\newcommand{\GL}{{\mathrm{GL}}}
\newcommand{\SL}{{\mathrm{SL}}}
\newcommand{\GSp}{{\mathrm{GSp}}}
\newcommand{\PGSp}{{\mathrm{PGSp}}}
\newcommand{\SP}{{\mathrm{Sp}}}
\newcommand{\SO}{{\mathrm{SO}}}
\newcommand{\SU}{{\mathrm{SU}}}
\newcommand{\Ind}{\mathrm{Ind}}
\newcommand{\Hom}{{\mathrm{Hom}}}
\newcommand{\Ad}{{\mathrm{Ad}}}
\newcommand{\Sym}{{\mathrm{Sym}}}
\newcommand{\Mat}{\mathrm{M}}
\newcommand{\sgn}{\mathrm{sgn}}
\newcommand{\trs}{\,^t\!}
\newcommand{\iu}{\sqrt{-1}}
\newcommand{\oo}{\hbox{\bf 0}}
\newcommand{\ono}{\hbox{\bf 1}}
\newcommand{\smallcirc}{\lower .3em \hbox{\rm\char'27}\!}
\newcommand{\bAf}{\bA_{\hbox{\eightrm f}}}
\newcommand{\thalf}{{\textstyle{\frac12}}}
\newcommand{\shp}{\hbox{\rm\char'43}}
\newcommand{\Gal}{\operatorname{Gal}}

\newcommand{\bdel}{{\boldsymbol{\delta}}}
\newcommand{\bchi}{{\boldsymbol{\chi}}}
\newcommand{\bgam}{{\boldsymbol{\gamma}}}
\newcommand{\bome}{{\boldsymbol{\omega}}}
\newcommand{\bpsi}{{\boldsymbol{\psi}}}
\newcommand{\GK}{\mathrm{GK}}
\newcommand{\ord}{\mathrm{ord}}
\newcommand{\diag}{\mathrm{diag}}
\newcommand{\ua}{{\underline{a}}}
\newcommand{\ZZn}{\ZZ_{\geq 0}^n}
\newcommand{\calhnd}{{\mathcal H}^\mathrm{nd}}
\newcommand{\EGK}{\mathrm{EGK}}

\theoremstyle{plain}
\newtheorem{theorem}{Theorem}[section]
\newtheorem{lemma}{Lemma}[section]
\newtheorem{proposition}{Proposition}[section]
\newtheorem{corollary}{\textbf{Corollary}}[section]
\theoremstyle{definition}
\newtheorem{definition}{Definition}[section]
\newtheorem{conjecture}{Conjecture}[section]
\newtheorem{remark}{\textbf{Remark}}[section]

\newtheorem{mainthm}{\textbf{Theorem}}
\renewcommand{\themainthm}{}

\title[]{On the Gross-Keating invariant of a quadratic form over a non-archimedean local field}
\author{Tamotsu IKEDA}
\address{Graduate school of mathematics, Kyoto University, Kitashirakawa, Kyoto, 606-8502, Japan}
\email{ikeda@math.kyoto-u.ac.jp}
\author{Hidenori KATSURADA}
\address{Muroran Institute of Technology
27-1 Mizumoto, Muroran, 050-8585, Japan}
\email{hidenori@mmm.muroran-it.ac.jp}
\subjclass[2010]{11E08, 11E95}
\keywords{Gross-Keating invariant, quadratic form}
\begin{abstract}
Let $B$ be a half-integral symmetric matrix of size $n$ defined over $\QQ_p$.
The Gross-Keating invariant of $B$ was defined by Gross and Keating, and has  important applications to arithmetic geometry.
But the nature of the Gross-Keating invariant was not understood very well for $n\geq 4$.
In this paper, we establish basic properties of the Gross-Keating invariant of a half-integral symmetric matrix of general size over an arbitrary non-archimedean local field of characteristic zero.
\end{abstract}
\maketitle


\section*{Introduction}

Gross and Keating \cite{GK} introduced a certain invariant for a quadratic form over $\ZZ_p$.
This invariant is called the Gross-Keating invariant, and has applications to arithmetic geometry (see ARGOS seminar \cite{ARGOS}, Bouw \cite{Bouw}, Gross and Keating \cite{GK}, Kudla, Rapoport, and Yang \cite{kry}, Wedhorn \cite{Wedhorn}).
For $p\neq 2$, the Gross-Keating invariant can be easily calculated by means of the Jordan splitting.
For $p=2$, the nature of the Gross-Keating invariant of a quadratic form of degree $n$ was understood only for $n\leq 3$.
The purpose of this paper is to investigate the basic properties of the Gross-Keating invariants for a quadratic form of general degree over the ring of integers of a non-archimedean local field of characteristic zero.

Let us recall the definition of the Gross-Keating invariant.
Let $F$ be a non-archimedean local field of characteristic $0$, and $\frko=\frko_F$ its ring of integers.
$F$ is said to be dyadic if $F$ is a finite extension of $\QQ_2$.
The order $\ord(x)$ of $x\in F^\times$ is normalized so that $\ord(\vpi)=1$ for a prime element $\vpi$ of $F$.
We understand $\ord(0)=+\infty$.

The set of symmetric matrices $B\in \mathrm{M}_n(F)$ of size $n$ is denoted by $\mathrm{Sym}_n(F)$.
For $B\in \mathrm{Sym}_n(F)$ and $X\in\GL_n(F)$, we set $B[X]={}^t\! XBX$.
We say that $B=(b_{ij})\in \mathrm{Sym}_n(F)$ is a half-integral symmetric matrix if
\begin{align*}
b_{ii}\in\frko_F &\qquad (1\leq i\leq n),  \\
2b_{ij}\in\frko_F& \qquad (1\leq i\leq j\leq n).
\end{align*}
The set of all half-integral symmetric matrices of size $n$ is denoted by $\calh_n(\frko)$.
An element $B\in\calh_n(\frko)$ is non-degenerate if $\det B\neq 0$.
The set of all non-degenerate elements of $\calh_n(\frko)$ is denoted by $\calhnd_n(\frko)$.

The equivalence class of $B\in\calh_n(\frko)$ is denoted by $\{B\}$, i.e., 
$\{B\}=\{B[U]\,|\, U\in\GL_n(\frko)\}$.
We write $B\sim B'$ if $B'\in\{B\}$.

\begin{definition} 
Let $B=(b_{ij})\in\calhnd_n(\frko)$.
Let $S(B)$ be the set of all non-decreasing sequences $(a_1, \ldots, a_n)\in\ZZn$ such that
\begin{align*}
&\ord(b_{ii})\geq a_i \qquad\qquad\qquad\quad (1\leq i\leq n), \\
&\ord(2 b_{ij})\geq (a_i+a_j)/2  \qquad\; (1\leq i\leq j\leq n).
\end{align*}
Put
\[
\bfS(\{B\})=\bigcup_{B'\in\{B\}} S(B')=\bigcup_{U\in\GL_n(\frko)} S(B[U]).
\]
The Gross-Keating invariant $\GK(B)$ of $B$ is the greatest element of $\bfS(\{B\})$ with respect to the lexicographic order $\succeq$ on $\ZZn$.
\end{definition}

Note that for $\ua\in S(B)$ and $\sig\in\frkS_n$, we have $\mathrm{ord}(2b_{i\sig(j)}))\geq (a_i+a_{\sig(j)})/2$.
It follows that
\[
\ord(2^n b_{1\sig(1)}\cdots b_{n\sig(n)})\geq a_1+\cdots+a_n.
\]
Therefore we have $a_1+\cdots +a_n\leq \ord(\det(2B))$.
In particular, $\bfS(\{B\})$ is a finite set.

A sequence of length $0$ is denoted by $\emptyset$.
When $B$ is the empty matrix, we understand $\GK(B)=\emptyset$.
By definition, the Gross-Keating invariant $\GK(B)$ is determined only by the equivalence class of $B$.
Note that $\GK(B)=(a_1, \ldots, a_n)$ is also defined by
\begin{align*}
a_1&=\max_{(y_1,  \ldots)\in \bfS(\{B\})} \,\{y_1\}, \\
a_2&=\max_{(a_1, y_2, \ldots)\in \bfS(\{B\})}\, \{y_2\}, \\
&\cdots \\
a_n&=\max_{(a_1, a_2, \ldots, a_{n-1}, y_n)\in \bfS(\{B\})}\, \{y_n\}.
\end{align*}

\begin{definition} 
\label{def:0.2}
$B\in\calh^\mathrm{nd}_n(\frko)$ is optimal if $\GK(B)\in S(B)$.
\end{definition}
By definition, a non-degenerate half-integral symmetric matrix $B\in\calhnd_n(\frko)$ is equivalent to an optimal form.

\begin{remark} 
If $F$ is non-dyadic, one can easily show that the Jordan splitting of $B$ is optimal (See Remark \ref{rem:1.1}).
On the other hand, if $F$ is dyadic, a Jordan splitting may not be optimal.
For example, if $F$ is dyadic, then $B=\begin{pmatrix} 1 & 0 \\ 0 & -1\end{pmatrix}$ is not optimal (See section \ref{sec:2}).
Thus the issue is the case when $F$ is dyadic. 
A characterization of an optimal form will be given in section \ref{sec:5} (See Theorem \ref{thm:5.1a}).
\end{remark}

For $B\in\calhnd_n(\frko)$, we put $D_B=(-4)^{[n/2]}\det B$.
$D_B$ (or its image in $F^\times/F^{\times 2}$) is often called the signed determinant (\cite{lam}) or the discriminant (\cite{scharlau}) of $2B$.
Here, $F^{\times 2}=\{x^2\,|\, x\in F^\times\}$.
If $n$ is even, we denote the discriminant ideal of $F(\sqrt{D_B})/F$ by $\mathfrak{D} _B$.
We also put
\[
\xi_B=
\begin{cases} 
1 & \text{ if $D_B\in F^{\times 2}$,} \\
-1 & \text{ if $F(\sqrt{D_B})/F$ is unramified and $[F(\sqrt{D_B}):F]=2$,} \\
0 & \text{ if $F(\sqrt{D_B})/F$ is ramified.} 
\end{cases}
\]
We also write $\xi(B)$ for $\xi_B$, if there is no fear of confusion.
\begin{definition} 
\label{def:0.3}
For $B\in\calhnd_n(\frko)$, we put
\[
\Del(B)=
\begin{cases}
\ord(D_B) & \text{ if $n$ is odd,} \\
\ord(D_B)-\ord(\mathfrak{D}_B)+1-\xi_B^2 & \text{ if $n$ is even.}
\end{cases}
\]
\end{definition}
Note that if $n$ is even, then
\[
\Del(B)=
\begin{cases}
\ord(D_B) & \text{ if $\ord(\mathfrak{D}_B)=0$,} \\
\ord(D_B)-\ord(\mathfrak{D}_B)+1 & \text{ if $\ord(\mathfrak{D}_B)>0$.}
\end{cases}
\]

For $\ua=(a_1, a_2, \ldots, a_n)\in\ZZn$, we write $|\ua|=a_1+a_2+\cdots+a_n$.

\begin{theorem} 
\label{thm:0.1}
For $B\in\calhnd_n(\frko)$,  we have
\[
|\GK(B)|=\Del(B).
\]
\end{theorem}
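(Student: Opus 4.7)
The plan is to reduce to an optimal representative and verify the identity block-by-block in a Jordan-type decomposition. By the remark following Definition \ref{def:0.2}, I may assume $B$ is itself optimal, so that $\GK(B)=(a_1,\ldots,a_n)\in S(B)$. This gives the direct inequalities $\ord(b_{ii})\ge a_i$ and $\ord(2b_{ij})\ge(a_i+a_j)/2$ for $B$, and by definition $|\GK(B)|=a_1+\cdots+a_n$. On the other side, $\det B$ modulo $F^{\times 2}$ and $\ord(\det B)$ are invariant under $\GL_n(\frko)$-equivalence, so $\Del(B)$ depends only on $\{B\}$.

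The core step is a Jordan-type splitting for optimal matrices, which I would establish as a preliminary structure result: after a suitable change of basis,
\[
B\sim B_1\perp B_2\perp\cdots\perp B_r,
\]
where each $B_j$ is either a unary block $(\vpi^{a}u)$ with $u\in\frko^\times$ or, in the dyadic case, a scaled binary block of hyperbolic or non-split type. For a unary block, both $|\GK(B_j)|$ and $\Del(B_j)$ equal $a$. For a binary block $\vpi^{a}H$, a direct computation yields $\GK(B_j)=(a',a')$ for the appropriate $a'$, and the equality $|\GK(B_j)|=\Del(B_j)$ follows once the correction $-\ord(\mathfrak{D}_{B_j})+1-\xi_{B_j}^2$ absorbs the $2\,\ord(2)$ contributed by the $-4$ factor in $D_{B_j}=-4\det B_j$. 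Granted additivity of both sides over the orthogonal decomposition, this completes the argument.

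The main obstacle is the dyadic case, and specifically this additivity. For $|\GK|$, one must show that the $\GL_n(\frko)$-orbit of $B_1\perp\cdots\perp B_r$ does not admit any shearing that raises some $a_i$; this requires a combinatorial argument using the valuations encoded in $\bfS(\{B\})$ together with the fact that cross-block entries of any equivalent matrix lie in a controlled range of valuations. For $\Del$, the bookkeeping is delicate because the factor $(-4)^{[n/2]}$ jumps by $2\,\ord(2)$ whenever two odd-sized summands are merged, and this extra contribution must be matched against the change in $\ord(\mathfrak{D}_B)$ and $\xi_B^2$ as the global quadratic extension $F(\sqrt{D_B})/F$ degenerates or ramifies in the merge. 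Tracking this interplay in the dyadic setting---where the binary blocks carry Arf-type data---is the technical heart of the theorem, and I expect the cleanest route is to reduce to the case of two summands and then iterate.
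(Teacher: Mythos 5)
Your outline correctly normalizes to an optimal representative, but the central claim it then relies on — additivity of $|\GK|$ and $\Del$ over a Jordan-type orthogonal decomposition — is false in the dyadic case, and the paper has to work around exactly this failure rather than prove it. Take $F=\QQ_2$ and $B = \begin{pmatrix}1&1\\1&2\end{pmatrix}\perp (2)$. The first summand has $\GK=(0,1)$ and $\Del=1$, the second has $\GK=(1)$ and $\Del=1$, so the block sums are $2$ and $2$; but $\GK(B)=(0,1,2)$ and $\Del(B)=\ord(-4\cdot 1\cdot 2)=3$, so both invariants jump by $1$. The shearing you say "one must show ... does not occur" is precisely what does occur (here, a shear of $\psi_3$ by $\psi_2$ raises $a_3$ from $1$ to $2$). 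Likewise $\Del$ does not split over the decomposition because merging two odd-sized blocks introduces an extra $2\,\ord(2)$ into $\ord(D_B)$ that must be matched against a jump in $\ord(\frkD_B)$ and a possible change of $\xi_B$. You flag this as "the technical heart" and "delicate bookkeeping," but you do not supply the mechanism that controls it, and without that the argument does not close.

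The paper avoids the unconstrained Jordan decomposition entirely. It introduces reduced forms of a fixed GK type (Definition \ref{def:3.2}), proves the reduction theorem (Theorem \ref{thm:4.1}) that every optimal form is brought to reduced form by a matrix in $G_\ua^\bigtriangleup$ — this is the structural substitute for your "preliminary structure result," and it is where the real work lives. For reduced forms, Proposition \ref{prop:3.2} proves $|\ua|=\Del(B)$ by first scaling by $\diag(\vpi^{-[a_i/2]})$ to reduce to exponents in $\{0,1\}$, and then using Proposition \ref{prop:3.1} to split off only \emph{primitive unramified binary} blocks (possibly scaled by $\vpi$), for which the restricted identity $\Del(B\perp \vpi^c K)=\Del(B)+2c$ \emph{does} hold — because such $K$ have $\ord(\frkD_K)=0$ and $\xi_K\neq 0$, so they do not perturb the discriminant-field data. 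The remaining core is of size at most $2$ and is handled by the explicit binary computation (Proposition \ref{prop:2.2}, Proposition \ref{prop:2.3}). Your plan, as stated, would need to re-derive this selective additivity and to rule out shears across block boundaries, which is exactly the content of the reduced-form machinery; stating "granted additivity" assumes what has to be proven, and in the naive form it is assumed it is false.
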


For a non-decreasing sequence $\ua=(a_1, a_2, \ldots, a_n)\in\ZZn$, we set
\[
G_{\ua}=
\{g=(g_{ij})\in \GL_n(\frko) \,|\, \text{ $\ord(g_{ij})\geq (a_j-a_i)/2$, if $a_i<a_j$}\}.
\]

\begin{theorem} 
\label{thm:0.2}
Suppose that $B\in\calhnd_n(\frko)$ is optimal and  $\GK(B)=\ua$.
Let $U\in \GL_n(\frko)$.
Then $B[U]$ is optimal if and only if $U\in G_\ua$.
\end{theorem}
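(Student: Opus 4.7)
The forward implication is a direct computation. Since $\GK$ is invariant under $\GL_n(\frko)$-equivalence, $\GK(B[U]) = \ua$, so the optimality of $B[U]$ reduces to showing $\ua \in S(B[U])$. Expanding
\[
2(B[U])_{ij} = \sum_{k,l} u_{ki}\,(2b_{kl})\,u_{lj},
\]
I would use $\ord(2b_{kl}) \geq (a_k+a_l)/2$ (from $\ua \in S(B)$) together with $\ord(u_{ki}) \geq \max(0,(a_i-a_k)/2)$ (from $U \in G_\ua$ together with the trivial bound $\ord(u_{ki})\geq 0$). A four-case analysis on whether $a_i \lessgtr a_k$ and $a_j \lessgtr a_l$ yields
\[
\ord(u_{ki}) + \ord(2b_{kl}) + \ord(u_{lj}) \;\geq\; (a_i+a_j)/2
\]
in every case, so each summand meets the required bound. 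The diagonal inequality $\ord((B[U])_{ii}) \geq a_i$ follows by the same analysis, replacing $2b_{kk}$ by $b_{kk}$ (which satisfies $\ord(b_{kk}) \geq a_k$).

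For the reverse implication, the plan is contrapositive: assume some pair $(i,j)$ with $a_i < a_j$ satisfies $\ord(u_{ij}) < (a_j-a_i)/2$, and produce a sequence in $\bfS(\{B\}) = \bfS(\{B[U]\})$ that is lexicographically strictly greater than $\ua$, contradicting $\GK(B) = \ua$. I would choose a "worst" violating pair --- for instance, the one maximizing $a_j$ and then maximizing $a_j - 2\ord(u_{ij})$ --- so that modifying the $j$-th column of $U$ gives the cleanest effect. The expansion of $(B[U])_{jj}$ contains the monomial $b_{ii}u_{ij}^2$, whose order $a_i + 2\ord(u_{ij})$ is strictly less than $a_j$; by optimality of $B[U]$ this must be canceled by other terms, and this cancellation mechanism can be leveraged (after a further small $G_\ua$-type adjustment that does not change $\GK$) to exhibit an equivalent $B'$ whose leading $j$-th diagonal entry has order strictly greater than $a_j$ while preserving the first $j-1$ diagonal orders. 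This yields $(a_1,\ldots,a_{j-1},a_j+1,\ldots)$-style bound in $\bfS(\{B'\}) = \bfS(\{B\})$, contradicting the maximality of $\ua$.

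The main obstacle is this converse direction, for two reasons. First, a single violating entry $u_{ij}$ interacts with all other entries through the sum defining $(B[U])_{jj}$, and identifying the "dominant" term requires a careful induction on the pair $(i,j)$ sorted by the severity of the violation and by the level sets of $\ua$. Second, in the dyadic case factors of $\ord(2)$ enter nontrivially into the diagonal bound $\ord(b_{ii}) \geq a_i$ versus the off-diagonal bound $\ord(2b_{ij}) \geq (a_i+a_j)/2$, so the clean "monomial" argument above has to account for possible contributions from $\ord(2)\cdot u_{ki}u_{kj}$ cross terms. It may be cleanest to first reduce to the case where $\ua$ has two distinct values (a maximal and a minimal block), prove the statement there by a direct quadratic-form computation on the block decomposition, and then use induction on the number of distinct values in $\ua$, restricting to $B^{(m)}$ for $m$ equal to the size of the bottom block. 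Theorem~\ref{thm:0.1} provides the crucial control $|\GK(B)|=\Del(B)$, which ensures that any apparent improvement in one coordinate of $\ua$ cannot be "stolen" from another coordinate without violating the total-order constraint.
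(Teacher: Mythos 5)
The forward implication is fine and amounts to Proposition~\ref{prop:1.1} done by hand: the four-case estimate on $\ord(u_{ki})+\ord(2b_{kl})+\ord(u_{lj})$ is correct, and the diagonal term works out once you observe that the cross terms in $(B[U])_{ii}$ come in pairs $u_{ki}u_{li}(2b_{kl})$.

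The converse direction is where the argument is not a proof but a plan, and the plan has a real gap. You correctly identify that a violating entry $u_{ij}$ contributes the monomial $b_{ii}u_{ij}^2$ of order $a_i+2\ord(u_{ij})<a_j$ to $(B[U])_{jj}$, and that optimality of $B[U]$ forces this to be cancelled. But the leap from ``there is cancellation'' to ``one can exhibit $B'\sim B$ with $(a_1,\ldots,a_{j-1},a_j+1,\ldots)\in\bfS(\{B'\})$'' is not justified, and it is exactly the point where the naive approach fails. Cancellation of the low-order monomial does not by itself say anything stronger than what you already assumed ($\ord((B[U])_{jj})\ge a_j$); the other $n-1$ off-diagonal conditions needed for a larger sequence to lie in $S(B')$ are not controlled. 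The obstacles you list at the end (global interaction of $u_{ij}$ through the full quadratic expansion, the dyadic $\ord(2)$ asymmetry between the diagonal and off-diagonal bounds) are exactly the reasons why a ``worst violating pair'' greedy argument does not close; proposing to resolve them by reducing to two-valued $\ua$ and inducting is not carried out and is itself nontrivial, since $\GL_n(\frko)$-equivalence does not respect the block decomposition in the dyadic case. Finally, $|\GK(B)|=\Del(B)$ (Theorem~\ref{thm:0.1}) does not provide the control you hope for: $\Del$ constrains the total $|\ua|$ but not how order is distributed among coordinates, which is what your argument would need.

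The paper's actual route is structurally different and avoids all of this. First it uses the reduction theorem (Proposition~\ref{prop:4.1} in the non-dyadic case, Theorem~\ref{thm:4.1} in the dyadic case, together with Theorem~\ref{thm:5.1}) to replace $B$ by an equivalent \emph{reduced form} of GK type $\ua$. For a reduced form, Lemma~\ref{lem:3.10} gives an \emph{intrinsic} characterization of the filtration $\call_1\supset\call_2\supset\cdots\supset\call_r$ purely in terms of valuations of $Q(x)$ and of pairings $(x,y)_Q$. Optimality of $B[U]$ says exactly that the transformed basis vectors $\psi_iU$ satisfy those valuation conditions, so each $\psi_iU$ lands in $\call_s$ for $i\in I_s$, and unwinding the definition of $\call_s$ gives $U\in G_\ua$. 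So the real content is Lemma~\ref{lem:3.10} (resting on Lemmas~\ref{lem:3.8} and~\ref{lem:3.9}), which replaces the coordinatewise bookkeeping you were attempting by an invariant, quadratic-module-theoretic statement available only after normalizing $B$ to be reduced. Your sketch never introduces any normalization of $B$, and without it there is no clean description of which vectors ``belong at level $s$,'' which is what the converse direction actually needs.
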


For $B=(b_{ij})_{1\leq i,j\leq n}\in\calh_n(\frko)$ and $1\leq m\leq n$,  we denote the upper left $m\times m$ submatrix $(b_{ij})_{1\leq i, j\leq m}\in\calh_m(\frko)$ by $B^{(m)}$.
For $\underline{a}=(a_1, a_2, \ldots, a_n)\in\ZZ_{\geq 0}^n$, we put $\underline{a}^{(m)}=(a_1, a_2, \ldots, a_m)$ for $m\leq n$.

\begin{theorem} 
\label{thm:0.3}
Suppose that $B\in\calhnd_n(\frko)$ is optimal and  $\GK(B)=\ua$.
If $a_k<a_{k+1}$, then $B^{(k)}$ is also optimal and $\GK(B^{(k)})=\ua^{(k)}$.
\end{theorem}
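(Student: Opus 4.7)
The easier half is immediate: restricting the inequalities defining $\ua \in S(B)$ to indices $i,j \leq k$ gives $\ua^{(k)} \in S(B^{(k)})$, so $\GK(B^{(k)}) \succeq \ua^{(k)}$, and once equality is established, optimality of $B^{(k)}$ follows automatically. The main task is therefore to rule out any sequence in $\bfS(\{B^{(k)}\})$ that is strictly lex greater than $\ua^{(k)}$.

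I plan to argue by contradiction. Suppose $(b_1, \ldots, b_k) \in S(B^{(k)}[V'])$ for some $V' \in \GL_k(\frko)$ with $(b_1, \ldots, b_k) \succ \ua^{(k)}$, and let $j$ be the smallest index where the two disagree, so $b_j > a_j$. First I would exploit the gap $a_k < a_{k+1}$ by replacing each $b_i$ with $\min(b_i, a_{k+1})$. Membership in $S(B^{(k)}[V'])$ is preserved (the defining bounds only relax), the sequence stays non-decreasing, and the strict inequality at position $j$ survives because $a_j \leq a_k < a_{k+1}$ forces $\min(b_j, a_{k+1}) \geq a_j+1$. Hence without loss of generality $b_k \leq a_{k+1}$.

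Set $V = V' \perp \mathbf{1}_{n-k} \in \GL_n(\frko)$ and consider the extended sequence $(b_1, \ldots, b_k, a_{k+1}, \ldots, a_n) \in \ZZn$. This sequence is non-decreasing by the truncation and still strictly exceeds $\ua$ at position $j$. Writing $B = \begin{pmatrix} B^{(k)} & C \\ {}^tC & D \end{pmatrix}$, we have $B[V] = \begin{pmatrix} B^{(k)}[V'] & {}^tV' C \\ {}^tC V' & D \end{pmatrix}$. The inequalities required for membership in $S(B[V])$ on the top-left block hold by $(b_1, \ldots, b_k) \in S(B^{(k)}[V'])$, and on the bottom-right block they hold because those entries are unchanged and the $\ua$-bounds apply. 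The substantive case is the mixed block: for $i \leq k < r$ we need
\[
\ord\Bigl(2\sum_{l=1}^k V'_{li}\,b_{lr}\Bigr) \geq \frac{b_i + a_r}{2}.
\]
Applying $\ord(2 b_{lr}) \geq (a_l + a_r)/2$ term by term, this reduces to the coordinate-wise bound $\ord(V'_{li}) \geq (b_i - a_l)/2$ whenever $a_l < b_i$. Once established, the extended sequence lies in $\bfS(\{B\})$ and lex-exceeds $\ua$, contradicting $\GK(B) = \ua$.

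The hard part is proving this coordinate-wise estimate on $V'$. My plan is to derive it from the diagonal condition $\ord((B^{(k)}[V'])_{ii}) \geq b_i$ by expanding $(B^{(k)}[V'])_{ii} = \sum_{l,m} V'_{li} V'_{mi} b_{lm}$: the diagonal term $(V'_{li})^2 b_{ll}$ has order at least $2\ord(V'_{li}) + a_l$, and if some $V'_{li}$ with $a_l < b_i$ violated the desired bound, this term would dominate unless cancelled by the remaining terms—whose orders are controlled by the $\ua^{(k)}$-bounds on the entries of $B^{(k)}$. A non-cancellation argument (straightforward in the non-dyadic case, but requiring the delicate analysis behind Theorem \ref{thm:0.2} and the structural results on optimal forms established earlier in the paper when $F$ is dyadic) then forces $\ord((B^{(k)}[V'])_{ii}) < b_i$, contradicting $(b_1, \ldots, b_k) \in S(B^{(k)}[V'])$. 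Granting this, the mixed-block inequality holds, and the proof is complete.
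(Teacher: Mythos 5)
The overall strategy---truncate the competing sequence so that $b_k \leq a_{k+1}$, then extend it by $(a_{k+1}, \ldots, a_n)$ and try to place the extended sequence in $\bfS(\{B\})$---is reasonable in spirit, and the truncation step is a genuinely nice observation. But the argument breaks down at precisely the point you flag as ``the hard part.'' Your key lemma, that $(b_1,\ldots,b_k) \in S(B^{(k)}[V'])$ forces $\ord(V'_{li}) \geq (b_i - a_l)/2$ whenever $a_l < b_i$, is false as stated, and the ``non-cancellation'' heuristic you propose to prove it from the diagonal condition is exactly what fails in the dyadic case. Concretely, take $F=\QQ_2$, $k=2$, $\ua^{(k)}=(0,0)$, $B^{(k)}=\mathbf{1}_2 \in \calm((0,0))$, and $V' = \begin{pmatrix}1 & 1\\ 0 & 1\end{pmatrix}$. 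Then $B^{(k)}[V'] = \begin{pmatrix}1 & 1\\ 1 & 2\end{pmatrix} \in \calm((0,1))$, so $(b_1,b_2)=(0,1)$ satisfies all your hypotheses, yet $\ord(V'_{12})=0 < 1/2$. The diagonal entry $(B^{(k)}[V'])_{22} = 1\cdot 1 + 1\cdot 1 = 2$ has order $1$ because the two unit terms \emph{do} cancel $2$-adically; the diagonal condition $\ord((B^{(k)}[V'])_{ii}) \geq b_i$ simply does not control the entries of $V'$ in the way you want.

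The paper sidesteps this entirely via the Reduction Theorem (Theorem \ref{thm:4.1}): one first writes $B = B_1[U]$ with $B_1$ a reduced form of GK type $\ua$ and $U \in G_\ua^\bigtriangleup$; then the block-triangularity of $U$ together with $a_k < a_{k+1}$ gives $B^{(k)} = B_1^{(k)}[U^{(k)}]$, and Lemma \ref{lem:4.1} shows $B_1^{(k)}$ is itself a reduced form of GK type $\ua^{(k)}$, hence optimal by Theorem \ref{thm:5.1}; finally $U^{(k)} \in G_{\ua^{(k)}}$ preserves optimality by Proposition \ref{prop:1.1}. The control you are missing on the transition matrix is exactly what reduced forms supply (via Lemmas \ref{lem:3.8}--\ref{lem:3.10}): it is the special shape of a reduced form---a perturbed orthogonal sum of primitive unramified binary blocks---that prevents the dyadic cancellation that sinks your direct attack. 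Without first normalizing $B$ to a reduced form, there is no elementary non-cancellation argument available, and the gap in your proposal is not a technicality but the main difficulty of the whole paper.

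Two smaller remarks. First, your opening line claiming ``optimality of $B^{(k)}$ follows automatically'' once $\GK(B^{(k)}) = \ua^{(k)}$ is established is correct, since $\ua^{(k)} \in S(B^{(k)})$ is immediate from $\ua \in S(B)$. Second, in your extension step the appeal to ``the delicate analysis behind Theorem \ref{thm:0.2}'' is circular in spirit: Theorem \ref{thm:0.2} in the paper is \emph{also} proved via the Reduction Theorem, so invoking its machinery here is not a shortcut around Theorem \ref{thm:4.1} but a reference back to it.
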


\begin{definition} 
\label{def:0.4}
The Clifford invariant (see Scharlau \cite{scharlau}, p.~333) of $B\in\calhnd_n(\frko)$ is the Hasse invariant of the Clifford algebra (resp.~the even Clifford algebra) of $B$ if $n$ is even (resp.~odd).
\end{definition}
We denote the Clifford invariant of $B$ by $\eta_B$.
We also write $\eta(B)$ for $\eta_B$, if there is no fear of confusion.
If $B$ is $\GL_n(F)$-equivalent to a diagonal matrix $\mathrm{diag}(b'_1, \ldots, b'_n)$, then 
\begin{align*}
\eta_B
=&
\langle -1, -1 \rangle^{[(n+1)/4]}\langle -1, \det B \rangle^{[(n-1)/2]} \prod_{i < j} \langle b'_i, b'_j \rangle \\
=&\begin{cases}
\langle -1, -1 \rangle^{m(m-1)/2}\langle -1, \det B \rangle^{m-1} \ds\prod_{i < j} \langle b'_i, b'_j \rangle & \text{ if $n=2m$, } \\
\noalign{\vskip 6pt}
\langle -1, -1 \rangle^{m(m+1)/2}\langle -1, \det B \rangle^{m} \ds\prod_{i < j} \langle b'_i, b'_j \rangle & \text{ if $n=2m+1$. } 
\end{cases}
\end{align*}
(See Scharlau \cite{scharlau} pp.~80--81.)
The Clifford invariant $\eta_B$ depends only on the image of $B$ in the Witt group of $F$.
In particular, if $n$ is odd, then we have
\begin{align*}
\eta_B
=&
\begin{cases} 
1 & \text{ if  $B$ is split over $F$,} \\
-1 & \text{ otherwise.} 
\end{cases}
\end{align*}

\begin{theorem} 
\label{thm:0.4}
Let $B, B_1\in\calhnd_n(\frko)$.
Suppose that $B\sim B_1$ and both $B$ and $B_1$ are optimal.
Let $\ua=(a_1, a_2, \ldots, a_n)=\GK(B)=\GK(B_1)$.
Suppose that $a_k<a_{k+1}$ for $1\leq k < n$.
Then the following assertions (1) and (2) hold.
\begin{itemize}
\item[(1)] If $k$ is even, then $\xi_{B^{(k)}}=\xi_{B_1^{(k)}}$.
\item[(2)] If $k$ is odd, then $\eta_{B^{(k)}}=\eta_{B_1^{(k)}}$.
\end{itemize}
\end{theorem}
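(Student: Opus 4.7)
The plan is to apply Theorem~\ref{thm:0.2} to write $B_1 = B[U]$ for some $U \in G_{\ua}$, to perform a block LDU decomposition of $U$ adapted to the gap $a_k < a_{k+1}$, and to reduce the claim to the stability of $\xi$ (resp.~$\eta$) under a specific controlled perturbation of $B^{(k)}$.

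First, I partition $U$ in block form $U = \begin{pmatrix} U_{11} & U_{12} \\ U_{21} & U_{22} \end{pmatrix}$ with $U_{11}$ of size $k$. For $i \leq k < j$ we have $a_j - a_i \geq a_{k+1} - a_k \geq 1$, so the defining inequality of $G_{\ua}$ forces $\ord(u_{ij}) \geq (a_j - a_i)/2 \geq 1$, i.e., $U_{12} \in \vpi\,\Mat_{k,n-k}(\frko)$. No analogous constraint is imposed on $U_{21}$. Reducing $\det U \in \frko^\times$ modulo $\vpi$ (noting that $U$ becomes block lower triangular) then forces $U_{11} \in \GL_k(\frko)$ and $U_{22} \in \GL_{n-k}(\frko)$, which in turn permits the block LDU factorization $U = L\widetilde{D}M$ with
\[
L = \begin{pmatrix} I_k & 0 \\ Y & I_{n-k} \end{pmatrix},\qquad \widetilde{D} = \begin{pmatrix} U_{11} & 0 \\ 0 & W \end{pmatrix},\qquad M = \begin{pmatrix} I_k & X \\ 0 & I_{n-k} \end{pmatrix},
\]
where $Y = U_{21}U_{11}^{-1} \in \Mat_{n-k,k}(\frko)$, $X = U_{11}^{-1}U_{12} \in \vpi\,\Mat_{k,n-k}(\frko)$, and $W \in \GL_{n-k}(\frko)$. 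Writing $B = \begin{pmatrix} B^{(k)} & C \\ {}^tC & D_B \end{pmatrix}$, straightforward block multiplication shows that $[M]$ fixes the upper-left block, that $[\widetilde D]$ conjugates it by $U_{11}$, and that $[L]$ produces an additive correction $R$, giving
\[
B_1^{(k)} = (B[U])^{(k)} = \bigl(B^{(k)} + R\bigr)[U_{11}], \qquad R := CY + {}^tY\,{}^tC + {}^tY\, D_B\, Y.
\]
Since $U_{11} \in \GL_k(\frko)$ preserves both $\xi$ and $\eta$, the theorem reduces to showing $\xi(B^{(k)} + R) = \xi(B^{(k)})$ when $k$ is even, and $\eta(B^{(k)} + R) = \eta(B^{(k)})$ when $k$ is odd.

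The hard part will be this last invariance. Since $B$ is optimal with $\GK(B) = \ua$, the defining inequalities for $S(B)$ together with $a_j \geq a_{k+1}$ for $j > k$ yield the entrywise lower bounds
\[
\ord(2c_{ij}) \geq (a_i + a_{k+1})/2 \quad (i \leq k < j),\qquad \ord(2(D_B)_{ij}) \geq a_{k+1},
\]
from which one checks that $R$ lies in the ``strictly higher'' piece of the natural filtration of $\calh_k(\frko)$ attached to $\ua^{(k)}$. The claim therefore reduces to a stability lemma: if $B', B'' \in \calhnd_k(\frko)$ are both optimal with $\GK(B') = \GK(B'') = \ua^{(k)}$, and if $B'' - B'$ lies in this strictly higher filtration piece, then $\xi_{B'} = \xi_{B''}$ when $k$ is even, and $\eta_{B'} = \eta_{B''}$ when $k$ is odd. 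I would prove this lemma by choosing a Jordan-style normal form of $B^{(k)}$ over $\frko$ and tracking how admissible perturbations affect $\det$ modulo $F^{\times 2}$ (for $\xi$) and the collection of Hilbert symbols of a diagonalisation (for $\eta$). The genuinely delicate case is the dyadic one, where $\ord(2) > 0$ narrows the valuation margin between $B^{(k)}$ and $R$, and one must invoke the finer structural results about optimal half-integral forms developed earlier in the paper.
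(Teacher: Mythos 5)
Your high-level strategy matches the paper's: invoke Theorem~\ref{thm:0.2} to get $B_1 = B[U]$ with $U\in G_\ua$, decompose $U$, push the lower-triangular part into an additive correction of $B^{(k)}$ that lands in $\calm^0(\ua^{(k)})$, and reduce to a perturbation lemma for $\xi$ and $\eta$. Your block computation giving $B_1^{(k)}=(B^{(k)}+R)[U_{11}]$ with $R\in\calm^0(\ua^{(k)})$ is correct. The genuine gap is in the final step: the perturbation lemma.

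You state the lemma with the hypothesis that $B'$ and $B''$ are both \emph{optimal} of the same GK invariant, differing by an element of $\calm^0(\ua^{(k)})$. That is not what the paper proves. Proposition~\ref{prop:3.3} requires that both $B'$ and $B''$ be \emph{reduced forms} of the same GK type, a much more rigid structural hypothesis (Definition~\ref{def:3.2}). Your proof sketch — ``choosing a Jordan-style normal form of $B^{(k)}$ and tracking $\det$ mod $F^{\times 2}$ and Hilbert symbols'' — is essentially the non-dyadic argument, where a Jordan diagonalization exists. In the dyadic case there is no such diagonalization, and the entire apparatus of reduced forms in Sections~\ref{sec:3}--\ref{sec:4} exists precisely to substitute for it; saying one must ``invoke the finer structural results about optimal half-integral forms'' names the gap without filling it. Concretely, if you apply Theorem~\ref{thm:4.1} to make $B'$ reduced via some $V\in G_{\ua^{(k)}}^\bigtriangleup$, nothing you have said explains why $B''[V]=B'[V]+R[V]$ is also a reduced form (of the \emph{same} GK type $(\ua^{(k)},\sig)$), which is the hypothesis of Proposition~\ref{prop:3.3}.

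The paper sidesteps this by choosing a decomposition better adapted to the block structure of $\ua$: it first replaces $B$ by a reduced form and then uses $G_\ua = N_\ua^\bigtriangledown G_\ua^\bigtriangleup$ to write $U = U_1U_2$. Because $U_2\in G_\ua^\bigtriangleup$ is block upper triangular, $B_1^{(k)}$ is $\GL_k(\frko)$-equivalent to $B[U_1]^{(k)}$; and because $U_1\in N_\ua^\bigtriangledown$, Lemma~\ref{lem:4.6} guarantees $B[U_1]$ is \emph{still} a reduced form of the \emph{same} GK type. So both $B^{(k)}$ and $B[U_1]^{(k)}$ are reduced and Proposition~\ref{prop:3.3} applies directly. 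Your coarser LDU split at the single index $k$ keeps no control over the remaining block boundaries inside $\{1,\dots,k\}$, so it does not deliver the reduced-form hypothesis. To complete your argument you would need to prove an additional fact — for instance that being a reduced form of GK type $(\ua^{(k)},\sig)$ is stable under addition of an element of $\calm^0(\ua^{(k)})$ — which is true but nontrivial and nowhere asserted in your proposal.
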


\begin{remark} 
It is known that $B^{(k)}\sim B_1^{(k)}$ if $F$ is non-dyadic.
But it is not true if $F$ is dyadic.
Suppose that $F=\QQ_2$.
Put $B=\begin{pmatrix} 1 & 0 \\ 0 & 2 \end{pmatrix}$ and $B_1=\begin{pmatrix} 3 & 0 \\ 0 & 6 \end{pmatrix}$.
Then we have $B\sim B_1$ and $\GK(B)=(0,1)$.
Moreover both $B$ and $B_1$ are optimal.
But $B^{(1)}=(1)$ is not equivalent to $B_1^{(1)}=(3)$.
Thus $B^{(k)}$ and $B_1^{(k)}$ may not be equivalent if $F$ is dyadic.
\end{remark}

We define $a^\ast_1<a^\ast_2< \cdots< a^\ast_r$ by
\[
\{a^\ast_1\ldots, a^\ast_r\}=\{a_1, a_2, \ldots, a_n\}.
\]
Put $n_s=\sharp\{i\,|\, a_i=a^\ast_s\}$ and  $n^\ast_s=n_1+n_2+\cdots+n_s$ for $s=1,\ldots, r$.
In particular, $n^\ast_r=n$.
Put
\[
\zeta_s=
\begin{cases}
\xi_{B^{(n_s^\ast)}} & \text{ if $n_s^\ast$ is even,} \\
\eta_{B^{(n_s^\ast)}} & \text{ if $n_s^\ast$ is odd.} 
\end{cases}
\]
By definition, if $n^\ast_s$ is even, we have
\[
{\zeta_s \not=0}
\quad
\Longleftrightarrow
\quad
{a^\ast_1 n_1+\cdots+a^\ast_s n_s}\text{ is even.}
\]
Moreover,  $\zeta_s \neq 0$ if $n^\ast_s$ is odd.
Then we can show the following theorem (See Theorem \ref{thm:6.1}).
\begin{theorem}
\label{thm:0.5}
Suppose that $n^\ast_s$ is odd. 
Then we have 
\begin{itemize}
\item[(a)] Assume that $n^\ast_i$ is even for any $i<s$.
Then we have
\[
\zeta_s
=\zeta_1^{a^\ast_1+a^\ast_2}
\zeta_2^{a^\ast_2+a^\ast_3}
\cdots
\zeta_{s-1}^{a^\ast_{s-1}+a^\ast_s}.
\]
In particular, $\zeta_1=1$ if $n_1$ is odd.
\item[(b)] 
Assume that $a^\ast_1n_1+\cdots + a^\ast_{s-1}n_{s-1}+a^\ast_s(n_s-1)$ is even and that $n^\ast_i$ is odd for some $i<s$.
Let $t<s$ be the largest number such that $n^\ast_t$ is odd.
Then we have
\[
\zeta_s=
\zeta_t
\zeta_{t+1}^{a^\ast_{t+1}+a^\ast_{t+2}} 
\zeta_{t+2}^{a^\ast_{t+2} + a^\ast_{t+3}}
\cdots
\zeta_{s-1}^{a^\ast_{s-1}+a^\ast_s}.
\]
In particular, $\zeta_s=\zeta_t$ if $t+1=s$.
\end{itemize}
\end{theorem}
The datum $\mathrm{EGK}(B)=(n_1, \ldots, n_r; a^\ast_1, \ldots, a^\ast_r; \zeta_1, \ldots, \zeta_r)$ is called the extended GK datum of $B$.
In general, a datum $(n_1, \ldots, n_r; m_1, \ldots, m_r; \zeta_1, \ldots, \zeta_r)$ 
 satisfying these conditions is called an EGK datum (See Definition \ref{def:6.2}).
The EGK data have an application to the theory of Siegel series.
For the theory of Siegel series, one can consult Katsurada \cite{Katsurada} and Shimura \cite{shimura97}.
The Siegel series arises from the local factor of the Fourier coefficients of Siegel Eisenstein series.
For $F=\QQ_p$, an explicit formula for the Siegel series was given by Katsurada \cite{Katsurada}.
But the explicit formula in \cite{Katsurada} was very complicated for $p=2$.
In out forthcoming paper \cite{ikedakatsurada}, we will show that there exists a Laurent polynomial $\tilde\calf(\mathrm{EGK}(B); Y, X)\in\ZZ[X^{1/2}, X^{-1/2}, Y^{1/2}, Y^{-1/2}]$ such that the Laurent polynomial $\tilde F(B, X)$ attached to the Siegel series of $B$ is given by
\[
\widetilde F(B,X)=\widetilde \calf(\mathrm{EGK}(B);q^{1/2},X).
\]
In particular, the Siegel series of $B$ is determined by $\mathrm{EGK}(B)$.
Note that this formula holds for both the non-dyadic case and the dyadic case.

We now explain the content of this paper.
In section \ref{sec:1}, we will discuss some elementary properties of the Gross-Keating invariant.
In particular, we show that if $B_1\in\calhnd_m(\frko)$ is represented by $B\in\calhnd_n(\frko)$, then $\GK(B_1)\succeq \GK(B)^{(m)}$ (Lemma \ref{lem:1.2}).
This lemma is useful to calculate Gross-Keating invariants.
In section \ref{sec:2}, we calculate Gross-Keating invariants of binary forms explicitly.
The results of section \ref{sec:3} and \ref{sec:4} are the technical heart of this paper.
In section \ref{sec:3}, we introduce reduced forms (see Definition \ref{def:3.2}) and discuss its properties.
In section \ref{sec:4}, we prove the reduction theorem (Theorem \ref{thm:4.1}) which says that any half-integral symmetric matrix is equivalent to a reduced form.
By the reduction theorem, the proofs of the theorems above are reduced to the case of reduced form.
Using these results, we prove the Theorems \ref{thm:0.1}--\ref{thm:0.4} in section \ref{sec:5}.
In section \ref{sec:6}, we discuss some combinatorial properties of auxiliary invariants $\xi_{B^{(k)}}$ and $\eta_{B^{(k)}}$.
To this end, we introduce EGK data (Definition 6.2), and show that these invariants satisfy the axioms of EGK data (Theorem 6.1).

Throughout this paper, except for section \ref{sec:1} and section \ref{sec:6},  we mainly discuss the dyadic case.
The proof of the Theorems \ref{thm:0.1}--\ref{thm:0.4} for the non-dyadic case is briefly explained at the end of section \ref{sec:1}.

We thank the referee for many useful comments.
We also thank Sungmun Cho for his comments.
This research was partially supported by the JSPS KAKENHI Grant Number 26610005, 24540005.

\section*{Notation}

When $R$ is a ring, the set of $m\times n$ matrices with entries in $R$ is denoted by $\mathrm{M}_{mn}(R)$ or $\mathrm{M}_{m,n}(R)$.
As usual, $\mathrm{M}_n(R)=\mathrm{M}_{n,n}(R)$.
The identity matrix of size $n$ is denoted by $\mathbf{1}_n$.
For $X_1\in \mathrm{M}_s(R)$ and $X_2\in\mathrm{M}_t(R)$, the matrix $\begin{pmatrix} X_1 & 0 \\ 0 & X_2\end{pmatrix}\in\mathrm{M}_{s+t}(R)$ is denoted by $X_1\perp X_2$.
The diagonal matrix whose diagonal entries are $b_1$, $\ldots$, $b_n$ is denoted by $\mathrm{diag}(b_1, \dots, b_n)=(b_1)\perp\dots\perp (b_n)$.

Let $F$ be a non-archimedean local field of characteristic $0$, and $\frko=\frko_F$ its ring of integers.
The maximal ideal and  the residue field of $\frko$ are denoted by $\frkp$ and $\frkk$, respectively.
We put $q=[\frko:\frkp]$.
We fix a prime element $\vpi$ of $\frko$ once and for all.
The order of $x\in F^\times$ is given by $\mathrm{ord}(x)=n$ for $x\in \vpi^n \frko^\times$.
We understand $\ord(0)=+\infty$.
Put $F^{\times 2}=\{x^2\,|\, x\in F^\times\}$ and $\frko^{\times 2}=\{x^2\,|\, x\in\frko^\times\}$.

When $G$ is a subgroup of $\GL_n(F)$, we shall say that two elements $B_1, B_2\in\mathrm{Sym}_n(F)$ are called $G$-equivalent, if there is an element $X\in G$ such that $B_1[X]=B_2$.
When $G=\GL_n(\frko)$, we just say they are equivalent.

The lexicographic order $\succeq$ on $\ZZ_{\geq 0}^n$ is, as usual, defined as follows.
For distinct sequences $(y_1, y_2, \ldots, y_n),  (z_1, z_2, \ldots, z_n)\in \ZZ_{\geq 0}^n$, let $j$ be the largest integer such that $y_i=z_i$ for $i<j$.
Then $(y_1, y_2, \ldots, y_n)\succneqq  (z_1, z_2, \ldots, z_n)$ if $y_j>z_j$.
We define $(y_1, y_2, \ldots, y_n)\succeq  (z_1, z_2, \ldots, z_n)$ if $(y_1, y_2, \ldots, y_n)\succneqq  (z_1, z_2, \ldots, z_n)$ or $(y_1, y_2, \ldots, y_n) = (z_1, z_2, \ldots, z_n)$.

\section{Elementary properties of the Gross-Keating invariant}
\label{sec:1}

Let $L$ be a free module of rank $n$ over $\frko$, and $Q$ an $\frko$-valued quadratic form on $L$.
The pair $(L, Q)$ is called a quadratic module over $\frko$.
The symmetric bilinear form $(x, y)_Q$ associated to $Q$ is defined by
\[
 (x, y)_Q=Q(x+y)-Q(x)-Q(y), \qquad x, y\in L.
\]
When there is no fear of confusion, $(x, y)_Q$ is simply denoted by $(x, y)$.
If $\underline{\psi}=(\psi_1, \ldots, \psi_n)$ is an ordered basis of $L$, we call the triple $(L, Q, \underline{\psi})$ a framed quadratic $\frko$-module.
Hereafter, ``a basis'' means an ordered basis.
For a framed quadratic $\frko$-module $(L, Q, \underline{\psi})$, we define a matrix $B=(b_{ij})\in\calh_n(\frko)$ by
\[
 b_{ij}=\frac12 (\psi_i, \psi_j).
\]
The isomorphism class of $(L, Q, \underline{\psi})$ (as a framed quadratic $\frko$-module) is determined by $B$.
We say that $B\in\calh_n(\frko)$ is associated to the framed quadratic module $(L, Q, \underline{\psi})$.
If $B$ is non-degenerate, we also say $(L, Q)$ or $(L, Q, \underline{\psi})$ is non-degenerate.
The set $S(B)$ is also denoted by $S(\underline{\psi})$.
If $B$ is optimal, then $\underline{\psi}$ is called an optimal basis.
We consider $\mathrm{Aut}(L)$ acting on $L$ from the right.
When $U\in \mathrm{Aut}(L)$ is given by $\psi_j\mapsto \sum_{i=1}^n \psi_i u_{ij}$, with $(u_{ij})\in\GL_n(\frko)$, 
we define an ordered basis $\underline{\psi}U=((\underline{\psi}U)_1, \ldots, (\underline{\psi}U)_n)$ by $(\underline{\psi}U)_j=\sum_{i=1}^n \psi_i u_{ij}$.
Then the matrix associated to $(L, Q, \underline{\psi}U)$ is equal to $B[U]=B[(u_{ij})]$.
In particular, the equivalence class of $B$ is determined by the isomorphism class of the quadratic module $(L, Q)$.
The norm $\mathrm{n}(L)$ of $(L, Q)$ is the fractional ideal generated by $\{Q(x)\,|\, x\in L\}$.
It is known (see \cite{thyang} Lemma B.1) that $a_1=\mathrm{ord}(\mathrm{n}(L))$, where $a_1$ is the first entry of $\GK(B)$.

Let $\ua=(a_1, \ldots, a_n)\in\ZZ_{\geq 0}^n$ be a non-decreasing sequence.
We define $n_1, n_2, \dots, n_r$ with $n_1+\cdots+n_r=n$ by
\begin{align*}
&a_1=\cdots=a_{n_1} <a_{n_1+1}, \\
&a_{n_1}<a_{n_1+1}=\cdots=a_{n_1+n_2}<a_{n_1+n_2+1}, \\
&\cdots \\
&a_{n_1+\cdots+n_{r-1}}<a_{n_1+\cdots+n_{r-1}+1}=\cdots =a_{n_1+\cdots+n_r}.
\end{align*}
For $s=1, 2, \ldots, r$, we set
\[ 
n^\ast_s=\sum_{u=1}^{s} n_u.
\]
We put $n^\ast_0=0$.
The $s$-th block $I_s$ is defined by $I_s=\{n^\ast_{s-1}+1, n^\ast_{s-1}+2, \ldots, n^\ast_s\}$ for $s=1, 2, \dots, r$.
We put $a^\ast_s=a_{n^\ast_{s-1}+1}=\cdots=a_{n^\ast_s}$.

Let $(L, Q, \underline{\psi})$ be the framed quadratic $\frko$-module associated to  $B=(b_{ij})$.
For $s=1, \ldots, r$, we denote by $L_s$ the submodule of $L$ generated by $\{\psi_k\,|\, n^\ast_{s-1}+1\leq k\leq n\}=\{\psi_k\,|\, k\in I_s\cup\cdots \cup I_r \}$.
We put $L_{r+1}=\{0\}$.

Let $S^0(B)$ be the set of all non-decreasing sequences $(a_1, \ldots, a_n)\in\ZZn$ such that
\begin{align*}
&\ord(b_{ii})> a_i \qquad\qquad\qquad\quad (1\leq i\leq n), \\
&\ord(2 b_{ij})> (a_i+a_j)/2  \qquad\; (1\leq i\leq j\leq n).
\end{align*}

\begin{lemma} 
\label{lem:1.1}
Suppose that $\ua=(a_1, \ldots, a_n)\in S(B)$ with $B\in\calhnd_n(\frko)$.
Let $(L,Q, \underline{\psi})$ and $L_1, \ldots, L_r$ be as above.
If $x\in L_s$ and $y\in L_t$, then we have
\[
\ord(Q(x))\geq a^\ast_s, \qquad
\ord((x, y))\geq \frac{a^\ast_s+a^\ast_t}2.
\]
Moreover, if $\ua\in S^0(B)$, then we have
\[
\ord(Q(x))> a^\ast_s, \qquad
\ord((x, y))> \frac{a^\ast_s+a^\ast_t}2
\]
for $x\in L_s$ and $y\in L_t$.
\end{lemma}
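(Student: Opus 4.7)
The plan is a direct computation: expand $x$ and $y$ in the basis $\underline{\psi}$, apply bilinearity, and invoke the defining inequalities of $S(B)$ term by term.

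First I would write
\[
x = \sum_{k \geq n^\ast_{s-1}+1} c_k \psi_k,
\qquad
y = \sum_{l \geq n^\ast_{t-1}+1} d_l \psi_l,
\]
with $c_k, d_l \in \frko$, which is legitimate because by definition $L_s$ is the $\frko$-span of $\{\psi_k \mid k \geq n^\ast_{s-1}+1\}$ and similarly for $L_t$. Using the bilinearity of $(\cdot,\cdot)_Q$ together with $(\psi_k,\psi_l) = 2 b_{kl}$, and the identity $Q(v) = \tfrac12 (v,v)_Q$, this gives
\[
(x,y) = \sum_{k,l} c_k d_l \cdot 2 b_{kl},
\qquad
Q(x) = \sum_k c_k^2 b_{kk} + \sum_{k<l} c_k c_l \cdot 2 b_{kl},
\]
where both sums run only over indices appearing in the expansions above.

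Next I would bound each term by means of the hypothesis $\ua \in S(B)$: $\ord(b_{kk}) \geq a_k$ and $\ord(2 b_{kl}) \geq (a_k+a_l)/2$. Every index $k$ occurring in the expansion of $x$ satisfies $k \geq n^\ast_{s-1}+1$, hence $a_k \geq a^\ast_s$; likewise every $l$ occurring in $y$ satisfies $a_l \geq a^\ast_t$. Consequently each term of $(x,y)$ has order at least $(a^\ast_s+a^\ast_t)/2$, and in $Q(x)$ both the diagonal contributions $c_k^2 b_{kk}$ and the off-diagonal contributions $c_k c_l\cdot 2 b_{kl}$ have order at least $a^\ast_s$ (for the latter, because $a_k, a_l \geq a^\ast_s$ forces $(a_k+a_l)/2 \geq a^\ast_s$). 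Taking $\ord$ of the whole sums yields the two non-strict inequalities.

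For the statement under the stronger hypothesis $\ua \in S^0(B)$ the same calculation applies with strict inequalities on every individual term. The only mildly delicate point, and the main thing to be careful about, is that strict inequalities pass to the sum: since $\ord$ takes values in $\ZZ \cup \{\infty\}$, each term with order strictly greater than the rational number $m = a^\ast_s$ (respectively $m = (a^\ast_s+a^\ast_t)/2$) in fact has order at least the smallest integer strictly exceeding $m$, hence so does the sum. This gives the strict inequalities and completes the proof. No other obstacle is expected; the lemma is essentially bookkeeping translating the matrix inequalities defining $S(B)$ and $S^0(B)$ into coordinate-free statements about the filtration $L_1 \supset L_2 \supset \cdots \supset L_{r+1} = 0$.
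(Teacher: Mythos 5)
Your proof is correct and follows essentially the same direct computation as the paper: expand $x$ and $y$ in the basis $\underline{\psi}$, apply bilinearity and the relation $Q(v)=\tfrac12(v,v)_Q$, and bound each term using the defining inequalities of $S(B)$ (resp.\ $S^0(B)$). Your remark on why strict termwise bounds against a possibly non-integral threshold pass to the sum is a small but genuine point of care that the paper leaves implicit under ``the proof of the last part is similar,'' and your summation indices $k\geq n^\ast_{s-1}+1$ correctly parametrize $L_s$, where the paper's displayed formula has an off-by-one typo ($n^\ast_s+1$ rather than $n^\ast_{s-1}+1$).
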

\begin{proof}
The proof of this lemma is easy and is left to the reader.
\end{proof}

Recall that $B_1\in\calh_m(\frko)$ is represented by $B\in\calh_n(\frko)$, if there exists $X\in \mathrm{M}_{nm}(\frko)$ such that $B_1={}^t \! XBX$.
For $\underline{a}=(a_1, a_2, \ldots, a_n)\in\ZZ_{\geq 0}^n$, we put $\underline{a}^{(m)}=(a_1, a_2, \ldots, a_m)$ for $m\leq n$.
\begin{lemma} 
\label{lem:1.2}
Suppose that $\ua=(a_1, \ldots, a_n)\in\ZZ_{\geq 0}^n$ is a non-decreasing sequence and $\ua\in S(B)$ with $B\in\calhnd_n(\frko)$.
If $B_1\in\calhnd_m(\frko)$ is represented by $B$, then we have $\ua^{(m)}\in \bfS(\{B_1\})$.
In particular, $\GK(B_1)\succeq\ua^{(m)}$
\end{lemma}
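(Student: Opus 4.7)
The plan is to work with the framed quadratic module $(L,Q,\underline{\psi})$ associated to $B$ and to translate the representation $B_1={}^t\!XBX$ into vectors: the columns of $X$ produce $\xi_1,\ldots,\xi_m\in L$ whose Gram matrix with respect to $\frac12(\,\cdot\,,\,\cdot\,)_Q$ is exactly $B_1$. Because $B_1$ is non-degenerate, the $\xi_j$ are $F$-linearly independent, so $M:=\frko\xi_1+\cdots+\frko\xi_m$ is a free $\frko$-module of rank $m$ having $\xi_1,\ldots,\xi_m$ as a basis. The task is to produce a second $\frko$-basis $\xi'_1,\ldots,\xi'_m$ of $M$ whose Gram matrix $B_1[U]$ satisfies $\ua^{(m)}\in S(B_1[U])$; once this is done, $\ua^{(m)}\in\bfS(\{B_1\})$ by definition, and the ``in particular'' clause is immediate since $\GK(B_1)$ is the $\succeq$-maximum of $\bfS(\{B_1\})$.

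To produce such a basis I respect the block filtration $L=L_1\supset L_2\supset\cdots\supset L_{r+1}=0$ associated to $\ua$. Set $M_s:=M\cap L_s$. Since $L_s$ is a direct summand of $L$ (it is spanned by the $\psi_k$ with $k\in I_s\cup\cdots\cup I_r$), each quotient $M_s/M_{s+1}$ is torsion-free and therefore free over the principal ideal domain $\frko$. Building downward from $s=r$---pick a basis of $M_r$, extend to a basis of $M_{r-1}$, and so on---yields a basis $\xi'_1,\ldots,\xi'_m$ of $M$ together with a non-decreasing function $s(\cdot)\colon\{1,\ldots,m\}\to\{1,\ldots,r\}$ such that $\xi'_i\in L_{s(i)}$ for every $i$.

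Once such a basis exists, Lemma~\ref{lem:1.1} immediately gives
\[
\ord(Q(\xi'_i))\geq a^*_{s(i)},\qquad \ord\bigl((\xi'_i,\xi'_j)\bigr)\geq \tfrac12\bigl(a^*_{s(i)}+a^*_{s(j)}\bigr),
\]
so $\ua^{(m)}\in S(B_1[U])$ provided one verifies $a^*_{s(i)}\geq a_i$ for every $i$. Writing $m_s:=\mathrm{rank}_\frko M_s$, the construction forces $s(i)$ to be the largest $s$ with $m_s\geq m-i+1$. If $t$ denotes the block of $i$ (so $n^*_{t-1}<i\leq n^*_t$ and $a_i=a^*_t$), the decisive input is the rank bound $m-m_t\leq n^*_{t-1}$, which follows because $M/M_t$ embeds into $L/L_t$ and the latter has $\frko$-rank $n^*_{t-1}$. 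Then $i>n^*_{t-1}\geq m-m_t$ forces $s(i)\geq t$, and consequently $a^*_{s(i)}\geq a^*_t=a_i$.

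The main point of care is this last bookkeeping step: matching the jump pattern of $\{M_s\}$ inside $M$ with the block structure of $\ua$. All other ingredients---freeness of the $M_s/M_{s+1}$, the existence of a compatible basis, and the final invocation of Lemma~\ref{lem:1.1}---are essentially formal once the rank inequality $m-m_t\leq n^*_{t-1}$ is secured.
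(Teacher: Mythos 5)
Your proof is correct and follows essentially the same route as the paper's: pass to the submodule $M=L_{B_1}\subset L$, intersect with the filtration $L=L_1\supset\cdots\supset L_{r+1}=\{0\}$, build an adapted basis of $M$ from the free quotients $M_s/M_{s+1}$, and apply Lemma~\ref{lem:1.1}. The one place you add value is the explicit bookkeeping at the end: the paper just says the conclusion follows ``since $\ua$ is non-decreasing,'' while you spell out the needed rank inequality $m-m_t\leq n^\ast_{t-1}$ (from $M/M_t\hookrightarrow L/L_t$) and the deduction $s(i)\geq t$, which is exactly the point the paper leaves implicit.
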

\begin{proof}
Let $(L, Q, \underline{\psi})$ be the framed quadratic module corresponding to $B$.
We define $L_1, \dots, L_{r+1}$ as in Lemma \ref{lem:1.1}.
Let $(L_{B_1}, Q_1)$ be the quadratic module corresponding to $B_1$.
We may consider $L_{B_1}$ as a submodule of $L$.
It is enough to find an ordered basis $\underline{\phi}=(\phi_1, \ldots, \phi_m)$ of $L_{B_1}$ such that  $\ua^{(m)}\in S(\underline{\phi})$.
Put $M=L_{B_1}$ and $M_u=M\cap L_u$ for $u=1, \ldots, r+1$.
Then we have
\[
M=M_1\supset M_2\supset \cdots \supset M_{r+1}=\{0\}.
\]
Note that $M_u/M_{u+1}\subset L_u/L_{u+1}$ is a  torsion free $\frko$-module for $u=1,\dots, r$.
Put $m_u=\mathrm{rank}(M_u/M_{u+1})$ and $m^\ast_u=m_1+m_2+\cdots+m_u$.
Choose an ordered basis $\underline{\phi}=(\phi_1, \ldots, \phi_m)$ such that $\{\phi_{m^\ast_u+1}, \ldots, \phi_m\}$ is a basis of $M_u$ for $u=1,2, \dots, r$.
By Lemma \ref{lem:1.1}, we have $\ua^{(m)}\in S(\underline{\phi})$, since $(a_1, a_2, \ldots, a_n)$ is a non-decreasing sequence.
\end{proof}

Lemma \ref{lem:1.1} can be generalized as follows.
For $x\in\RR$, the smallest integer $n$ such that $n\geq x$ is denote by $\lceil x \rceil$.
\begin{lemma} 
\label{lem:1.3}
Suppose that $\ua=(a_1, \ldots, a_n)\in S(B)$ with $B\in\calhnd_n(\frko)$.
Let $(L,Q, \underline{\psi})$ and $L_s$ $(1\leq s\leq r)$ be as in Lemma \ref{lem:1.1}.
Put
\[
\call_s=L_s+\sum_{u=1}^{s-1} \vpi^{\lceil (a^\ast_s-a^\ast_u)/2\rceil} L_u=\sum_{u=1}^s \vpi^{\lceil (a^\ast_s-a^\ast_u)/2\rceil} L_u.
\]
If $x\in\call_s$ and $y\in \call_t$, then we have
\[
\ord(Q(x))\geq a^\ast_s, \qquad
\ord((x, y))\geq \frac{a^\ast_s+a^\ast_t}2.
\]
Moreover, if $\ua\in S^0(B)$, then we have
\[
\ord(Q(x))> a^\ast_s, \qquad
\ord((x, y))> \frac{a^\ast_s+a^\ast_t}2
\]
for $x\in \call_s$ and $y\in \call_t$.
\end{lemma}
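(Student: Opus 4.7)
The plan is to reduce Lemma \ref{lem:1.3} directly to Lemma \ref{lem:1.1} by decomposing elements of $\call_s$ and $\call_t$ according to the block filtration, then expanding $Q$ and the bilinear form and applying the Lemma \ref{lem:1.1} estimates to each piece.

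First I would write an arbitrary $x \in \call_s$ as $x = \sum_{u=1}^{s} \vpi^{\lceil (a^\ast_s - a^\ast_u)/2 \rceil} x_u$ with $x_u \in L_u$ (the $u = s$ term being just $x_s \in L_s$, since the exponent is zero), and similarly $y = \sum_{v=1}^{t} \vpi^{\lceil (a^\ast_t - a^\ast_v)/2 \rceil} y_v$ with $y_v \in L_v$. Expanding
\[
Q(x) = \sum_{u} \vpi^{\,2\lceil (a^\ast_s - a^\ast_u)/2 \rceil} Q(x_u) + \sum_{u<u'} \vpi^{\,\lceil (a^\ast_s - a^\ast_u)/2 \rceil + \lceil (a^\ast_s - a^\ast_{u'})/2 \rceil}(x_u, x_{u'}),
\]
Lemma \ref{lem:1.1} supplies $\ord Q(x_u) \geq a^\ast_u$ and $\ord(x_u, x_{u'}) \geq (a^\ast_u + a^\ast_{u'})/2$. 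Combining these with the trivial inequality $\lceil y \rceil \geq y$, each diagonal term has order at least $2\lceil (a^\ast_s - a^\ast_u)/2 \rceil + a^\ast_u \geq (a^\ast_s - a^\ast_u) + a^\ast_u = a^\ast_s$, and each cross term has order at least $\lceil (a^\ast_s - a^\ast_u)/2 \rceil + \lceil (a^\ast_s - a^\ast_{u'})/2 \rceil + (a^\ast_u + a^\ast_{u'})/2 \geq a^\ast_s$. Hence $\ord Q(x) \geq a^\ast_s$. The estimate for $(x, y)$ is the same bookkeeping: bilinear expansion produces a sum whose $(u,v)$-term has order at least $\lceil (a^\ast_s - a^\ast_u)/2 \rceil + \lceil (a^\ast_t - a^\ast_v)/2 \rceil + (a^\ast_u + a^\ast_v)/2 \geq (a^\ast_s + a^\ast_t)/2$.

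The strict version for $\ua \in S^0(B)$ is handled identically, upgrading each "$\geq$" coming from Lemma \ref{lem:1.1} to "$>$"; each summand's bound becomes strict, so the minimum is still strict. There is essentially no obstacle here; the only point requiring mild care is that the ceiling function is used solely to keep the exponents in $\ZZ_{\geq 0}$, while all the order arithmetic goes through with the unadorned lower bounds $(a^\ast_s - a^\ast_u)/2$, which is legitimate since $\lceil y \rceil \geq y$.
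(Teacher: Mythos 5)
Your proposal is correct and follows the same route as the paper: decompose $x$ and $y$ along the filtration $L_1 \supset \cdots \supset L_r$ with coefficients of the prescribed order, expand $Q$ and the bilinear form, and apply the Lemma~\ref{lem:1.1} bounds termwise (the paper uses arbitrary coefficients $c_u$ with $\ord(c_u) \geq (a^\ast_s - a^\ast_u)/2$, but since $c_u \in \frko$ this is the same as your explicit $\vpi^{\lceil (a^\ast_s - a^\ast_u)/2 \rceil}$). The strict case is likewise dispatched in one line in the paper by the same upgrade of inequalities.
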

\begin{proof}
The proof of this lemma is easy and is left to the reader.
\end{proof}

Let $\ua=(a_1, a_2, \ldots, a_n)\in\ZZn$ be a non-decreasing sequence.
Recall that we have defined the group $G_{\ua}\subset \GL_n(\frko)$ by
\[
G_{\ua}=
\{g=(g_{ij})\in \GL_n(\frko) \,|\, \text{ $\ord(g_{ij})\geq (a_j-a_i)/2$, if $a_i<a_j$}\}.
\]
We define subgroups $G_\ua^\bigtriangleup$ and $G_\ua^\bigtriangledown$ of $G_\ua$ by
\begin{align*}
G_{\ua}^\bigtriangleup&=
\{g=(g_{ij})\in G_{\ua}\,|\, \text{ $g_{ij}=0$, if $a_i> a_j$.}\}, \\
G_{\ua}^\bigtriangledown&=
\{g=(g_{ij})\in G_{\ua}\,|\, \text{ $g_{ij}=0$, if $a_i< a_j$. }\}.
\end{align*}
The symbols $\bigtriangleup$ and $\bigtriangledown$ stands for upper and lower block triangular matrices, respectively.
We also define
\begin{align*}
N_{\ua}^\bigtriangleup&=
\{g=(g_{ij})\in G_{\ua}^\bigtriangleup\,|\, \text{ $g_{ij}=\del_{ij}$, if $a_i = a_j$. }\}, \\
N_{\ua}^\bigtriangledown&=
\{g=(g_{ij})\in G_{\ua}^\bigtriangledown\,|\, \text{ $g_{ij}=\del_{ij}$, if $a_i = a_j$. }\}.
\end{align*}
Here, $\del_{ij}$ is the Kronecker delta.
Then we have $G_{\ua}=N_{\ua}^\bigtriangledown G_{\ua}^\bigtriangleup=G_{\ua}^\bigtriangleup N_{\ua}^\bigtriangledown$.

\begin{definition} 
For $\ua=(a_1, \ldots, a_n)\in\ZZ_{\geq 0}^n$, put
\begin{align*}
\calm(\ua)&=\left\{(b_{ij})\in\calh_n(\frko)\,\vrule\, \begin{array}{ll}\ord(b_{ii})\geq a_i & ( 1\leq i\leq n), \\ \ord(2 b_{ij})\geq (a_i+a_j)/2 & (1\leq i < j\leq n)\end{array} \right\},  \\
\calm^0(\ua)&=\left\{(b_{ij})\in\calh_n(\frko)\,\vrule\, \begin{array}{ll}\ord(b_{ii}) > a_i & ( 1\leq i\leq n), \\ \ord(2 b_{ij}) > (a_i+a_j)/2&(1\leq i < j\leq n)\end{array}  \right\}.
\end{align*}
\end{definition}
In the definition of $\calm(\ua)$ or $\calm^0(\ua)$, we do not assume $\ua$ is non-decreasing.
Note that when $B\in\calhnd_n(\frko)$, we have
\begin{align*}
 \ua\in S(B) & \Longleftrightarrow \text{ $\ua$ is non-decreasing and $B\in\calm(\ua)$,} \\
 \ua\in S^0(B) & \Longleftrightarrow \text{ $\ua$ is non-decreasing and $B\in\calm^0(\ua)$.} 
\end{align*}

\begin{proposition} 
\label{prop:1.1}
Suppose that $\ua=(a_1, a_2, \ldots, a_n)\in\ZZn$ is a non-decreasing sequence and that $B\in\calm(\ua)$ \, $($resp. $B\in\calm^0(\ua)$ \!$)$.
Then we have $B[U]\in\calm(\ua)$ \, $($resp. $B[U]\in\calm^0(\ua)$\! $)$ for any $U\in G_\ua$.
In particular, if $B$ is optimal and $\GK(B)=\ua$, then $\GK(B[U])=\ua$ for any $U\in G_\ua$.
\end{proposition}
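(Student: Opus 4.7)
The plan is to verify $B[U]\in\calm(\ua)$ (resp.\ $\calm^0(\ua)$) by a direct entry-wise estimate on $B[U]=\trs U B U$. Writing $U=(u_{kl})$, $B=(b_{kl})$, and $B[U]=(c_{ij})$, we have $c_{ij}=\sum_{k,l}u_{ki}b_{kl}u_{lj}$. The key preliminary observation I would isolate at the outset is the unconditional inequality
\[
2\ord(u_{ki})\geq a_i-a_k
\]
for all $i,k$ and all $U\in G_\ua$: when $a_k<a_i$ this is the defining condition of $G_\ua$, while when $a_k\geq a_i$ the right side is non-positive and the left side is non-negative since $u_{ki}\in\frko$.

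For an off-diagonal entry ($i\neq j$), rewrite $2c_{ij}=\sum_{k,l}u_{ki}(2b_{kl})u_{lj}$ and bound each summand. Using the preliminary inequality together with $\ord(2b_{kl})\geq (a_k+a_l)/2$ (the case $k=l$ being absorbed via $\ord(2b_{kk})\geq\ord(b_{kk})\geq a_k$), one obtains
\[
\ord\bigl(u_{ki}(2b_{kl})u_{lj}\bigr)\geq\frac{a_i-a_k}{2}+\frac{a_k+a_l}{2}+\frac{a_j-a_l}{2}=\frac{a_i+a_j}{2},
\]
so $\ord(2c_{ij})\geq (a_i+a_j)/2$. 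For the diagonal, split $c_{ii}=\sum_k u_{ki}^2 b_{kk}+\sum_{k<l}u_{ki}u_{li}(2b_{kl})$; the square terms give $\ord(u_{ki}^2 b_{kk})\geq (a_i-a_k)+a_k=a_i$ and the cross terms give $\ord(u_{ki}u_{li}(2b_{kl}))\geq a_i$ by the same arithmetic, so $\ord(c_{ii})\geq a_i$. Hence $B[U]\in\calm(\ua)$. The $\calm^0(\ua)$ version is the identical argument with the non-strict inequalities on $B$-entries upgraded to strict, so each bound acquires an extra $+1$ and one gets $\ord(c_{ii})>a_i$ and $\ord(2c_{ij})>(a_i+a_j)/2$.

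Finally, for the \emph{in particular} clause: if $B$ is optimal with $\GK(B)=\ua$ then $\ua\in S(B)$, so $\ua$ is non-decreasing and $B\in\calm(\ua)$; by what was just proved, $B[U]\in\calm(\ua)$, hence $\ua\in S(B[U])\subseteq \bfS(\{B[U]\})=\bfS(\{B\})$, and maximality of $\ua$ in $\bfS(\{B\})$ forces $\GK(B[U])=\ua$ with $B[U]$ also optimal. I foresee no substantive obstacle: the whole statement reduces to bookkeeping once the unconditional bound $2\ord(u_{ki})\geq a_i-a_k$ is in place, and the only points needing mild care are the factor of $2$ on the diagonal and the verification that strictness propagates in the $\calm^0$ case.
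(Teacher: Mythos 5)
Your proof is correct. The paper proves this by passing to the framed quadratic module $(L,Q,\underline{\psi})$ associated to $B$, observing that $\psi_i U\in\call_s$ for $i\in I_s$, and then invoking Lemma \ref{lem:1.3} (which gives valuation bounds on $Q(x)$ and $(x,y)$ for $x\in\call_s$, $y\in\call_t$). Your argument is the same estimate carried out directly on the matrix entries $c_{ij}=\sum_{k,l}u_{ki}b_{kl}u_{lj}$ of $B[U]={}^t UBU$, using the unconditional bound $2\ord(u_{ki})\geq a_i-a_k$; in effect you have inlined the computation that the paper has packaged into Lemma \ref{lem:1.3}. Both routes buy the same thing here: the paper's version reuses the module-theoretic framework ($\call_s$, Lemma \ref{lem:1.3}) which also serves Lemma \ref{lem:3.10} and the proof of Theorem \ref{thm:0.2}, whereas your version is self-contained and makes the dependence on the $G_\ua$ condition fully explicit. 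One small presentational point: in the $\calm^0$ case it is not quite that "each bound acquires an extra $+1$" --- for the cross terms the gain from $\ord(2b_{kl})>(a_k+a_l)/2$ may only be $1/2$ --- but since $\ord(u_{ki}u_{li}(2b_{kl}))$ is an integer strictly exceeding the integer $a_i$, it is automatically $\geq a_i+1$, and the conclusion $\ord(c_{ii})>a_i$, $\ord(2c_{ij})>(a_i+a_j)/2$ follows as you say. The treatment of the final clause (invariance of $\GK$ under equivalence plus $\ua\in S(B[U])$ forcing optimality of $B[U]$) is also correct.
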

\begin{proof}
Suppose that $U\in G_\ua$.
Let $(L, Q, \underline{\psi})$ be the framed quadratic module associated to $B$.
Then by definition, we have $\psi_i U\in \call_s$ for  $i\in I_s$.
By Lemma \ref{lem:1.3}, we have $B[U]\in\calm(\ua)$ (resp. $B[U]\in\calm^0(\ua)$).
The proof of the last part is clear.
\end{proof}

The following lemma will be frequently used in this paper.
\begin{lemma} 
\label{lem:1.4}
Let $\ua=(a_1, a_2, \ldots, a_n)\in\ZZn$ be a non-decreasing sequence and $1\leq m<n$.
Let $s$ be the largest integer such that $a_{m+1}=\cdots=a_{m+s}$.
Put $c=a_{m+1}=\dots=a_{m+s}$.
Assume that $B\in\calm(\ua)$.
Write $B$ in a block form
\[
\begin{array}{cccc}
&\hskip -30pt
\overbrace{\hphantom{B_{11}}}^m \;
\overbrace{\hphantom{ B_{12}}}^s  \; 
\overbrace{\hphantom{ B_{15}}}^{n-m-s} \\ & 
B=\left(
\begin{array}{cccl}
B_{11} & B_{12} & B_{13} &  \\
{}^t\!B_{12} & B_{22} & B_{23}   \\
{}^t\!B_{13} &{}^t\!B_{23} & B_{33}  \\
\end{array} \hskip -10pt
\right) \hskip -5pt
\begin{array}{l}
 \left.\vphantom{B_{11}} \right\} \text{\footnotesize$m$} \\
 \left.\vphantom{B_{11}} \right\} \text{\footnotesize$s$} \\
 \left.\vphantom{B_{11}} \right\} \text{\footnotesize${n-m-s}$.} 
\end{array}
\end{array}
\]
Assume also that
\[
\begin{pmatrix}
0 & B_{12} & 0 &  \\
{}^t\!B_{12} & 0 & 0   \\
0 & 0 & 0
\end{pmatrix}
\in\calm^0(\ua),
\]
Then the following two assertions (1) and (2) hold.
\begin{itemize}
\item[(1)]
If $\GK(B_{22})\succneqq (c,c, \ldots, c)$, then $\GK(B)\succneqq \ua$.
\item[(2)]
If $\GK(B)=\ua$, then we have $\GK(B_{22})=(c,c,\ldots, c)$.
\end{itemize}
\end{lemma}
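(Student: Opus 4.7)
The plan is to prove (1) directly; part (2) then follows by contrapositive, using that $\ua\in S(B)$ forces $\GK(B)\succeq\ua$ and that $(c,\dots,c)\in S(B_{22})$ (immediate from $B\in\calm(\ua)$) forces $\GK(B_{22})\succeq(c,\dots,c)$. So assume $\ua':=\GK(B_{22})\succneqq(c,\dots,c)$ and let $k\in\{1,\dots,s\}$ be the least index with $a'_k>c$; thus $a'_l=c$ for $l<k$ and $a'_l\geq c+1$ for $l\geq k$. Pick $W\in\GL_s(\frko)$ with $B_{22}[W]\in\calm(\ua')$, and put $U=\mathbf{1}_m\perp W\perp\mathbf{1}_{n-m-s}$, $\widetilde B=B[U]$.

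The element of $\bfS(\{B\})$ that I will produce strictly above $\ua$ is
\[
\ua^*=\bigl(a_1,\dots,a_m,\underbrace{c,\dots,c}_{k-1},\underbrace{c+1,\dots,c+1}_{n-m-k+1}\bigr).
\]
This is non-decreasing (since $a_m\leq a_{m+1}=c$), agrees with $\ua$ through position $m+k-1$, and satisfies $a^*_{m+k}=c+1>c=a_{m+k}$, so $\ua^*\succneqq\ua$. The whole content of the proof is the verification $\ua^*\in S(\widetilde B)$; once that is in hand, $\GK(B)\succeq\ua^*\succneqq\ua$.

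This verification is a six-fold case analysis on the block decomposition of $\widetilde B$. Four of the six cases are routine: the $(1,1)$ block is unchanged and handled by $\ua\in S(B)$; the middle $(2,2)$ block is $B_{22}[W]$ and, after a further split on whether the middle indices are $<k$ or $\geq k$, the required bounds follow from $\ua'\in S(B_{22}[W])$ together with the integer-rounding $\lceil c+1/2\rceil=c+1$; the $(3,3)$ block is unchanged with $a_j\geq a_{m+s+1}\geq c+1$; and the $(1,3)$ block similarly needs only $a_j\geq c+1$. For the $(1,2)$ cross-block $(B_{12}W)_{i,l}$, the $\calm^0(\ua)$ hypothesis gives the strict inequality $\ord(2(B_{12})_{i,l'})>(a_i+c)/2$; combined with integer-valuedness of $\ord$ this upgrades in both parities of $a_i+c$ to $\ord(2\cdot)\geq(a_i+c+1)/2$, which is precisely what $\ua^*$ demands when $l\geq k$.

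The main obstacle is the $(2,3)$ cross-block $(W^{\top} B_{23})_{l,\,j-m-s}$: the input $B\in\calm(\ua)$ yields only $\ord(2\cdot)\geq(c+a_j)/2$, which is naively short of the bound $((c+1)+a_j)/2$ that $\ua^*$ would demand for $l\geq k$. The reason for collapsing the tail values $a_{m+s+1},\dots,a_n$ down to the common value $c+1$ in $\ua^*$ is precisely to reduce this requirement to $((c+1)+(c+1))/2=c+1$; and since $a_j\geq c+1$ forces $(c+a_j)/2\geq c+1/2$, integer-valuedness of $\ord$ then gives $\ord(2\cdot)\geq c+1$ automatically. The same integer-rounding argument handles $l<k$, where the requirement is $(c+(c+1))/2=c+1/2$, again rounded up to $c+1$. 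Once all six conditions are checked, $\ua^*\in S(\widetilde B)$ is established, which proves (1) and hence (2).
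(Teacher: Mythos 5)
Your proposal is correct and takes essentially the same approach as the paper's: exhibit an explicit non-decreasing sequence strictly above $\ua$ that lies in $S(B[U])$ for a block-diagonal $U = \mathbf{1}_m\perp W\perp\mathbf{1}_{n-m-s}$. The paper uses the candidate $(a_1,\dots,a_m,\underbrace{c,\dots,c}_{s-1},\underbrace{c+1,\dots,c+1}_{n-m-s+1})$ with $W$ chosen so that $(c,\dots,c,c+1)\in S(B_{22}[W])$, while you use the (possibly larger, but in any case $\succneqq\ua$) candidate indexed by the first position $k$ where $\GK(B_{22})$ exceeds $c$; the block-by-block verification is the same in spirit, and your write-up supplies the integer-rounding details that the paper leaves implicit.
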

\begin{proof}
Obviously $\GK(B_{22})\succeq (c, c,  \ldots, c)$.
Suppose that $\GK(B_{22})\neq (c, c,  \ldots, c)$.
Then there exists $U\in \GL_s(\frko)$ such that
$(c, c, \dots, c, c+1)\in S(B_{22}[U])$.
Put $B'=B[\mathbf{1}_r\perp U\perp \mathbf{1}_{n-m-s}]$.
Then we have
\begin{align*}
&(a_1, a_2, \ldots, a_{m+s-1}, \underbrace{a_{m+s}+1, \ldots, a_{m+s}+1}_{n-m-s+1}) \\
=&(a_1, a_2, \ldots, a_m, \underbrace{c, c,  \ldots, c}_{s-1}, \underbrace{c+1, c+1, \ldots, c+1}_{n-m-s+1})
\in S(B').
\end{align*}
Hence the assertion (1) holds.
The assertion (2) follows from (1) immediately.
\end{proof}

\begin{remark} 
\label{rem:1.1}
We briefly explain the proofs of the Theorems \ref{thm:0.1}--\ref{thm:0.4} for non-dyadic case.
Suppose that $F$ is a non-dyadic field.
Then it is well-known that any non-degenerate element $B\in\calh_n(\frko)$ is equivalent to a diagonal matrix of the form
\[
T=\diag(t_1, t_2, \ldots, t_n), \qquad \ord(t_1)\leq \ord(t_2)\leq \cdots \leq \ord(t_n),
\]
which is called a Jordan splitting of $B$.
It is known that the Gross-Keating invariant $\GK(B)=\ua=(a_1, a_2, \ldots, a_n)$ is given by $a_i=\ord(t_i)$ for $i=1,2, \ldots, n$.
In particular,  $T$ is optimal.
For a proof, see Bouw \cite{Bouw} Proposition 2.6.
The proof of \cite{Bouw} is valid for any non-dyadic field.
Using this, one can easily show Theorem \ref{thm:0.1}.
Note that $\GK(B)=(0, \ldots, 0)$ if and only if $B\in\GL_n(\frko)$.

The ``if part" of Theorem \ref{thm:0.2} follows from Proposition \ref{prop:1.1}.
Let $\ua=(a_1, a_2, \ldots, a_n)$ be a non-decreasing sequence.
Suppose that $B$ is optimal and $\GK(B)=\ua$.
By Lemma \ref{lem:1.4}, we have  $\GK(B^{(n_1)})=(\underbrace{a_1, \ldots, a_1}_{n_1})$.
Then, we have $\vpi^{-a_1}B^{(n_1)}\in\GL_{n_1}(\frko)$.
It follows that there exists $U_1\in G_\ua$ such that $B[U_1]$ is of the form $B^{(n_1)}\perp B'$.
Repeating this argument, one can show that there exists $U_2\in G_\ua^\bigtriangleup$ such that $B[U_2]$ is a Jordan splitting of $B$.
Suppose that $B[U]$ is also optimal with $U\in\GL_n(\frko)$.
Then by the same argument as above, there exists $U_3\in G_\ua^\bigtriangleup$ such that $B[UU_3]$ is a Jordan splitting of $B$.
It is well-known that two Jordan splittings of $B$ are $\GL_{n_1}(\frko)\times \cdots\times \GL_{n_r}(\frko)$-equivalent. 
Moreover, if $T$ is a Jordan splitting of $B$, then $\{U\in\GL_n(\frko)\,|\, T[U]=T\}\subset G_\ua$.
Thus we obtain the ``only if part" of Theorem \ref{thm:0.2}.

Suppose that $B\in \calm(\ua)$. 
Then as we have seen above, $\GK(B)=\ua$ if and only if there exists $U\in G_\ua^\bigtriangleup$ such that $B=T[U]$, where $T$ is a  Jordan splitting of $B$ and $\GK(T)=\ua$.
Equivalently,  $\GK(B)=\ua$ if and only if $\ord(\det B^{(n^\ast_s)})=|\ua^{(n^\ast_s)}|$ for $s=1, \ldots, r$.
Theorem \ref{thm:0.3} follows from this.
Let $B$ and $B_1$ be as in Theorem \ref{thm:0.4}.
As we have seen above, there exists $U\in G_\ua^\bigtriangleup$ such that $B_1=B[U]$.
It follows that $B^{(k)}\sim B_1^{(k)}$.
Hence we obtain Theorem \ref{thm:0.4}.
\end{remark}

\section{Binary quadratic forms}
\label{sec:2}

Hereafter, until the end of section \ref{sec:5}, we assume that $F$ is dyadic.

Let $(L, Q)$ and $(L_1, Q_1)$ be quadratic modules of rank $n$ over $\frko$.
We say that $(L, Q)$ and $(L_1, Q_1)$ are weakly equivalent if there exists an isomorphism $\iot:L\rightarrow L_1$ and a unit $u\in \frko^\times$ such that $u Q_1(\iot(x))=Q(x)$ for any $x\in L$.
Similarly, we say that $B, B_1\in \calh_n(\frko)$ are weakly equivalent if there exists a unimodular matrix $U\in\GL_n(\frko)$ and a unit $u \in\frko^\times$ such that $u B_1=B[U]$.
If $B$ and $B_1$ are weakly equivalent, then $\GK(B)=\GK(B_1)$.

Recall that a half-integral symmetric matrix $B\in\calh_n(\frko)$ is primitive if $\vpi^{-1}B\notin \calh_n(\frko)$.
It is well-known that $B$ is primitive if and only if $\mathrm{n}(L)=\frko$, where $L$ is the  quadratic module associated to $B$.
Let $\GK(B)=(a_1, a_2, \ldots, a_n)$.
It is obvious that if $B$ is not primitive, then $a_1>0$.
Conversely, if $B$ is primitive, then $a_1=0$ by Lemma \ref{lem:1.2}.
Thus $B$ is primitive if and only if $a_1=0$.

We define the integer $e$ by $|2|^{-1}=q^e$.
Since we have assumed that $F$ is dyadic, $e$ is equal to the ramification index of $F/\QQ_2$.
It is well-known that $1+4\frkp\subset \frko^{\times 2}$.
For $\xi\in F^\times$, we denote the discriminant ideal of $F(\sqrt{\xi})/F$ by $\mathfrak{D}_\xi$.
Then $\ord(\mathfrak{D}_\xi)=0$ if and only if $\xi\in (1+4\frko)\frko^{\times 2}$ for $\xi\in\frko^\times$.
Moreover, $[(1+4\frko)\frko^{\times 2}:\frko^{\times 2}]=2$.
It is easy to see that if $\ord(\mathfrak{D}_{\xi})=0$, then $\ord(\mathfrak{D}_{\xi\xi'})=\ord(\mathfrak{D}_{\xi'})$ for any $\xi'\in F^\times$.
(See e.g., O'Meara \cite{omeara} \S63A.)

Let $E/F$ be a  semi-simple quadratic algebra.
This means that $E$ is a quadratic extension of $F$ or $E=F\oplus F$.
The non-trivial automorphism of $E/F$ is denoted by $x\mapsto \bar x$.
Note that if  $E=F\oplus F$, we have $\overline{(x_1, x_2)}=(x_2, x_1)$.
Let $\frko_E$ be the maximal order of $E$.
In the case $E=F\oplus F$, $\frko_E=\frko\oplus \frko$.
The discriminant ideal of $E/F$ is denoted by $\frkD_E$.
The order $\frko_{E, f}$ of conductor $f$ for $E/F$ is defined by $\frko_{E, f}=\frko+\frkp^f\frko_E$.
Any open $\frko$-subring of $\frko_E$ is of the form $\frko_{E, f}$ for some non-negative integer $f$.
We say that $E/F$ is unramified, if $E=F\oplus F$ or $E/F$ is an unramified quadratic extension.
Then $E/F$ is unramified if and only if $\ord(\frkD_E)=0$.

\begin{proposition} 
\label{prop:2.1}
Let $B\in\calhnd_2(\frko)$ be a primitive half-integral symmetric matrix of size $2$ and $(L, Q)$ its associated quadratic module.
Put $E=F(\sqrt{D_B})/F$.
When $D_B\in F^{\times 2}$, we understand $E=F\oplus F$.
Put $f=(\ord(D_B)-\ord(\mathfrak{D}_E))/2$.
Then $f$ is an integer and $(L, Q)$ is weakly equivalent to $(\frko_{E, f}, \mathrm{N})$, where, $\mathrm{N}$ is the norm form for $E/F$.
\end{proposition}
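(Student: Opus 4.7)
The plan is to construct an explicit $F$-linear embedding $\iot\colon L\to E$ realizing $(L, Q)$, up to rescaling by a unit, as $(\frko_{E,f},\mathrm{N})$ sitting inside $E$.

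First I would verify $f\in\ZZ$. Since $E=F(\sqrt{D_B})$, the element $D_B$ represents the square class in $F^\times/F^{\times 2}$ defining $E$, and for any $d\in F^\times$ in that class one has $\ord(d)\equiv\ord(\frkD_E)\pmod 2$: when $\ord(d)$ is even, $d$ is a unit modulo squares, and $\ord(\frkD_E)$ is $0$ if $E/F$ is unramified and otherwise an even integer, via the standard formulas for the discriminant of $F(\sqrt u)/F$ with $u\in\frko^\times$; when $\ord(d)$ is odd, $E/F$ is ramified and $\ord(\frkD_E)$ is odd, e.g.\ since $\frkD_E$ divides $\mathrm{disc}(T^2-d)=-4d$. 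Thus $\ord(D_B)\equiv\ord(\frkD_E)\pmod 2$.

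Next, I would use primitivity to pick a basis of $L$ in which the $(1,1)$-entry of the associated matrix is a unit. Primitivity is equivalent to $\mathrm{n}(L)=\frko$, so some $v\in L$ has $Q(v)\in\frko^\times$; such a $v$ is primitive in $L$ (else $Q(v)\in\vpi^2\frko$) and hence extends to a basis. Writing the new Gram matrix as $B=\begin{pmatrix}a&b\\ b&c\end{pmatrix}$ with $a\in\frko^\times$, $c\in\frko$, $\alpha:=2b\in\frko$, and taking $\omega$ to be the image of $T$ in $E=F[T]/(T^2-\alpha T+ac)$ (which coincides with $F(\sqrt{D_B})$ or $F\oplus F$ according as this polynomial is irreducible or not), a direct computation gives
\[
aQ(xe_1+ye_2)=(ax)^2+\alpha(ax)y+ac\,y^2=\mathrm{N}(ax+y\omega).
\]
Therefore $\iot(xe_1+ye_2):=ax+y\omega$ is an $\frko$-linear isometry from $(L,aQ)$ onto $(M,\mathrm{N})$ where $M:=\iot(L)=\frko+\omega\frko$ (using $a\in\frko^\times$). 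This already exhibits $(L,Q)$ as weakly equivalent to $(M,\mathrm{N})$ with unit $a^{-1}$.

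It then remains to identify $M$ with $\frko_{E,f}$. The relation $\omega^2=\alpha\omega-ac$ shows $M$ is closed under multiplication and hence is an $\frko$-order of $E$; every such order has the form $\frko_{E,f'}$ for a unique $f'\geq 0$. Concretely, writing $\omega=\gamma_1+\gamma_2\omega_0$ with $\omega_0$ a generator of $\frko_E$ over $\frko$, one finds $M=\frko+\vpi^{f'}\omega_0\frko=\frko_{E,f'}$ with $f'=\ord(\gamma_2)$. Computing the Gram matrix of $(\frko_{E,f'},\mathrm{N})$ in the basis $(1,\vpi^{f'}\omega_0)$ gives $\ord(D_{\frko_{E,f'}})=2f'+\ord(\frkD_E)$; since $\iot$ is an isometry up to the unit square $a^2$, this equals $\ord(D_B)$, forcing $f'=f$. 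The main obstacle is providing a uniform argument across the split versus field cases and between dyadic and non-dyadic $F$, but the algebraic identity $aQ=\mathrm{N}\circ\iot$ underlying the construction is insensitive to these distinctions.
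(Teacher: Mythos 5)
Your proof is correct and implements the same underlying idea as the paper — identifying a primitive binary form, after scaling by a unit, with the norm form of a quadratic order in $E=F(\sqrt{D_B})$ and then reading off the conductor from the discriminant — but by explicit coordinate computation rather than via the Clifford algebra. In the paper's argument the role of your $\omega$ and your isometry $\iot$ is played by the Clifford product $x_0\otimes y_0$ and the map $r\mapsto r\otimes x_0$ (with $Q(x_0)=1$), so the two proofs match step for step once the Clifford-algebra formalism is unpacked, and your preliminary check that $f\in\ZZ$ is actually subsumed by the final identification $f=f'$.
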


\begin{proof}
Since $B$ is primitive, there exists $x_0\in L$ such that $u=Q(x_0)$ is a unit.
By replacing $Q$ by $u^{-1}Q$, we may assume $Q(x_0)=1$.
Let $R$ be the even Clifford algebra of $(L, Q)$ over $\frko$ (See \cite{knus}).
Then $R\otimes_\frko F$ is the even Clifford algebra of $(L\otimes F, Q\otimes F)$, which is isomorphic to $E$.
Thus $R\simeq \frko_{E, f}$ for some $f\geq 0$.
By Lemma 2.2.1 of Chapter 5 of \cite{knus}, we have $(L, Q)\simeq (\frko_{E,f}, \mathrm{N})$.
Let $\{1, \ome\}$ be an $\frko$-basis of $\frko_E$.
Then $\{1, \vpi^f\ome\}$ is an $\frko$-basis of $\frko_{E, f}$.
From this, we have $\ord(D_B)=\ord(\mathfrak{D}_E)+2f$, since $(D_B)=((\vpi^f\ome-\vpi^f\bar\ome)^2)$ and $\mathfrak{D}_B=((\ome-\bar\ome)^2)$.
\end{proof}
Note that two primitive binary forms $B, B'\in\calhnd_2(\frko)$ are weakly equivalent if and only if $D_B/D_{B'}\in \frko^{\times 2}$ by Proposition \ref{prop:2.1}.

\begin{proposition} 
\label{prop:2.2}
The Gross-Keating invariant of the binary quadratic form $(L, Q)=(\frko_{E,f}, \mathrm{N})$ is given by
\[
\begin{cases}
(0, 2f) & \text{ if $E/F$ is unramified,}
\\
(0, 2f+1) & \text{ if $E/F$ is ramified.}
\end{cases}
\]
\end{proposition}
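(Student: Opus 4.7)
The plan is to pin down the second entry $a_2$ of $\GK(B) = (a_1, a_2)$ by matching an explicit basis realising a lower bound against a matching upper bound. Since $Q(1) = \mathrm{N}(1) = 1 \in \frko^\times$, $B$ is primitive, so $a_1 = 0$ by the discussion following the definition of primitivity.

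For the lower bound I fix an $\frko$-basis $\{1,\omega\}$ of $\frko_E$. Then $\{1,\vpi^f\omega\}$ is an $\frko$-basis of $L = \frko_{E,f}$, and the associated half-integral symmetric matrix is
\[
B \;=\; \begin{pmatrix} 1 & \vpi^f\alpha/2 \\ \vpi^f\alpha/2 & \vpi^{2f}\beta \end{pmatrix},
\]
where $\alpha = \mathrm{tr}(\omega)$, $\beta = \mathrm{N}(\omega)$. In the unramified case this already gives $(0,2f)\in S(B)$: the required inequalities $\ord(b_{22}) \geq 2f$ and $\ord(2 b_{12}) \geq f$ hold trivially. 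In the ramified case I would take $\omega = \pi_E$ a uniformizer of $\frko_E$ (using that a totally ramified quadratic extension satisfies $\frko_E = \frko[\pi_E]$), so that $\ord(\beta) = 1$. The crucial point is $\mathrm{tr}(\pi_E) \in \frkp$: indeed $\alpha^2 - 4\beta$ generates $\frkD_E$, and since $\ord(4\beta) = 2e+1 \geq 1$, the ramification hypothesis $\ord(\frkD_E) \geq 1$ forces $\ord(\alpha^2) \geq 1$, hence $\alpha \in \frkp$. This yields $\ord(b_{22}) = 2f+1$ and $\ord(2b_{12}) \geq f+1$, so $(0,2f+1) \in S(B)$ and $a_2 \geq 2f+1$.

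For the upper bound I would first dispose of the unramified case by the inequality $a_1 + a_2 \leq \ord(\det(2B)) = \ord(D_B)$ combined with $\ord(D_B) = \ord(\frkD_E) + 2f = 2f$ from the proof of Proposition~\ref{prop:2.1}. In the ramified case this same inequality only yields $a_2 \leq 2f + \ord(\frkD_E)$, which can exceed $2f+1$ in wildly ramified dyadic situations; supplying the sharper bound $a_2 \leq 2f+1$ is the main obstacle. I would overcome it by a module-theoretic argument: if there existed an $\frko$-basis $\{\phi_1,\phi_2\}$ of $L$ with $\ord_F(\mathrm{N}(\phi_2)) \geq 2f+2$, then using $\ord_F\mathrm{N}(x) = \ord_E(x)$ for $x \in E^\times$ (valid because $e_{E/F}=2$) one would have $\phi_2 \in \pi_E^{2f+2}\frko_E = \vpi^{f+1}\frko_E$. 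A direct expansion of $\vpi L = \vpi\frko + \vpi^{f+1}\frko_E$ shows $\vpi^{f+1}\frko_E \subset \vpi L$, so $\phi_2$ would lie in $\vpi L$ and reduce to zero in the two-dimensional $\frkk$-vector space $L/\vpi L$, contradicting the fact that $\{\phi_1,\phi_2\}$ reduces to an $\frkk$-basis there. Hence $a_2 \leq 2f+1$, and combined with the lower bound this finishes the proof.
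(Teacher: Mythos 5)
Your proof is correct, and while the lower bound via the explicit basis $\{1,\vpi^f\ome\}$ is the same as the paper's, the upper bound is argued by a different route. The paper splits into $f=0$ and $f>0$: for $f=0$ it identifies $S(\{\ome_1,\ome_2\})=\{(0,0)\}$ (unramified) and uses maximality of $\frko_E$ as a quadratic module (ramified), then for $f>0$ it again invokes maximality of $\frko_E$ to get the uniform bound $a_2\le 2f+1$, and finally rules out $a_2=2f+1$ in the unramified case by a separate contradiction argument reducing back to $\GK(\frko_E)=(0,0)$. You instead dispatch the unramified case at once from the a priori bound $a_1+a_2\le\ord(\det(2B))=\ord(D_B)=2f$, which neatly avoids the paper's extra contradiction step, and in the ramified case you replace the ``maximality of $\frko_E$'' argument by a direct valuation computation $\ord_F\circ\mathrm{N}=\ord_E$, concluding that a hypothetical $\psi_2$ with $a_2\ge 2f+2$ would lie in $\vpi^{f+1}\frko_E\subset\vpi L$ and hence vanish in $L/\vpi L$. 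This is a slightly more hands-on and self-contained argument (it does not lean on the classification in Proposition~\ref{prop:2.1} beyond the formula $\ord(D_B)=\ord(\frkD_E)+2f$), at the cost of introducing the valuation identity and treating the two ramification types asymmetrically; the paper's version is structurally more uniform but needs the final case split. You also supply the explicit verification $\mathrm{tr}(\vpi_E)\in\frkp$ for the lower bound, which the paper leaves implicit. Both are valid; yours is a genuinely independent and arguably more elementary treatment of the upper bound.
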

\begin{proof}
Let $(a_1, a_2)$ be the Gross-Keating invariant of $(L, Q)$.
Since $(L, Q)$ is primitive, we have $a_1=0$.

\medskip
\noindent {\textbf{Step 1.}} 
We first consider the case $f=0$.
In this case, $L=\frko_E$.

Assume that $E/F$ is unramified and $(\ome_1, \ome_2)$ is any ordered $\frko$-basis of  $\frko_E$.
Then we have $(\ome_1, \ome_2)_Q\in\frko^\times$, since $F$ is dyadic.
It follows that $S((\ome_1, \ome_2))=\{(0,0)\}$, and so $a_1=a_2=0$ in this case.

Next, we assume $E/F$ is ramified.
Let $\vpi_E$ be a prime element of $\frko_E$.
Then $(1, \vpi_E)$ is an ordered $\frko$-basis of $\frko_E$ and  $(0, 1)\in S((1, \vpi_E))$.
It follows that $a_2\geq 1$.
On the other hand, let $(\psi_1, \psi_2)$ be an optimal basis.
If $a_2\geq 2$, then the $\frko$-module generated by $\{\psi_1, \vpi^{-[a_2/2]}\psi_2\}$ is also a quadratic module over $\frko$.
This contradicts the fact that $\frko_E$ is a maximal quadratic module.
It follows that $a_2\leq 1$, and so $a_2=1$.

\medskip
\noindent {\textbf{Step 2.}} 
We assume $f>0$.
Let $(\psi_1, \psi_2)$ be an optimal basis of $L$.
The $\frko$-module generated by $\{\psi_1, \vpi^{-[a_2/2]}\psi_2\}$ is also a quadratic module over $\frko$. 
It follows that $\vpi^{-[a_2/2]}\psi_2\in \frko_E$.
Thus we have $[a_2/2]\leq f$, i.e., $a_2\leq 2f+1$.
On the other hand, let $(1, \ome)$ be an optimal basis of $\frko_E$.
Then $(1, \vpi^f\ome)$ is an ordered $\frko$-basis of $\frko_{E, f}$ and
\[
\begin{cases}
(0, 2f)\in S((1, \vpi^f\ome)) & \text{ if $E/F$ is unramified,}
\\
(0, 2f+1)\in S((1, \vpi^f\ome))  & \text{ if $E/F$ is ramified.}
\end{cases}
\]
In particular, $a_2=2f+1$, if $E/F$ is ramified.
Assume that $a_2=2f+1$ and $E/F$ is unramified.
If $(\psi_1, \psi_2)$ is an optimal basis, then $\vpi^{-f}\psi_2\in\frko_E$, and so $(\psi_1, \vpi^{-f}\psi_2)$ is an ordered $\frko$-basis of $\frko_E$.
Then $(0,1)\in S((\psi_1, \vpi^{-f}\psi_2))$, this contradicts  $\GK(\frko_E)=(0,0)$.
This proves $a_2=2f$ in this case.
\end{proof}
\begin{corollary} 
\label{cor:2.1}
If $B\in\calhnd_2(\frko)$, then $|\GK(B)|=\Delta(B)$.
\end{corollary}
\begin{proof}
We may assume $B$ is primitive.
If $B$ is primitive, then the corollary follows from Proposition \ref{prop:2.2}.
\end{proof}

Recall that an element $B\in\calh_n(\frko)$ is said to be decomposable if $B\sim B_1\perp B_2$, for some $B_1\in\calh_s(\frko)$, $B_2\in\calh_t(\frko)$, $s, t<n$.
$B$ is said to be indecomposable if it is not decomposable.
\begin{lemma} 
\label{lem:2.1}
Let $B\in\calhnd_2(\frko)$ be a primitive binary form.
Then $B$ is decomposable if and only if $D_B\in 4\frko$.
\end{lemma}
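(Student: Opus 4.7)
The strategy is to pick a particularly convenient basis of $L$ coming from primitivity, read off the discriminant $D_B$ in that basis, and then show that the ``obstruction'' to diagonalising $B$ is exactly the class of $D_B$ modulo $4\frko$.

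First I would use primitivity: since $B$ is primitive, the norm ideal $\mathrm{n}(L)$ is the unit ideal, so there exists $x_0 \in L$ with $Q(x_0) = u \in \frko^\times$. Extending to a basis $\{x_0, y\}$ of $L$, the associated matrix becomes
\[
B = \begin{pmatrix} u & c \\ c & d \end{pmatrix}, \qquad u \in \frko^\times,\ d \in \frko,\ 2c \in \frko.
\]
A direct computation gives $D_B = -4\det B = (2c)^2 - 4ud$. Since $4ud \in 4\frko$, we conclude $D_B \in 4\frko$ if and only if $(2c)^2 \in 4\frko$, i.e., $\ord(2c) \geq \ord(2)$, which is equivalent to $c \in \frko$. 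This key equivalence is the arithmetic content of the lemma.

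For the ``only if'' direction, if $B$ is decomposable then $B \sim \diag(s,t)$ for some $s, t \in \frko$, so $D_B = -4st \in 4\frko$ immediately. For the ``if'' direction, assume $D_B \in 4\frko$, so by the above $c \in \frko$. Set $\alpha = c/u \in \frko$ (this is where $c \in \frko$ is essential) and apply the unimodular change of basis
\[
U = \begin{pmatrix} 1 & -\alpha \\ 0 & 1 \end{pmatrix} \in \GL_2(\frko).
\]
A short computation gives $B[U] = \diag\bigl(u,\ (ud-c^2)/u\bigr)$, which lies in $\calh_2(\frko)$ since both entries are in $\frko$. Hence $B$ is decomposable.

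There is no real obstacle: the only subtlety is making sure the relation ``$D_B \in 4\frko \iff c \in \frko$'' is handled uniformly in the dyadic and non-dyadic cases, which it is once one observes that in either case $\ord((2c)^2) \geq \ord(4)$ is equivalent to $\ord(c) \geq 0$. In the non-dyadic case $2 \in \frko^\times$ forces $c \in \frko$ automatically, which matches the well-known fact that any $B \in \calhnd_2(\frko)$ is diagonalisable in that setting.
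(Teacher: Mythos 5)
Your proof is correct, and it takes a genuinely different route from the paper's. The paper proves the ``if'' direction by invoking Proposition~\ref{prop:2.1} together with the classification that two primitive binary forms are weakly equivalent iff the ratio of their discriminants is a square unit: since $D_B \in 4\frko$, the diagonal form $\diag(1, -D_B/4)$ is an integral primitive binary form with the same discriminant class, hence weakly equivalent to $B$. Your argument avoids that machinery entirely: you pick a primitive vector, read off $D_B = (2c)^2 - 4ud$ in the resulting basis, observe that $D_B \in 4\frko$ is equivalent to $c \in \frko$, and then diagonalise by an explicit unimodular completion of the square. What the structural route buys is that it also gives the isomorphism type of the diagonal form for free (and the Clifford-algebra picture in Proposition~\ref{prop:2.1} is used heavily elsewhere), whereas what your computational route buys is self-containment and a transparent identification of the exact arithmetic obstruction ($c \notin \frko$) to decomposability. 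Both proofs are valid; just note that in the ``only if'' direction the correct sign is $D_B = -4b_1b_2$ as you wrote (the paper's $D_B = 4b_1b_2$ is a harmless typo), and it costs nothing to do so.
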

\begin{proof}
Suppose that $B\sim (b_1)\perp (b_2)$ is decomposable.
Then we have $D_B=4b_1b_2\in 4\frko$.
Conversely, suppose that $D_B \in 4\frko$.
Then $B$ is weakly isomorphic to $\begin{pmatrix} 1 & 0 \\ 0 & -D_B/4\end{pmatrix}$ by the remark after Proposition \ref{prop:2.1}.
\end{proof}
\begin{definition} 
$K\in \calh_2(\frko)$ is a primitive unramified binary (quadratic) form if  the quadratic module associated to $K$ is isomorphic to $(\frko_E, \mathrm{N})$ for an unramified 
quadratic algebra $E$.
\end{definition}
Clearly, $B$ is a primitive unramified binary form if and only if $\Del(B)=0$.
By Proposition \ref{prop:2.2}, it is also equivalent to $\GK(B)=(0,0)$.
Note also that $B\in\calh_2(\frko)$ is weakly equivalent to a primitive unramified binary form, then $B$ itself is a primitive unramified binary form, since $\mathrm{N}(\frko_E^\times)=\frko^\times$.
A primitive unramified binary form is indecomposable by Lemma \ref{lem:2.1},  
If $\begin{pmatrix} a & b/2 \\ b/2 & c \end{pmatrix}\in\calh_2(\frko)$ is a primitive unramified binary form, then the proof of Proposition \ref{prop:2.2} shows  $b\in\frko^\times$.
Conversely, If $b\in \frko^\times$, then $\begin{pmatrix} a & b/2 \\ b/2 & c \end{pmatrix}\in\calh_2(\frko)$ is a primitive unramified binary form, since $\mathrm{ord}(\frkD_{b^2-4ac})=0$.
More precisely, a primitive unramified binary form is isomorphic to either
\[
H=\begin{pmatrix}
0 & 1/2 \\
1/2 & 0
\end{pmatrix}
\quad \text{ or }
\quad
Y=\begin{pmatrix}
1 & 1/2 \\
1/2 & c
\end{pmatrix},
\]
where $c\in\frko$ and $1-4c\notin \frko^{\times 2}$.
Note that $Y\perp Y\sim H\perp H$.

We characterize optimal binary forms as follows.
\begin{proposition} 
\label{prop:2.3}
Assume that $B=\begin{pmatrix} b_{11} & b_{12} \\ b_{12} & b_{22}\end{pmatrix}\in \calm((a_1, a_2))$ and $a_1\leq a_2$.
\begin{itemize}
\item[(1)]
If $a_1=a_2$, then
\[
\text{$\GK(B)=(a_1, a_2)$} \Longleftrightarrow  \mathrm{ord}(2 b_{12})=a_1.
\]
\item[(2)]
If $a_2-a_1=2f>0$, with $f\in \ZZ_{> 0}$, then
\[
\text{$\GK(B)=(a_1, a_2)$} \Longleftrightarrow  \mathrm{ord}(b_{11})=a_1, \mathrm{ord}(2 b_{12})=a_1+f.
\]
\item[(3)] 
If $a_2-a_1=2f+1$, with $f\in \ZZ_{\geq 0}$, then
\[
\text{$\GK(B)=(a_1, a_2)$} \Longleftrightarrow  \mathrm{ord}(b_{11})=a_1, \mathrm{ord}(b_{22})=a_2.
\]
\end{itemize}
\end{proposition}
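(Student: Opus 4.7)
My plan is to reduce the statement to the primitive case $a_1 = 0$ and then combine Proposition~\ref{prop:2.2} with the identity $D_B = (2b_{12})^2 - 4b_{11}b_{22}$ and a direct local discriminant computation.

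\textbf{Reduction to the primitive case.} I would first observe that $B \in \calm((a_1, a_2))$ with $a_1 \leq a_2$ forces $\vpi^{a_1}$ to divide each of $b_{11}$, $b_{22}$, and $2b_{12}$, so $B' := \vpi^{-a_1} B$ again lies in $\calh_2(\frko)$. A short verification using the definition of $S$ shows $\GK(B) = (a_1, a_2)$ if and only if $\GK(B') = (0, a_2 - a_1)$, and each of the three right-hand-side conditions in (1)--(3) transforms covariantly under this scaling (both sides shift by $a_1$). This reduces us to $a_1 = 0$, in which case $B$ is primitive by Lemma~\ref{lem:1.2}. By Proposition~\ref{prop:2.2}, $\GK(B) = (0, a_2)$ equals $(0, 2f)$ precisely when $E := F(\sqrt{D_B})/F$ is unramified and equals $(0, 2f+1)$ precisely when $E/F$ is ramified, and by Corollary~\ref{cor:1} we always have $|\GK(B)| = \Del(B)$.

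\textbf{Cases (1) and (2).} In the dyadic setting $\ord(4) = 2e \geq 2$, so $\ord(4 b_{11} b_{22}) \geq 2e + \ord(b_{11}) + \ord(b_{22})$ tends to dominate the term $(2b_{12})^2$. In case (1) ($a_2 = 0$) this immediately gives $\ord(D_B) = 0$ if and only if $\ord(2b_{12}) = 0$, which by Proposition~\ref{prop:2.2} is equivalent to $\GK(B) = (0,0)$. In case (2) ($a_2 = 2f > 0$), I would first argue that primitivity together with $\ord(b_{22}) \geq 2f \geq 2$ and $\ord(2b_{12}) \geq f \geq 1$ forces $\ord(b_{11}) = 0$; then $\ord(4b_{11} b_{22}) \geq 2e + 2f > 2f$, so $\ord(D_B) = 2\ord(2b_{12})$ with $D_B/(2b_{12})^2 \in 1 + 4\frko$. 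In particular $E/F$ is unramified and $\GK(B) = (0, 2\ord(2b_{12}))$, so the condition $\GK(B) = (0, 2f)$ is exactly $\ord(2b_{12}) = f$.

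\textbf{Case (3).} Primitivity again forces $\ord(b_{11}) = 0$. The direction ($\Rightarrow$) is clean: if $\ord(b_{22}) \geq 2f+2$, then $B \in \calm((0, 2f+1))$ already guarantees $\ord(2b_{12}) \geq f+1 = (2f+2)/2$, so $(0, 2f+2) \in S(B)$, contradicting $\GK(B) = (0, 2f+1)$. For ($\Leftarrow$), assuming $\ord(b_{22}) = 2f+1$ I have $\GK(B) \succeq (0, 2f+1)$ immediately, and by primitivity and $|\GK(B)| = \Del(B)$ it suffices to verify $\Del(B) = 2f+1$. The two summands of $D_B$ have opposite parities ($2\ord(2b_{12})$ even versus $2e+2f+1$ odd), so no cancellation occurs and $\ord(D_B) = \min(2\ord(2b_{12}),\, 2e+2f+1)$. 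The main obstacle is then to compute $\ord(\frkD_E)$ in the two resulting subcases via the standard formulas for ramified quadratic extensions of a dyadic local field: when $E = F(\sqrt{1 - \alpha})$ with $\alpha \in \vpi^k\frko^\times$ for odd $k$ with $1 \leq k \leq 2e-1$ one has $\ord(\frkD_E) = 2e+1-k$, while for $E = F(\sqrt{\vpi \cdot \mathrm{unit}})$ one has $\ord(\frkD_E) = 2e+1$. Substituting into $\Del(B) = \ord(D_B) - \ord(\frkD_E) + 1$ yields $\Del(B) = 2f+1$ in both subcases, which together with $\GK(B) \succeq (0, 2f+1)$ and $a_1 = 0$ forces $\GK(B) = (0, 2f+1)$.
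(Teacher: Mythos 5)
Your reduction to $a_1 = 0$ and your treatments of cases (1) and (3) are essentially sound; case (3) takes a more computational route than the paper, which reduces to $f = 0$ via $B[\diag(1,\vpi^{-f})]$ and then observes that $b_{11}x^2 + 2b_{12}x + b_{22}$ is Eisenstein, thereby identifying $(L,Q)$ with the maximal order of a ramified extension directly and never computing $\ord(\frkD_E)$, but your discriminant-conductor bookkeeping works. However, there is a genuine gap in case (2), in the direction ($\Rightarrow$). After fixing $\ord(b_{11}) = 0$, you assert that $\ord(4b_{11}b_{22}) > 2f$ forces $\ord(D_B) = 2\ord(2b_{12})$ with $D_B/(2b_{12})^2 \in 1+4\frko$, hence $\GK(B) = (0, 2\ord(2b_{12}))$. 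This chain only holds when $\ord(2b_{12})$ is small enough; you need $2\ord(2b_{12}) < 2e + \ord(b_{22})$ for the first equality and $\ord(b_{22}) \geq 2\ord(2b_{12})$ for the $1+4\frko$ membership. Both hold when $\ord(2b_{12}) = f$, so your ($\Leftarrow$) direction is fine, but they fail when $\ord(2b_{12})$ is large. For instance, over $\QQ_2$ with $f=1$, take $B = \diag(1,4)$: then $\ord(2b_{12}) = \infty$, yet $D_B = -16$, $E = \QQ_2(\sqrt{-1})$ is ramified, $\ord(D_B) = 4$, and $\GK(B) = (0,3)$. So the claimed identity $\GK(B) = (0, 2\ord(2b_{12}))$ is false in general, and your argument does not rule out $\GK(B) = (0, 2f)$ when $\ord(2b_{12}) > f$.

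The paper closes this gap by setting $B' = B[\diag(1,\vpi^{-f})] \in \calh_2(\frko)$ and using $\Del(B) = \Del(B') + 2f$ together with the earlier characterization that, for dyadic $F$, a primitive binary form is a primitive unramified binary form if and only if its off-diagonal entry is a unit; thus $\GK(B) = (0,2f) \iff \Del(B') = 0 \iff \ord(2b'_{12}) = 0 \iff \ord(2b_{12}) = f$, with no case split on whether $(2b_{12})^2$ or $4b_{11}b_{22}$ dominates. To repair your argument along your own lines, you would need to separately show that $\ord(2b_{12}) > f$ forces $\Del(B) > 2f$, handling the regime where $4b_{11}b_{22}$ dominates and $E/F$ can be ramified; the paper's $B'$ device does precisely this bookkeeping in one stroke.
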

\begin{proof}
By replacing $B$ by $\vpi^{-a_1}B$, we may assume $a_1=0$.
We have already seen (1).
We prove (2).
Since $B$ is primitive, we have $\mathrm{ord}(b_{11})=0$.
Put $B'=B[\mathrm{diag}(1, \vpi^{-f})]\in\calh_2(\frko)$.
Then $B'$ is a primitive unramified binary form if and only if $\mathrm{ord}(2b_{12})=f$.
On the other hand, $\GK(B)=2f$ if and only if $\Del(B)=2f$ by Corollary \ref{cor:2.1}.
Since $\Del(B)=\Del(B')+2f$, we have (2).
Now we prove (3).
Since $B$ is primitive, we have $\mathrm{ord}(b_{11})=0$. 
Put $B'=B[\mathrm{diag}(1, \vpi^{-f})]\in\calh_2(\frko)$.
As in the proof of (2), $\GK(B)=(0, 2f+1)$ if and only if $\GK(B')=(0,1)$.
Thus we may assume $f=0$.
If $\GK(B)=(0,1)$, then clearly we have $\ord(2b_{12})>1$ and $\ord(b_{22})=1$.
Conversely, suppose that
\[
\ord(b_{11})=0, \quad \ord(2b_{12})>0, \quad \ord(b_{22})=1.
\]
Then $b_{11}x^2+2b_{12}x+b_{22}$ is an Eisenstein polynomial.
The roots of this polynomial are prime elements of the ramified quadratic extension $F(\sqrt{D_B})/F$.
In particular, the quadratic module associated to $B$ is weakly isomorphic to the maximal order of $F(\sqrt{D_B})$.
By Proposition \ref{prop:2.2}, we have $\GK(B)=(0,1)$.
\end{proof}

\noindent \textbf{Example}.
Suppose that $F=\QQ_2$.
Then $B=\begin{pmatrix} 1 & 0 \\ 0 & 1 \end{pmatrix}$ is not optimal.
It is equivalent to an optimal form $\begin{pmatrix} 1 & 1 \\ 1 & 2 \end{pmatrix}$.
Note that $\GK(B)=(0,1)$.

\begin{lemma} 
\label{lem:2.2}
Suppose that $B$ and $B'$ are primitive unramified binary forms.
If $B-B'\in\vpi\calh_2(\frko)$, then  $\xi_B=\xi_{B'}$.
\end{lemma}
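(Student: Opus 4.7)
The strategy is to show $D_B/D_{B'} \in \frko^{\times 2}$, which forces $F(\sqrt{D_B}) = F(\sqrt{D_{B'}})$ and hence $\xi_B = \xi_{B'}$. For non-dyadic $F$ the argument is immediate: a primitive unramified binary form has $\ord(D_B) = 0$ and (since $\ord(4)=0$) satisfies $B \in \GL_2(\frko)$. The hypothesis $B \equiv B' \pmod{\vpi}$ then gives $\det B/\det B' \in 1 + \vpi\frko \subset \frko^{\times 2}$, where the last inclusion is Hensel's lemma for odd residue characteristic.

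For the dyadic case, I would use the explicit description in the paragraph preceding Proposition \ref{prop:2.3}, writing $B = \left(\begin{smallmatrix} a & b/2 \\ b/2 & c\end{smallmatrix}\right)$ and $B' = \left(\begin{smallmatrix} a' & b'/2 \\ b'/2 & c'\end{smallmatrix}\right)$ with $b, b' \in \frko^\times$, and unwinding $B - B' \in \vpi\calh_2(\frko)$ to get $a-a', b-b', c-c' \in \vpi\frko$. Factor $D_B = b^2 u$ and $D_{B'} = b'^2 u'$ with $u = 1 - 4ac/b^2$, $u' = 1 - 4a'c'/b'^2 \in 1 + 4\frko$. Since $(b/b')^2$ is automatically a square, it suffices to show $u/u' \in \frko^{\times 2}$. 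I would deduce this from the sharper congruence $u \equiv u' \pmod{4\vpi\frko}$: expanding
\[
a'c'b^2 - ac b'^2 = a'(c'-c)b^2 + (a'-a)c b^2 + ac(b-b')(b+b'),
\]
each summand lies in $\vpi\frko$ by the congruences above, so $u - u' = 4(a'c'b^2 - ac b'^2)/(b^2 b'^2) \in 4\vpi\frko$. Hence $u/u' \in 1 + 4\vpi\frko \subset 1 + 4\frkp \subset \frko^{\times 2}$, using the inclusion recalled at the start of this section.

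The conceptual obstacle is that the naive congruence $D_B \equiv D_{B'} \pmod{\vpi}$ is not enough in the dyadic case, because $(1+\vpi\frko)\cap\frko^{\times 2}$ is a proper subgroup of $1+\vpi\frko$. Primitivity of $B$ and $B'$ (i.e.\ $b, b' \in \frko^\times$) is exactly what lets one factor out the square $b^2$ and move the analysis into the more tractable set $1 + 4\frko$, where squareness is detected by congruence modulo $4\frkp$ via the inclusion $1 + 4\frkp \subset \frko^{\times 2}$.
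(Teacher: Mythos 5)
Your proof is correct and follows essentially the same approach as the paper's: both reduce to checking $D_B/D_{B'}\in\frko^{\times 2}$ via the inclusion $1+4\frkp\subset\frko^{\times 2}$, using primitivity ($b,b'\in\frko^\times$) to pull out a square factor so that the mod-$\vpi$ congruence sharpens to mod $4\frkp$. The only cosmetic difference is that the paper first rescales $B$ by $b'/b$ to normalize $b=b'$ and then computes $D_B-D_{B'}\in 4\frkp$ directly, while you keep both forms and factor $D_B=b^2u$, $D_{B'}=b'^2u'$; the underlying calculation is the same.
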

\begin{proof}
One can easily show that $D_B-D_{B'}\in 4\frkp$,  and so $\xi_B=\xi_{B'}$.
\end{proof}

\section{Reduced forms}
\label{sec:3}

In this section, we introduce reduced forms in a somewhat generalized way.
We do \emph{not} assume $\ua=(a_1, \ldots, a_n)\in\ZZn$ is non-decreasing, unless otherwise  stated.

The $s$-th block $I_s\subset \{1, 2, \ldots, n\}$ is given by 
\begin{align*}
I_1&=\{i\,|\, a_i=\min(a_1, a_2, \ldots, a_n)\}, \\
I_2&=\{i\,|\, a_i=\min\{ a_j\,|\, j\notin I_1\}\}, \\
I_3&=\{i\,|\, a_i=\min\{ a_j\,|\, j\notin I_1\cup I_2\}\}, \\
&\cdots \\
I_r&=\{i\,|\, a_i=\max\{a_j\,|\, 1\leq j\leq n\}\}.
\end{align*}
Let $\frkS_n$ be the symmetric group of degree $n$.
Recall that a permutation $\sig\in \frkS_n$ is an involution if $\sig^2=\mathrm{id}$.
For an involution $\sig$, we set
\begin{align*}
\calp^0&=\calp^0(\sig)=\{i\,| 1\leq i\leq n,\;  i=\sig(i)\}, \\
\calp^+&=\calp^+(\sig)=\{i\,| 1\leq i\leq n,\;  a_i>a_{\sig(i)}\}, \\
\calp^-&=\calp^-(\sig)=\{i\,| 1\leq i\leq n,\;  a_i<a_{\sig(i)}\}.
\end{align*}
For $s=1,2, \ldots, r$, put
\begin{align*}
\calp_s^0=&\calp_s^0(\sig)=\calp^0\cap I_s, \\
\calp_s^+=&\calp_s^+(\sig)=\calp^+\cap I_s, \\
\calp_s^-=&\calp_s^-(\sig)=\calp^-\cap I_s.
\end{align*}
\begin{definition} 
We shall say that an involution $\sig\in\frkS_n$ is $\ua$-admissible if the following three conditions are satisfied.
\begin{itemize}
\item[(i)] 
$\calp^0$ has at most two elements.
If $\calp^0$ has two distinct elements $i$ and $j$, then $a_i\not\equiv a_j$ mod $2$.
Moreover, if $i\in \calp^0$, then
\[
a_i=\max\{ a_j \, |\, j\in \calp^0\cup\calp^+, \, a_j\equiv a_i \text{ mod }2\}.
\] 
\item[(ii)]
For $s=1, 2, \ldots, r$, we have
\[
\sharp\calp_s^+\leq 1, \quad \sharp \calp_s^-+\sharp \calp_s^0\leq 1.
\]
\item[(iii)]
If $i\in\calp^-$, then 
\[
a_{\sig(i)}=\min\{a_j\,|\,j\in\calp^+,\, a_j>a_i, \ a_j\equiv a_i \text{ mod }2\}.
\]
Similarly, if $i\in\calp^+$, then 
\[
a_{\sig(i)}=\max\{a_j\,|\,j\in\calp^-,\, a_j<a_i, \ a_j\equiv a_i \text{ mod }2\}.
\]
\end{itemize}
\end{definition}
It is easy to see that such an $\ua$-admissible involution exists for any $\ua\in\ZZn$.
When $\sig$ is an $\ua$-admissible involution, we call the pair $(\ua,\sig)$ a GK type.
Note that $\sharp \calp^0(\sig)=1$ if $n$ is odd.
If $n$ is even, then $\sharp\calp^0(\sig)=0$ or $\sharp\calp^0(\sig)=2$ according as $|\ua|$ is even or odd.

\begin{definition} 
\label{def:3.2}
Let $\sig\in\frkS_n$ be an $\ua$-admissible involution.
We say that $B=(b_{ij})\in \calm(\ua)$ is a reduced form of (generalized) GK type $(\ua, \sig)$ if the following conditions are satisfied.
\begin{itemize}
\item[(1)]  If $i\notin\calp^0$, $j=\sig(i)$, and $a_i \leq a_j$, then 
\[
\GK\left(\begin{pmatrix} b_{ii} & b_{ij} \\ b_{ij} & b_{jj} \end{pmatrix}\right)=(a_i, a_j).
\]
Note that this condition is equivalent to the following condition.
\[
 \left\{
 \begin{array}{ll}
\ord(2 b_{ij})={\ds \frac{a_i+a_j}2}, \quad i\notin\calp^0,\; j=\sig(i) \\ 
\noalign{\vskip 6pt}
\ord(b_{ii})=a_i, \quad i\in\calp^-.
 \end{array}
 \right.
\]
\item[(2)] If $i\in\calp^0$, then
\[
\ord(b_{ii})=a_i.
\]
\item[(3)] If $j\neq i, \sig(i)$, then
\[
\ord(2 b_{ij})>\frac{a_i+a_j}2,
\]
\end{itemize}
\end{definition}
We often say $B$ is a reduced form of GK type $\ua$ without mentioning $\sig$.
We formally think of the empty matrix as a reduced form of GK type $\emptyset$.

\begin{lemma} 
\label{lem:3.1}
Suppose that $B\in\calh_n(\frko)$ is a reduced form of GK type $\ua=(\underbrace{0, \ldots, 0}_n)$.
\begin{itemize}
\item[(1)] If $n=2m$ is even, then  $B\sim K_1\perp\cdots \perp K_m$, where $K_1, \ldots, K_m$ are primitive unramified binary forms.
\item[(2)] If $n=2m+1$ is odd, then $B\sim (u)\perp K_1\perp \cdots \perp K_m$, where $K_1,\ldots, K_m$ are primitive unramified binary forms and $u\in\frko^\times$.
\end{itemize}
\end{lemma}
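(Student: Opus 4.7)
The plan is to induct on $n$, orthogonally splitting off one primitive unramified binary block at each step. After applying a permutation matrix (which lies in $\GL_n(\frko)$), I may assume the $\ua$-admissible involution $\sig$ pairs $\{1,2\},\{3,4\},\ldots,\{2m-1,2m\}$ and, if $n=2m+1$, fixes $n$. Then Definition \ref{def:3.2} gives: each $2\times 2$ diagonal block $K_s$ of $B$ on indices $\{2s-1,2s\}$ satisfies $\GK(K_s)=(0,0)$, hence is a primitive unramified binary form; if $n$ is odd then $b_{nn}\in\frko^\times$; and every other entry $b_{ij}$ (with $\{i,j\}$ not a $\sig$-pair) satisfies $\ord(2b_{ij})>0$.

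The cases $n\leq 2$ are immediate, so assume $n\geq 3$. Write
\[
B=\begin{pmatrix} K_1 & C \\ {}^tC & D \end{pmatrix},\qquad X=\begin{pmatrix} \mathbf{1}_2 & X_{12} \\ 0 & \mathbf{1}_{n-2}\end{pmatrix},
\]
and let $X_{12}$ be the unique solution of $(2K_1)X_{12}=-2C$. The crucial point is that $2K_1\in\GL_2(\frko)$: since $K_1$ is primitive unramified binary, $\ord(D_{K_1})=0$, so $\det(2K_1)=-D_{K_1}\in\frko^\times$. Because $2C$ has entries in $\frkp$ (by condition (3) of the reduced form definition), $X_{12}\in\mathrm{M}_{2,n-2}(\frkp)$, hence $X\in\GL_n(\frko)$, and a direct computation yields $B[X]=K_1\perp D'$ with the Schur complement $D'=D-{}^tCK_1^{-1}C$.

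Next I would estimate the entries of $D'-D=-{}^tCK_1^{-1}C$. Using $\ord(c_{k,j})\geq 1-e$ together with $\ord((K_1^{-1})_{kk})\geq 2e$ and $\ord((K_1^{-1})_{12})=e$, the off-diagonal entries ($i\neq j$) satisfy $\ord((D'-D)_{ij})\geq 2-e$, which preserves $\ord(2D'_{ij})>0$ whenever $i,j$ are not paired under the induced involution on $\{3,\ldots,n\}$. For the diagonal ($i=j$), writing
\[
({}^tCK_1^{-1}C)_{ii}= c_{1i}^{\,2}(K_1^{-1})_{11}+2c_{1i}c_{2i}(K_1^{-1})_{12}+c_{2i}^{\,2}(K_1^{-1})_{22}
\]
and noting the explicit factor of $2$ on the cross term forces every summand to have order $\geq 2$. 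Hence if $n$ is odd the $\sig$-fixed diagonal entry $b_{nn}$ remains a unit in $D'$, and each $2\times 2$ block $K'_s$ of $D'$ satisfies $2K'_s-2K_s\in\mathrm{M}_2(\frkp^2)$, so $\det(2K'_s)\in\frko^\times$ and $\ord(D_{K'_s})=0$. Since $D_{K'_s}$ is a unit, $\xi_{K'_s}=\pm 1$ and $\Del(K'_s)=0$; the $n=2$ case of Theorem \ref{thm:0.1} (Corollary \ref{cor:1}) then forces $\GK(K'_s)=(0,0)$, so $K'_s$ is again a primitive unramified binary form.

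These checks identify $D'$ as a reduced form of GK type $(\underbrace{0,\ldots,0}_{n-2})$ with the restricted involution, and the inductive hypothesis gives $D'\simeq K'_2\perp\cdots\perp K'_m$ (together with a $1\times 1$ unit block if $n$ is odd). Composed with $B[X]=K_1\perp D'$ this yields the desired decomposition. The main obstacle is the refined order estimate on the diagonal of the Schur complement: one must notice that the factor $2$ multiplying the cross term upgrades the bound from $2-e$ to $2$, which is precisely what propagates the reduced form structure (and in particular the unit property of the $\sig$-fixed entry) across the induction step.
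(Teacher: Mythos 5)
Your proposal is correct and follows essentially the same route as the paper's: induct on $n$, split off the first $\sigma$-paired block $K_1$ by the unipotent column operation $\left(\begin{smallmatrix}1 & -K_1^{-1}C\\ 0&1\end{smallmatrix}\right)$, and observe that the Schur complement differs from $D$ by an element of $\vpi^2\calh_{n-2}(\frko)$, so the reduced-form structure (off-diagonal inequalities, unit cross-term in each paired $2\times 2$ block, unit $\sigma$-fixed diagonal entry) survives. The paper states the estimate $K_1^{-1}[C]\in\vpi^2\calh_{n-2}(\frko)$ without spelling out the order bookkeeping that makes the diagonal entries land in $\frkp^2$ rather than merely $\frkp^{2-e}$; you have simply filled that in, and your alternative check that $K'_s$ is again primitive unramified (via $D_{K'_s}\in\frko^\times$, $\Delta=0$, and Corollary \ref{cor:1}) is an acceptable, if slightly more roundabout, substitute for the direct observation that the cross entry of $2K'_s$ stays a unit.
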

\begin{proof}
We prove this lemma by induction with respect to $n$.
For $n\leq 2$, the lemma is obvious.
Assume that $n>2$.
We may assume $\sig(1)=2$ by changing the coordinates.
Then $B$ is of the form
\[
B=\begin{pmatrix} K & X \\ {}^t\!X & B_{22}\end{pmatrix}
\]
such that $2X\in \vpi\mathrm{M}_{2, n-2}(\frko)$ and $B_{22}$ is a reduced form of GK type  $(\underbrace{0, \ldots, 0}_{n-2})$.
Since $K^{-1}X\in \vpi\mathrm{M}_{2, n-2}(\frko)$, $B$ is equivalent to
\[
B\left[
\begin{pmatrix}
1 & -K^{-1} X \\
0 & 1
\end{pmatrix}
\right]
=
\begin{pmatrix}
K & 0 \\
0 & B_{22}-K^{-1}[X]
\end{pmatrix}.
\]
Then $B_{22}-K^{-1}[X]$ is a reduced form, since $K^{-1}[X]\in\vpi^2\calh_{n-2}(\frko)$.
Hence the lemma.
\end{proof}

\begin{proposition} 
\label{prop:3.1}
Suppose that $B\in\calh_n(\frko)$ is a reduced form of GK type $\ua=(\underbrace{0, 0, \ldots, 0}_{s}, \underbrace{1, 1, \ldots, 1}_{t})$.
Then there exists $U\in G_\ua$ such that
\[
B[U]=K_1\perp K_2\perp \cdots  \perp K_{[s/2]} \perp B' \perp \vpi (K'_1\perp  K'_2\perp \cdots \perp K_{[t/2]} ),
\]
where $K_1, K_2, \ldots K_{[s/2]}$ and $K'_1, K'_2, \ldots K'_{[t/2]}$ are primitive unramified binary forms and $B'$ is a reduced form of GK type $\emptyset, (0), (1)$, or $(0,1)$.
\end{proposition}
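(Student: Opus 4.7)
The plan is to split off the $\sigma$-paired $2\times 2$ sub-blocks iteratively, in the spirit of Lemma~\ref{lem:3.1}, dealing first with the pairs contained in $I_1$ and then with those contained in $I_2$, and collecting the $\sigma$-fixed indices into the leftover $B'$.

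First I analyse the $\ua$-admissible involution $\sigma$. Because $\ua$ takes only the values $0$ and $1$, condition~(iii) in the definition of admissibility rules out any $\sigma$-pair $\{i,\sigma(i)\}$ with $i\in I_1$ and $\sigma(i)\in I_2$: such a pair would require an even integer strictly between $0$ and $1$. Hence $\sigma$ preserves the two blocks setwise, and by condition~(ii) each restriction $\sigma|_{I_\ell}$ is a product of transpositions together with at most one fixed point, present precisely when $|I_\ell|$ is odd.

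To peel off a $\sigma$-pair $\{i,j\}\subset I_1$, reduced-form condition~(1) says $K=\begin{pmatrix}b_{ii}&b_{ij}\\b_{ij}&b_{jj}\end{pmatrix}$ is a primitive unramified binary form, so $\det(2K)\in\frko^{\times}$ and $K^{-1}=2(2K)^{-1}$ with $(2K)^{-1}\in\mathrm{M}_2(\frko)$; in particular every entry of $K^{-1}$ has order at least $\ord(2)$. Writing $B$ in block form with $K$ in the pair-coordinates and cross-block $X$, I apply
\[
U_K=\begin{pmatrix}I_2 & -K^{-1}X\\ 0 & I_{n-2}\end{pmatrix},
\]
which transforms $B$ into $K\perp(B_{\mathrm{rest}}-\trs{X}K^{-1}X)$. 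Reduced-form condition~(3) gives $\ord(2X_{k\ell})\geq 1$, which combined with the bound on $K^{-1}$ shows that every entry of $-K^{-1}X$ has order $\geq 1$; in particular those in columns indexed by $I_2$ satisfy the $G_\ua$-constraint $\ord\geq 1/2$, so $U_K\in G_\ua$. Feeding the same estimates through the quadratic form $K^{-1}[\,\cdot\,]$ (rather than multiplying term by term) yields $\ord((\trs{X}K^{-1}X)_{kk})\geq 2$ and $\ord(2(\trs{X}K^{-1}X)_{k\ell})\geq 2$, so the perturbation lies in $\vpi^{2}\calh_{n-2}(\frko)$ and does not disturb any of the reduced-form conditions on $B_{\mathrm{rest}}$. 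Iterating, I split off $K_1,\ldots,K_{[s/2]}$.

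The $I_2$-pairs are peeled off in the same way, with $K$ replaced by $\vpi K'$: now $(\vpi K')^{-1}=2\vpi^{-1}(2K')^{-1}$ has entries of order at least $\ord(2)-1$, and $(\vpi K')^{-1}X'$ has entries of order $\geq 0$. Integrality is the only requirement on the orthogonalizer here, because its non-trivial block sits in rows indexed by $I_2$, where the $G_\ua$-constraint is vacuous. Analogous (and, in directions aligned with other $\sigma$-pairs, sharper by condition~(3)) estimates show that the perturbation preserves the reduced-form structure on the remaining indices. After all $[t/2]$ iterations I have split off $\vpi K'_1,\ldots,\vpi K'_{[t/2]}$, and what survives is the sub-matrix $B'$ on the (at most two) $\sigma$-fixed indices. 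Conditions~(2) and~(3) of the reduced form force its diagonal entries to have exact orders $0$ and $1$ (when present) and its unique off-diagonal entry (when both fixed points are present) to satisfy $\ord(2\cdot)\geq 1$; hence $B'$ is a reduced form of GK type $\emptyset$, $(0)$, $(1)$, or $(0,1)$. A final within-block permutation arranges the summands in the stated order, and the product of all the transformations lies in $G_\ua$ since $G_\ua$ is a group. The main technical obstacle throughout is the dyadic case, where $K^{-1}$ has entries of negative order; the identity $K^{-1}=2(2K)^{-1}$ combined with the unit determinant of $2K$ is the device that simultaneously places each $U_K$ inside $G_\ua$ and sharpens the perturbation bound to the $\vpi^{2}$ level required for the induction to close.
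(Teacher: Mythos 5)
Your proof is correct and uses the same core mechanism as the paper: iteratively splitting off the $\sigma$-paired $2\times 2$ blocks via the block Gram--Schmidt matrix $U=\begin{pmatrix}\mathbf{1}_2&-K^{-1}X\\0&\mathbf{1}\end{pmatrix}$, with the order estimate $\ord(K^{-1}X)\geq 1$ (and the $I_2$ analogue $\ord((\vpi K')^{-1}X')\geq 0$) placing $U$ in $G_{\ua}$, and the bound $K^{-1}[X]\in\vpi^2\calh(\frko)$ keeping the residual form reduced. The paper organizes the same induction slightly differently (reduce to $s\leq 1$, rescale $\vpi^{-1}B$ to dispose of $s=0$, then peel the $I_2$-pair at positions $\{n-1,n\}$ with a lower block-triangular matrix), whereas you peel all pairs uniformly and observe directly that the $G_{\ua}$-constraint is vacuous on rows indexed by an $I_2$-pair; one minor slip in your closing commentary is that $K^{-1}$ never actually has entries of negative order---its entries have order $\geq\ord(2)\geq 0$, as the identity $K^{-1}=2(2K)^{-1}$ together with $\det(2K)\in\frko^{\times}$ shows.
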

\begin{proof}
First assume $s\geq 2$.
We may assume $\sig(1)=2$.
Then $B$ is of the form
\[
B=\begin{pmatrix} K & X \\ {}^t\!X & B_{22}\end{pmatrix},
\]
where $K$ is a primitive unramified binary form.
Moreover, $2X\in \vpi\mathrm{M}_{2, n-2}(\frko)$ and $B_{22}$ is a reduced form of GK type  $(\underbrace{0, \ldots, 0}_{s-2}, \underbrace{1, \ldots, 1}_t)$.
Put
\[
U=\begin{pmatrix}
1 & -K^{-1} X \\
0 & 1
\end{pmatrix}.
\]
Since $K^{-1}X\in \mathrm{M}_{2, n-2}(\frko)$, we have $U\in G_\ua$.
Then we have
\[
B[U]=
\begin{pmatrix}
K & 0 \\
0 & B_{22}-K^{-1}[X]
\end{pmatrix}.
\]
Since $K^{-1}[X]\in\vpi^2\calh_{n-2}(\frko)$, $B_{22}-K^{-1}[X]$ is also a reduced form of  GK type $(\underbrace{0, \ldots, 0}_{s-2}, \underbrace{1, \ldots, 1}_t)$.
Thus we may assume $s\leq 1$.
In particular, the proposition is proved if $t=0$.
The case $s=0$ is reduced to the case $t=0$ by considering $\vpi^{-1}B$.
Thus we may assume $s=1$ and $t\geq 2$.
We may assume $\sig(n)=n-1$.
Write $B$ in a block form as follows.
\[
\begin{array}{ccccc}
&\hskip -15pt
\overbrace{\hphantom{B_{11}}}^{1} \;
\overbrace{\hphantom{ B_{12}}}^{t-2}  \;
\overbrace{\hphantom{ B_{15}}}^{2} \\ & 
B=\left(
\begin{array}{cccl}
B_{11} & B_{12} & B_{13}  \\
{}^t\! B_{12} & B_{22} & B_{23}  \\
{}^t\! B_{13} & {}^t\! B_{23} & B_{33}  \\
\end{array} \hskip -0pt
\right) \hskip -5pt
\begin{array}{l}
 \left.\vphantom{B_{11}} \right\} \text{\footnotesize${1}$} \\
 \left.\vphantom{B_{11}} \right\} \text{\footnotesize${t-2}$} \\
 \left.\vphantom{B_{11}} \right\} \text{\footnotesize${2}$.} 
\end{array}
\end{array}
\]
Then we have $2B_{13}\in \vpi \mathrm{M}_{1,2}(\frko)$, $2B_{23}\in \vpi^2 \mathrm{M}_{t-2, 2}(\frko)$.
Moreover, $\vpi^{-1}B_{33}$ is an unramified primitive binary form.
Put $X_1=-B_{33}^{-1}\cdot{}^t\! B_{13}$ and $X_2=-B_{33}^{-1}\cdot{}^t\! B_{23}$.
Then we have
\[
X_1\in \mathrm{M}_{2,1}(\frko), \quad
X_2\in \vpi\mathrm{M}_{2,t-2}(\frko).
\]
Put
\begin{align*}
B
\left[
\begin{pmatrix}
1 & 0 & 0 \\
0 & \mathbf{1}_{t-2} & 0 \\
X_1 & X_2 & \mathbf{1}_2
\end{pmatrix}
\right]
=&\begin{pmatrix}
B'_{11} & B'_{12} & 0   \\
{}^t\! B'_{12} & B'_{22} & 0   \\
0 & 0 & B_{33}
\end{pmatrix},
\\
\begin{pmatrix} B'_{11} & B'_{12} \\ {}^t\!B'_{12} & B'_{22}\end{pmatrix}=&
\begin{pmatrix} B_{11} & B_{12} \\ {}^t\!B_{12} & B_{22}\end{pmatrix}
- B_{33}[\begin{pmatrix}X_1\, X_2\end{pmatrix}].
\end{align*}
Then we have
\[
\begin{pmatrix} B_{11} & B_{12} \\ {}^t\!B_{12} & B_{22}\end{pmatrix}
-
\begin{pmatrix} B'_{11} & B'_{12} \\ {}^t\!B'_{12} & B'_{22}\end{pmatrix}
=B_{33}[\begin{pmatrix}X_1\, X_2\end{pmatrix}]\in \calm^0(0, \underbrace{1,\dots,1}_{t-2}).
\]
It follows that
$\begin{pmatrix} B'_{11} & B'_{12} \\ {}^t\!B'_{12} & B'_{22}\end{pmatrix}$ is a reduced form of GK type $(0, \underbrace{1, 1, \ldots, 1}_{t-2})$.
Repeating this argument, the lemma is reduced to the case $t\leq 1$.
Hence the lemma is proved.
\end{proof}

\begin{lemma} 
\label{lem:3.2}
Suppose that $B\in\calh_n(\frko)$ is a reduced form of GK type $\ua=(\underbrace{0, \ldots, 0}_s, \underbrace{1, \ldots, 1}_t)$.
Then we have $\Del(B)=|\ua|$.
\end{lemma}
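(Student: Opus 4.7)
The plan is to apply Proposition~\ref{prop:3.1} to replace $B$ by an explicit orthogonal decomposition and compute $\Del$ on each summand. Since $G_\ua \subset \GL_n(\frko)$, the invariants $D_B$, $\mathfrak{D}_B$, $\xi_B$, and hence $\Del(B)$, are preserved by replacing $B$ with $B[U]$, so I may assume
\[
B = K_1 \perp \cdots \perp K_{[s/2]} \perp B' \perp \vpi K'_1 \perp \cdots \perp \vpi K'_{[t/2]},
\]
where each $K_i$ and $K'_j$ is a primitive unramified binary form and $B'$ is a reduced form of GK type $\emptyset$, $(0)$, $(1)$, or $(0,1)$ according to the parities of $s$ and $t$ (forced by $2[s/2] + \mathrm{size}(B') + 2[t/2] = n$).

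Next, I would compute $D_B = (-4)^{[n/2]}\det B$ using multiplicativity of the determinant. For each primitive unramified binary form $K$, $D_K = -4\det K$ lies in $\frko^\times$ with $F(\sqrt{D_K})/F$ unramified; for the scaled piece $\vpi K'$ one has $D_{\vpi K'} = \vpi^2 D_{K'}$, still of unramified class modulo squares. After absorbing factors of $(-4)$ into each $D_{K_i}$ and $D_{K'_j}$, the outcome is
\[
D_B = (\text{contribution from } B') \cdot U \cdot \vpi^{2[t/2]}, \qquad U = \prod_i D_{K_i} \prod_j D_{K'_j} \in \frko^\times,
\]
where the contribution from $B'$ equals $\det B'$ when $B'$ has odd size and $D_{B'}$ when $B'$ has even size.

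I would then verify the four parity cases separately. When $B' \in \{\emptyset, (u), (\vpi u)\}$ with $u \in \frko^\times$, direct inspection yields $\ord(D_B) = t$ with $F(\sqrt{D_B})/F$ unramified, and $\Del(B) = t = |\ua|$ follows immediately from Definition~\ref{def:0.3} (using $\Del(B) = \ord(D_B)$ both when $n$ is odd and when $n$ is even with $\ord(\mathfrak{D}_B) = 0$). The interesting case is $s$ and $t$ both odd, where $B'$ has GK type $(0,1)$: Corollary~\ref{cor:1} then gives $\Del(B') = 1$, and Proposition~\ref{prop:2.1} shows $B'$ is weakly equivalent to $\frko_E$ for a ramified quadratic extension $E/F$, so $\xi_{B'} = 0$ and $\ord(D_{B'}) = \ord(\mathfrak{D}_{B'})$.

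The main obstacle will be the bookkeeping of $\mathfrak{D}_B$ in this final case. The key point is that $\vpi^{2[t/2]}$ is a square in $F^\times$ and $U$ is a product of units whose associated quadratic extensions are unramified; by the fact recalled at the start of Section~\ref{sec:2} that $\ord(\mathfrak{D}_{\xi\xi'}) = \ord(\mathfrak{D}_{\xi'})$ whenever $\ord(\mathfrak{D}_\xi) = 0$, one deduces $\ord(\mathfrak{D}_B) = \ord(\mathfrak{D}_{B'})$ and $\xi_B = \xi_{B'} = 0$. A short calculation then gives
\[
\Del(B) = \ord(D_B) - \ord(\mathfrak{D}_B) + 1 = \bigl(\ord(D_{B'}) + 2[t/2]\bigr) - \ord(\mathfrak{D}_{B'}) + 1 = \Del(B') + (t-1) = t,
\]
completing the argument.
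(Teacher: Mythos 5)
Your proof is correct and follows essentially the same route as the paper: both invoke Proposition~\ref{prop:3.1} to split off $\vpi^c K$-type binary summands and then reduce $\Del$ over the decomposition, with the content of your explicit $D_B$/$\mathfrak{D}_B$ bookkeeping being exactly what the paper summarizes in the one-line identity $\Del(B\perp \vpi^c K)=\Del(B)+2c$ before handling the base case $s,t\leq 1$ via Corollary~\ref{cor:1} and Proposition~\ref{prop:2.3}. (One tiny simplification you could have used: $\Del$ is invariant under all of $\GL_n(\frko)$, not just $G_\ua$, since $\det(B[U])=(\det U)^2\det B$, so the restriction to $G_\ua$ in passing to $B[U]$ is unnecessary.)
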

\begin{proof}
Note that $\Del(B\perp \vpi^c K)=\Del(B)+2c$ if $K$ is a primitive unramified binary form.
By Proposition \ref{prop:3.1}, it is enough to consider the case $s, t\leq 1$.
The case $s=0$ or $t=0$ is trivial.
The case $s=t=1$ follows from Corollary \ref{cor:2.1} and Proposition \ref{prop:2.3}.
\end{proof}

\begin{proposition} 
\label{prop:3.2}
Suppose that $B\in\calh_n(\frko)$ is a reduced form of GK type  $\ua=(a_1, a_2, \ldots, a_n)$.
Then we have
\[
|\ua|=\Del(B).
\]
\end{proposition}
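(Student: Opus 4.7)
My plan is to prove $\Delta(B) = |\ua|$ by induction on $n$. The case $n = 0$ is trivial, the case $n = 1$ follows from $\Delta((u)) = \ord(u) = a_1$, and the case $n = 2$ is Corollary 1 to Proposition 2.2 combined with Definition 3.2(1), which ensures $\GK(B) = \ua$ for a reduced binary form.

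For the inductive step with $n \geq 3$, I peel off a single $\sigma$-pair. Admissibility condition (i) gives $|\calp^0(\sigma)| \leq 2$, so there exist indices $i \neq j$ with $\sigma(i) = j$; fix such a pair with $a_i \leq a_j$, and observe that either the within-block equality $a_i = a_j$ or admissibility condition (iii) forces $a_i \equiv a_j \bmod 2$, so $a_j - a_i = 2f$ for some $f \geq 0$. By Definition 3.2(1), the submatrix $T = B[\{i, j\}]$ satisfies $\GK(T) = (a_i, a_j)$, whence $\Delta(T) = a_i + a_j$ by Corollary 1 to Proposition 2.2. Applying Proposition 2.2 to the rescaled primitive binary form $\vpi^{-a_i} T[\diag(1, \vpi^{-f})]$, whose GK invariant is $(0, 2f)$, shows that $F(\sqrt{D_T})/F$ is unramified; equivalently, $D_T$ lies in the unramified square class of $F^\times / F^{\times 2}$.

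The next step is to decouple $\{i, j\}$ from its complement by a $\GL_n(\frko)$-equivalence, producing $B \sim T' \perp B''$ with $T' \in \calh_2(\frko)$ equivalent to $T$ and $B'' \in \calh_{n-2}(\frko)$ a reduced form of GK type $\ua' = (a_k)_{k \neq i, j}$ with respect to the restricted involution. The technique mirrors the Gauss elimination in the proof of Proposition 3.1: writing $X$ for the off-diagonal block linking $\{i, j\}$ to its complement, one applies the unipotent transformation with block $-T^{-1} X$. The strict inequalities in Definition 3.2(3), combined with the orders of the entries of $T^{-1}$, provide the valuation bounds to keep $T^{-1} X$ in $\mathrm{M}_{2, n-2}(\frko)$, provided the pair $\{i, j\}$ is chosen appropriately (e.g., within an extremal block, or with minimal gap $a_j - a_i$). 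The correction $-\trs X T^{-1} X$ added to the complementary block has strictly higher valuation than Definition 3.2 demands, so $B''$ inherits a reduced structure.

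Applying the induction hypothesis yields $\Delta(B'') = |\ua'| = |\ua| - a_i - a_j$. To conclude, I invoke the additivity $\Delta(T' \perp B'') = \Delta(T') + \Delta(B'')$, which holds because $D_{T'}$ lies in the unramified square class: multiplying $D_{B''}$ by $D_{T'}$ (with the appropriate $-4$ factor correction relating $D_{T' \perp B''}$ to $D_{T'} \cdot D_{B''}$) preserves whether $F(\sqrt{\cdot})/F$ is ramified and preserves $\ord(\mathfrak{D})$, while the change in $\xi^2$ balances in the $\Delta$-formula via a brief case analysis on the ramification of $D_{B''}$. Combining, $\Delta(B) = (a_i + a_j) + (|\ua| - a_i - a_j) = |\ua|$. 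The main obstacle is verifying the $\GL_n(\frko)$-integrality of the decoupling transformation in the highly ramified dyadic case (large $e$) with small block gaps; a judicious choice of the peeled pair, together with the latitude afforded by the strict inequalities of Definition 3.2(3), should address this.
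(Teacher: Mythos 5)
Your structural skeleton---peel a $\sigma$-pair, use Proposition \ref{prop:2.2} to see that it is unramified, decouple it from the rest by a unipotent transformation, apply the inductive hypothesis, and use the additivity $\Delta(T'\perp B'')=\Delta(T')+\Delta(B'')$ against an unramified discriminant---is sound, and the additivity bookkeeping you describe does check out. What fails is the decoupling step, and the failure cannot be cured by a judicious choice of the peeled pair.

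Take $F=\QQ_2$, $\ua=(0,4,4,8)$, and the standard $\ua$-admissible involution $\sigma=(1\,4)(2\,3)$; consider
\[
B=\begin{pmatrix} 1 & 4 & 0 & 8 \\ 4 & 0 & 8 & 0 \\ 0 & 8 & 0 & 0 \\ 8 & 0 & 0 & 0 \end{pmatrix},
\]
which one checks satisfies every condition of Definition \ref{def:3.2}. If you peel $\{1,4\}$, then $T=\left(\begin{smallmatrix} 1 & 8 \\ 8 & 0\end{smallmatrix}\right)$, $T^{-1}=\left(\begin{smallmatrix} 0 & 1/8 \\ 1/8 & -1/64\end{smallmatrix}\right)$, and $T^{-1}X=\left(\begin{smallmatrix} 0 & 0 \\ 1/2 & 0\end{smallmatrix}\right)$, which lies outside $\mathrm{M}_{2,2}(\ZZ_2)$. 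Peeling $\{2,3\}$ (the pair with minimal gap, and the only remaining option) gives an identical $1/2$ once $T^{-1}$ hits the column indexed by $1$, and pivoting from the other side via a lower-unipotent $\left(\begin{smallmatrix}\mathbf{1}&0\\Z&\mathbf{1}\end{smallmatrix}\right)$ with $Z=-D^{-1}\,{}^t\!X$ produces the same $1/2$. In every variant the proposed $U$ escapes $\GL_4(\ZZ_2)$, so $T'\perp B''$ is not a half-integral symmetric matrix and the induction collapses. The mechanism is generic: the strict inequalities of Definition \ref{def:3.2}(3) buy only roughly one unit of valuation, which cannot neutralize the large negative orders of $T^{-1}$ when the pair being peeled has $a$-value $c$ and the complement contains an index whose $a$-value is less than $c$ by more than about $2$.

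The paper sidesteps this entirely by doing the scaling first: conjugating by $\mathrm{diag}(\vpi^{-[a_1/2]},\ldots,\vpi^{-[a_n/2]})$ sends a reduced form of GK type $\ua$ to a reduced form of GK type $\ua'=(a_1-2[a_1/2],\ldots,a_n-2[a_n/2])$ with all entries in $\{0,1\}$, while adding the same quantity $2\sum_i[a_i/2]$ to both $|\ua|$ and $\Delta(B)$. Once every $a$-gap is at most $1$, the Gauss elimination of Proposition \ref{prop:3.1} is trivially $\GL_n(\frko)$-integral and the claim reduces to Lemma \ref{lem:3.2}. Prepending this scaling step would repair your argument, but at that point it coincides with the paper's proof.
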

\begin{proof}
Put 
\[
B'=B[\mathrm{diag}(\vpi^{-[a_1/2]}, \vpi^{-[a_2/2]}, \ldots, \vpi^{-[a_n/2]})].
\]
Then $B'$ is a reduced form of GK type $\ua'=(a'_1, a'_2, \ldots, a'_n)$, where $a'_i=a_i-2[a_i/2]$.
Since
\begin{align*}
|\ua|&=|\ua'| + 2\sum_{i=1}^n \left[\frac{a_i}2\right], \\
\Delta(B)&=\Delta(B') + 2\sum_{i=1}^n \left[\frac{a_i}2\right],
\end{align*}
it is enough to consider the case $a_1, a_2, \ldots, a_n\leq 1$.
By changing the coordinate, we may assume $\ua=(\underbrace{0, \ldots, 0}_s, \underbrace{1, \ldots, 1}_t)$.
In this case, the proposition follows from Lemma \ref{lem:3.2}.
\end{proof}

Let $\ua=(a_1, a_2, \ldots, a_n)$ be a sequence of integers whose components are allowed to be negative.
For such a sequence, we put $\calm(\ua)=\vpi^{-a_0}\calm(a_0+\ua)$, where $a_0$ is a sufficiently large integer and $a_0+\ua=(a_0+a_1, a_0+a_2, \ldots, a_0+a_n)$.
Obviously, this definition does not depend on a choice of $a_0$.
Similarly, we say that $B\in\calh_n(\frko)$ is a reduced form of GK type $(\ua, \sig)$, if $\vpi^{a_0}B$ is a reduced form of GK type $(a_0+\ua, \sig)$.

\begin{lemma} 
\label{lem:3.2a}
Suppose that $B\in\calh_n(\frko)$ is a reduced form of GK type $(\ua, \sig)$.
If $\calp^0(\sig)=\emptyset$, then $(4B)^{-1}\in\calm(-\ua)$.
\end{lemma}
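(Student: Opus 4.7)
The plan is to reduce the claim to the case $\ua\in\{0,1\}^n$ via a diagonal rescaling, and then invoke Proposition~\ref{prop:3.1} to block-diagonalize. Set $X=\diag(\vpi^{-\lfloor a_i/2\rfloor})$ and $B^\sharp=B[X]$, so that $(B^\sharp)_{ij}=\vpi^{-\lfloor a_i/2\rfloor-\lfloor a_j/2\rfloor}b_{ij}$. Because condition (iii) of $\ua$-admissibility forces each pair $\{i,\sig(i)\}$ to satisfy $a_i\equiv a_{\sig(i)}\pmod 2$, a routine check parallel to the proof of Proposition~\ref{prop:3.2} shows that $B^\sharp$ is a reduced form of GK type $(\ua^\sharp,\sig)$, where $\ua^\sharp=\ua-2\lfloor\ua/2\rfloor\in\{0,1\}^n$ and the involution is unchanged. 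The identity
\[
((4B)^{-1})_{ij}=\vpi^{-\lfloor a_i/2\rfloor-\lfloor a_j/2\rfloor}((4B^\sharp)^{-1})_{ij},
\]
combined with $2\lfloor a_i/2\rfloor+a_i^\sharp=a_i$, shows that $(4B)^{-1}\in\calm(-\ua)$ is equivalent to $(4B^\sharp)^{-1}\in\calm(-\ua^\sharp)$, so we may assume $\ua\in\{0,1\}^n$.

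After permuting coordinates, write $\ua=(\underbrace{0,\ldots,0}_s,\underbrace{1,\ldots,1}_t)$. Since $\calp^0(\sig)=\emptyset$ every index is paired; combined with condition (iii) (paired indices have matching $a$-parity), this forces the zero-indices to pair among themselves and the one-indices among themselves, so both $s$ and $t$ are even. Proposition~\ref{prop:3.1} then supplies $U\in G_\ua$ such that
\[
B[U]=K_1\perp\cdots\perp K_{s/2}\perp\vpi(K'_1\perp\cdots\perp K'_{t/2}),
\]
each $K_i$ and $K'_j$ being a primitive unramified binary form (the residual factor $B'$ of Proposition~\ref{prop:3.1} vanishes because $s$ and $t$ are even). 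For a primitive unramified binary $K$, the adjugate formula together with $\ord(\det(4K))=2e$, $\ord(K_{ii})\geq 0$, and $\ord(2K_{12})=0$ gives $(4K)^{-1}\in\calm(-(0,0))$; hence $(4\vpi K'_j)^{-1}=\vpi^{-1}(4K'_j)^{-1}\in\calm(-(1,1))$. Taking direct sums yields $(4B[U])^{-1}\in\calm(-\ua)$.

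Finally, the action $C\mapsto UCU^T$ preserves $\calm(-\ua)$ for $U\in G_\ua$; this is verified by expanding $(UCU^T)_{ij}=\sum_{k,l}U_{ik}C_{kl}U_{jl}$ and estimating the order of each term using $\ord(U_{ik})\geq\max(0,(a_k-a_i)/2)$ and the bounds on the entries of $C\in\calm(-\ua)$, in the same case-by-case style as the proof of Proposition~\ref{prop:1.1}. Applying this to $C=(4B[U])^{-1}$ and using $(4B)^{-1}=U(4B[U])^{-1}U^T$ proves $(4B)^{-1}\in\calm(-\ua)$. The main obstacle is modest: it consists of the arithmetic verification that the rescaling step descends to an equivalent statement on $B^\sharp$, and the observation that $\calp^0(\sig)=\emptyset$ together with the parity condition of admissibility eliminates the non-trivial residual cases $B'\in\{(0),(1),(0,1)\}$ of Proposition~\ref{prop:3.1}, leaving only the pure block-diagonal situation to handle.
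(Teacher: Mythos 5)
Your proof is correct and follows essentially the same route as the paper: rescale by $\diag(\vpi^{-\lfloor a_i/2\rfloor})$ to reduce to $\ua\in\{0,1\}^n$, use $\calp^0(\sig)=\emptyset$ together with the parity condition of admissibility to conclude $s,t$ are even, apply Proposition~\ref{prop:3.1} to block-diagonalize into primitive unramified binary forms, verify the $2\times 2$ base case via the adjugate, and transport back along $U\in G_\ua$. The only cosmetic difference is that the paper works directly with the block-triangular matrix produced in the proof of Proposition~\ref{prop:3.1} and checks membership in $\calm(-\ua)$ for that specific conjugate, whereas you abstract the last step into the (true, and easily verified by the same order estimates) statement that $C\mapsto UCU^{T}$ preserves $\calm(-\ua)$ for $U\in G_\ua$.
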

\begin{proof}
We first note that the lemma holds for $\ua=(0,0)$.
In fact, $B$ is a primitive unramified binary form in this case.
Then $(4B)^{-1}$ is also a primitive unramified binary form.

Now we consider general case.
As is the proof of Proposition \ref{prop:3.2}, we may assume that $\ua=(\underbrace{0, \ldots, 0}_s, \underbrace{1, \ldots, 1}_t)$.
The assumption $\calp^0(\sig)=\emptyset$ implies that both $s$ and $t$ are even.
By the proof of Proposition \ref{prop:3.1}, there exist $K_1\in\calh_s(\frko)$, $K_2\in\calh_t(\frko)$ and $X\in\vpi \mathrm{M}_{s, t}(\frko)$ such that the following conditions hold:
\begin{itemize}
\item[(1)] $K_1$ and $K_2$ are equivalent to direct sums of primitive unramified binary forms.
\item[(2)] We have
\[
B=
\begin{pmatrix}
K_1 & 0 \\ 0 & \vpi K_2
\end{pmatrix}\left[
\begin{pmatrix}
1 & X \\ 0 & 1 \end{pmatrix}
\right].
\]
\end{itemize}
Then we have
\[
\GK((4K_1)^{-1})=(\underbrace{0, \ldots, 0}_s), \quad \GK((4K_2)^{-1})=(\underbrace{0, \ldots, 0}_t).
\]
It follows that
\[
(4B)^{-1}=
\begin{pmatrix}
(4K_1)^{-1} & 0 \\ 0 & \vpi^{-1} (4K_2)^{-1}
\end{pmatrix}\left[
\begin{pmatrix}
1 & 0 \\ -\,{}^t\! X & 1 \end{pmatrix}
\right]\in 
\calm(-\ua).
\]
Hence we have proved the lemma.
\end{proof}

For $B\in\calhnd_n(\frko)$, let $\eta_B$ be the Clifford invariant of $B$ introduced in Definition \ref{def:0.4}.
By \cite{lam}, Chapter 5, section 3, (3.13), we have
\[
\eta_{B_1\bot B_2}=
\begin{cases}
\eta_{B_1}\eta_{B_2}\langle D_{B_1}, D_{B_2}\rangle
& \text{ if $n_1\equiv n_2$ mod $2$},
\\
\eta_{B_1}\eta_{B_2}\langle D_{B_1}, -D_{B_2}\rangle
& \text{ if $n_1$ is even and $n_2$ is odd}
\end{cases}
\]
for $B_1\in\calh^\mathrm{nd}_{n_1}(\frko)$ and  $B_2\in\calh^\mathrm{nd}_{n_2}(\frko)$.

\begin{lemma} 
\label{lem:3.3}
For $B\in\calhnd_n(\frko)$ such that $B^{(n-1)}\in\calhnd_{n-1}(\frko)$, then we have
\[
\eta_B=\eta_{B^{(n-1)}} \langle D_B, D_{B^{(n-1)}} \rangle.
\]
\end{lemma}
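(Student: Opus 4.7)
The strategy is to reduce to the diagonal formula for $\eta_B$ given in the text and carry out a direct Hilbert-symbol computation.

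First I would exploit the nondegeneracy of $B^{(n-1)}$ to decompose $B$: by completing the square, $B$ is $\GL_n(F)$-equivalent to $B^{(n-1)}\perp (d)$ for some $d\in F^\times$. Diagonalizing $B^{(n-1)}\sim \mathrm{diag}(b'_1,\ldots,b'_{n-1})$ over $F$, one gets simultaneously
\[
B\;\sim\;\mathrm{diag}(b'_1,\ldots,b'_{n-1},d),\qquad B^{(n-1)}\;\sim\;\mathrm{diag}(b'_1,\ldots,b'_{n-1}),
\]
with $d\equiv \det B\cdot \det B^{(n-1)}\pmod{F^{\times 2}}$ and $\prod_{i<n}b'_i\equiv \det B^{(n-1)}\pmod{F^{\times 2}}$.

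Next I would apply the explicit formula for $\eta$ from the introduction to both $B$ and $B^{(n-1)}$ and take the ratio. The only new Hilbert symbols that appear are those involving $d$:
\[
\prod_{i<n}\langle b'_i,d\rangle=\langle \det B^{(n-1)},\,d\rangle=\langle \det B^{(n-1)},\det B\rangle\,\langle \det B^{(n-1)},\det B^{(n-1)}\rangle,
\]
and I would simplify the last factor by the identity $\langle a,a\rangle=\langle a,-1\rangle$. After collecting the remaining contributions coming from the $\langle -1,-1\rangle$ and $\langle -1,\det\rangle$ exponents, a short calculation gives
\[
\frac{\eta_B}{\eta_{B^{(n-1)}}}
=\langle -1,-1\rangle^{\epsilon}\,\langle -1,\det B\rangle^{\alpha}\,\langle -1,\det B^{(n-1)}\rangle^{\beta}\,\langle \det B,\det B^{(n-1)}\rangle,
\]
where the integers $\epsilon,\alpha,\beta$ depend on the parity of $n$.

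Finally I would compare this with $\langle D_B,D_{B^{(n-1)}}\rangle$ after substituting $D_B\equiv (-1)^{[n/2]}\det B$ and $D_{B^{(n-1)}}\equiv (-1)^{[(n-1)/2]}\det B^{(n-1)}$ modulo $F^{\times 2}$ (the factors of $4$ drop out). Expanding the Hilbert symbol bilinearly gives the same exponents as above, with the small simplification that $m^2\equiv m\pmod 2$ makes the two parity cases $n=2m$ and $n=2m+1$ match the prediction. The only step that requires attention is the bookkeeping of the exponents of $\langle -1,-1\rangle$ and $\langle -1,\cdot\rangle$; there is no conceptual obstacle, and the identity $\langle a,a\rangle=\langle a,-1\rangle$ together with $\langle a,b\rangle^2=1$ handles everything.
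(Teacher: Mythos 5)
Your proposal is correct and follows essentially the same route as the paper: both decompose $B$ as $B^{(n-1)}\perp(\det B\cdot\det B^{(n-1)})$ over $F$, diagonalize $B^{(n-1)}$, substitute into the explicit formula for $\eta$, and reduce the result to $\langle D_B,D_{B^{(n-1)}}\rangle$ by Hilbert-symbol bilinearity and $\langle a,a\rangle=\langle -1,a\rangle$. The paper writes out the two parity cases $n=2m$ and $n=2m+1$ explicitly where you leave them as a sketch, but the underlying computation is identical.
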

\begin{proof}
Note that $B$ is $\GL_n(F)$-equivalent to  $B^{(n-1)}\perp ((-1)^{n-1}D_B D_{B^{(n-1)}})$.
Assume that $n$ is odd.
Then we have 
\[
\eta_B=\eta_{B^{(n-1)}}\langle D_B, -D_B D_{B^{(n-1)}}\rangle=\eta_{B^{(n-1)}}\langle D_B, D_{B^{(n-1)}}\rangle.
\]
The case when $n$ is even is similar.
\end{proof}

\begin{lemma} 
\label{lem:3.4}
Let $K$ be a primitive unramified binary form.
For $B\in\calhnd_n(\frko)$, we have
\[
\eta_{B\perp \vpi^a K}=\eta_B \xi_K^{a+\ord(D_B)}.
\]
\end{lemma}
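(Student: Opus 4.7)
The plan is to apply Lemma \ref{lem:3.3} twice, peeling off the two rows and columns of $\vpi^a K$ one at a time, and then simplify the resulting Hilbert symbols using the fact that $K$ is unramified.

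First, since $K$ is a primitive binary form, the norm ideal of its associated quadratic module is $\frko$, so after a suitable change of basis on $K$ we may assume the $(1,1)$-entry $k_{11}$ is a unit. This change replaces $K$ by an equivalent form and hence affects neither $\eta_{B\perp \vpi^a K}$, $\ord(D_B)$, nor $\xi_K$. Set $c=\vpi^a k_{11}$, so $\ord(c)=a$, and consider the chain $B \subset B\perp(c) \subset B\perp\vpi^a K$ of principal submatrices. Two applications of Lemma \ref{lem:3.3} give
\[
\eta_{B\perp \vpi^a K} = \eta_B \cdot \langle D_{B\perp(c)}, D_B\rangle\, \langle D_{B\perp \vpi^a K}, D_{B\perp(c)}\rangle.
\]

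Next I would compute the discriminants modulo $F^{\times 2}$. Writing $D_B=(-4)^{[n/2]}\det B$ and using $\vpi^{2a}\in F^{\times 2}$, one checks
\[
D_{B\perp \vpi^a K}\equiv D_B\,D_K \pmod{F^{\times 2}}, \qquad D_{B\perp (c)} \equiv \begin{cases} D_B\, c & \text{if } n \text{ is even},\\ -D_B\, c & \text{if } n \text{ is odd},\end{cases}
\]
modulo $F^{\times 2}$. Substituting these into the displayed formula and expanding by bilinearity of the Hilbert symbol, together with the standard identities $\langle x,-x\rangle = 1$ and $\langle x,x\rangle = \langle x,-1\rangle$, all terms not involving $D_K$ cancel in pairs. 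One is left with
\[
\eta_{B\perp \vpi^a K}= \eta_B\cdot \langle D_K, D_B\rangle\, \langle D_K, c\rangle
\]
in both parity cases (in the odd case, one also uses $\langle D_K,-1\rangle = 1$, addressed below).

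The crucial final step uses that $K$ is an unramified primitive binary form: $D_K$ is either a square (when $\xi_K=1$) or a unit representing the unramified quadratic extension of $F$ (when $\xi_K=-1$). In either case $\langle D_K, x\rangle = \xi_K^{\ord(x)}$ for every $x\in F^\times$, because an element of $F^\times$ is a norm from an unramified quadratic extension if and only if its order is even. Applied with $x=D_B$ and $x=c$, this yields $\eta_{B\perp \vpi^a K}=\eta_B\,\xi_K^{\ord(D_B)+a}$, completing the proof. The main obstacle is the bookkeeping in the two parity cases and the extra sign appearing in $D_{B\perp(c)}$ when $n$ is odd; the identity $\langle D_K,-1\rangle=1$ (a consequence of $-1$ being a unit, hence a norm from any unramified quadratic extension of $F$) is precisely what absorbs this sign and produces a uniform answer.
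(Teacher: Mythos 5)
Your proof is correct, and it is the same basic strategy as the paper's: apply Lemma \ref{lem:3.3} twice to peel off the two columns of $\vpi^a K$, then use that $D_K$ is either a square or a unit generating the unramified quadratic extension, so that $\langle D_K, x\rangle = \xi_K^{\ord(x)}$. The only difference is in the choice of intermediate matrix. The paper first diagonalizes $K$ over $F$ as $(1)\perp(-D_K)$ and works with the chain $B$, $B\perp(\vpi^a)$, $B\perp(\vpi^a)\perp(-\vpi^a D_K)$, whereas you keep $K$ intact (which is important in the dyadic case, where $K$ need not be $\frko$-diagonalizable), make an $\frko$-basis change so the $(1,1)$-entry of $K$ is a unit (justified by primitivity via the norm ideal), and then apply Lemma \ref{lem:3.3} to the genuine principal submatrices of $B\perp\vpi^a K$. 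Your version has the advantage of staying entirely within $\calh_\ast(\frko)$ rather than passing through an $F$-equivalent form, and you spell out the Hilbert-symbol bookkeeping in the two parity cases, including the role of $\langle D_K,-1\rangle=1$, which the paper compresses into the closed-form expression $\langle D_K,(-1)^n\vpi^a D_B\rangle$. Both routes are essentially the same length and rely on the same two ingredients.
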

\begin{proof}
Note that $\eta_{\vpi^a K}=\xi_K^a$.
Hence we have
\[
\eta_{B\perp \vpi^a K}=\eta_B \eta_{\vpi^a K} \langle D_B, D_K \rangle
=\eta_B \xi_K^{a+\ord(D_B)}.
\]
\end{proof}

\begin{lemma} 
\label{lem:3.5} 
Let $B \in \calh_n(\frko)$ be a half-integral symmetric matrix with $\GK(B)=\ua=(a_1,\ldots, a_n)$.
Assume that $a_1=\cdots=a_n$. 
Then we have
\[
\eta_B=
\begin{cases}
1 & \text{ if $n$ is odd,} \\
\xi_B^{a_1} & \text{ if $n$ is even.}
\end{cases}
\]
\end{lemma}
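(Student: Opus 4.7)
The plan is to reduce $B$ to an explicit orthogonal sum using the reduction theorem together with Lemma~\ref{lem:3.1}, and then to iterate Lemma~\ref{lem:3.4} to compute $\eta_B$.

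Write $a := a_1$. Since $\eta_B$ and $\xi_B$ depend only on the $\GL_n(F)$-equivalence class of $B$, I would first invoke the reduction theorem (Theorem~\ref{thm:4.1}) to replace $B$ by an equivalent reduced form of GK type $(a, a, \ldots, a)$. Then $B_0 := \vpi^{-a} B \in \calh_n(\frko)$ is a reduced form of GK type $(0, 0, \ldots, 0)$, and Lemma~\ref{lem:3.1} yields
\[
B_0 \simeq \begin{cases} K_1 \perp \cdots \perp K_m & \text{if } n = 2m, \\ (u) \perp K_1 \perp \cdots \perp K_m & \text{if } n = 2m+1,\; u \in \frko^\times, \end{cases}
\]
where each $K_i$ is a primitive unramified binary form. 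Scaling back by $\vpi^a$, I may assume that $B$ itself is the corresponding orthogonal sum with each block multiplied by $\vpi^a$.

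The computational heart is the iteration of Lemma~\ref{lem:3.4}. The key observation is that for any non-degenerate $B'$ and any primitive unramified binary $K$, one has $\ord(D_{B' \perp \vpi^a K}) = \ord(D_{B'}) + 2a$, since $\ord(D_K) = 0$ and $\det(\vpi^a K) = \vpi^{2a}\det K$. For the even case $n = 2m$, starting from the empty matrix and adjoining the summands one at a time, the exponent appearing at the $k$-th step is $a + 2(k-1)a = (2k-1)a$, which is congruent to $a$ modulo $2$; since $\xi_{K_k}^2 = 1$, this step contributes $\xi_{K_k}^a$, and hence $\eta_B = \prod_{i=1}^m \xi_{K_i}^a$. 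For the odd case $n = 2m+1$, starting from $B_0 = (\vpi^a u)$ with $\ord(D_{B_0}) = a$, the $k$-th exponent is $a + (2k-1)a = 2ka$, which is even, so every step contributes $1$ and $\eta_B = 1$.

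It remains, in the even case, to identify $\prod_i \xi_{K_i}$ with $\xi_B$. Since $D$ is multiplicative on orthogonal sums of even-rank forms, $D_B \equiv \prod_i D_{K_i}$ modulo $F^{\times 2}$. Each $D_{K_i}$ is a unit with $F(\sqrt{D_{K_i}})/F$ unramified, and the elements of $F^\times/F^{\times 2}$ giving unramified extensions form a subgroup of order $2$; consequently $F(\sqrt{D_B})/F$ is also unramified and $\xi_B = \prod_i \xi_{K_i}$, giving $\eta_B = \xi_B^a$. The main obstacle is the parity bookkeeping for $\ord(D)$ through the induction; the forward reference to Theorem~\ref{thm:4.1} is a standard device ensuring Lemma~\ref{lem:3.1} is applicable.
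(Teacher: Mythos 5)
Your proof is correct and follows essentially the same route as the paper: reduce via Theorem \ref{thm:4.1} and Lemma \ref{lem:3.1} to $B = \vpi^{a_1}(K_1\perp\cdots\perp K_m)$ (with an extra $(u)$ summand when $n$ is odd) and then iterate Lemma \ref{lem:3.4}; the paper's own proof is just a terse version of this. The only cosmetic point is that Lemma \ref{lem:3.4} as stated requires a nonempty $B$, so in the even case the first step $\eta_{\vpi^{a_1}K_1}=\xi_{K_1}^{a_1}$ should be verified directly (it is immediate from $\vpi^{a_1}K_1\sim_{\GL_2(F)}(\vpi^{a_1})\perp(-\vpi^{a_1}D_{K_1})$), rather than invoked as an application of the lemma to the empty matrix.
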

\begin{proof}
By Theorem \ref{thm:4.1} and Lemma \ref{lem:3.1}, we may assume
\[
B=
\begin{cases}
\vpi^{a_1}((u)\perp K_1\perp \cdots \perp K_{[n/2]}) & \text{ if $n$ is odd,} \\
\vpi^{a_1}(K_1\perp \cdots \perp K_{n/2}) & \text{ if $n$ is even,}
\end{cases}
\]
where $u\in\frko^\times$ and $K_1, \ldots, K_{[n/2]}$ are primitive unramified binary forms.
Then the lemma follows from Lemma \ref{lem:3.4}.
\end{proof}

\begin{proposition} 
\label{prop:3.3}
Suppose that $B, T\in\calh_n(\frko)$ are reduced forms of GK type  $\ua$.
If $B-T\in\calm^0(\ua)$, then the following assertions  (a) and  (b) hold.
\begin{itemize}
\item[(a)] If $n$ is even, then $\xi_B=\xi_{T}$.
\item[(b)] If $n$ odd, then $\eta_B=\eta_{T}$.
\item[(c)] If $n$ is even and $\xi_B\neq 0$, then $\eta_B=\eta_T$.
\end{itemize}
\end{proposition}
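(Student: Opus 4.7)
The plan is to scale so that the GK type becomes $\ua=(\underbrace{0,\ldots,0}_{s},\underbrace{1,\ldots,1}_{t})$, decompose $B$ and $T$ in parallel as direct sums of primitive unramified binary forms plus a small residual (following the argument of Proposition~\ref{prop:3.1}), match the corresponding binary summands using Lemma~\ref{lem:2.2}, and finally assemble $\xi_{B}$ and $\eta_{B}$ using Lemma~\ref{lem:3.4}. For the reduction, applying $D=\diag(\vpi^{-[a_{1}/2]},\ldots,\vpi^{-[a_{n}/2]})\in\GL_{n}(F)$ simultaneously to $B$ and $T$ leaves $\xi$ and $\eta$ invariant (since $(\det D)^{2}\in F^{\times 2}$, and $\eta$ is a rational invariant of $B$) and preserves the hypothesis $B-T\in\calm^{0}(\ua)$ in the rescaled GK type; after a coordinate permutation the new GK type takes the stated form.

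The heart of the argument is a parallel induction on $n$ mirroring the proof of Proposition~\ref{prop:3.1}. Assume $s\geq 2$ (the case $s\leq 1$, $t\geq 2$ is symmetric via $\vpi^{-1}$). Let $K$ be the upper-left $2\times 2$ block of $B$, a primitive unramified binary form by Definition~\ref{def:3.2}, and set $U=\begin{pmatrix}\mathbf{1}_{2} & -K^{-1}X \\ 0 & \mathbf{1}_{n-2}\end{pmatrix}\in G_{\ua}$, where $X$ is the upper-right block of $B$. Then $B[U]=K\perp(B_{22}-K^{-1}[X])$ with the quotient reduced of shortened GK type. Applying the same $U$ to $T$: letting $K'$ (resp.\ $Y$) be the upper-left $2\times 2$ (resp.\ upper-right) block of $T$, the hypothesis gives $K-K'\in\vpi\calh_{2}(\frko)$, and the off-diagonal block of $T[U]$ equals $(Y-X)+K^{-1}(K-K')X$, which lies in the appropriate $\calm^{0}$-module. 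A further transformation $U'\in G_{\ua}$ built from $T[U]$ then block-diagonalizes $T[UU']=K'\perp T'_{22}$, with $T'_{22}$ reduced and $\calm^{0}$-close to $B_{22}-K^{-1}[X]$. Iterating yields
\[
B\simeq K_{1}\perp\cdots\perp K_{p}\perp B_{0}\perp\vpi(K^{\flat}_{1}\perp\cdots\perp K^{\flat}_{q}),
\]
\[
T\simeq L_{1}\perp\cdots\perp L_{p}\perp T_{0}\perp\vpi(L^{\flat}_{1}\perp\cdots\perp L^{\flat}_{q}),
\]
with $K_{i}-L_{i},\,K^{\flat}_{j}-L^{\flat}_{j}\in\vpi\calh_{2}(\frko)$ and $B_{0},T_{0}$ reduced of one of the GK types $\emptyset,(0),(1),(0,1)$.

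With the parallel decomposition in hand, Lemma~\ref{lem:2.2} gives $\xi_{K_{i}}=\xi_{L_{i}}$ and $\xi_{K^{\flat}_{j}}=\xi_{L^{\flat}_{j}}$, and since these are nonzero the underlying discriminants agree modulo $F^{\times 2}$. Propositions~\ref{prop:2.2} and~\ref{prop:2.3} handle the four residual cases: if $n$ is even, the residual has size $0$ or size $2$, and size $2$ forces GK type $(0,1)$ with $\xi_{B_{0}}=\xi_{T_{0}}=0$; if $n$ is odd, the residual has size $1$ and $\eta_{(u)}=1$ trivially. Combining, (a) follows. For (b) and (c), iterating Lemma~\ref{lem:3.4} expresses $\eta_{B}$ as $\eta_{B_{0}}$ times a product of factors $\xi_{K}^{a+\ord(D_{A})}$ whose exponents involve only the running partial determinants, coinciding with those for $T$; in (c), the assumption $\xi_{B}\neq 0$ rules out the GK type $(0,1)$ residual, forcing $B_{0}=T_{0}=\emptyset$, and the product for $\eta_{B}$ matches that for $\eta_{T}$.

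The main technical obstacle is the parallel decomposition: one must verify that the off-diagonal corrections created in $T$ by transformations built from $B$ can be cancelled by elements of $G_{\ua}$ without destroying the $\calm^{0}$-closeness of the quotient forms through each inductive step, using the order estimates of Lemmas~\ref{lem:1.1} and~\ref{lem:1.3}.
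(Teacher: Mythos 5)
Your proposal follows essentially the same route as the paper: rescale by $\diag(\vpi^{-[a_i/2]})$ to reach GK type $(0,\ldots,0,1,\ldots,1)$, peel off primitive unramified binary blocks parallelly from $B$ and $T$, compare the blocks via Lemma~\ref{lem:2.2}, and reassemble via Lemma~\ref{lem:3.4}. The toolset, the induction, and the role of Proposition~\ref{prop:2.2}, Proposition~\ref{prop:2.3}, and Lemma~\ref{lem:3.5} are all the right ones.

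The main structural difference is that you apply the same $U$ (built from $B$) to both matrices and then block-diagonalize $T[U]$ with a further $U'\in G_\ua$, whereas the paper simply applies $U_B=\begin{pmatrix}1&-B_{11}^{-1}B_{12}\\0&1\end{pmatrix}$ to $B$ and, independently, $U_T=\begin{pmatrix}1&-T_{11}^{-1}T_{12}\\0&1\end{pmatrix}$ to $T$, then observes directly that $B_{22}-B_{11}^{-1}[B_{12}]$ and $T_{22}-T_{11}^{-1}[T_{12}]$ are both reduced and their difference is in $\calm^0$, because both correction terms $B_{11}^{-1}[B_{12}]$ and $T_{11}^{-1}[T_{12}]$ lie in $\vpi^2\calh_{n-2}(\frko)$. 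Your $UU'$ in fact equals the paper's $U_T$, so the two mechanisms are equivalent; the paper's is just shorter, since no cleanup step or off-diagonal tracking is needed. Incidentally, the off-diagonal of $T[U]$ is $Y-K'K^{-1}X=(Y-X)+(K-K')K^{-1}X$, not $(Y-X)+K^{-1}(K-K')X$ as you wrote; these differ by transposition and are not equal in general, though the order estimates you want hold for either.

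One point that needs sharpening: your parenthetical "the case $s\le 1$, $t\ge 2$ is symmetric via $\vpi^{-1}$" is only true for $s=0$; rescaling cannot turn $s=1$, $t\ge 2$ into a previously handled case since it would produce a negative entry in the GK type. This is precisely the case where the paper's proof of (b), following Proposition~\ref{prop:3.1}, must peel from the bottom via the $1+(t-2)+2$ block decomposition around $\sig(n)=n-1$, using $X_i=-B_{33}^{-1}\,{}^t\!B_{i3}$, $Y_i=-T_{33}^{-1}\,{}^t T_{i3}$, then comparing $\xi_{B_{33}}=\xi_{T_{33}}$ by Lemma~\ref{lem:2.2} and invoking Lemma~\ref{lem:3.4} together with the induction hypothesis. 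Since you do cite the argument of Proposition~\ref{prop:3.1} for the decomposition, I read this as a slip of phrasing rather than a missing idea, but as stated it is false and would need to be replaced by the actual bottom-peeling argument. Your handling of (c) — that $\xi_B\neq 0$ forces both $s$ and $t$ even, so the residual is empty and (a) plus Lemma~\ref{lem:3.4} give the conclusion — matches the paper exactly.
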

\begin{proof}
Put 
\begin{align*}
B'&=B[\mathrm{diag}(\vpi^{-[a_1/2]}, \vpi^{-[a_2/2]}, \ldots, \vpi^{-[a_n/2]})], \\
T'&=T[\mathrm{diag}(\vpi^{-[a_1/2]}, \vpi^{-[a_2/2]}, \ldots, \vpi^{-[a_n/2]})].
\end{align*}
Then $B'$ and $T'$  are reduced forms of GK type $\ua'=(a'_1, a'_2, \ldots, a'_n)$  and $B'-T'\in\calm^0(\ua')$, where $a'_i=a_i-2[a_i/2]$.
Thus we may assume $\ua=(\underbrace{0, \ldots, 0}_s, \underbrace{1, \ldots, 1}_t)$.
We first prove (a).
If both $s$ and $t$ are odd, then $\xi_B=\xi'_{B'}=0$.
We assume both $s$ and $t$ are even.
Suppose that $s\geq 2$.
We may assume $\sig(1)=2$.
Write $B$ and $T$ in block forms
\[
B=\begin{pmatrix}
B_{11} & B_{12} \\
{}^t\! B_{12} & B_{22}
\end{pmatrix},\quad 2B_{12}\in \vpi\mathrm{M}_{2, n-2}(\frko)
\]
and
\[
T=\begin{pmatrix}
T_{11} & T_{12} \\
{}^t T_{12} & B'_{22}
\end{pmatrix}, \quad 2T_{12}\in \vpi\mathrm{M}_{2, n-2}(\frko).
\]
Then $B_{11}$ and $T_{11}$ are unramified primitive binary forms and $\xi_{B_{11}}=\xi_{T_{11}}$ by Lemma \ref{lem:2.2}.
Put
\[
B
\left[
\begin{pmatrix}
1 & -B_{11}^{-1}B_{12} \\
0 & 1
\end{pmatrix}
\right]
=\begin{pmatrix}
B_{11} & 0        \\
0 & B'
\end{pmatrix},\qquad
B'=B_{22}- B_{11}^{-1}[B_{12}].
\]
Then we have $\xi_B=\xi_{B_{11}}\xi_{B'}$.
Similarly, put
\[
T
\left[
\begin{pmatrix}
1 & -T_{11}^{-1}T_{12} \\
0 & 1
\end{pmatrix}
\right]
=\begin{pmatrix}
T_{11} & 0        \\
0 & T'
\end{pmatrix},\qquad
T'=T_{22}- T_{11}^{-1}[T_{12}].
\]
Then we have $\xi_T=\xi_{T_{11}}\xi_{T'}$.
Note that 
\[
B_{11}^{-1}[B_{12}], T_{11}^{-1}[T_{12}]\in\vpi^2\calh_{n-2}(\frko).
\]
It follows that $B'$ and $T'$ are reduced form of GK type $(\underbrace{0, \ldots, 0}_{s-2}, \underbrace{1, \ldots, 1}_t)$ and $B'-T'\in \calm^0(\underbrace{0, \ldots, 0}_{s-2}, \underbrace{1, \ldots, 1}_t)$.
Thus the proof is reduced to the case $s=0$.
The case $s=0$ is reduced to the case $t=0$, by replacing  $B$ and $T$ by $\vpi^{-1}B$ and $\vpi^{-1}T$, respectively.
Thus we have proved (a).

Next, we show (b).
By the same argument as above, the proof is reduced to the case $s\leq 1$ by using Lemma \ref{lem:3.4}.
If $s=0$, then $\eta_B=\eta_T=1$ by Lemma \ref{lem:3.5}.
Assume now $s=1$.
In this case $t$ is even.
Since the case $t=0$ is trivial, we may assume $t\geq 2$.
We may assume $\sig(n)=n-1$.
Write $B$ and $T$ in block forms as follows.
\[
\begin{array}{ccccc}
&\hskip -15pt
\overbrace{\hphantom{B_{11}}}^{1} \;
\overbrace{\hphantom{ B_{12}}}^{t-2}  \;
\overbrace{\hphantom{ B_{15}}}^{2} \\ & 
B=\left(
\begin{array}{cccl}
B_{11} & B_{12} & B_{13}  \\
{}^t\! B_{12} & B_{22} & B_{23}  \\
{}^t\! B_{13} & {}^t\! B_{23} & B_{33}  \\
\end{array} \hskip -0pt
\right) \hskip -5pt
\begin{array}{l}
 \left.\vphantom{B_{11}} \right\} \text{\footnotesize${1}$} \\
 \left.\vphantom{B_{11}} \right\} \text{\footnotesize${t-2}$} \\
 \left.\vphantom{B_{11}} \right\} \text{\footnotesize${2}$,} 
\end{array}
\end{array}
\]
\[
\begin{array}{ccccc}
&\hskip -15pt
\overbrace{\hphantom{T_{11}}}^{1} \;
\overbrace{\hphantom{T_{12}}}^{t-2}  \;
\overbrace{\hphantom{T_{15}}}^{2} \\ & 
T=\left(
\begin{array}{cccl}
T_{11} & T_{12} & T_{13}  \\
{}^t T_{12} & T_{22} & T_{23}  \\
{}^t T_{13} & {}^t T_{23} & T_{33}  \\
\end{array} \hskip -0pt
\right) \hskip -5pt
\begin{array}{l}
 \left.\vphantom{T_{11}} \right\} \text{\footnotesize${1}$} \\
 \left.\vphantom{T_{11}} \right\} \text{\footnotesize${t-2}$} \\
 \left.\vphantom{T_{11}} \right\} \text{\footnotesize${2}$.} 
\end{array}
\end{array}
\]
Put 
\begin{align*}
X_1&=-B_{33}^{-1}\cdot{}^t\! B_{13}, \quad  X_2=-B_{33}^{-1}\cdot{}^t\! B_{23}, \\
Y_1&=-T_{33}^{-1}\cdot{}^t T_{13}, \quad Y_2=-T_{33}^{-1}\cdot{}^t T_{23}.
\end{align*}
\begin{align*}
B
\left[
\begin{pmatrix}
1 & 0 & 0 \\
0 & \mathbf{1}_{t-2} & 0 \\
X_1 & X_2 & \mathbf{1}_2
\end{pmatrix}
\right]
=&\begin{pmatrix}
B'_{11} & B'_{12} & 0   \\
{}^t\! B'_{12} & B'_{22} & 0   \\
0 & 0 & B_{33}
\end{pmatrix},
\\
T
\left[
\begin{pmatrix}
1 & 0 & 0 \\
0 & \mathbf{1}_{t-2} & 0 \\
Y_1 & Y_2 & \mathbf{1}_2
\end{pmatrix}
\right]
=&\begin{pmatrix}
T'_{11} & T'_{12} & 0        \\
{}^t T'_{12} & T'_{22} & 0        \\
0 & 0 & T_{33}
\end{pmatrix}.
\end{align*}
As in the proof of Proposition \ref{prop:3.1}, we have
\[
\begin{pmatrix} B_{11} & B_{12} \\ {}^t\!B_{12} & B_{22}\end{pmatrix}
-
\begin{pmatrix} B'_{11} & B'_{12} \\ {}^t\!B'_{12} & B'_{22}\end{pmatrix}
\in \calm^0(0, \underbrace{1,\dots,1}_{t-2}),
\]
\[
\begin{pmatrix} T_{11} & T_{12} \\ {}^t T_{12} & T_{22}\end{pmatrix}
-
\begin{pmatrix} T'_{11} & T'_{12} \\ {}^t T'_{12} & T'_{22}\end{pmatrix}
\in \calm^0(0, \underbrace{1,\dots,1}_{t-2}).
\]
By Lemma \ref{lem:2.2}, we have $\xi_{B_{33}}=\xi_{T_{33}}$.
On the other hand, by induction hypothesis, we have $\eta_{B'}=\eta_{T'}$, where
\[
B'=
\begin{pmatrix} B'_{11} & B'_{12} \\ {}^t\!B'_{12} & B'_{22}\end{pmatrix}, \quad 
T'=\begin{pmatrix} T'_{11} & T'_{12} \\ {}^t T'_{12} & T'_{22}\end{pmatrix}.
\]
By Lemma \ref{lem:3.4}, we have $\eta_B=\xi_{B_{33}}\eta_{B'}=\xi_{T_{33}}\eta_{T'}=\eta_T$.

Now we prove (c).
As in the previous cases, we may assume $\ua=(\underbrace{0, \ldots, 0}_s, \underbrace{1, \ldots, 1}_t)$.
Since $\xi_B\neq 0$, both $s$ and $t$ are even.
We proceed by induction with respect to $s$.
The case $s=0$ follows from (a) and Lemma \ref{lem:3.5}.
Suppose that $s\geq 2$.
As in the proof of (a), we can show
\begin{align*}
B\sim B_{11}\perp B', \quad \GK(B_{11})=(0,0),\; \GK(B')=(\underbrace{0, \ldots, 0}_{s-2}, \underbrace{1, \ldots, 1}_t), \\
T\sim T_{11}\perp T', \quad \GK(T_{11})=(0,0),\; \GK(T')=(\underbrace{0, \ldots, 0}_{s-2}, \underbrace{1, \ldots, 1}_t),
\end{align*}
where 
\[
B_{11}-T_{11}\in\calm^0((0,0)), \quad B'-T'\in \calm^0((\underbrace{0, \ldots, 0}_{s-2}, \underbrace{1, \ldots, 1}_t)).
\]
By (a) and Lemma \ref{lem:3.4}, we have $\eta_B=\eta_{B'}=\eta_{T'}=\eta_T$, as desired.
\end{proof}
\begin{remark} 
If $n$ is even and $\xi_B\neq 0$, then (a) and (c) imply that $B$ and $T$ are $\GL_n(F)$-equivalent, but not $\GL_n(\frko)$-equivalent in general.
In the case $\xi_B=0$, $B$ and $T$ may not be $\GL_n(F)$-equivalent.
For example, put $B=\begin{pmatrix} -1 & 0 \\ 0 & 2 \end{pmatrix}$ and $T=\begin{pmatrix} -1 & 1 \\ 1 & -2 \end{pmatrix}$.
Then $\GK(B)=\GK(T)=(0,1)$ and $B-T\in\calm^0((0,1))$.
By easy calculation, $\eta_B=1$ and $\eta_T=-1$.
Note that $D_B=2$ and $D_T=-1$, and so the discriminant fields are different.
\end{remark}

\begin{proposition} 
\label{prop:3.4}
Suppose that $B\in\calh_n(\frko)$ and  $\GK(B)=(0, 0, \ldots, 0)$.
\begin{itemize}
\item[(1)] If $n=2m$ is even, then  $B\sim K_1\perp\cdots \perp K_m$, where $K_1, \ldots, K_m$ are primitive unramified binary forms.
\item[(2)] If $n=2m+1$ is odd, then $B\sim (u)\perp K_1\perp \cdots \perp K_m$, where $K_1,\ldots, K_m$ are primitive unramified binary forms and $u\in\frko^\times$.
\end{itemize}
\end{proposition}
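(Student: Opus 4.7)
The plan is to reduce the proposition to Lemma \ref{lem:3.1} by invoking the reduction theorem. Lemma \ref{lem:3.1} already gives exactly the desired direct sum decomposition under the stronger hypothesis that $B$ is itself a \emph{reduced form} of GK type $(\underbrace{0,\ldots,0}_n)$, so it suffices to replace our given $B$, which is only assumed to have $\GK(B)=(\underbrace{0,\ldots,0}_n)$, by an equivalent reduced form, and then check that the GK type of that reduced form is still $(\underbrace{0,\ldots,0}_n)$.

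First I would apply Theorem \ref{thm:4.1} (the reduction theorem of Section \ref{sec:4}) to obtain an equivalent half-integral symmetric matrix $B' \sim B$ which is a reduced form of some GK type $(\ua, \sig)$ with $\ua=(a_1,\ldots,a_n)\in\ZZn$. This is the only nontrivial step of the argument, and it is precisely the content of the reduction theorem; without it, there is no way to bridge the gap between an arbitrary $B$ and the reduced-form hypothesis of Lemma \ref{lem:3.1}.

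Next I would verify that $\ua=(0,0,\ldots,0)$. Since $\GK$ is an invariant of the equivalence class, $\GK(B')=\GK(B)=(\underbrace{0,\ldots,0}_n)$. On the other hand, because $B'$ is a reduced form of GK type $\ua$, we have $B'\in\calm(\ua)$; after reordering so that $\ua$ is non-decreasing (which does not change the equivalence class), this gives $\ua\in S(B')$, hence $\GK(B')\succeq \ua$. Combined with $\GK(B')=(\underbrace{0,\ldots,0}_n)$ and the fact that every entry $a_i$ is non-negative, this forces $\ua=(\underbrace{0,\ldots,0}_n)$.

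With $B'$ now known to be a reduced form of GK type $(\underbrace{0,\ldots,0}_n)$, Lemma \ref{lem:3.1} applies directly and produces the decomposition
\[
B' \simeq \begin{cases} K_1\perp K_2\perp\cdots\perp K_m & (n=2m), \\ (u)\perp K_1\perp K_2\perp\cdots\perp K_m & (n=2m+1), \end{cases}
\]
with $K_i$ primitive unramified binary forms and $u\in\frko^\times$. Transporting this back along $B\simeq B'$ concludes the proof. The single real obstacle is the appeal to Theorem \ref{thm:4.1}; the rest is a short lex-order bookkeeping argument together with Lemma \ref{lem:3.1}.
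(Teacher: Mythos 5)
Your argument is circular. You appeal to the reduction theorem (Theorem \ref{thm:4.1}), but Theorem \ref{thm:4.1} is obtained by iterating Lemma \ref{lem:4.5}, and the proof of Lemma \ref{lem:4.5} explicitly invokes Proposition \ref{prop:3.4} (in the final case, to split off a primitive unramified binary form from $B_{22}$ when $\GK(B_{22})=(c,\dots,c)$ and $s\geq 2$). Worse, in the very first step of applying Lemma \ref{lem:4.5} to a form $B$ with $\GK(B)=(0,\ldots,0)$, one is in the case $m=0$ and $B_{22}=B$ is of full size $n$, so the instance of Proposition \ref{prop:3.4} needed is precisely the one you are trying to prove. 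The paper's ordering (Proposition \ref{prop:3.4} in Section \ref{sec:3}, Theorem \ref{thm:4.1} in Section \ref{sec:4}) reflects this dependency, and it is not optional.

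The intended proof avoids the reduction theorem entirely. Since $\ua=(0,\ldots,0)$ gives $G_\ua=\GL_n(\frko)$, every matrix equivalent to $B$ is optimal by Proposition \ref{prop:1.1}. One then uses the classical fact (Jordan decomposition over a local ring, dyadic case) that any half-integral symmetric matrix is equivalent to a direct sum of blocks of size $1$ or $2$, and applies Lemma \ref{lem:1.4} to deduce that each direct summand again has $\GK$ equal to $(0)$ or $(0,0)$. A size-$1$ block with $\GK=(0)$ is $(u)$ with $u\in\frko^\times$, and a size-$2$ block with $\GK=(0,0)$ is a primitive unramified binary form. Finally, in the dyadic case two rank-$1$ summands $(u_1)\perp(u_2)$ cannot both occur, since $\GK((u_1)\perp(u_2))\neq(0,0)$; this caps the number of rank-$1$ pieces at one, giving the stated decomposition. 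Your observation that the GK type of a reduced form equivalent to $B$ would have to be $(0,\ldots,0)$ is correct lex-order bookkeeping, but it does not rescue the circular dependence on Theorem \ref{thm:4.1}.
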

\begin{proof}
By Proposition \ref{prop:1.1}, any half-integral symmetric matrix equivalent to $B$ is optimal, since $G_\ua=\GL_n(\frko)$ for $\ua=(0,\dots, 0)$.
It is well-known that $B$ is isomorphic to a direct sum of matrices of size $1$ or $2$.
By Lemma \ref{lem:1.4}, the Gross-Keating invariant of any direct summand is of the form $(0, 0, \ldots, 0)$.
Thus $B$ is isomorphic to 
\[
(u_1)\perp (u_2)\perp\cdots\perp (u_r)\perp K_1\perp K_2\perp\cdots\perp K_s,
\]
where $u_1, u_2, \ldots, u_r$ are units and $K_1, K_2, \ldots, K_s$ are primitive unramified binary forms.
Note that $\GK((u_1)\perp (u_2))\neq (0,0)$, since it is not a primitive unramified binary form.
Thus $B$ cannot contain a direct summand of the form $(u_1)\perp(u_2)$.
This shows that $r\leq 1$.
\end{proof}

\begin{lemma} 
\label{lem:3.6}
If $B \in\calm_n(\ua)$, then we have
\[
|\ua|\leq \Del(B).
\]
\end{lemma}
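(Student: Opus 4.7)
The plan is to reduce to the case $\ua=(\underbrace{0,\ldots,0}_s,\underbrace{1,\ldots,1}_t)$ by a diagonal rescaling, and then apply the reduction theorem of the next section together with Proposition~\ref{prop:3.2}.

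First, I conjugate $B$ by a permutation matrix in $\GL_n(\frko)$ so that $\ua$ is non-decreasing; this preserves both $|\ua|$ and $\Del(B)$, and now $\ua\in S(B)$. Next I rescale: set $D=\diag(\vpi^{-[a_1/2]},\ldots,\vpi^{-[a_n/2]})$, $B''=B[D]$, and $\underline{e}=(e_1,\ldots,e_n)$ with $e_i=a_i-2[a_i/2]\in\{0,1\}$. Using the identities $\ord(b_{ii})\geq a_i$ and $\ord(2b_{ij})\geq(a_i+a_j)/2$ together with $a_i=2[a_i/2]+e_i$, one checks that $B''$ is a non-degenerate half-integral matrix in $\calm(\underline{e})$, and
\[
|\ua|-|\underline{e}|=2\sum_{i=1}^{n}[a_i/2]=\Del(B)-\Del(B''),
\]
where for the second equality one uses that $D_{B''}=\vpi^{-2\sum[a_i/2]}D_B$ differs from $D_B$ by a square, so $F(\sqrt{D_{B''}})=F(\sqrt{D_B})$, $\frkD_{B''}=\frkD_B$, and $\xi_{B''}=\xi_B$. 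Hence $|\ua|\leq\Del(B)$ is equivalent to $|\underline{e}|\leq\Del(B'')$, and after a further permutation I may assume $\underline{e}=(\underbrace{0,\ldots,0}_s,\underbrace{1,\ldots,1}_t)$.

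Now I apply Theorem~\ref{thm:4.1} to produce a reduced form $T\sim B''$ whose GK type $\ua_T$ equals $\GK(B'')$; Proposition~\ref{prop:3.2} then gives $|\ua_T|=\Del(T)=\Del(B'')$. Since $\underline{e}\in S(B'')$, we have $\ua_T\succeq\underline{e}$ in the lexicographic order. I finish by case analysis. If $(\ua_T)_i=0$ for all $i\leq s$, then comparing at position $s+1$ forces $(\ua_T)_{s+1}\geq 1$, and by non-decreasingness $(\ua_T)_j\geq 1$ for $j\geq s+1$, yielding $|\ua_T|\geq t$. Otherwise, letting $i_0\leq s$ be the smallest index with $(\ua_T)_{i_0}\geq 1$, non-decreasingness gives $(\ua_T)_j\geq 1$ for $j\geq i_0$, hence $|\ua_T|\geq n-i_0+1\geq t+1$. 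Either way $t=|\underline{e}|\leq|\ua_T|=\Del(B'')$, which gives $|\ua|\leq\Del(B)$.

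The main obstacle is the appeal to Theorem~\ref{thm:4.1}, which is proved only in Section~\ref{sec:4}; the rest is a routine rescaling computation and a short lex-versus-sum comparison that crucially exploits the special shape $\underline{e}=(0^s,1^t)$. For general non-decreasing pairs, $\succeq$ in lex does not imply $\geq$ in sum (e.g.\ $(1,1,1)\succ(0,0,10)$ with smaller sum), so the preliminary reduction to this shape is essential to the whole scheme.
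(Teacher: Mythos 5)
Your argument is mathematically sound, but it takes a different route from the paper's. Both proofs begin with the same rescaling by $\diag(\vpi^{-[a_1/2]},\ldots,\vpi^{-[a_n/2]})$ to reduce to the shape $\underline e=(0^s,1^t)$, and your verification that $|\ua|-|\underline e|=2\sum[a_i/2]=\Del(B)-\Del(B'')$ is correct. After that, however, you invoke Theorem~\ref{thm:4.1} (the reduction theorem) to get a reduced form in the class of $B''$, then apply Proposition~\ref{prop:3.2} and the lex-versus-sum comparison. This is not circular (the proof of Theorem~\ref{thm:4.1} proceeds through Lemmas~\ref{lem:4.2},~\ref{lem:4.4},~\ref{lem:4.5}, Lemma~\ref{lem:3.2a} and Proposition~\ref{prop:3.4}, none of which use Lemma~\ref{lem:3.6}), but it is a forward reference to Section~\ref{sec:4} and disrupts the paper's ordering, since the paper needs Lemma~\ref{lem:3.6} in place already in Section~\ref{sec:3} to feed Lemma~\ref{lem:3.7}. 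The paper instead stays within Section~\ref{sec:3}: it writes $B$ in a $2\times 2$ block form adapted to $(0^s,1^t)$ and iterates with Lemma~\ref{lem:1.4} --- if $\GK(B_{11})\succneqq(0^s)$ it shifts one slot from the $0$-block to the $1$-block, and if $\GK(B_{22})\succneqq(1^t)$ it strictly increases $|\ua|$ and re-enters the rescaling loop --- until $\GK(B_{11})=(0^s)$ and $\GK(B_{22})=(1^t)$, at which point Proposition~\ref{prop:3.4} shows $B$ is equivalent to a reduced form and Proposition~\ref{prop:3.2} finishes. So the paper's iteration replaces both your appeal to the reduction theorem and your lex-versus-sum observation (which is a nice clean argument, but unneeded in the paper's route because the iteration forces the GK invariants of the blocks to match $(0^s)$ and $(1^t)$ exactly). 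Your proof is shorter to state but heavier in what it presupposes; the paper's is longer but fits the intended logical order.
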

\begin{proof}
As in the proof of Proposition \ref{prop:3.2}, we may assume $\ua=(\underbrace{0, \ldots, 0}_s, \underbrace{1, \ldots, 1}_t)$ by replacing $B$ by $B[\mathrm{diag}(\vpi^{-[a_1/2]}, \vpi^{-[a_2/2]}, \ldots, \vpi^{-[a_n/2]})]$.
Write $B$ in a block form
\[
\begin{array}{cccc}
&\hskip 5pt
\overbrace{\hphantom{B_{11}}}^{s} \,
\overbrace{\hphantom{ B_{12}}}^{t}   \\
\noalign{\vskip -17pt}
\\ 
&
B=\left(
\hskip -3pt
\begin{array}{ccl}
B_{11} & B_{12}  \\
{}^t\!B_{12} & B_{22} 
\end{array} \hskip -3pt
\right) \hskip -7pt
\begin{array}{l}
 \left.\vphantom{B_{11}} \right\} \text{\footnotesize${s}$} \\
 \left.\vphantom{B_{11}} \right\} \text{\footnotesize${t}$.} 
\end{array}
\end{array}
\]
If $\GK(B_{11})\succneqq (\underbrace{0, \ldots, 0}_s)$, then  $B[U\perp\mathbf{1}_t]\in \calm(\underbrace{0, \ldots, 0}_{s-1}, \underbrace{1, \ldots, 1}_{t+1})$ for some $U\in\GL_s(\frko)$ by the proof of Lemma \ref{lem:1.4}.
Replacing $B$ and $t$ by $B[U\perp\mathbf{1}_t]$ and $t+1$, respectively, we may assume $\GK(B_{11})=(\underbrace{0, \ldots, 0}_s)$.

Moreover, if $\GK(B_{22})\succneqq (\underbrace{1, \ldots, 1}_t)$, then we can find a non-decreasing sequence $\ua'$ and $U'\in\GL_t(\frko)$ such that $|\ua'|>|\ua|$ and $B[\mathbf{1}_s\perp U']\in \calm(\ua')$ by Lemma \ref{lem:1.4}, (1).
In this case, we go back to the case $a_n>1$.

Repeating this argument, we may assume $\ua=(\underbrace{0, \ldots, 0}_s, \underbrace{1, \ldots, 1}_t)$, $\GK(B_{11})=(\underbrace{0, \ldots, 0}_s)$, and  $\GK(B_{22})=(\underbrace{1, \ldots, 1}_t)$.
In this case, $B$ is equivalent to a reduced form of GK type $\ua$ by Proposition \ref{prop:3.4}.
Thus the lemma follows from Proposition \ref{prop:3.2}.
\end{proof}

\begin{lemma}\label{lem:3.7} 
Suppose that $B\in\calhnd_n(\frko)$.
Assume that $\ua=(a_1, \ldots, a_n)\in \bfS(\{B\})$.
If $B_1\in\calhnd_m(\frko)$ is represented by $B$, then we have
\[
|\ua^{(m)}|\leq \Del(B_1).
\]
\end{lemma}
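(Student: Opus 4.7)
The plan is to chain together Lemma \ref{lem:1.2} and Lemma \ref{lem:3.6}, with a small remark about invariance of $\Del$ under $\GL_m(\frko)$-equivalence.

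First I would apply Lemma \ref{lem:1.2}. Since $\ua\in\bfS(\{B\})$, it is a non-decreasing sequence, and there is some $V\in\GL_n(\frko)$ with $\ua\in S(B[V])$, i.e.\ $B[V]\in\calm(\ua)$. Lemma \ref{lem:1.2} then gives $\ua^{(m)}\in\bfS(\{B_1\})$. Unpacking this, we obtain some $U\in\GL_m(\frko)$ such that $B_1[U]\in\calm(\ua^{(m)})$.

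Next I would apply Lemma \ref{lem:3.6} to $B_1[U]$ and the sequence $\ua^{(m)}$ (noting that $\ua^{(m)}$ is non-decreasing since $\ua$ is), yielding
\[
|\ua^{(m)}|\leq \Del(B_1[U]).
\]

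Finally, I need the observation that $\Del$ is invariant under $\GL_m(\frko)$-equivalence. This is immediate from the definitions: for $U\in\GL_m(\frko)$, $\det(B_1[U])=\det(B_1)\cdot\det(U)^2$ with $\det(U)\in\frko^\times$, so $D_{B_1[U]}/D_{B_1}\in\frko^{\times 2}$, hence $F(\sqrt{D_{B_1[U]}})=F(\sqrt{D_{B_1}})$, giving $\ord(D_{B_1[U]})=\ord(D_{B_1})$, $\ord(\mathfrak{D}_{B_1[U]})=\ord(\mathfrak{D}_{B_1})$, and $\xi_{B_1[U]}=\xi_{B_1}$. Therefore $\Del(B_1[U])=\Del(B_1)$, and combining this with the previous inequality yields $|\ua^{(m)}|\leq \Del(B_1)$, as claimed.

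There is no real obstacle here; the lemma is essentially a formal consequence of Lemma \ref{lem:1.2} (which transports a sequence in $\bfS(\{B\})$ to its truncation in $\bfS(\{B_1\})$) together with Lemma \ref{lem:3.6} (which bounds the size of such a sequence by $\Del$ of any representative in the class). The one point worth noting explicitly in the write-up is the invariance of $\Del$ under the equivalence used in passing from $B_1[U]$ to $B_1$.
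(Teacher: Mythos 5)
Your proof is correct and follows the paper's argument exactly: reduce to $\ua\in S(B[V])$, apply Lemma \ref{lem:1.2} to produce $U\in\GL_m(\frko)$ with $B_1[U]\in\calm(\ua^{(m)})$, and then invoke Lemma \ref{lem:3.6} together with $\Del(B_1[U])=\Del(B_1)$. The paper simply states this last equality without the unpacking you provide, but the content is identical.
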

\begin{proof}
We may assume $\ua\in S(B)$.
We can find $U\in \GL_m(\frko)$ such that $B_1[U]\in \calm(\ua^{(m)})$ by Lemma \ref{lem:1.2}.
Then we have
\[
|\ua^{(m)}|\leq \Del(B_1[U])=\Del(B_1)
\]
by Lemma \ref{lem:3.6}.
\end{proof}

\begin{lemma}\label{lem:3.8} 
Let $\ua=(a_1, a_2, \ldots, a_n)\in \ZZn$ be a sequence such that  $\ua^{(k)}= (\underbrace{0, \ldots, 0}_s, \underbrace{1, \ldots, 1}_t)$ and $a_{k+1}, \ldots, a_n\geq 1$, $s+t=k$.
Suppose that $B=(b_{ij})\in\calm(\ua)$.
Assume that $B^{(k)}$ is a reduced form of GK type $\ua^{(k)}$.
Let $(L, Q, \underline{\psi})$ be a framed quadratic module associated to $B$.
Assume that $x=\sum_{i=1}^n x_i\psi_i\in L\otimes F$ satisfies the following conditions (a), (b) and (c).
\begin{itemize}
\item[(a)] $x_1, \dots, x_k\in F$ and $x_{k+1}, \dots, x_n\in \frko$.
\item[(b)] $(x, y)_Q\in\frkp$ for any $y=\sum_{i=1}^k y_i \psi_i$, $y_1, \ldots, y_k \in\frko$.
\item[(c)] $Q(x)\in\frkp$.
\end{itemize}
Then we have $x_1, \ldots, x_s\in\frkp$ and $x_{s+1}, \dots, x_k\in\frko$.
\end{lemma}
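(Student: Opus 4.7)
The approach has three steps: (i) show that the ``tail'' $x''=\sum_{i>k}x_i\psi_i\in L$ contributes only elements of $\frkp$ to $(x,\psi_j)_Q$ and to $Q(x)$, so conditions (b) and (c) reduce to conditions on $x':=\sum_{i\le k}x_i\psi_i$ with respect to $B^{(k)}$; (ii) use Proposition~\ref{prop:3.1} to replace $B^{(k)}$ by a convenient block-diagonal form; and (iii) analyse the resulting blocks one at a time.

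\emph{Step 1.} For $i>k$ and $j\le k$ we have $\ord(2b_{ij})\ge(a_i+a_j)/2\ge 1/2$, hence $\ge 1$. Therefore $\beta_j:=(x'',\psi_j)_Q=\sum_{i>k}x_i(2b_{ij})\in\frkp$ for every $j\le k$. Likewise $(b_{ij})_{i,j>k}\in\vpi\calh_{n-k}(\frko)$, so $Q(x'')\in\frkp$. Using
\[
(x,\psi_j)_Q=(x',\psi_j)_{B^{(k)}}+\beta_j,\qquad Q(x)=Q^{(k)}(x')+\sum_{i\le k}x_i\beta_i+Q(x''),
\]
conditions (b) and (c) reduce to
\begin{equation}\label{bc-red}
(x',\psi_j)_{B^{(k)}}\in\frkp\ \ (j\le k),\qquad Q^{(k)}(x')+\sum_{i\le k}x_i\beta_i\in\frkp.
\end{equation}

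\emph{Step 2.} By Proposition~\ref{prop:3.1} applied to the reduced form $B^{(k)}$ there is $U\in G_{\ua^{(k)}}$ with
\[
B^{(k)}[U]=K_1\perp\cdots\perp K_{[s/2]}\perp B_\circ\perp \vpi K'_1\perp\cdots\perp \vpi K'_{[t/2]},
\]
the $K_i,K'_j$ primitive unramified binary forms and $B_\circ$ of GK type $\emptyset$, $(0)$, $(1)$, or $(0,1)$. Replacing $B$ by $B[U\perp\mathbf{1}_{n-k}]$ (which still lies in $\calm(\ua)$ by Proposition~\ref{prop:1.1}) we may assume $B^{(k)}$ already has this form. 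A direct inspection of the block structure of elements of $G_{\ua^{(k)}}$ shows that the target sublattice $L^*:=\bigoplus_{i\le s}\frkp\psi_i\oplus\bigoplus_{s<i\le k}\frko\psi_i$ is $G_{\ua^{(k)}}$-stable, so the conclusion ``$x'\in L^*$'' is preserved.

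\emph{Step 3.} Split the coordinates of $x'$ in the new basis into $w_i\in F^2$ for each $K_i$, $v_j\in F^2$ for each $\vpi K'_j$, and $\hat x$ (of length $0$, $1$, or $2$) for $B_\circ$. The first half of \eqref{bc-red} restricted to a $K_i$-block reads $(2K_i)w_i\in\frkp^2$; since $\det(2K_i)=-D_{K_i}\in\frko^{\times}$, $2K_i\in\GL_2(\frko)$ and $w_i\in\frkp^2$, as required. Analogously, $(2\vpi K'_j)v_j\in\frkp^2$ gives $v_j\in\frko^2$. Feeding these bounds into the second half of \eqref{bc-red} leaves
\[
B_\circ[\hat x]+\sum_{i\in B_\circ}x_i\beta_i\in\frkp.
\]
If $B_\circ=\emptyset$ we are done. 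If $B_\circ=(u_p)$ with $u_p\in\frko^{\times}$ and $a_p=0$, this is $u_p\hat x_p^2+\hat x_p\beta_p\in\frkp$ with $\beta_p\in\frkp$; since $\ord(\hat x_p\beta_p)\ge\ord(\hat x_p)+1>2\ord(\hat x_p)$ whenever $\ord(\hat x_p)<1$, the quadratic term would isolate at order $\le 0$, contradicting membership in $\frkp$; hence $\hat x_p\in\frkp$. The case $B_\circ=(\vpi u_p)$, $a_p=1$, is identical with $\vpi u_p\hat x_p^2$ in the leading role and yields $\hat x_p\in\frko$. For $B_\circ$ of GK type $(0,1)$ one first runs the $(u_p)$-argument (the cross term $\vpi\gamma\hat x_p\hat x_{p+1}$ has strictly larger order than $u_p\hat x_p^2$ except when $\hat x_{p+1}$ is so negative that $\vpi u_{p+1}\hat x_{p+1}^2$ already dominates on its own and forces the contradiction), concluding $\hat x_p\in\frkp$; substituting this back and applying the Newton polygon estimate with $\vpi u_{p+1}\hat x_{p+1}^2$ as leading term yields $\hat x_{p+1}\in\frko$.

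The main obstacle is the $(0,1)$-subcase: condition (b) alone is too weak, and one must carefully rule out cancellations among the competing terms $u_p\hat x_p^2$, $\vpi\gamma\hat x_p\hat x_{p+1}$, $\vpi u_{p+1}\hat x_{p+1}^2$ in the single quadratic condition; the other subcases are straightforward one-variable order estimates.
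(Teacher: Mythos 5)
Your proposal is correct, and it follows the same overall skeleton as the paper: peel off the tail contribution using $2B_{12}\in\vpi\mathrm{M}_{k,n-k}(\frko)$ and $B_{22}\in\vpi\calh_{n-k}(\frko)$, apply Proposition~\ref{prop:3.1} to put $B^{(k)}$ in block-diagonal form, note that $L^*$ and the conditions (b), (c) are preserved under the change of basis by $G_{\ua^{(k)}}$, dispose of the $K_i$ and $\vpi K'_j$ blocks using invertibility of $2K_i$ and $2K'_j$ over $\frko$, and then settle the small leftover block $B_\circ$. Where you depart from the paper is precisely in that last step for $B_\circ$ of GK type $(0,1)$. The paper identifies $B_\circ$ (via Proposition~\ref{prop:2.1}) with the norm form on $\frko_E$ for a ramified quadratic extension $E/F$, writes $X=x_s+\vpi_E x_{s+1}\in E$, and compares $\ord(\mathrm N(X))$ with the order of the linear error terms to conclude $X\in\frkp_E$. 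You instead run a direct valuation/Newton-polygon analysis on $u_p\hat x_p^2+\vpi\gamma\hat x_p\hat x_{p+1}+\vpi u_{p+1}\hat x_{p+1}^2+\hat x_p\beta_p+\hat x_{p+1}\beta_{p+1}$; the crucial observation making the ``cancellation'' worry harmless is that $\ord(u_p\hat x_p^2)=2\ord(\hat x_p)$ and $\ord(\vpi u_{p+1}\hat x_{p+1}^2)=1+2\ord(\hat x_{p+1})$ have opposite parities, so exactly one of them realizes the minimum order, and all remaining terms (cross term and $\beta$-terms) are strictly larger. Your sketch of that final estimate is compressed and could be written out more carefully as a clean two-case argument on $\min(2\ord\hat x_p,\,1+2\ord\hat x_{p+1})$, but the idea is sound. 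The paper's route via $(\frko_E,\mathrm N)$ is conceptually tidier and reuses Proposition~\ref{prop:2.1}, whereas yours is more elementary and self-contained; both establish the lemma.
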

\begin{proof}
Note that the group $G_{\ua^{(k)}}$ preserves both $\sum_{i=1}^k \frko \psi_i$ and $\sum_{i=1}^s \frkp\psi_i +\sum_{i=s+1}^k \frko\psi_i$.
Write $B$ in a block form
\[
B=\begin{pmatrix}
B^{(k)} & B_{12}  \\
{}^t\!B_{12} & B_{22} 
\end{pmatrix}.
\]
Here, $2 B_{12}\in \vpi\mathrm{M}_{k, n-k}(\frko)$ and $B_{22}\in\vpi \calh_{n-k}(\frko)$ by assumption.
By Proposition \ref{prop:3.1}, we may assume $B^{(k)}$ is of the form
\[
K_1\perp K_2\perp \cdots  \perp K_{[s/2]} \perp B' \perp \vpi (K'_1\perp  K'_2\perp \cdots \perp K_{[t/2]} ),
\]
where $K_1, K_2, \ldots K_{[s/2]}$ and $K'_1, K'_2, \ldots K'_{[t/2]}$ are primitive unramified binary forms and $B'$ is a reduced form of GK type $\emptyset$, $(0)$, $(1)$, or $(0,1)$.
We consider only the case $\GK(B')=(0,1)$, since the other cases are similar.
In this case, the condition (b) is equivalent to
\[
\ord(\sum_{i=1}^n 2b_{ij} x_i)\geq 1 \quad \text{ for } j=1, 2, \ldots, k.
\]
It follows that 
\[
x_1, x_2, \ldots, x_{s-1}\in \frkp, \quad 
x_{s+2}, x_{s+3}, \ldots, x_k \in \frko.
\]
We fix $x_1, x_2, \ldots, x_{s-1}\in \frkp$ and $x_{s+2}, x_{s+3}, \ldots, x_n \in \frko$.
We need to show $x_s\in\frkp$ and $x_{s+1}\in\frko$.
Put $E=F(\sqrt{D_{B'}})$.
Then $E$ is a ramified quadratic extension of $F$.
Moreover, there exists a prime element $\vpi_E$ of $E$ such that the framed quadratic module associated to $B'$ is weakly isomorphic to $(\frko_E, \mathrm{N}, (1, \vpi))$.
By multiplying $B$ by some unit, we may assume $B'\left[ \begin{pmatrix} x_s \\ x_{s+1}\end{pmatrix}\right]=\mathrm{N}(x_s+\vpi_E x_{s+1})$.
Put $X=x_s+\vpi_E x_{s+1}\in E$.
Then the condition (c) implies
\[
\mathrm{N}(X)+\beta_1 x_s+\bet_2 x_{s+1}
\in \frkp,
\]
where, $\bet_1=\sum_{i=k+1}^n b_{si}x_i\in\frkp$ and  $\bet_2=\sum_{i=k+1}^n b_{s+1\, i} x_i\in\frkp$.
Note that 
\[
\ord(\mathrm{N}(X))=2\ord_E(X), \qquad \ord(\beta_1 x_s+\bet_2 x_{s+1})\geq \left[\frac{\ord_E(X)}2\right]+1,
\] 
where $\ord_E$ is the order for $E$.
It follows that $X\in\frkp_E$, and so $x_s\in\frkp$ and $x_{s+1}\in\frko$.
\end{proof}

\begin{lemma} 
\label{lem:3.9}
Let $\ua=(a_1, a_2, \ldots, a_n)\in \ZZn$ be a sequence.
Put $A=\max(a_1, a_2, \ldots, a_k)$.
We assume that $a_{k+1}, \ldots, a_n\geq A$.
Suppose that $B=(b_{ij})\in\calm(\ua)$ and that $B^{(k)}$ is a reduced form of GK type $\ua^{(k)}$.
Let $(L, Q, \underline{\psi})$ be the framed quadratic module associated to $B$.
Assume that $x=\sum_{i=1}^n x_i\psi_i\in L\otimes F$ satisfies the following conditions  (a), (b) and (c).
\begin{itemize}
\item[(a)] $x_1, \dots, x_k\in F$ and $x_{k+1}, \dots, x_n\in \frko$.
\item[(b)] $\ord((\psi_j, x)_Q)\geq (a_j+A)/2$ for $j=1, \ldots, k$. 
\item[(c)] $\ord(Q(x))\geq A$.
\end{itemize}
Then we have
\[
\ord(x_i)\geq \frac{A-a_i}2 \qquad (i=1,2,\ldots,k).
\]
\end{lemma}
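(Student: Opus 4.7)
The plan is to decompose $x = x^{(k)} + x^{(>k)}$ with $x^{(k)} = \sum_{i\leq k} x_i\psi_i$ and $x^{(>k)} = \sum_{i>k} x_i\psi_i \in L$, upgrade condition (b) into a refined bound on $(\psi_j, x^{(k)})$, and then invert the Gram matrix $2B^{(k)}$ via Lemma \ref{lem:3.2a}. For $i > k$ we have $\ord(2b_{ij})\geq (a_i+a_j)/2\geq (A+a_j)/2$ and $x_i\in\frko$, so $\ord((\psi_j, x^{(>k)}))\geq (A+a_j)/2$; subtracting from (b) yields the refined bound
\[
\ord\bigl((\psi_j, x^{(k)})\bigr)\;\geq\;\frac{A+a_j}{2},\qquad j=1,\ldots,k.
\]

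Let $\tau$ be an admissible involution realizing the reducedness of $B^{(k)}$. When $\calp^0(\tau)=\emptyset$, Lemma \ref{lem:3.2a} asserts $(4B^{(k)})^{-1}\in\calm(-\ua^{(k)})$. Solving the linear system $((\psi_j,x^{(k)}))_j=2B^{(k)}(x_i)_i$ gives $(x_i)_i=2(4B^{(k)})^{-1}((\psi_j,x^{(k)}))_j$, and the bound $\ord(2(4B^{(k)})^{-1}_{ij})\geq -(a_i+a_j)/2$ combined with the refined bound above yields
\[
\ord(x_i)\;\geq\;\min_{j\leq k}\left(-\frac{a_i+a_j}{2}+\frac{A+a_j}{2}\right)=\frac{A-a_i}{2},
\]
as desired.

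When $\calp^0(\tau)\neq\emptyset$, let $i_0$ (and possibly $i_1$, with $a_{i_0}\not\equiv a_{i_1}\pmod 2$) denote the singleton(s). Deleting the $\calp^0$-indexed rows and columns of $B^{(k)}$ leaves a reduced form with empty $\calp^0$, so the previous argument gives $\ord(x_i)\geq (A-a_i)/2$ for $i\notin\calp^0(\tau)$ (the contributions from $x_{i_0}, x_{i_1}$ to $(\psi_j, x^{(k)})$ for $j\notin\calp^0(\tau)$ are of strictly higher order by Definition \ref{def:3.2}(3)). For the singletons themselves, I would use condition (c): writing $Q(x)=b_{i_0i_0}x_{i_0}^2+b_{i_1i_1}x_{i_1}^2+\mathcal{R}$, where $\mathcal R$ collects all other terms, the established bounds on $x_i$ ($i\neq i_0, i_1$) together with the strict inequality $\ord(2b_{i_0j})>(a_{i_0}+a_j)/2$ show $\ord(\mathcal R)\geq A$. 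If one had $\ord(x_{i_t})<(A-a_{i_t})/2$ for some $t\in\{0,1\}$, then $b_{i_ti_t}x_{i_t}^2$ would have order $<A$; the parity constraint $a_{i_0}\not\equiv a_{i_1}\pmod 2$ prevents the two singleton squares from cancelling each other, so the smaller of the two would emerge as the unique leading term of $Q(x)$ with order $<A$, contradicting (c). Hence $\ord(x_{i_t})\geq (A-a_{i_t})/2$ for $t=0,1$.

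The main obstacle is the bootstrap in the second case: applying Lemma \ref{lem:3.2a} to the $\calp^0$-complement requires that the cross contributions of $x_{i_0}, x_{i_1}$ to the sub-block's version of the refined bound be absorbed, which in turn uses a preliminary weak bound on $\ord(x_{i_t})$ obtainable from condition (b) alone up to a factor of $e=\ord(2)$. Carefully tracking this order-of-$e$ slack and then using (c) to sharpen it back to the exact bound $(A-a_{i_t})/2$ is the technical heart of the argument.
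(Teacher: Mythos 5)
Your treatment of the case $\calp^0(\tau)=\emptyset$ is correct and quite efficient: the refined bound $\ord((\psi_j,x^{(k)}))\geq(A+a_j)/2$ does follow from (b) together with the integrality of $x_{k+1},\dots,x_n$, and combined with Lemma~\ref{lem:3.2a} the inversion gives exactly the desired estimate $\ord(x_i)\geq(A-a_i)/2$. This is in the same spirit as (though more direct than) the paper's reduction.

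The problem is the case $\calp^0(\tau)\neq\emptyset$, and the gap is a genuine circularity, not a bookkeeping subtlety. Your parenthetical justification for deleting the $\calp^0$-rows/columns --- ``the contributions from $x_{i_0},x_{i_1}$ to $(\psi_j,x^{(k)})$ for $j\notin\calp^0(\tau)$ are of strictly higher order by Definition~\ref{def:3.2}(3)'' --- is unsound: Definition~\ref{def:3.2}(3) only gives $\ord(2b_{ji_0})>(a_j+a_{i_0})/2$, and to turn this into $\ord(2b_{ji_0}x_{i_0})\geq(a_j+A)/2$ you would need $\ord(x_{i_0})\gtrsim(A-a_{i_0})/2$, which is precisely what you are trying to prove. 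Conversely, your condition-(c) argument for bounding $\ord(x_{i_0})$ relies on having $\ord(\calR)\geq A$, but $\calR$ contains cross terms $2b_{i_0 j}x_{i_0}x_j$ whose estimate again requires the very bound on $\ord(x_{i_0})$ under dispute. You acknowledge the issue in your final paragraph, but the proposed fix --- a preliminary bound off by $e=\ord(2)$ from (b), to be ``sharpened back'' by (c) --- does not close the loop: the $e$-sized slack on $\ord(x_{i_0})$ (or even a slack $>1/2$) propagates into the sub-block system and degrades the bounds on $\ord(x_j)$ for $j\notin\calp^0$ by a comparable amount, which in turn ruins the estimate on $\calR$, so the iteration does not visibly converge; moreover Lemma~\ref{lem:3.2a} fails outright with a loss of order $2e$ on a $1\times 1$ singleton, which is exactly why that lemma requires $\calp^0=\emptyset$.

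The paper avoids this circularity entirely. After multiplying by $\vpi$ to make $A$ odd and rescaling each coordinate by $\vpi^{-[a_i/2]}$, it reduces to the case $\ua^{(k)}=(0,\dots,0,1,\dots,1)$ and then applies Lemma~\ref{lem:3.8}. That lemma first invokes Proposition~\ref{prop:3.1} to \emph{genuinely block-diagonalize} $B^{(k)}$ into primitive unramified binary blocks $K_i$, $\vpi K'_j$ together with one small residual block $B'$ of GK type $\emptyset,(0),(1)$ or $(0,1)$; this is possible because a primitive unramified binary form $K$ has $\det(2K)\in\frko^\times$, so the relevant Gram--Schmidt steps stay inside $G_\ua$. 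The linear system in (b) then decouples exactly (off-block entries are zero, not merely of strictly larger order), the unramified blocks are solved by unimodular inversion, and the residual block containing the singleton(s) is handled by a norm-form argument in a ramified quadratic extension, using (c). The essential new idea relative to your proposal is the exact block diagonalization via Proposition~\ref{prop:3.1}; without it, the singleton cannot be decoupled from the rest of the system, and the inversion approach stalls.
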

\begin{proof}
By multiplying $B$ by $\vpi$ if necessary, we may assume $A$ is odd.
The condition (b) is equivalent to 
\[
\ord(\sum_{i=1}^k 2b_{ij} x_i)\geq \frac{a_j+A}2\quad \text{ for } j=1,2, \ldots, k.
\]
Put
\[
B'=B[\diag(\vpi^{-[a_1/2]}, \vpi^{-[a_2/2]}, \ldots, \vpi^{-[a_k/2]}, \vpi^{-[A/2]}, \ldots, \vpi^{-[A/2]})]
\]
and
\[
x'={}^t (x'_1, \ldots, x'_n), \qquad x'_i=\begin{cases} \vpi^{[a_i/2]-[A/2]} x_i & \text{ if $i\leq k$,} \\
x_i & \text{ if $k<i\leq n$.}\end{cases}
\]
Then the conditions (b) and (c) are equivalent to the following conditions (b') and (c'), respectively.
\begin{itemize}
\item[(b')] $\ord(\sum_{i=1}^k 2b'_{ij} x'_i)\geq (a_j+1)/2-[a_j/2]$.
\item[(c')] $\ord(B'[x'])\geq 1$.
\end{itemize}
Changing the coordinate, we may assume $\ua^{(k)}$ is of the form $(\underbrace{0, \ldots, 0}_s, \underbrace{1, \ldots, 1}_t)$.
In this case, the lemma follows from Lemma \ref{lem:3.8}.
\end{proof}

Suppose that $\ua=(a_1, a_2, \ldots, a_n)\in\ZZn$ is a non-decreasing sequence.
Let $B\in\calh_n(\frko)$ be a reduced form of GK type $\ua$ and $(L, Q, \underline{\psi})$ the framed quadratic module associated to $B$.
We define $L_s$ and $\call_s$ as in section \ref{sec:1}, i.e., 
\begin{align*}
L_s&=\sum_{i=n^\ast_{s-1}+1}^n \frko\psi_i, \\
\call_s&=L_s+\sum_{u=1}^{s-1} \vpi^{\lceil (a^\ast_s-a^\ast_u)/2\rceil} L_u.
\end{align*}
\begin{lemma} 
\label{lem:3.10}
Let $B$ and $(L, Q, \underline{\psi})$ be as above.
Suppose that $x\in L$.
Then $x\in \call_s$ if and only if the following conditions (1) and (2) are satisfied.
\begin{itemize}
\item[(1)] For any $y\in \call_t$, we have 
\[
\ord((x, y)_Q)\geq {\displaystyle \frac{a^\ast_t+a^\ast_s}2} \quad \text{ for } t=1,2, \ldots, s-1.
\]
\item[(2)] $\ord(Q(x))\geq a^\ast_s$.
\end{itemize}
\end{lemma}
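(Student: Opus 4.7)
The approach is to treat the ``only if'' and ``if'' directions separately. The ``only if'' direction is an immediate consequence of Lemma~\ref{lem:1.3}: for any $x \in \call_s$ and $y \in \call_t$, that lemma directly yields $\ord(Q(x)) \geq a^\ast_s$ and $\ord((x, y)_Q) \geq (a^\ast_s + a^\ast_t)/2$, which are conditions (1) and (2) of the statement.

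For the converse, first unwind the definition $\call_s = \sum_{u=1}^{s} \vpi^{\lceil (a^\ast_s - a^\ast_u)/2 \rceil} L_u$ in coordinates relative to $\underline{\psi}$. A short computation (using that $\lceil (a^\ast_s - a^\ast_u)/2 \rceil$ decreases in $u$, so the binding exponent at a coefficient $\psi_i$ with $i \in I_v$ occurs at $u = v$) shows that, for $x = \sum_{i=1}^n x_i \psi_i \in L$, membership $x \in \call_s$ is equivalent to the explicit coordinate bounds
\[
\ord(x_i) \geq \lceil (a^\ast_s - a^\ast_v)/2 \rceil, \qquad i \in I_v,\ v < s
\]
(for $v \geq s$ the only requirement is $x_i \in \frko$, which is automatic). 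Thus the task is to derive this system from (1) and (2), and the natural tool is Lemma~\ref{lem:3.9} applied with $k = n^\ast_{s-1}$ and $A = a^\ast_s$: hypothesis (a) is immediate from $x \in L$; for hypothesis (b), each basis vector $\psi_j$ with $j \in I_v$, $v < s$, lies in $L_v \subset \call_v$, so condition (1) with $t = v$ gives $\ord((\psi_j, x)_Q) \geq (a^\ast_v + a^\ast_s)/2 = (a_j + A)/2$; hypothesis (c) is exactly (2); and the structural requirements that $B^{(k)}$ be a reduced form of GK type $\ua^{(k)}$ and that $a_{k+1}, \ldots, a_n \geq A$ follow from $B$ being reduced of GK type $\ua$ and from the definition of $a^\ast_s$. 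The resulting conclusion $\ord(x_i) \geq \lceil (A - a_i)/2 \rceil$ for $i \leq k$ is exactly the system of bounds identifying $\call_s$ inside $L$.

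The main obstacle is a formal mismatch: Lemma~\ref{lem:3.9} is stated with $A = \max(a_1, \ldots, a_k)$, whereas in our application $\max(a_1, \ldots, a_k) = a^\ast_{s-1}$ is strictly less than the value $A = a^\ast_s$ that we want. The resolution is to observe that the identification $A = \max(a_1, \ldots, a_k)$ plays no essential role in the proof of Lemma~\ref{lem:3.9}: the scaling step (multiplying $B$ by $\vpi$ if necessary to make $A$ odd, and then conjugating by $\diag(\vpi^{-[a_1/2]}, \ldots, \vpi^{-[a_k/2]}, \vpi^{-[A/2]}, \ldots, \vpi^{-[A/2]})$) goes through verbatim for any $A$ in the range $\max(a_1, \ldots, a_k) \leq A \leq \min(a_{k+1}, \ldots, a_n)$. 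It reduces the problem to the case in which $\ua^{(k)}$ is $\{0,1\}$-valued and $a_{k+1}, \ldots, a_n \geq 1$, to which Lemma~\ref{lem:3.8} applies without change; unwinding the scaling produces the bound $\ord(x_i) \geq \lceil (A - a_i)/2 \rceil$. Taking $A = a^\ast_s$ in this generalized form yields the desired coordinate bounds and completes the proof.
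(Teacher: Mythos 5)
Your proof is correct and follows the paper's own two-line strategy exactly: Lemma \ref{lem:1.3} gives the ``only if'' direction, and Lemma \ref{lem:3.9}, applied with $k = n^\ast_{s-1}$ and $A = a^\ast_s$ (verifying hypothesis (b) via $L_t \subset \call_t$ and condition (1)), gives the converse after translating $x \in \call_s$ into the coordinate bounds $\ord(x_i) \geq \lceil (a^\ast_s - a^\ast_v)/2\rceil$ for $i \in I_v$, $v < s$. You also correctly flag the formal mismatch the paper glosses over: Lemma \ref{lem:3.9} as stated fixes $A = \max(a_1,\ldots,a_k) = a^\ast_{s-1}$, which is strictly smaller than the value $a^\ast_s$ actually needed, and your observation that its proof (scaling so $A$ is odd, conjugating by $\diag(\vpi^{-[a_1/2]},\ldots,\vpi^{-[a_k/2]}, \vpi^{-[A/2]},\ldots)$, and invoking Lemma \ref{lem:3.8}) uses only $a_{k+1},\ldots,a_n \geq A$ and not the precise value of $A$ is exactly the right reading of the paper's terse ``by Lemma \ref{lem:3.9}.''
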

\begin{proof}
We denote by $M$ the set of all $x\in L$ which satisfies (1) and (2).
By Lemma \ref{lem:1.3}, we have $\call_s\subset M$.
Conversely, $M\subset \call_s$ by Lemma \ref{lem:3.9}, since $L_t\subset \call_t$.
\end{proof}

The following lemma will be used in our forthcoming paper \cite{ikedakatsurada}.
Recall that $e=\ord(2)$.
\begin{lemma}
\label{lem:3.12}
Let $\ua=(a_1, a_2, \ldots, a_n)\in \ZZn$ be a sequence and $\sig\in\frkS_n$ an $\ua$-admissible involution.
Let $B=(b_{ij})\in \calh_n(\frko)$ be a reduced form of GK type $(\ua, \sig)$.
We assume $n$ is even and $a_1 + \cdots + a_n$ is odd. 
Put $B^{-1} = (b'_{ij})$ and $\ord(\frkD_B)=d$.
Then we have the following.
\begin{align*}
\mathrm{(a)}\,\, &\ord(b'_{ii})= 2e+1-d-a_i & (i\in\calp^0(\sig)). \\
\mathrm{(b)}\,\,  &
\ord(b'_{ij})\geq (2e+1-d-a_i-a_j)/2 & (i, j\in\calp^0(\sig)). \\
\mathrm{(c)}\,\,  &
\ord(b'_{ij}) >  (2e+1-d-a_i-a_j)/2\qquad & (i\in\calp^0(\sig),\, j\notin\calp^0(\sig)). \\
\mathrm{(d)}\,\,  &
\ord(b'_{ii}) > 2e+1-d-a_i & (i\notin\calp^0(\sig)). \\
\mathrm{(e)}\,\,  &
\ord(b'_{ij}) > (2e+1-d-a_i-a_j)/2 & (i,j\notin\calp^0(\sig)).
\end{align*}
\end{lemma}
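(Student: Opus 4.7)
The strategy is to reduce $B$ via scaling and $G_\ua$-equivalence to a block-diagonal reduced form whose inverse is computable, and then transfer the resulting bounds back using the $G_\ua$-action. First, replacing $B$ with $B[\mathrm{diag}(\vpi^{-\lfloor a_i/2 \rfloor})]$ reduces us to the case $\ua \in \{0, 1\}^n$; this scaling preserves $d = \ord(\mathfrak{D}_B)$ (since $D_B$ is multiplied by a square) and shifts both sides of each claimed inequality by the same integer. Under this normalization, the $\ua$-admissibility of $\sigma$ forces each $\sigma$-orbit outside $\calp^0(\sigma)$ to be a pair of indices of equal scale, while the fixed points $i_0, j_0 \in \calp^0(\sigma)$ satisfy $\{a_{i_0}, a_{j_0}\} = \{0, 1\}$. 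Proposition~\ref{prop:3.1} then yields $U \in G_\ua$ such that
\[
B[U] = K_1 \perp \cdots \perp K_p \perp M \perp \vpi K'_1 \perp \cdots \perp \vpi K'_q,
\]
where each $K_l, K'_l$ is a primitive unramified binary form and $M$ is the $2 \times 2$ block on $\{i_0, j_0\}$, a reduced binary form of GK type $(0, 1)$.

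Next I would compute $(B[U])^{-1}$ block by block. For $M$: from $\Delta(M) = 1$ (Theorem~\ref{thm:0.1}) together with $\ord(D_M) = \ord(\det M) + 2e$ and the formula in Definition~\ref{def:0.3}, one gets $\ord(\det M) = d - 2e$. The adjugate formula then yields
\[
\ord(M^{-1}_{i_0 i_0}) = 2e + 1 - d, \qquad \ord(M^{-1}_{j_0 j_0}) = 2e - d, \qquad \ord(M^{-1}_{i_0 j_0}) \geq (2e - d)/2,
\]
verifying (a) and (b). For each $K$- or $\vpi K'$-block of common scale $a$, Lemma~\ref{lem:3.2a} gives $\ord(K^{-1}_{ii}) \geq 2e - a$ and $\ord(K^{-1}_{ij}) \geq e - a$ for $i \neq j$. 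Since any ramified quadratic extension of a dyadic field has discriminant of valuation at least $2$, we have $d \geq 2$, whence $2e - a > 2e + 1 - d - a$ and $e - a > (2e + 1 - d - 2a)/2$; this secures (d) and (e) within each block. All cross-block entries of $(B[U])^{-1}$ vanish, so (c) is trivially strict on the block-diagonal side.

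Finally, transfer the bounds to $B^{-1} = U (B[U])^{-1} U^T$. Setting $c_{ij} := (2e + 1 - d - a_i - a_j)/2$, a direct computation using $\max(0, (a_k - a_i)/2) + \max(0, (a_l - a_j)/2) \geq (a_k + a_l - a_i - a_j)/2$ shows that $c$ is $G_\ua$-stable: $\ord(U_{ik}) + c_{kl} + \ord(U_{jl}) \geq c_{ij}$ for all $k, l$, so the lower bounds propagate from $(B[U])^{-1}$ to $B^{-1}$. The equalities in (a) survive because the $U$ produced by Proposition~\ref{prop:3.1} restricts to the identity on the $\calp^0 \times \calp^0$ block, letting the exact $M^{-1}$-diagonal contributions dominate. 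The main obstacle I anticipate is establishing strict inequality in (c) for $i \in \calp^0$, $j \notin \calp^0$: there the $M^{-1}$-contributions via $U_{ik} M^{-1}_{kl} U_{jl}$ with $k, l \in \calp^0$ saturate the $G_\ua$-stability bound, so strictness must be extracted from the strict excess $\ord(U_{jl}) > (a_l - a_j)/2$ inherited, through the explicit construction $Y = -K^{-1} X$ in Proposition~\ref{prop:3.1}, from condition (3) of Definition~\ref{def:3.2}.
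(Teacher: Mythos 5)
Your overall strategy (normalize to $\ua\in\{0,1\}^n$, block-diagonalize via Proposition~\ref{prop:3.1} so that the inverse is computable block-by-block, then transfer via $B^{-1}=U(B[U])^{-1}\,{}^t U$) is in the same spirit as the paper's argument, which also reduces everything to the binary base case and Lemma~\ref{lem:3.2a}. The paper, however, performs a single $(n-2,2)$ split after normalizing $\calp^0(\sig)=\{n-1,n\}$, uses one unipotent $\begin{pmatrix}\mathbf{1}&X\\0&\mathbf{1}\end{pmatrix}$, and writes $B^{-1}=\begin{pmatrix} B_{11}^{-1}+XT^{-1}\,{}^t X & -XT^{-1}\\ -T^{-1}\,{}^t X & T^{-1}\end{pmatrix}$; each of the three blocks of this inverse is then estimated directly. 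That single-step decomposition makes the bookkeeping of strict versus non-strict bounds completely explicit.

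The gap in your version is exactly the transfer step, and it is broader than you acknowledge. Your $G_\ua$-stability inequality $\ord(U_{ik})+c_{kl}+\ord(U_{jl})\geq c_{ij}$ only yields the weak bound $\geq c_{ij}$; this is enough for~(b), but (c), (d) and (e) require strict inequality and (a) requires an exact equality that is strictly stronger than $c_{ii}$. You flag only (c) as the problem case, but (d) and (e) have the same difficulty: in $\sum_{k,l}U_{ik}((B[U])^{-1})_{kl}U_{jl}$ the terms with $k,l$ landing in the two $\calp^0$-positions saturate the stability bound via the exact diagonal entries of $M^{-1}$, so strictness must come entirely from the excess in $\ord(U_{ik})$, which needs to be proved. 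Moreover, the $U$ produced by Proposition~\ref{prop:3.1} is a product of many unipotents interleaved with permutations inside each $a$-level (the proof repeatedly says ``we may assume $\sig(1)=2$''), so the claim that ``$U$ restricts to the identity on the $\calp^0\times\calp^0$ block'' is not justified as stated; at best one can argue that the rows of $U$ indexed by $\calp^0$ are standard basis vectors, which requires tracing through the recursion. Finally, the adjugate computation for $M^{-1}$ needs a case split on whether $\ord(\det M)=d-2e$ is attained by $m_{12}^2$ or by $m_{11}m_{22}$; this is where the paper's hypothesis $1<d\leq 2e+1$ enters and you pass over it. None of these are fatal — your outline could almost certainly be completed — but as written the proof establishes only the non-strict bound in (b), and the hard parts (a), (c), (d), (e) are asserted rather than proved.
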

\begin{proof}
Note that $1<d \leq 2e+1$ by our assumption.
We first consider the case $n=2$.
In this case, $\ord(b_{ii})= a_i$ for $i=1,2$.
By Corollary \ref{cor:2.1}, we have 
\[
\ord(b_{12}^2-b_{11}b_{22})=a_1+a_2+d-2e-1.
\]
Since $d\leq 2e+1$, 
we have $\ord(b_{11}b_{22})=a_1+a_2\geq \ord(b_{12}^2-b_{11}b_{22})$.
Hence we have $\ord(b_{12})\geq (a_1+a_2+d-2e-1)/2$.
This proves the lemma for $n=2$.
Note that $\ord(b_{ii})\geq (2a_i+d-2e-1)/2$ for $i=1,2$ also holds, since $d\leq 2e+1$.

Now we consider the case $n>2$.
Without loss of generality, we may assume $\calp^0(\sig)=\{n-1, n\}$.
Write $B$ in a block form
\[
\begin{array}{cccc}
&\hskip -10pt
\overbrace{\hphantom{B_{11}}}^{n-2} \,
\overbrace{\hphantom{ B_{12}}}^{2}   \\
\noalign{\vskip -17pt}
\\ 
&
B=\left(
\hskip -3pt
\begin{array}{ccl}
B_{11} & B_{12}  \\
{}^t\!B_{12} & B_{22} 
\end{array} \hskip -3pt
\right) \hskip -7pt
\begin{array}{l}
 \left.\vphantom{B_{11}} \right\} \text{\footnotesize${n-2}$} \\
 \left.\vphantom{B_{11}} \right\} \text{\footnotesize${2}$.} 
\end{array}
\end{array}
\]
Then $B_{11}$ is a reduced form of GK type $(\ua', \sig')$, where $a'_i=a_i$ and $\sig'(i)=\sig(i)$ for $i=1, \ldots, n-2$.
In particular, $\calp^0(\sig')=\emptyset$ and $\ord(\frkD_{B_{11}})=0$.
Put $X=-B_{11}^{-1}B_{12}$ and $U=(u_{ij})=\begin{pmatrix} \mathbf{1} & X \\ O & \mathbf{1} \end{pmatrix}$.
Then we have
\[
B[U]=
\begin{pmatrix} B_{11} & O \\ O & T \end{pmatrix},
\]
where $T=B_{22}+B_{11}[X]$.

By Lemma \ref{lem:3.2a}, we have $\ord(u_{ij})> (a_j-a_i)/2$ for $i\in\{1,\ldots, n-2\}$ and $j\in\{n-1, n\}$.
Hence we have $B_{11}[X]\in \calm^0((a_{n-1}, a_n))$.
It follows that $T$ is a reduced form of GK type $(a_{n-1}, a_n)$.
Note that $\ord(\frkD_T)=\ord(\frkD_B)=d$, since $\ord(\frkD_{B_{11}})=0$.
Hence we have
\begin{align*}
\ord(t'_{ii})\geq&  2e+1-d-a_{i+n-2} \qquad &(i=1,2), \\
\ord(t'_{12})\geq&  (2e+1-d-a_{n-1}-a_n)/2,&
\end{align*}
where $(t'_{ij})=T^{-1}$.
This proves (a) and (b), since $t'_{ij}=b'_{i+n-2, j+n-2}$.
As we have observed as above, these two inequalities imply
\[
\ord(t'_{ij})\geq  (2e+1-d-a_{i+n-2}-a_{j+n-2})/2 \qquad (i, j=1,2).
\]
Next, we prove (c).
Since
\[
B^{-1}=
\begin{pmatrix} \mathbf{1} & -X \\ O & \mathbf{1} \end{pmatrix}
\begin{pmatrix} B_{11}^{-1} & O \\ O & T^{-1} \end{pmatrix}
\begin{pmatrix} \mathbf{1} & O \\ -{}^t\! X & \mathbf{1} \end{pmatrix},
\]
we have
\begin{align*}
\ord(b'_{ij})\geq & \min\{ \ord(u_{i, n-1}t'_{1, j-n+2}), \ord(u_{i, n}t'_{2, j-n+2})\} \\
> &
(2e+1-d-a_i-a_j)/2
\end{align*}
for $i\in\{1, \ldots, n-2\}$ and $j\in\{n-1, n\}$.
Hence we have (c).

By Lemma \ref{lem:3.2a}, we have 
\begin{align*}
\ord((B_{11})_{ii}) \geq& 2e-a_i, \qquad &(i=1,\ldots, n-2), \\
\ord((B_{11})_{ij}) \geq& (2e-a_i-a_j)/2, \qquad &(i,j=1,\ldots, n-2).
\end{align*}
Here, $(B_{11})_{ij}$ is the $ij$-th entry of $B_{11}$. 
One can easily show
\begin{align*}
\ord((XT\,{}^t\!X)_{ii})\geq & 2e+1-d-a_i,  \\
\ord((XT\,{}^t\!X)_{ij})\geq & (2e+1-d-a_i-a_j)/2, 
\end{align*}
for $i,j=1,\ldots, n-2$.
Hence we have (d) and (e).
\end{proof}

\section{Reduction theorem} 
\label{sec:4}

Suppose that $\ua=(a_1, \ldots, a_n)\in\ZZ_{\geq 0}^n$ is a non-decreasing sequence.
In this case, the integers $n_1, n_2, \dots, n_r$ are given by
\begin{align*}
&a_1=\cdots=a_{n_1} <a_{n_1+1}, \\
&a_{n_1}<a_{n_1+1}=\cdots=a_{n_1+n_2}<a_{n_1+n_2+1}, \\
&\cdots \\
&a_{n_1+\cdots+n_{r-1}}<a_{n_1+\cdots+n_{r-1}+1}=\cdots =a_{n_1+\cdots+n_r}
\end{align*}
with $n=n_1+n_2+\cdots+n_r$.
For $s=1, 2, \ldots, r$, we set
\[ 
n^\ast_s=\sum_{v=1}^{s} n_v, \qquad a^\ast_s=a_{n^\ast_{s-1}+1}=\cdots=a_{n^\ast_s}
\]
We set $n^\ast_0=0$.
The $s$-th block $I_s$ is defined by $I_s=\{n^\ast_{s-1}+1, n^\ast_{s-1}+2, \ldots, n^\ast_s\}$.

We say that two $\ua$-admissible involutions $\sig, \sig'\in\frkS_n$ are equivalent if they are conjugate by an element of $\frkS_{n_1}\times \cdots \times\frkS_{n_r}$.
In each equivalence class of $\ua$-admissible involutions, there exists a unique $\ua$-admissible involution $\sig$ satisfying the following properties (i), (ii) and (iii).
\begin{itemize}
\item[(i)] 
If $i \in  \calp_s^0\cup\calp_s^-$, then $i$ is the maximal element of $I_s$.
\item[(ii)] 
If $i \in  \calp_s^+$, then $i$ is the minimal element of $I_s$.
\item[(iii)]
If $a_i=a_{\sig(i)}$, then $|\sig(i)-i|\leq 1$.
\end{itemize}
We say that an $\ua$-admissible involution $\sig$ is standard if $\sig$ satisfies these conditions.
Thus the set of standard $\ua$-admissible involutions is a complete set of representatives for the equivalence classes of $\ua$-admissible involutions.
We shall say that a GK type $(\ua, \sig)$ is a standard GK type if $\sig$ is standard.

Thus an involution $\sig\in\frkS_n$ is a standard $\ua$-admissible involution if the following conditions are satisfied:
\begin{itemize}
\item[(i)] 
$\calp^0$ has at most two elements.
If $\calp^0$ has two distinct elements $i$ and $j$, then $a_i\not\equiv a_j \text{ mod $2$}$.
Moreover, if $i \in  I_s\cap \calp^0$, then $i$ is the maximal element of $I_s$, and
\[
i=\max\{ j \, |\, j\in \calp^0\cup\calp^+, \, a_j\equiv a_i \text{ mod }2\}.
\] 
\item[(ii)]
For $s=1, \ldots, r$,  there is at most one element in $I_s\cap\calp^-$.
If $i \in  I_s\cap\calp^-$, then $i$ is the maximal element of $I_s$ and 
\[
\sig(i)=\min\{j\in \calp^+ \,| \, j>i,\, a_j\equiv a_i \text{ mod } 2\}.
\]
\item[(iii)]
For $s=1, \ldots, r$,  there is at most one element in $I_s\cap\calp^+$.
If $i \in  I_s\cap\calp^+$, then $i$ is the minimal element of $I_s$ and
\[
\sig(i)=\max\{j\in \calp^- \,| \, j<i,\, a_j\equiv a_i \text{ mod } 2\}.
\]
\item[(iv)]
If $a_i=a_{\sig(i)}$, then $|i -\sig(i)| \le 1$. 
\end{itemize}

We draw a picture of an example of a standard GK type.
Let us consider a standard GK type given by
\begin{align*}
\ua&=(0,0,0, 1, 2,2,2,2, 3,3, 5,5, 6,6,6,6, 7,7,7), \\
\sig&=(12) (35) (4, 17) (67) (8, 13) (9,10) (11, 12) (14, 15) (18, 19).
\end{align*}
Then this GK type can be picturized as follows.

\bigskip
{\small
\begin{xy}
(0,5) *+[F]{\phantom{+}};
(5,5) *+[F]{\phantom{+}} ;
(2.5,4.5) *+[]{\rightleftarrows};
(10,5) *+[F]{-}="A";
(25,5) *+[F]{+}="B";
(30,5) *+[F]{\phantom{+}} ;
(32.5,4.5) *+[]{\rightleftarrows};
(35,5) *+[F]{\phantom{+}} ; ;
(40,5) *+[F]{-}="C" ;
(72.5,5) *+[F]{+}="D";
(77.5,5) *+[F]{\phantom{+}} ;
(80,4.5) *+[]{\rightleftarrows};
(82.5,5) *+[F]{\phantom{+}} ;
(87.5,5) *+[F]{\phantom{+}};
(87.7,5) *+[]{0};
(17.5,0) *+[F]{-} ="E";
(47.5,0) *+[F]{\phantom{+}};
(50,-0.5) *+[]{\rightleftarrows};
(52.5,0) *+[F]{\phantom{+}} ;
(60,0) *+[F]{\phantom{+}};
(62.5,-0.5) *+[]{\rightleftarrows};
(65,0) *+[F]{\phantom{+}};
(95,0) *+[F]{+} ="F";
(100,0) *+[F]{\phantom{+}};
(102.5,-0.5) *+[]{\rightleftarrows};
(105,0) *+[F]{\phantom{+}};
\ar @/_5mm/ @{<->} "C";"D";
\ar @/^5mm/ @{<->} "B";"A";
\ar @/^5mm/ @{<->} "E";"F";
\end{xy}
}
\bigskip\medskip
\noindent
Here, the upper line shows blocks $I_s$ with $a^\ast_s$ even, and the lower line shows blocks $I_s$ with $a^\ast_s$ odd.

\bigskip

Let  $(\ua, \sig)$ be a standard GK type.
For $1\leq k\leq n$, we define $\sig^{(k)}\in\frkS_k$ by
\[
\sig^{(k)}(i)=
\begin{cases}
i & \text{ if $\sig(i)>k$,} \\
\sig(i) & \text{ otherwise.}
\end{cases}
\]
If $\sig^{(k)}$ is $\ua^{(k)}$-admissible, then $\sig^{(k)}$ is also standard.
In this case, we say that the standard GK type $(\ua^{(k)}, \sig^{(k)})$ is a restriction of the standard GK type $(\ua, \sig)$.
We also say $(\ua, \sig)$ is an extension of $(\ua^{(k)}, \sig^{(k)})$.
The proof of the following lemma is easy and omitted.
\begin{lemma} 
\label{lem:4.1}
Let $(\ua, \sig)$ be a standard GK type, and $B=(b_{ij})\in\calh_n(\frko)$ a reduced form of standard GK type $(\ua, \sig)$.
\begin{itemize}
\item[(1)]
If $a_k<a_{k+1}$, then $\sig^{(k)}$ is $\ua^{(k)}$-admissible and  $B^{(k)}$ is a reduced form of GK type $(\ua^{(k)}, \sig^{(k)})$.
\item[(2)]
If $n\in \calp^0\cup\calp^+$, then $\sig^{(n-1)}$ is $\ua^{(n-1)}$-admissible and $B^{(n-1)}$ is a reduced form of GK type $(\ua^{(n-1)}, \sig^{(n-1)})$.
\item[(3)]
If $a_{n-1}=a_n$ and if $\sig(n)=\sig(n-1)$, then $\sig^{(n-2)}$ is $\ua^{(n-2)}$-admissible and $B^{(n-2)}$ is a reduced form of GK type $(\ua^{(n-2)}, \sig^{(n-2)})$.
\end{itemize}
\end{lemma}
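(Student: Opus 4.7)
All three parts have the same flavour: truncation respects both the standard-admissibility conditions on $\sig$ and the reduced-form conditions on $B$. Since these conditions are essentially block-local, the argument in each case is a short bookkeeping exercise. The strategy is to treat (1) carefully and then note that (2) and (3) are strictly simpler checklists.

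For (1), the hypothesis $a_k < a_{k+1}$ forces $k = n^\ast_{s_0}$ for some $s_0$, so the initial segment $\{1, \ldots, k\}$ is a union of complete blocks $I_1, \ldots, I_{s_0}$. For $i \le k$, either $\sig(i) \le k$, in which case $\sig^{(k)}(i) = \sig(i)$ and the type ($\calp^0, \calp^+, \calp^-$) of $i$ is unchanged; or $\sig(i) > k$, forcing $a_i \le a_k < a_{k+1} \le a_{\sig(i)}$ so that $i \in \calp^-(\sig)$, and $i$ becomes a new fixed point of $\sig^{(k)}$. By standardness (ii) such an $i$ is the maximum element of its block. The delicate point is admissibility (i) for $\sig^{(k)}$: if $i_1 < i_2 \le k$ were two elements of $\calp^-(\sig)$ with $\sig(i_1), \sig(i_2) > k$ and $a_{i_1} \equiv a_{i_2} \pmod 2$, then $\sig(i_2)$ would be an eligible candidate for $\sig(i_1)$ under standardness (iii) for $\sig$, forcing $\sig(i_1) \le \sig(i_2)$; minimality of $\sig(i_1)$ would then yield $\sig(i_1) \le i_2 \le k$, contradicting $\sig(i_1) > k$. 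A dual argument using the maximality clause of (i) for $\sig$ rules out a pre-existing $j \in \calp^0(\sig) \cap [1,k]$ of the same parity as a newly created fixed point $i$, since such $j$ would have to dominate $\sig(i) \in \calp^+$ in index, whereas $\sig(i) > k \ge j$. Consequently $|\calp^0(\sig^{(k)})| \le 2$ with distinct parities, and the maximality clause is preserved; the remaining admissibility and standardness conditions are block-local and transfer term by term.

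The three reduced-form conditions for $B^{(k)}$ then come for free. Conditions (1) and (3) of Definition \ref{def:3.2} for indices both $\le k$ are inherited verbatim from $B$: whenever $\sig^{(k)}(i) \ne i$ one has $\sig^{(k)}(i) = \sig(i)$, and whenever $\sig^{(k)}(i) = i$ arises from $\sig(i) > k$ one has $j \le k < \sig(i)$, so $j \ne \sig(i)$ automatically. For a newly created fixed point $i \in \calp^-(\sig) \cap [1,k]$ the required identity $\ord(b_{ii}) = a_i$ is precisely the ``$i \in \calp^-$'' clause in the dyadic reformulation of Definition \ref{def:3.2}(1) for $B$. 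Part (2) is the same checklist for the two sub-cases $n \in \calp^0$ (just erase a fixed point) and $n \in \calp^+$ (promote $\sig(n) \in \calp^-$ to a fixed point, as above), and part (3), in the natural reading $\sig(n) = n-1$, simply removes a single transposition internal to the last block, leaving every other condition untouched. I expect the parity bookkeeping for $\calp^0(\sig^{(k)})$ in part (1) to be the only non-routine step.
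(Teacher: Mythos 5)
The paper simply says ``the proof of the following lemma is easy and omitted,'' so there is no proof of record to compare against; I can only assess your attempt on its own terms. Your reduction is the right one: the only non-routine check is admissibility condition (i) for $\sig^{(k)}$, driven by the parity analysis of $\calp^0(\sig^{(k)})$, and everything else (conditions (ii)--(iii) and the three reduced-form conditions of Definition~\ref{def:3.2}) transfers by restriction because $\{1,\dots,k\}$ is a union of complete blocks, $\sig^{(k)}(i)=\sig(i)$ whenever $\sig(i)\le k$, and a newly created fixed point $i$ has $j\neq\sig(i)$ automatic for all $j\le k$. Your observation that $\ord(b_{ii})=a_i$ for a newly created fixed point comes from the $i\in\calp^-$ clause of Definition~\ref{def:3.2}(1) is also the right source, and you are correct that part (3) only makes sense with $\sig(n)=n-1$.

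The one step that is stated too loosely is ``minimality of $\sig(i_1)$ would then yield $\sig(i_1)\le i_2$.'' As written this does not follow: $i_2\in\calp^-$ is not a candidate in the minimization defining $\sig(i_1)$, and $\sig(i_1)<\sig(i_2)$ alone does not bound $\sig(i_1)$ by $i_2$. The contradiction is available, but needs one more line. Either: having shown $\sig(i_1)<\sig(i_2)$, note that $\sig(i_1)>k\ge i_2$, so $\sig(i_1)$ is itself a candidate in the minimization defining $\sig(i_2)$, forcing $\sig(i_2)\le\sig(i_1)$, a contradiction. Or: if $a_{\sig(i_1)}>a_{i_2}$, then $\sig(i_1)$ competes in the minimization defining $\sig(i_2)$ and one gets $a_{\sig(i_2)}\le a_{\sig(i_1)}<a_{\sig(i_2)}$ (the strict middle inequality using that $\calp^+$-elements of a fixed parity lie in distinct blocks), contradiction; hence $a_{\sig(i_1)}\le a_{i_2}$, and since $i_2\in\calp^-$ sits at the top of its block by standardness (ii), this does give $\sig(i_1)\le i_2\le k$, contradicting $\sig(i_1)>k$. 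You should also say explicitly that the maximality clause of (i) for $\sig^{(k)}$ must be re-checked at the newly created fixed points (not only that no clash with a pre-existing $\calp^0$ element occurs); this follows by the same comparison against $\sig(i)\in\calp^+(\sig)$ with $\sig(i)>k$, but it is a separate verification. With those two clarifications the proof is complete.
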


\begin{lemma} 
\label{lem:4.2}
Suppose that $B=(b_{ij})\in\calm(\ua)$, where  $\ua=(a_1, \dots, a_n)$.
Write $B$ in a block form
\[
B=\begin{pmatrix}
B^{(m)} & C \\
{}^t C & D
\end{pmatrix} =(b_{ij}).
\]
We assume that $B^{(m)}$ is a reduced form of GK type $(\ua^{(m)}, \sig_m)$ for some $\ua^{(m)}$-admissible involution $\sig_m\in \frkS_m$.
Then there exists $U\in G_\ua^\bigtriangleup$ 
satisfying the following conditions.
\begin{itemize}
\item[(1)] $U$ is of the form
$U=\begin{pmatrix} \mathbf{1}_m & X \\ 0 & \mathbf{1}_{n-m} \end{pmatrix}$. 
\item[(2)]
Put
\[
B[U]
=\begin{pmatrix} B^{(m)} & C' \\ {}^t C' & D' \end{pmatrix} .
\]
Then $i$-th row of $C'$ is $0$ unless $i\in \calp^0(\sig_m)$.
\end{itemize}
\end{lemma}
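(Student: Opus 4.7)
Since the required $U$ has the block form $\begin{pmatrix}\mathbf{1}_m & X \\ 0 & \mathbf{1}_{n-m}\end{pmatrix}$, a direct computation of $B[U]$ yields $C'=B^{(m)}X+C$. The plan is therefore to produce $X\in\mathrm{M}_{m,n-m}(\frko)$ such that $(B^{(m)}X+C)_{i,\cdot}=0$ for every $i$ outside $\calp^0(\sig_m)$, while $X_{l,\cdot}=0$ for $l\in\calp^0(\sig_m)$, all subject to the order conditions that force $U\in G_\ua^\bigtriangleup$.

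Set $\calo:=\{1,\dots,m\}\setminus\calp^0(\sig_m)$ and let $\tilde B=(b_{ij})_{i,j\in\calo}$, $\tilde C=(b_{l,m+k})_{l\in\calo,\,1\leq k\leq n-m}$. The first key step is to verify that $\tilde B$ is itself a reduced form of GK type $(\tilde\ua,\tilde\sig_m)$, where $\tilde\ua=(a_i)_{i\in\calo}$ and $\tilde\sig_m=\sig_m|_\calo$: conditions (1) and (3) of Definition \ref{def:3.2} are inherited directly from $B^{(m)}$ because $\sig_m(\calo)\subseteq\calo$, condition (2) is vacuous since $\calp^0(\tilde\sig_m)=\emptyset$, and $\tilde\ua$-admissibility of $\tilde\sig_m$ follows block by block from the $\ua^{(m)}$-admissibility of $\sig_m$. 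Because $\calp^0(\tilde\sig_m)=\emptyset$, Lemma \ref{lem:3.2a} applies and gives
\[
(4\tilde B)^{-1}\in\calm(-\tilde\ua),
\]
which, rewritten for $\tilde B^{-1}$ itself, yields $\ord((\tilde B^{-1})_{ii})\geq 2e-a_i$ and $\ord((\tilde B^{-1})_{il})\geq e-(a_i+a_l)/2$ for distinct $i,l\in\calo$.

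I then define $\tilde X:=-\tilde B^{-1}\tilde C$ and extend to $X$ by placing zero rows at positions $l\in\calp^0(\sig_m)$. The hypothesis $B\in\calm(\ua)$ gives $\ord(b_{l,m+k})\geq (a_l+a_{m+k})/2-e$, and combining with the bounds from the previous step, a short computation shows
\[
\ord(\tilde X_{ik})\geq (a_{m+k}-a_i)/2\qquad(i\in\calo,\ 1\leq k\leq n-m).
\]
Because $\ua$ is non-decreasing this quantity is non-negative, hence $X\in\mathrm{M}_{m,n-m}(\frko)$ and the resulting $U$ satisfies all defining conditions of $G_\ua^\bigtriangleup$ (the lower-left zero block and identity diagonal take care of the other clauses). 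By construction $\tilde B\tilde X+\tilde C=0$, which is exactly the required vanishing of the rows of $C'$ outside $\calp^0(\sig_m)$.

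The principal technical point lies in the second step: checking that $\tilde B$ really is a reduced form of a genuine GK type after restriction. This is a combinatorial bookkeeping exercise---one must verify that discarding the (at most one per block) fixed points of $\sig_m$ preserves every clause of the admissibility definition, in particular the matching clauses relating $\calp^+$ and $\calp^-$. Once admissibility is in place, the rest collapses to a single application of Lemma \ref{lem:3.2a} followed by the direct order estimate above.
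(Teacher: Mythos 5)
Your proof is correct and takes essentially the same approach as the paper: the paper first treats $\calp^0(\sig_m)=\emptyset$ directly via Lemma \ref{lem:3.2a}, and then handles $\calp^0(\sig_m)\neq\emptyset$ by deleting the $i_0$-th row and column from $B^{(m)}$ and $C$, solving on the complementary indices, and reinserting a zero row at position $i_0$. You have simply unified the two cases by restricting to $\calo=\{1,\dots,m\}\setminus\calp^0(\sig_m)$ from the start; the key tool (Lemma \ref{lem:3.2a} applied to the reduced form $\tilde B$ with $\calp^0(\tilde\sig_m)=\emptyset$) and the resulting order estimate $\ord(\tilde X_{ik})\geq (a_{m+k}-a_i)/2$ are identical to the paper's.
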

\begin{proof}
We first consider the case when $\calp^0(\sig_m)=\emptyset$.
Put 
\[
X=-(B^{(m)})^{-1}C, \quad U=\begin{pmatrix} \mathbf{1}_m & X \\ 0 & \mathbf{1}_{n-m} \end{pmatrix}.
\]
Put $(B^{(m)})^{-1}=(y_{ij})$.
By Lemma  \ref{lem:3.2a}, $\mathrm{ord}(2^{-1}y_{ij})\geq -(a_i+a_j)/2$ for $1\leq i \leq m$ and $1\leq j\leq m$.
On the other hand, $\mathrm{ord}(2c_{ij})\geq (a_i + a_{m+j})/2$ for $1\leq i\leq m$ and $1\leq j\leq n-m$, where $c_{ij}$ is the $ij$-th entry of $C$.
Hence we have
\[
\mathrm{ord}(x_{ij})\geq
\min_{1\leq k\leq m} \{ \mathrm{ord}(y_{ik} c_{kj}) \}
\geq (a_{m+j}-a_i)/2,
\]
where $x_{ij}$ is is the $ij$-th entry of $X$.
It follows that $U\in G_\ua^\bigtriangleup$.
Thus in this case, the conditions are satisfied.

Next, we consider the case $\calp^0(\sig_m)\neq \emptyset$.
For simplicity, we assume $\sharp\calp^0(\sig)=1$.
Put $\calp^0(\sig_m)=\{i_0\}$.
Write
\[
\begin{array}{cccc}
& \hskip 12pt
\text{\footnotesize$\begin{matrix} i_0 \\ \noalign{\vskip-5pt} \vdots \end{matrix}$}   \\ & 
B^{(m)}=\left(
\begin{array}{ccc l}
B_{11} & \ast & B_{12} &  \\
\ast & b_{i_0 \, i_0} & \ast \\
{}^t\!B_{12} & \ast & B_{22}  
\end{array} \hskip -10pt
\right) \hskip -8pt
\begin{array}{l}
 \vphantom{B_{11} } \\
 \vphantom{B_{11}} \text{\footnotesize$\cdots {i_0}$,} \\
 \vphantom{B_{11} }
\end{array}
\end{array}
\begin{array}{cccc}
\phantom{\text{\footnotesize$\begin{matrix} i_0 \\ \noalign{\vskip -2pt} \vdots \end{matrix}$}  }
\\
C=\left( \hskip -5pt
\begin{array}{c l}
C_1 \\
\ast \\
C_2
\end{array} \hskip -5pt
\right) \hskip -8pt
\begin{array}{l}
 \vphantom{B_{11} } \\
 \vphantom{B_{11}} \text{\footnotesize$\cdots {i_0}$.} \\
 \vphantom{B_{11} }
\end{array}
\end{array}
\]
Then $\begin{pmatrix} B_{11} & B_{12} \\ {}^t\! B_{12} & B_{22}\end{pmatrix}$ is a reduced form with GK type $(\ua', \sig')$, where $(\ua', \sig')$ is the GK type obtained by removing $i_0$-th component from $(\ua^{(m)}, \sig_m)$.
In particular, $\calp^0(\sig')=\emptyset$.
Put 
\[
\begin{pmatrix} X_1 \\ X_2 \end{pmatrix}
=
-\begin{pmatrix} B_{11} & B_{12} \\ {}^t\! B_{12} & B_{22}\end{pmatrix}^{-1}  
\begin{pmatrix} C_1 \\ C_2 \end{pmatrix}
\]
and
\[
\begin{array}{cccc}
X=\left( \hskip -5pt
\begin{array}{c l}
X_1 \\
0 \\
X_2
\end{array} \hskip -5pt
\right) \hskip -8pt
\begin{array}{l}
 \vphantom{B_{11} } \\
 \vphantom{B_{11}} \text{\footnotesize$\cdots {i_0}$.} \\
 \vphantom{B_{11} }
\end{array}
\end{array}
\]
Then $U=\begin{pmatrix} \mathbf{1}_m & X \\ 0 & \mathbf{1}_{n-m} \end{pmatrix}$ satisfies the required conditions.
The case $\sharp \calp^0(\sig_m)=2$ can be treated in a similar way.
\end{proof}

The following lemma will be used in our forthcoming paper
\begin{lemma} 
\label{lem4.3}
Let the notation be as in Lemma \ref{lem:4.2}.
Put $B[U]=(b_{ij}')$.
If $\mathrm{ord}(b_{m+1,m+1}) >a_{m+1}$ and $\mathrm{ord}(2b_{i,m+1}) > (a_i+a_{m+1})/2$ for any $1 \le i \le m$, then we have $\mathrm{ord}(b_{m+1,m+1}') > a_{m+1}$.
\end{lemma}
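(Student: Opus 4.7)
The plan is to compute $b'_{m+1,m+1}$ explicitly in terms of $B^{(m)}$, $X$, $C$, and $D$, use the explicit shape of $X$ from the proof of Lemma \ref{lem:4.2} (which makes $X$ essentially an inverse of the sub-block of $B^{(m)}$ indexed by $\{1,\dots,m\}\setminus\calp^0(\sig_m)$), and then apply Lemma \ref{lem:3.2a} to estimate the entries of that inverse.

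Write $B[U]=\begin{pmatrix} B^{(m)} & C' \\ {}^t C' & D'\end{pmatrix}$ with
\[
D' = D + {}^tX\,C + {}^tC\,X + {}^tX\,B^{(m)}\,X.
\]
The $(1,1)$ entry of $D'$ is $b'_{m+1,m+1}$, and if we denote by $v\in F^m$ the first column of $X$ and by $w\in F^m$ the first column of $C$ (so $w_i=b_{i,m+1}$), then
\[
b'_{m+1,m+1} = b_{m+1,m+1} + 2\,{}^tv\,w + {}^tv\,B^{(m)}\,v.
\]
In the proof of Lemma \ref{lem:4.2}, the vector $v$ is supported on the complement $J=\{1,\dots,m\}\setminus \calp^0(\sig_m)$, and its $J$-components are determined by $v_J = -\tilde B^{-1} w_J$, where $\tilde B$ is the $J\times J$ submatrix of $B^{(m)}$; this $\tilde B$ is a reduced form of some GK type $(\tilde\ua,\tilde\sig)$ with $\calp^0(\tilde\sig)=\emptyset$, and $\tilde\ua$ is obtained from $\ua^{(m)}$ by deleting the $\calp^0(\sig_m)$-coordinates. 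A direct computation (analogous to completing the square) then gives
\[
b'_{m+1,m+1} = b_{m+1,m+1} - {}^tw_J\,\tilde B^{-1}\,w_J.
\]

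The hypotheses give $\mathrm{ord}(b_{m+1,m+1})>a_{m+1}$ and $\mathrm{ord}(2 b_{i,m+1})>(a_i+a_{m+1})/2$ for all $i$, hence $\mathrm{ord}(w_i)>(a_i+a_{m+1})/2-e$, where $e=\ord(2)$. By Lemma \ref{lem:3.2a} applied to $\tilde B$, we have $(4\tilde B)^{-1}\in\calm(-\tilde\ua)$, which translates to the entrywise bounds
\[
\mathrm{ord}(\tilde y_{ii}) \geq 2e-a_i,\qquad \mathrm{ord}(\tilde y_{ij}) \geq e-(a_i+a_j)/2 \quad (i\neq j),
\]
where $\tilde B^{-1}=(\tilde y_{ij})$. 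Combining these with the bounds on $w_i$ shows
\[
\ord(\tilde y_{ii}\,w_i^2) > a_{m+1},\qquad \ord(2\,\tilde y_{ij}\,w_i w_j) > a_{m+1},
\]
so that $\ord({}^t w_J\,\tilde B^{-1}\,w_J)>a_{m+1}$. Together with $\ord(b_{m+1,m+1})>a_{m+1}$, this yields $\ord(b'_{m+1,m+1})>a_{m+1}$, as required.

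The only nontrivial step is the identity $b'_{m+1,m+1}=b_{m+1,m+1}-{}^tw_J\tilde B^{-1}w_J$, i.e., verifying that the explicit form of $X$ from Lemma \ref{lem:4.2} really produces this clean ``Schur complement'' at the $(m+1,m+1)$ slot; after this reduction, the estimation is a straightforward term-by-term bookkeeping using Lemma \ref{lem:3.2a} and the hypotheses, where one must be careful with the factor of $2$ in the off-diagonal terms (it is the doubling in the cross term $2\,\tilde y_{ij} w_i w_j$ that supplies the missing $e$ in the estimate).
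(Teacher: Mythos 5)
Your proof is correct, and it routes through essentially the same mechanism as the paper's (the bound on the inverse of the non-$\calp^0$ block of $B^{(m)}$ coming from Lemma \ref{lem:3.2a}), but organizes the computation slightly differently. The paper's argument is shorter: it observes that the hypothesis $\ord(2b_{i,m+1}) > (a_i + a_{m+1})/2$ upgrades the inequality $\ord(x_{i1}) \geq (a_{m+1}-a_i)/2$ from the proof of Lemma \ref{lem:4.2} to a \emph{strict} one, and then simply expands $b'_{m+1,m+1} = b_{m+1,m+1} + 2\sum_i x_{i1}b_{i,m+1} + \sum_{i,j} x_{i1}b_{ij}x_{j1}$ and estimates each term using $\ord(x_{i1}) > (a_{m+1}-a_i)/2$ together with the $\calm(\ua)$-conditions on $B$; this avoids the Schur-complement identity altogether. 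Your route is to first massage $b'_{m+1,m+1}$ into the closed form $b_{m+1,m+1}-{}^tw_J\tilde B^{-1}w_J$ (which does hold, since ${}^tXB^{(m)}X = {}^tX_J\tilde B X_J$ because the $\calp^0$-rows of $X$ vanish), and then estimate the quadratic form ${}^tw_J\tilde B^{-1}w_J$ directly from Lemma \ref{lem:3.2a}; the arithmetic works out and the bookkeeping with the factor of $2$ in the cross terms is handled correctly. Both are fine; the paper's version is leaner because it only needs Lemma \ref{lem:3.2a} once (inside the bound on $x_{i1}$) rather than again at the estimation stage.
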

\begin{proof}
If $\mathrm{ord}(2b_{i,m+1}) > (a_i+a_{m+1})/2$ for any $1 \le i \le m$, then $\mathrm{ord}(x_{i1})>(a_{m+1}-a_i)/2$ for any $1 \leq i \leq m$, and so $\mathrm{ord}(b_{m+1,m+1}') > a_{m+1}$.
\end{proof}

\begin{lemma} 
\label{lem:4.4}
Suppose that $a_1, a_2\in\ZZ_{\geq 0}$ and $a_2-a_1$ is an even integer.
We assume 
\[
\ord(b_{11})=a_1, \quad \ord(2b_{12})>\frac{a_1+a_2}2, \quad \ord(b_{22})=a_2.
\]
Then there exists $x\in F$ such that
\[
\ord(x)\geq \frac{a_2 - a_1}2, \quad \ord(b_{22}+2b_{12}x+b_{11}x^2)>a_2.
\]
\end{lemma}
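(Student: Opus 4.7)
The plan is to look for $x$ of the form $x = \vpi^f y$ where $f = (a_2 - a_1)/2 \in \ZZ_{\geq 0}$ and $y \in \frko^{\times}$, and then to solve the resulting congruence in the residue field using the fact that the Frobenius map is surjective in characteristic two.

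Substituting $x = \vpi^f y$, the expression to be estimated becomes
\[
b_{22} + 2 b_{12}\vpi^f y + b_{11}\vpi^{2f} y^2.
\]
Under our hypotheses, $\ord(b_{22}) = a_2$, $\ord(b_{11}\vpi^{2f}) = a_1 + 2f = a_2$, and $\ord(2b_{12}\vpi^f) > (a_1+a_2)/2 + f = a_2$. Writing $b_{22} = \vpi^{a_2} u$ and $b_{11}\vpi^{2f} = \vpi^{a_2} v$ with $u, v \in \frko^\times$, and $2 b_{12}\vpi^f = \vpi^{a_2+1} w$ with $w \in \frko$, the expression above equals
\[
\vpi^{a_2}\bigl(u + v y^2 + \vpi w y\bigr).
\]
So we only need to choose $y \in \frko^\times$ so that $u + v y^2 \in \frkp$, for then the parenthesized quantity lies in $\frkp$ and the full expression has order $> a_2$.

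The heart of the argument is that $F$ is dyadic, so its residue field $\frkk$ has characteristic $2$. Since $\frkk$ is finite, the Frobenius $t \mapsto t^2$ is a bijection on $\frkk$, so every element of $\frkk$ is a square. In particular $-u/v \in \frko^\times$ reduces to a square in $\frkk$, so we may choose $y_0 \in \frko^\times$ with $y_0^2 \equiv -u/v \pmod{\frkp}$. Setting $x = \vpi^f y_0$, the order of $x$ is $f = (a_2-a_1)/2$ as required, and the computation above shows that $\ord(b_{22} + 2b_{12} x + b_{11} x^2) > a_2$.

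There is no real obstacle here; the only thing to verify carefully is that after factoring out $\vpi^{a_2}$ the cross term $\vpi w y$ is in $\frkp$ (which follows from $\ord(2b_{12}\vpi^f) > a_2$), so it does not interfere with the residue-field computation. The key structural input is that dyadic residue fields are perfect of characteristic two, which is exactly why the lemma has a chance of holding despite the strict inequality on $2 b_{12}$.
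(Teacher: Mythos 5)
Your proof is correct and follows essentially the same approach as the paper's: both reduce by the substitution $x=\vpi^f y$ (the paper phrases this as reducing to $a_1=a_2=0$) and then use that every element of the finite residue field of characteristic $2$ is a square to choose the appropriate unit $y_0$. Note only that your target $y_0^2\equiv -u/v$ coincides with the paper's $t^2=\bar b_{22}/\bar b_{11}$ since $-1\equiv 1$ in characteristic $2$.
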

\begin{proof}
It is enough to consider the case $a_1=a_2=0$.
We denote the image of $b_{11}$ and $b_{22}$ in $\frkk=\frko/\frkp$ by $\bar b_{11}$ and $\bar b_{22}$, respectively.
Since $\frkk$ is a finite field of characteristic $2$, there exists $t\in \frkk$ such that $\bar b_{22}=\bar b_{11} t^2$.
Then one can choose $x\in\frko$ such that $\bar x=t$.
\end{proof}

\begin{lemma} 
\label{lem:4.5}
Let $\ua=(a_1, a_2, \ldots, a_n)\in\ZZn$ be a non-decreasing sequence.
Suppose that $B=(b_{ij})\in\calhnd_n(\frko)$ is optimal and  $\GK(B)=\ua$.
We assume that $m < n$ and $B^{(m)}$ is a reduced form of a standard GK type $(\ua^{(m)}, \sig_m)$ for some $\ua^{(m)}$-admissible involution $\sig_m\in\frkS_m$.
Then there exists a standard GK type $(\ua^{(k)}, \sig_k)$ and $U\in G_\ua^\bigtriangleup$ satisfying the following conditions.
\begin{itemize}
\item[(1)]
$k>m$ and $(\ua^{(k)}, \sig_k)$ is an extension of $(\ua^{(m)}, \sig_m)$.
\item[(2)]
$B[U]^{(k)}$ is a reduced form of GK type $(\ua^{(k)}, \sig_k)$.
\end{itemize}
\end{lemma}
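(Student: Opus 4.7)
The plan is to extend the reduced block $B^{(m)}$ to a larger reduced block by successive cleanup steps inside $G_\ua^\bigtriangleup$, following the combinatorial rules defining a standard $\ua$-admissible involution.

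The first move is to apply Lemma \ref{lem:4.2} to produce an upper-block-unipotent $U_1\in G_\ua^\bigtriangleup$ such that, after writing
\[
B[U_1]=\begin{pmatrix} B^{(m)} & C' \\ {}^t\!C' & D' \end{pmatrix},
\]
every row of $C'$ indexed by $i\notin\calp^0(\sig_m)$ vanishes. Proposition \ref{prop:1.1} then guarantees $B[U_1]\in\calm(\ua)$ and $\GK(B[U_1])=\ua$, so in particular $D'\in\calm((a_{m+1},\dots,a_n))$. The residual data on which we have to work is $D'$ together with the at most two non-vanishing rows of $C'$ coming from $\calp^0(\sig_m)$.

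Next I would read off the next pivot dictated by the standard extension rules. Let $I_s$ denote the block of $\ua$ that contains $m+1$. Writing $B[U_1]=(b'_{ij})$, the conditions for a standard $\ua$-admissible involution force one of the following minimal extensions of $\sig_m$:
\begin{itemize}
\item[(i)] some $i_0\in\calp^0(\sig_m)$ with $a_{i_0}\equiv a_{m+1}\bmod 2$ satisfies $\ord(2 b'_{i_0,\,m+1})=(a_{i_0}+a_{m+1})/2$, in which case one takes $k=m+1$ and $\sig_k(i_0)=m+1$;
\item[(ii)] some pair $i,j\in I_s\cap\{m+1,\dots,n\}$ realises $\ord(2 b'_{ij})=a_{m+1}$, in which case a permutation inside $I_s$ moves this pair to positions $m+1,\,m+2$, and one takes $k=m+2$ with $\sig_k(m+1)=m+2$;
\item[(iii)] otherwise $\ord(b'_{m+1,m+1})=a_{m+1}$, $k=m+1$, and $m+1$ enters $\calp^0(\sig_k)$.
\end{itemize}
The existence of at least one of (i)--(iii) is forced by optimality: were no such pivot available, Lemma \ref{lem:1.4} applied to a suitable sub-block of $B[U_1]$ would produce a sequence $\succneqq\ua$ in $\bfS(\{B\})$, contradicting $\GK(B)=\ua$.

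Once the pivot is chosen, a second cleanup $U_2\in G_\ua^\bigtriangleup$---consisting of the permutation inside $I_s$, a further application of Lemma \ref{lem:4.2} to zero out the new columns that $C'$ has acquired, and, only in case (iii), a scalar shift supplied by Lemma \ref{lem:4.4} to raise an unwanted diagonal order---makes $B[U_1U_2]^{(k)}$ satisfy all the conditions of Definition \ref{def:3.2} for GK type $(\ua^{(k)},\sig_k)$: the off-pivot inequalities come from $B[U_1U_2]\in\calm(\ua)$, while the pivot equalities hold by construction. The main obstacle will be the combinatorial bookkeeping in the case analysis, and in particular verifying the parity and extremality conditions on $\calp^0(\sig_k),\calp^+(\sig_k),\calp^-(\sig_k)$ simultaneously with the guarantee $U_1U_2\in G_\ua^\bigtriangleup$: indices that lay in $\calp^0(\sig_m)$ may have to be re-designated to $\calp^\pm(\sig_k)$, a new element may need to enter $\calp^0(\sig_k)$ to preserve the parity axiom, and confirming that the minimal-pivot choice above still respects all four conditions of the standard admissibility definition is the delicate heart of the argument.
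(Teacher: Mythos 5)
Your overall strategy---first apply Lemma \ref{lem:4.2} to eliminate the rows of $C$ indexed outside $\calp^0(\sig_m)$, then split into cases according to which pivot (a $\calp^0$-pairing, a new unramified binary block, or a new $\calp^0$ element) becomes available at position $m+1$---is the same as the paper's, and the identification of the relevant helper lemmas is correct. But two steps are not right as written.

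First, the role of Lemma \ref{lem:4.4} is inverted. In the paper's case $s=1$, the issue is to show that no $h\in\calp^0(\sig_m)$ with $a_h\equiv c\bmod 2$ can exist; if one did, Lemma \ref{lem:4.4} produces a shear $U\in G_\ua^\bigtriangleup$ with $\ord\bigl((B[U])_{m+1,m+1}\bigr)>c$, and then Lemma \ref{lem:1.4} yields $\GK(B[U])\succneqq\ua$, contradicting $\GK(B)=\ua$. It is a proof by contradiction used to certify that the admissibility parity axiom is automatically satisfied by the new $\calp^0$-element. Your phrasing (``a scalar shift supplied by Lemma \ref{lem:4.4} to raise an unwanted diagonal order'') treats it instead as a constructive cleanup tool; but raising the diagonal order at $m+1$ is exactly the forbidden outcome, and you never explain why the obstruction disappears.

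Second, the exhaustiveness argument is underspecified. You invoke Lemma \ref{lem:1.4} alone to justify that at least one of (i)--(iii) occurs. Lemma \ref{lem:1.4} does give $\GK(B_{22})=(c,\ldots,c)$ when the off-block part lies in $\calm^0(\ua)$, but to conclude that for $s\geq 2$ a pair $\{i,j\}$ with $\ord(2b'_{ij})=c$ actually exists one also needs Proposition \ref{prop:3.4}, which decomposes $B_{22}$ (after scaling) into primitive unramified binary summands, each of which has a unit off-diagonal entry in the dyadic case. Without this, your ``otherwise'' in (iii) could kick in when $s\geq 2$, where adding $m+1$ to $\calp^0$ would violate the ``maximal element of $I_s$'' clause of standard admissibility. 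Finally, you acknowledge but do not carry out the verification that the extended $\sig_k$ is standard admissible; in the paper this is precisely what the Lemma \ref{lem:4.4} contradiction accomplishes, so that step cannot be deferred.
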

\begin{proof}
Put $c=a_{m+1}$.
Let $s$ be the maximal integer such that $c=a_{m+1}=\cdots=a_{m+s}$.
Write $B$ in a block form as follows.
\[
\begin{array}{ccccc}
&\hskip -30pt
\overbrace{\hphantom{B_{11}}}^{m} \;\;
\overbrace{\hphantom{ B_{12}}}^{s}  \;
\overbrace{\hphantom{ B_{15}}}^{n-m-s} \\ & 
B=\left(
\begin{array}{cccl}
B_{11} & B_{12} & B_{13}  \\
{}^t\! B_{12} & B_{22} & B_{23}  \\
{}^t\! B_{13} & {}^t\! B_{23} & B_{33}  \\
\end{array} \hskip -0pt
\right) \hskip -5pt
\begin{array}{l}
 \left.\vphantom{B_{11}} \right\} \text{\footnotesize${m}$} \\
 \left.\vphantom{B_{11}} \right\} \text{\footnotesize${s}$} \\
 \left.\vphantom{B_{11}} \right\} \text{\footnotesize${n-m-s}$.} 
\end{array}
\end{array}
\]
By Lemma \ref{lem:4.2}, we may assume
\[
(B_{12})_{ij}=0
\quad \text{ for } 1\leq i\leq m,\; i\notin \calp^0(\sig_m).
\]
Suppose that there exists $h\in\calp^0(\sig_m)$ such that
\[
\min_{1\leq j\leq s} (\ord(2(B_{12})_{hj})) = \frac{a_h+c}2.
\]
We claim that
\[
\min_{1\leq j\leq s}(\ord(2(B_{12})_{ij})) > \frac{a_i+c}2 \quad\text{ for } i\neq h.
\]
In fact, if $h'\in \calp^0(\sig_m)$ and $h'\neq h$,  then $a_h\not\equiv a_{h'}$ mod $2$.
It follows that $(a_{h'}+c)/2\notin \ZZ$, and so $\ord(2(B_{12})_{hj})>(a_{h'}+c)/2$ for $j=1,2,\dots, s$.

By changing the coordinates, we may assume 
\[
\ord(2(B_{12})_{h1})=\ord(2b_{h, m+1})=\frac{a_h+c}2.
\]
In this case, put $k=m+1$ and
\[
\sig_k(i)=
\begin{cases}
i & 1\leq i\leq m, \; i\neq h, \\
m+1, & i=h \\
h & i=m+1 
\end{cases}
\]
Then $(\ua^{(k)}, \sig_k)$ is a standard GK type, which is an extension of $(\ua^{(m)}, \sig_m)$.
Moreover, $B^{(k)}$ is a reduced form of GK type $(\ua^{(k)}, \sig_k)$.

Next, we consider the case
\[
\min_{1\leq j\leq s}(\ord(2(B_{12})_{ij})) > \frac{a_i+c}2 \quad\text{ for any } 1\leq i\leq m.
\]
In this case, we have
\[
\begin{pmatrix}
0 & B_{12} & 0  \\
{}^t\! B_{12} & 0 & 0  \\
0 & 0 & 0
\end{pmatrix}
\in\calm^0(\ua)
\]
and $\GK(B_{22})=(\underbrace{c, \dots, c}_s)$ by Lemma \ref{lem:1.4}.
If $s\geq 2$, then we may assume $B_{22}=\vpi^c K\perp B'$, for some primitive unramified binary form $K$ and $B'\in\calh_{s-2}(\frko)$ by Proposition \ref{prop:3.4}.
In this case, put $k=m+2$ and
\[
\sig_k(i)=
\begin{cases}
i & 1\leq i\leq m,  \\
m+2 & i=m+1 \\
m+1 & i=m+2.
\end{cases}
\]
Then $(\ua^{(k)}, \sig_k)$ is a standard GK type, which is an extension of $(\ua^{(m)}, \sig_m)$.
Moreover, $B^{(k)}$ is a reduced form of GK type $(\ua^{(k)}, \sig_k)$.

Finally, we consider the case when
\[
\begin{pmatrix}
0 & B_{12} & 0  \\
{}^t\! B_{12} & 0 & 0  \\
0 & 0 & 0
\end{pmatrix}
\in\calm^0(\ua)
\]
and $s=1$.
Note that $B_{22}=(b_{m+1, m+1})$ and $\ord(b_{m+1, m+1})=c$.
In this case, we claim that $\{ h\in \calp^0(\sig_k)\, |\, a_h\equiv c \text{ mod }2\}=\emptyset$.
Suppose $h\in \calp^0(\sig_k)$ and $a_h\equiv c$ mod $2$.
Then there exists $x\in \frko$ such that 
\[
\ord(x)\geq \frac{c-a_h}2, \quad \ord(b_{m+1, m+1}+2b_{h, m+1}x+b_{hh}x^2)>c
\]
by Lemma \ref{lem:4.4}.
Put $B'=B[U]$, where $U$ is the upper triangular unipotent matrix whose $U_{h, m+1}=x$ and $U_{ij}=0$ for $i<j$, $(i, j)\neq (h, m+1)$. 
Then $U\in G^\bigtriangleup_\ua$ and $\GK(B')\succneqq \ua$ by Lemma \ref{lem:1.4}.
This contradicts the assumption $\GK(B)=\ua$.
Thus we have $\{ h\in \calp^0(\sig_k)\, |\, a_h\equiv c \text{ mod }2\}=\emptyset$.
In this case, put $k=m+1$ and
\[
\sig_k(i)=
\begin{cases}
i & 1\leq i\leq m,  \\
m+1 & i=m+1.
\end{cases}
\]
Then $(\ua^{(k)}, \sig_k)$ is a standard GK type, which is an extension of $(\ua^{(m)}, \sig_m)$.
Moreover, $B^{(k)}$ is a reduced form of GK type $(\ua^{(k)}, \sig_k)$.
\end{proof}

By using Lemma \ref{lem:4.5} repeatedly, we obtain the following theorem.
\begin{theorem}[Reduction theorem] 
\label{thm:4.1}
Let $\ua=(a_1, a_2, \ldots, a_n)\in\ZZn$ be a non-decreasing sequence.
Suppose that $B\in\calhnd_n(\frko)$ is optimal and  $\GK(B)=\ua$.
Then there exists $U\in G_\ua^\bigtriangleup$ and a standard $\ua$-admissible involution $\sig$ such that $B[U]$ is a reduced form of GK type $(\ua, \sig)$.
In particular, a non-degenerate half-integral symmetric matrix is equivalent to a reduced form.
\end{theorem}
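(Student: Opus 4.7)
The plan is to reduce the theorem to an iteration of Lemma \ref{lem:4.5}. In the non-dyadic case, the conclusion is essentially Proposition \ref{prop:4.1}: that proposition produces $U\in G_\ua^\bigtriangleup$ with $B[U]=\diag(b_1,\dots,b_n)$ satisfying $\ord(b_i)=a_i$, and by the remark following Definition \ref{def:3.2} any such diagonal form is reduced of GK type $\ua$ for any $\ua$-admissible $\sig$, in particular for any standard one. So I will assume $F$ is dyadic and iterate Lemma \ref{lem:4.5}.

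I will inductively construct a strictly increasing sequence $0=m_0<m_1<\cdots<m_r=n$, matrices $V_1,\dots,V_r\in G_\ua^\bigtriangleup$, and standard GK types $(\ua^{(m_i)},\sig_{m_i})$ each extending the previous, such that at each stage $B_i:=B[V_1 V_2\cdots V_i]$ has $B_i^{(m_i)}$ equal to a reduced form of GK type $(\ua^{(m_i)},\sig_{m_i})$. The base case $m_0=0$ is trivial, as the empty matrix is by convention a reduced form of GK type $\emptyset$. For the inductive step, $V_1\cdots V_i\in G_\ua$ and hence $B_i$ remains optimal with $\GK(B_i)=\ua$ by Proposition \ref{prop:1.1}, so the hypotheses of Lemma \ref{lem:4.5} hold with $m=m_i$; the lemma yields $m_{i+1}>m_i$, $V_{i+1}\in G_\ua^\bigtriangleup$, and a standard extension $(\ua^{(m_{i+1})},\sig_{m_{i+1}})$ with $B_{i+1}^{(m_{i+1})}$ reduced of that GK type. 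Since $m_i$ strictly increases and is bounded by $n$, the process terminates with $m_r=n$; setting $U=V_1 V_2\cdots V_r$ and $\sig=\sig_{m_r}$ then gives the desired conclusion.

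Two small points remain. First, $U\in G_\ua^\bigtriangleup$: one checks that $G_\ua^\bigtriangleup$ is a subgroup of $\GL_n(\frko)$ by a short computation on matrix products, since for $i,j$ with $a_i<a_j$ the sum $(gh)_{ij}=\sum_k g_{ik}h_{kj}$ is supported on $k$ with $a_i\leq a_k\leq a_j$, and the estimate $(a_k-a_i)/2+(a_j-a_k)/2=(a_j-a_i)/2$ is immediate. Second, the ``in particular'' clause follows because any $B\in\calhnd_n(\frko)$ is equivalent to an optimal form by the remark following Definition \ref{def:0.2}, to which the first part applies. The real substance is already inside Lemma \ref{lem:4.5}; the only non-trivial bookkeeping point for the theorem itself is to note that the restriction $B_{i+1}^{(m_i)}$ remains reduced of GK type $(\ua^{(m_i)},\sig_{m_i})$, which is automatic because the defining conditions of Definition \ref{def:3.2} are index-local and $\sig_{m_{i+1}}$ agrees with $\sig_{m_i}$ on $\{1,\dots,m_i\}$ by the extension property of standard GK types. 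Thus the reduction theorem is essentially a corollary of Lemma \ref{lem:4.5}, with no serious obstacle beyond the iteration just described.
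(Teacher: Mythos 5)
Your proposal is correct and follows the paper's own approach: the paper proves Theorem~\ref{thm:4.1} by the one-line remark ``by using Lemma~\ref{lem:4.5} repeatedly,'' and your iteration spells this out faithfully, with the non-dyadic case delegated to Proposition~\ref{prop:4.1} and the ``in particular'' clause following from the remark after Definition~\ref{def:0.2}. The closing remark about $B_{i+1}^{(m_i)}$ remaining reduced is extraneous (the iteration only needs $B_{i+1}^{(m_{i+1})}$ reduced, which Lemma~\ref{lem:4.5} supplies directly), and the stated reason is slightly imprecise since the extension $\sig_{m_{i+1}}$ agrees with $\sig_{m_i}$ on $\{1,\dots,m_i\}$ only after truncating pairings that reach beyond $m_i$, but this does not affect the validity of the argument.
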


\begin{remark} 
We shall say a reduced form $B=(b_{ij})$ of GK type $(\ua, \sig)$ is a strongly reduced form if the following condition hold:
\begin{itemize}
\item[(1)]
If $i\notin \calp^0(\sig)$, then $b_{ij}=0$ for $j>\max\{ i, \sig(i)\}$.
\end{itemize}
The proof of Theorem \ref{thm:4.1} shows that a non-degenerate half-integral symmetric matrix is equivalent to a strongly reduced form.
\end{remark}

Recall that two $\ua$-admissible involutions $\sig, \sig'\in\frkS_n$ are equivalent if they are conjugate by an element of $\frkS_{n_1}\times \cdots \times\frkS_{n_r}$.
The equivalence class of $\sig$ is determined by
\[
\sharp \calp_1^+, \dots, \sharp \calp_r^+, \sharp \calp_1^-, \dots, \sharp \calp_r^-, \sharp \calp_1^0, \dots, \sharp \calp_r^0, 
\]
since
\begin{align*}
\sig(i)&=\min\{j\in \calp^+\,|\, j>i, \, a_j\equiv a_i \text{ mod } 2\} \quad \text{ for } i\in\calp^-, \\
\sig(i)&=\max\{j\in \calp^-\,|\, j<i,\, a_j\equiv a_i \text{ mod } 2\} \quad \text{ for } i\in\calp^+. 
\end{align*}
Note that for each block $I_s$, exactly one of the following possibilities occur:
\begin{itemize}
\item[(1)] $n_s$ is even and $\sharp\calp_s^+=\sharp\calp_s^-=\sharp\calp_s^0=0$.
\item[(2)] $n_s$ is even and $\sharp\calp_s^+=\sharp\calp_s^- + \sharp\calp_s^0=1$.
\item[(3)] $n_s$ is odd and $\sharp\calp_s^+=1$, $\sharp\calp_s^-=\sharp\calp_s^0=0$.
\item[(4)] $n_s$ is odd and $\sharp\calp_s^+=0$, $\sharp\calp_s^- + \sharp\calp_s^0=1$.
\end{itemize}
Moreover, if $i\in\calp^0$, then
\[
i=\max\{ j\in \calp^0\cup \calp^+\,|\, a_i\equiv a_j \text{ mod } 2\}.
\]
It follows that the equivalence class of $\sig$ is determined by
\[
\sharp \calp_1^+, \dots, \sharp \calp_r^+. 
\]
We determine the number of equivalence classes of GK types for given $\ua$.
For a block $I_s$, let $k_s$ be the number of blocks $I_u$ such that
\[
1\leq u < s, \quad a^\ast_u\equiv a^\ast_s \text{ mod }2, \quad n_u\not\equiv 0  \text{ mod }2.
\]
If $n_s$ is odd, then the possibility (3) (resp. the possibility (4)) occurs if and only if $k_s$ is odd (resp. even).
Suppose that $n_s$ is even.
If $k_s$ is even, only the possibility (1) occurs.
If $k_s$ is odd, both (1) and (2) are possible.
Note also that $\sharp\calp_s^0=1$ if and only if $k_s$ is even and
\[
i=\max\{ j\in \calp^0\cup \calp^+\,|\, a_i\equiv a_j \text{ mod } 2\}. 
\]
Thus the number of equivalence classes of GK types is equal to $2^K$, where $K$ is the number of blocks $I_s$ such that $n_s\equiv 0$ mod $2$ and $k_s\not\equiv 0$ mod $2$.

The proof of the following lemma is easy and will be omitted.
\begin{lemma} 
\label{lem:4.6}
Suppose that $B\in\calh_n(\frko)$ is a reduced form of GK type $(\ua, \sig)$.
If $U\in N_\ua^\bigtriangledown$, then $B[U]$ is also a reduced form of GK type $(\ua, \sig)$.
\end{lemma}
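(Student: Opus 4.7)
The plan is to verify the three conditions of Definition \ref{def:3.2} for $B[U]$. Let $(L, Q, \underline{\psi})$ be the framed quadratic module associated to $B$, and set $\tilde{\psi}_j = (\underline{\psi}U)_j$. Since $U = (u_{ij}) \in N_\ua^\bigtriangledown$, we have $u_{ij} = \delta_{ij}$ whenever $a_i = a_j$ and $u_{ij} = 0$ whenever $a_i < a_j$. Thus
\[
\tilde{\psi}_j \,=\, \psi_j + v_j, \qquad v_j \,=\, \sum_{i:\, a_i > a_j} u_{ij}\,\psi_i \,\in\, \sum_{i:\, a_i > a_j} \frko\,\psi_i.
\]
Moreover, $B[U]\in\calm(\ua)$ by Proposition \ref{prop:1.1}, so the weak inequalities required to speak of Definition \ref{def:3.2} at all are automatic.

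The key computation is that for every pair $(i,j)$ with $i\neq j$,
\[
2(B[U])_{ij} \,=\, (\tilde{\psi}_i, \tilde{\psi}_j)_Q \,=\, 2b_{ij} + (\psi_i, v_j)_Q + (v_i, \psi_j)_Q + (v_i, v_j)_Q,
\]
and analogously $(B[U])_{ii} = Q(\tilde{\psi}_i) = b_{ii} + (\psi_i, v_i)_Q + Q(v_i)$. Every summand in each "correction" term involves indices $k,\ell$ with $a_k > a_i$ or $a_\ell > a_j$, so by the defining inequalities of $\calm(\ua)$ satisfied by $B$, its order is \emph{strictly} greater than $(a_i+a_j)/2$ in the off-diagonal case, and strictly greater than $a_i$ in the diagonal case. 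Consequently the leading term cannot be cancelled: $\ord(2(B[U])_{ij}) = \ord(2b_{ij})$ when $\ord(2b_{ij}) = (a_i+a_j)/2$, and $\ord(2(B[U])_{ij}) > (a_i+a_j)/2$ when $\ord(2b_{ij}) > (a_i+a_j)/2$; similarly for diagonal entries.

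From this, each condition in Definition \ref{def:3.2} is immediate. Condition (3) is the strict inequality $\ord(2 b_{ij}) > (a_i+a_j)/2$ for $j\neq i, \sig(i)$, which transfers directly. Condition (2) is the equality $\ord(b_{ii}) = a_i$ for $i\in\calp^0$, which the leading-term argument preserves. For condition (1), by Proposition \ref{prop:2.3} (and its trivial non-dyadic counterpart) the requirement that the $2\times 2$ submatrix at rows/columns $\{i,\sig(i)\}$ have Gross-Keating invariant $(a_i, a_j)$ with $j=\sig(i)$ is equivalent to a conjunction of sharp order equalities among its four entries; since $B$ is reduced these hold for $B$, and the leading-term principle shows they hold for $B[U]$.

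The only potential obstacle is a subtle cancellation among the leading term and perturbations, but this is ruled out by the strict inequalities above: every correction lies in a higher power of $\vpi$ than the main term it is added to. Thus the argument is essentially bookkeeping with orders and is uniform in all subcases (non-dyadic versus dyadic; $a_i=a_j$, $a_j-a_i$ even, or $a_j-a_i$ odd), with no case requiring a separate idea.
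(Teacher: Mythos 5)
The paper records no proof of this lemma (``easy and will be omitted''), so there is nothing to compare against directly; your approach of writing $\tilde\psi_j=\psi_j+v_j$ with $v_j$ supported on indices $k$ with $a_k>a_j$ and then observing that every correction term has strictly higher order than the quantity it perturbs is the natural one, and your verifications of conditions (2) and (3) of Definition~\ref{def:3.2} are complete and correct. Note that you rely, correctly, on the fact that for $i\notin\calp^0$ and $j=\sig(i)$ the admissibility of $\sig$ forces $a_i\equiv a_j\bmod 2$, so only cases (1) and (2) of Proposition~\ref{prop:2.3} arise.

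The one real imprecision is in condition~(1), in the non-dyadic case. You appeal to ``Proposition~\ref{prop:2.3} (and its trivial non-dyadic counterpart)'' to say that $\GK$ of the $2\times 2$ block equals $(a_i,a_j)$ iff certain entries have sharp order. Proposition~\ref{prop:2.3} is stated only for dyadic $F$, and there is no non-dyadic analogue of that form: for example, with $a_i=a_j=0$ and $F$ non-dyadic, the criterion for the $2\times 2$ block $B'\in\calm((0,0))$ to have $\GK(B')=(0,0)$ is that $B'$ be $\GL_2(\frko)$-unimodular, i.e.\ $\det B'\in\frko^\times$, which is a determinant condition and not a conjunction of order equalities on individual entries (both $\begin{pmatrix}1&0\\0&1\end{pmatrix}$ and $\begin{pmatrix}0&1\\1&0\end{pmatrix}$ qualify). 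Your argument still goes through, but the clean fix is to observe that your leading-term computation shows $(B[U])_{\{i,\sig(i)\}}-B_{\{i,\sig(i)\}}\in\calm^0((a_i,a_{\sig(i)}))$, and hence (i) $\ord\bigl(\det (B[U])_{\{i,\sig(i)\}}\bigr)=\ord\bigl(\det B_{\{i,\sig(i)\}}\bigr)=a_i+a_{\sig(i)}$, since every correction to the determinant has order $>a_i+a_{\sig(i)}$, and (ii) when $a_i<a_{\sig(i)}$, $\ord((B[U])_{ii})=\ord(b_{ii})=a_i$. In the non-dyadic case these two facts characterize $\GK=(a_i,a_{\sig(i)})$ among matrices in $\calm((a_i,a_{\sig(i)}))$, via $\Del=\ord(\det)$ and the norm ideal; in the dyadic case Proposition~\ref{prop:2.3} does the job, exactly as you say. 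With that replacement the proof is complete.
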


\begin{theorem} 
\label{thm:4.2}
Let $B=(b_{ij})\in\calh_n(\frko)$ and $T=(t_{ij})\in\calh_n(\frko)$ be reduced forms with standard GK types $(\ua, \sig_1)$ and $(\ua, \sig_2)$, respectively.
If $B\sim T$, then we have $\sig_1=\sig_2$.
\end{theorem}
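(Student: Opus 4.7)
The plan is to show that the tuple $(\sharp\calp_s^+(\sig))_{s=1}^r$ is an invariant of the $\GL_n(\frko)$-equivalence class of a reduced form. Combined with the observation preceding the theorem---that two standard $\ua$-admissible involutions coincide if and only if they have the same such tuple---this will give $\sig_1=\sig_2$.

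From the classification after Theorem~\ref{thm:4.1}, $\sharp\calp_s^+(\sig)$ is forced by $\ua$ alone (via the parities of $n_s$ and $k_s$) except in the case $n_s$ even and $k_s$ odd, where it is a genuine binary choice in $\{0,1\}$. I would argue by induction on the smallest index $s_0$ at which $\sharp\calp_{s_0}^+(\sig_1)\neq \sharp\calp_{s_0}^+(\sig_2)$, seeking a contradiction. Setting $k=n^\ast_{s_0}$, Lemma~\ref{lem:4.1}(1) gives that $B^{(k)}$ and $T^{(k)}$ are reduced forms of standard GK types $(\ua^{(k)},\sig_1^{(k)})$ and $(\ua^{(k)},\sig_2^{(k)})$; Theorem~\ref{thm:0.3} shows they are optimal with GK invariant $\ua^{(k)}$; and Theorem~\ref{thm:0.4} forces $\xi_{B^{(k)}}=\xi_{T^{(k)}}$ when $k$ is even, or $\eta_{B^{(k)}}=\eta_{T^{(k)}}$ when $k$ is odd. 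Observe that pairings from $I_{s_0}$ to earlier blocks lie entirely within $\{1,\dots,k\}$, so $\sharp\calp_{s_0}^+(\sig_i^{(k)})=\sharp\calp_{s_0}^+(\sig_i)$.

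The decisive step is to read off this common invariant from $(\ua^{(k)},\sig^{(k)})$ and to verify that it depends on $\sharp\calp_{s_0}^+$. Iterating the decoupling arguments in Propositions~\ref{prop:3.1} and~\ref{prop:3.4}, any reduced form decomposes up to $\GL(\frko)$-equivalence as an orthogonal sum of primitive pieces indexed by the $\sig$-orbits: each within-block pair $(i,\sig(i))$ with $i,\sig(i)\in I_s$ contributes a scaled primitive unramified binary form $\vpi^{a^\ast_s}K$; each cross-block pair with $i\in\calp_t^-$ and $\sig(i)\in\calp_u^+$ contributes a primitive binary form of mixed GK type $(a^\ast_t,a^\ast_u)$ (classified via Proposition~\ref{prop:2.2}); and each fixed point in $\calp^0$ contributes a scalar. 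Applying Lemmas~\ref{lem:3.3}, \ref{lem:3.4}, and \ref{lem:3.5} recursively expresses $\xi_{B^{(k)}}$ (or $\eta_{B^{(k)}}$) as a product of Hilbert-symbol contributions from these primitive pieces. The two possibilities $\sharp\calp_{s_0}^+\in\{0,1\}$ correspond to interchanging one within-block unramified binary piece of $I_{s_0}$ (together with a matching scalar in a linked earlier block $I_t$, $t<s_0$) for a single cross-block binary piece of GK type $(a^\ast_t,a^\ast_{s_0})$.

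The main obstacle will be verifying that this interchange always produces a detectable change with no accidental cancellation. Since $k_{s_0}$ is odd by assumption, the linked block $I_t$ carries an element of matching parity participating in the swap; the interchange shifts $\ord(D_{B^{(k)}})$ by an odd amount or toggles the discriminant's square class, flipping $\xi$ between $0$ and $\pm 1$ (when $k$ is even) or shifting $\eta$ by a non-trivial Hilbert symbol $\langle\vpi,\cdot\rangle$ (when $k$ is odd). A careful case analysis across configurations (1)--(4) for the linked block, using that $\sig_1$ and $\sig_2$ agree on blocks $s<s_0$ by the choice of $s_0$, should rule out cancellation and complete the proof.
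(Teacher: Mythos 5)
Your approach is genuinely different from the paper's, and unfortunately it does not work. The paper's proof (after reducing to $U\in G_\ua^\bigtriangleup$ via Lemma~\ref{lem:4.6} and the factorization $G_\ua=N_\ua^\bigtriangledown G_\ua^\bigtriangleup$) is a direct valuation argument: writing $B_{12}'=B_{12}+B_{11}X$ for the cross-block part of $B[U]$, it shows that $\ord(2(B_{11}X)_{hj})>(a_h+a^\ast_s)/2$ always, so the exact-order entry $b_{h,\sig_1(h)}$ survives the transformation, contradicting the hypothesis that $T$ has all such entries strictly larger. That argument lives entirely at the level of valuations of matrix entries and hence sees the integral ($\GL_n(\frko)$) structure.

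You instead try to read off the involution from the $\xi$- and $\eta$-invariants of Theorem~\ref{thm:0.4}. The gap is in what you call the \emph{decisive step} and \emph{main obstacle}: the claim that interchanging a within-block unramified pair (plus a matching scalar in an earlier block) for a cross-block primitive binary piece ``always produces a detectable change.'' It does not. These invariants are, up to the fixed quantity $\Delta(B)=|\ua|$, invariants of the $\GL_n(F)$-class, and the two configurations can be $\GL_n(F)$-equivalent. The paper's own example at the end of Section~\ref{sec:4} is a counterexample to your claim. Over $F=\QQ_2$ with $\ua=(0,2,2)$,
\[
B_1=\begin{pmatrix} 1&1&0\\1&0&0\\0&0&4\end{pmatrix},\qquad
B_2=\begin{pmatrix} 1&0&0\\0&0&2\\0&2&0\end{pmatrix}
\]
are reduced forms with standard involutions $\sig_1=(1\,2)$ and $\sig_2=(2\,3)$, so $s_0=2$ and $k=n^\ast_2=3$. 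But $B_1$ and $B_2$ are both $\GL_3(\QQ_2)$-equivalent to $\diag(1,1,-1)$: the binary block $\begin{pmatrix}1&1\\1&0\end{pmatrix}$ has $D=4\in\QQ_2^{\times 2}$ and is a hyperbolic plane, as is $\begin{pmatrix}0&2\\2&0\end{pmatrix}$, and $4\in\QQ_2^{\times 2}$. Hence $\eta_{B_1}=\eta_{B_2}$ (and $\ord(D_{B_1})=\ord(D_{B_2})=4$: no odd shift). The only block boundary is $k=1$, where $B_1^{(1)}=B_2^{(1)}=(1)$. So \emph{no} comparison furnished by Theorem~\ref{thm:0.4} distinguishes the two standard GK types here, and the ``interchange'' you describe cancels exactly. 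Any correct proof must, like the paper's, work directly with integral data rather than with $\xi$ and $\eta$.
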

\begin{proof}
Assume that both $B[U]=T$ for some $U\in\GL_n(\frko)$.
Since both $B$ and $T$ are optimal, we have $U\in G_\ua$.
Since $G_\ua=N_\ua^\bigtriangledown G_\ua^\bigtriangleup$, there exist $U_1\in N_\ua^\bigtriangledown$ and $U_2\in G_\ua^\bigtriangleup$ such that $U=U_1U_2$.
Note that $B[U_1]$ is a reduced form of GK type $(\ua, \sig_1)$ by Lemma \ref{lem:4.6}.
Replacing $B$ by $B[U_1]$, we may assume $U\in G_\ua^\bigtriangleup$.

Suppose that $\sig_1\not\sim\sig_2$.
Let $I_1, \dots, I_r$ be the blocks for $\ua$.
Then we have
\[
\sharp\calp_s^+(\sig_1)\neq \sharp\calp_s^+(\sig_2)
\]
for some $s$.
Let $s$ be the smallest integer with this property.
We may assume $\calp_s^+(\sig_1)\neq\emptyset$ and $\calp_s^+(\sig_2)=\emptyset$.
By replacing $B$ and $T$ by $B^{(n_s^\ast)}$ and $T^{(n_s^\ast)}$, we may assume $n=n_s$.
Put $m=n_{s-1}^\ast=n-n_s$.

Write $B$ and $T$ in block forms as follows.
\[
\begin{array}{ccc}
&\hskip 5pt
\overbrace{\hphantom{B_{11}}}^{m} \;\;
\overbrace{\hphantom{\; B_{12}}}^{n_s}  \; \; \\ & 
B=\left(
\begin{array}{ccl}
\phantom{{}^t\!} B_{11} & \phantom{{}^t\!} B_{12} &  \\
{}^t\! B_{12} & \phantom{{}^t\!} B_{22}  \\
\end{array} \hskip -10pt
\right) \hskip -5pt
\begin{array}{l}
 \left.\vphantom{B_{11}} \right\} \text{\footnotesize${m}$} \\
 \left.\vphantom{B_{11}} \right\} \text{\footnotesize${n_s,}$} \\
\end{array}
\end{array} \qquad 
\begin{array}{ccc}
&\hskip 5pt
\overbrace{\hphantom{T_{11}}}^{m} \;\;
\overbrace{\hphantom{\; T_{12}}}^{n_s}  \; \; \\ & 
T=\left(
\begin{array}{ccl}
\phantom{{}^t\!} T_{11} & \phantom{{}^t\!} T_{12} &  \\
{}^t T_{12} & \phantom{{}^t\!} T_{22}  \\
\end{array} \hskip -10pt
\right) \hskip -5pt
\begin{array}{l}
 \left.\vphantom{T_{11}} \right\} \text{\footnotesize${m}$} \\
 \left.\vphantom{T_{11}} \right\} \text{\footnotesize${n_s}$} \\
\end{array}
\end{array}
\]
Then we have $\begin{pmatrix}
0 & \phantom{{}^t}T_{12}  \\
{}^t T_{12} & 0 
\end{pmatrix}
\in \calm^0(\ua)$, since $\calp_s^+(\sig_2)=\emptyset$.
Decompose $U\in G_\ua^\bigtriangleup$ into
\[
U=
\begin{pmatrix}
\mathbf{1}_m & X \\
0 & \mathbf{1}_{n_s}
\end{pmatrix}
\begin{pmatrix}
U_{11} & 0 \\
0 & U_{22}
\end{pmatrix},
\]
where 
\[
U_{11}\in G_{\ua^{(m)}}, \quad U_{22}\in \GL_{n_s}(\frko),\quad 
\begin{pmatrix}
\mathbf{1}_m & X \\
0 & \mathbf{1}_{n_s}
\end{pmatrix}
\in N_\ua^\bigtriangleup.
\]
Put 
\[
B'=\begin{pmatrix}
B'_{11} & B'_{12}  \\
{}^t\! B'_{12} & B'_{22}
\end{pmatrix}
=
B\left[
\begin{pmatrix}
\mathbf{1}_m & X \\
0 & \mathbf{1}_{n_s}
\end{pmatrix}
\right].
\]
Then we have
\[
\begin{pmatrix}
0 & B'_{12}  \\
{}^t\! B'_{12} & 0 
\end{pmatrix}
=
\begin{pmatrix}
0 & T_{12}  \\
{}^t T_{12} & 0 
\end{pmatrix}
\left[
\begin{pmatrix}
U_{11}^{-1} & 0 \\
0 & U_{22}^{-1}
\end{pmatrix}
\right]
\in \calm^0(\ua).
\]
Since $\calp_s^+(\sig_1)\neq\emptyset$, there exists $h\in \calp^-(\sig_1)$ such that $\sig_1(h)\in I_s$.
Now look at the $h$-th row of 
\[
B'_{12}=B_{12}+B_{11}X.
\]
Put $Y=(y_{ij})=B_{11}X$.
We claim that 
\[
\ord(2y_{hj}) > \frac{a_h+a^\ast_s}2\quad\text{ for } j=1,2, \dots, n_s.
\]
In fact, 
\[
y_{hj}=\sum_{i=1}^m b_{hi} x_{ij},
\]
where $x_{ij}$ is the $(i, j)$-th entry of $X$.
Since $B$ is a reduced form of GK type $(\ua, \sig_1)$, we have 
\[
\ord(2 b_{hj}) > \frac{a_h + a_i}2, \quad \text{ for } 1\leq i\leq m, i\neq h.
\]
Note that $\ord(2 b_{hh}) > a_h$, since $F$ is dyadic.
Note also that
\[
\ord(x_{ij}) \geq \frac{a^\ast_s-a_i}2, \quad \text{ for } i=1,2, \dots, m,
\]
since $\begin{pmatrix}
\mathbf{1}_m & X \\
0 & \mathbf{1}_{n_s}
\end{pmatrix}
\in N_\ua^\bigtriangleup$.
This proves the claim.

Put $\sig_1(h)=m+k\in \calp_s^+(\sig_1)$.
Then we have
\[
\ord(2 (B_{12})_{hk})=\ord(2b_{h, m+k})=\frac{a_h+a_s^\ast}2,
\]
where $(B_{12})_{hk}$ is the $(h, k)$-th entry of $B_{12}$.
This is a contradiction.
\end{proof}

\noindent \textbf{Example}.
Suppose that $F=\QQ_2$.
Put
\[
B_1=
\begin{pmatrix}
1 & 1 & 0 \\
1 & 0 & 0 \\
0 & 0 & 4
\end{pmatrix},
\qquad
B_2=
\begin{pmatrix}
1 & 0 & 0 \\
0 & 0 & 2 \\
0 & 2 & 0
\end{pmatrix}.
\]
Then $B_1$ is a reduced form of GK type $((0,2,2), \begin{pmatrix} 1\,2\,3 \\ 2\,1\,3\end{pmatrix})$ and $B_2$ is a reduced form of GK type $((0,2,2), \begin{pmatrix} 1\,2\,3 \\ 1\,3\,2\end{pmatrix})$.
Note that $B_1^{(2)}$ is reduced, but $B_2^{(2)}$ is not.
For more examples, see Bouw \cite{Bouw} and Yang \cite{thyang}.

\section{Optimal forms}
\label{sec:5}

\begin{theorem}
\label{thm:5.1a}
Suppose that $\ua=(a_1, a_2, \ldots, a_n)\in\ZZn$ is a non-decreasing sequence and  that $B\in\calm(\ua)$.
Then $\GK(B)=\ua$ if and only if $\Del(B^{(n^\ast_s)})=|\ua^{(n^\ast_s)}|$ for $s=1, \ldots, r$.
Moreover, $B$ is optimal in this case.
\end{theorem}

\begin{proof}
Assume that $\Del(B^{(n^\ast_s)})=|\ua^{(n^\ast_s)}|$ for $s=1, \ldots, r$.
Put $\GK(B)=\underline{b}=(b_1, \ldots, b_n)$.
Since $\ua \in S(B)$, we have $\underline{b}\succeq\ua$.
We shall show that $(a_1, \ldots, a_{n^\ast_s})=(b_1, \ldots, b_{n^\ast_s})$ for $s=0, 1, \ldots, r$ by induction.
The case $s=0$ is trivial.

Assume that $(a_1, \ldots, a_{n^\ast_s})=(b_1, \ldots, b_{n^\ast_s})$.
Then, we have $b_{n^\ast_s+1}\geq a_{n^\ast_s+1}$, since $\underline{b}\succeq\ua$.
Note that
\begin{align*}
a_{n^\ast_s+1}&=a_{n^\ast_s+2}=\cdots =a_{n^\ast_{s+1}}, \\
b_{n^\ast_s+1}&\leq b_{n^\ast_s+2}\leq \cdots \leq b_{n^\ast_{s+1}}.
\end{align*}
It follows that
\[
\sum_{i=1}^{n^\ast_{s+1}} a_i \leq \sum_{i=1}^{n^\ast_{s+1}} b_i.
\]
By applying Lemma \ref{lem:3.7} for $B_1=B^{(n^\ast_{s+1})}$, we have
\[
\sum_{i=1}^{n^\ast_{s+1}} b_i\leq 
\Delta(B^{(n^\ast_{s+1})})=
\sum_{i=1}^{n^\ast_{s+1}} a_i.
\]
It follows that $(a_1, \ldots, a_{n^\ast_{s+1}})=(b_1, \ldots, b_{n^\ast_{s+1}})$, as desired.

Conversely, assume that $\GK(B)=\ua$.
In particular, $B$ is optimal.
By Theorem \ref{thm:4.1}, there exist a reduced form $B_1$ and $U\in G_\ua^\bigtriangleup$ such that $B=B_1[U]$.
Then we have $B^{(n^\ast_s)}=B_1^{(n^\ast_s)}[U^{(n^\ast_s)}]$.
In particular, we have $\Delta(B^{(n^\ast_s)})=\Delta(B_1^{(n^\ast_s)})$.
Since $B_1^{(n^\ast_s)}$ is a reduced form of GK type $\ua^{(n^\ast_s)}$, we see $\Delta(B_1^{(n^\ast_s)})=|\ua^{(n^\ast_s)}|$.
The last part of the theorem is clear.
\end{proof}

\begin{corollary} 
\label{cor:5.1}
Suppose that $\ua=(a_1, a_2, \ldots, a_n)\in\ZZn$ is a non-decreasing sequence.
If $B\in\calh_n(\frko)$ is a reduced form of GK type $\ua$, then we have $\GK(B)=\ua$.
In particular, $B$ is optimal.
\end{corollary}

\begin{proof}
Note that $B^{(n^\ast_s)}$ is reduced with GK type $\ua^{(n^\ast_s)}$ for $s=1, \ldots, r$.
By Proposition \ref{prop:3.2}, we have $\Delta(B^{(n^\ast_s)})=|\ua^{(n^\ast_s)}|$.
By Theorem \ref{thm:5.1a}, $B$ is optimal and $\GK(B)=\ua$.
\end{proof}

\noindent \textsl{Proof of Theorem \ref{thm:0.1}.\quad}
By Theorem \ref{thm:4.1}, there exists $U\in G_\ua^\bigtriangleup$ such that $B[U]$ is a reduced form of GK type $\ua$.
By Proposition \ref{prop:3.2}, we have $|\ua|=\Del(B[U])=\Del(B)$.
\hfill $\square$\par\bigskip

\noindent\textsl{Proof of Theorem \ref{thm:0.2}.\quad}
The ``if" part is Proposition \ref{prop:1.1}.
We prove the ``only if" part.
Suppose that both $B$ and $B[U]$ are optimal with $U\in\GL_n(\frko)$.
By Theorem \ref{thm:4.1} and Corollary \ref{cor:5.1}, we may assume $B$ is a reduced form of GK type $\ua$.
By Lemma \ref{lem:3.10}, $\psi_i U\in \call_s$ for  any $i\in I_s$ ($s=1,2, \ldots, r$).
It follows that $U\in G_\ua$, as desired.
\hfill $\square$\par\bigskip

Theorem \ref{thm:0.3} follows from Theorem \ref{thm:5.1a} immediately.
\par\bigskip

\noindent \textsl{Proof of Theorem \ref{thm:0.4}.\quad}
We may assume that $B$ is a reduced form of GK type $\ua$.
By Theorem \ref{thm:0.2}, there exists an element $U\in G_\ua$ such that $B_1=B[U]$.
Since $G_\ua=N_\ua^\bigtriangledown G_\ua^\bigtriangleup$, there exist $U_1\in N_\ua^\bigtriangledown$ and $U_2\in G_\ua^\bigtriangleup$ such that $U=U_1 U_2$.
Then $B_1^{(k)}$ is equivalent to $B_1[U_2^{-1}]^{(k)}=B[U_1]^{(k)}$, since $U_2\in G_\ua^\bigtriangleup$.
Replacing $B_1$ by $B_1[U_2^{-1}]$, we may assume that $B_1=B[U_1]$ with $U_1\in N_\ua^\bigtriangledown$.
In this case, $B_1$ is a reduced form of GK type $\ua$ and $B^{(k)}-B_1^{(k)}\in \calm^0(\ua^{(k)})$.
Then the theorem follows from Proposition \ref{prop:3.3}.
\hfill$\square$

\begin{corollary} 
Suppose that $n$ is even.
Let $B\in\calh_n^{\mathrm{nd}}(\frko)$ be a half-integral symmetric matrix such that $\GK(B)=(\ua, \sig)$.
Then the following four conditions are equivalent.
\begin{itemize}
\item[(1)] 
$|\ua|$ is odd.
\item[(2)] $\sharp \calp^0(\sig)=2$.
\item[(3)] $\ord(\frkD_B)>0$.
\item[(4)] $\xi_B= 0$.
\end{itemize}
\end{corollary}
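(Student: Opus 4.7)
The plan is to verify the chain (3)$\Leftrightarrow$(4), (1)$\Leftrightarrow$(2), and (2)$\Leftrightarrow$(4). The first is immediate from the definition of $\xi_B$ given in the introduction.

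For (1)$\Leftrightarrow$(2), I would argue directly from $\ua$-admissibility. Since $n$ is even, $\sharp\calp^0(\sig)\in\{0,2\}$, as noted after the definition of GK type. Axioms (i) and (iii) force $a_i\equiv a_{\sig(i)}\pmod 2$ for every $i\in\calp^+\cup\calp^-$, so each pair $\{i,\sig(i)\}$ contributes an even amount to $|\ua|$; axiom (i) requires the two elements of $\calp^0$, when present, to have $a$-values of different parities. Consequently $|\ua|$ is odd if and only if $\sharp\calp^0(\sig)=2$.

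For (2)$\Leftrightarrow$(4), I would first invoke the Reduction Theorem (Theorem \ref{thm:4.1}) to assume $B$ is a reduced form of standard GK type $(\ua,\sig)$. The crucial step is to show, by iterating the Gram--Schmidt argument of Proposition \ref{prop:3.1}, that $B$ can be brought by an element of $G_\ua^\bigtriangleup$ into a block-diagonal form whose summands are either $1\times 1$ singletons $(b_{ii})$ with $\ord(b_{ii})=a_i$ indexed by $i\in\calp^0(\sig)$, or $2\times 2$ binary blocks $K_p$ of GK type $(a_i,a_{\sig(i)})$ indexed by pairs $\{i,\sig(i)\}$ with $i<\sig(i)$. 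For each such $K_p$, the parity condition $a_i\equiv a_{\sig(i)}\pmod 2$ means that, after scaling by $\vpi^{-a_i}$, the block is primitive of GK type $(0,2f)$; by Proposition \ref{prop:2.2} this corresponds to an unramified (or split) quadratic extension, so $D_{K_p}$ lies in the subgroup of $F^\times/F^{\times 2}$ of ``unramified classes'' and $\ord(D_{K_p})$ is even.

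With the block decomposition in hand, a direct computation expresses $D_B=(-4)^{n/2}\det B$ as $\prod_p D_{K_p}$ when $\calp^0(\sig)=\emptyset$, and as $-4\,b_{i_0 i_0}\,b_{j_0 j_0}\prod_p D_{K_p}$ when $\calp^0(\sig)=\{i_0,j_0\}$. In the first case $D_B$ lies in the unramified class and $\xi_B\ne 0$; in the second, $\ord(b_{i_0 i_0}b_{j_0 j_0})=a_{i_0}+a_{j_0}$ is odd while every remaining factor has even order, so $\ord(D_B)$ is odd and $F(\sqrt{D_B})/F$ must be ramified, giving $\xi_B=0$. The main obstacle will be the block-diagonalization: Proposition \ref{prop:3.1} and Lemma \ref{lem:3.2a} handle the all-pair case, but one must extend the Gram--Schmidt procedure to simultaneously clear the rows and columns indexed by $\calp^0(\sig)$ without leaving $G_\ua^\bigtriangleup$ or upsetting the reduced-form conditions on the remaining blocks.
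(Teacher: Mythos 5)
Your (3)$\Leftrightarrow$(4) and (1)$\Leftrightarrow$(2) steps are correct and coincide with the paper's argument: the first is the definition of $\xi_B$, and the second follows, exactly as you say, from axioms (i) and (iii) of $\ua$-admissibility, which force $a_i + a_{\sig(i)}$ to be even for every non-fixed $i$ while the two fixed points (if present) have $a$-values of opposite parity.

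For closing the loop, however, the paper proves (1)$\Leftrightarrow$(4) directly from Theorem \ref{thm:0.1}, and this is far shorter than your proposed (2)$\Leftrightarrow$(4). Since $|\ua| = \Del(B)$ and, for $n$ even, $\Del(B) = \ord(D_B) - \ord(\frkD_B) + 1 - \xi_B^2$ where $\ord(D_B) - \ord(\frkD_B)$ is always even (it equals $2f$ in the notation of Proposition \ref{prop:2.1}, and the parity claim $\ord(d)\equiv\ord(\frkD_d)\bmod 2$ holds for every $d\in F^\times$), we get $|\ua| \equiv 1 - \xi_B^2 \pmod 2$; hence $|\ua|$ is odd if and only if $\xi_B = 0$. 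Your route instead attempts to block-diagonalize a reduced form into singleton blocks on $\calp^0(\sig)$ and binary blocks on the $\sig$-pairs, compute $D_B$ as a product, and read off $\xi_B$. You correctly flag the real gap in this plan: Proposition \ref{prop:3.1} and Lemma \ref{lem:3.2a} control things only when $\calp^0(\sig)=\emptyset$, and splitting off the $\calp^0$ rows and columns inside $G_\ua^\bigtriangleup$ while keeping the residual block reduced needs a further argument. That extra step is exactly what the proof of Lemma \ref{lem:3.12} carries out (move the two fixed indices to the last two coordinates, invert the complementary block using Lemma \ref{lem:3.2a}, and clear the coupling), so your approach can be completed, but as written it defers precisely the point that needs proof. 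I would replace the entire (2)$\Leftrightarrow$(4) step with the one-line parity argument via Theorem \ref{thm:0.1}, which avoids block-diagonalization altogether.
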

\begin{proof}
The equivalence of (1) and (2) follows from the definition of admissible involutions.
The equivalence of (1) and (4) follows from Theorem \ref{thm:0.1}.
The equivalence of (3) and (4) follows from the definition of $\xi_B$.
\end{proof}

\section{extended GK data} 
\label{sec:6}

In this section, we discuss combinatorial properties of the invariants $\GK(B)$,
 $\xi_{B^{(k)}}$ and $\eta_{B^{(k)}}$.
We do not assume $F$ is dyadic in this section.
The results of this section will be used in our forthcoming paper \cite{ikedakatsurada}.
First we introduce some definitions.
Put ${\mathcal Z}_3=\{0,1,-1 \}$. 
\begin{definition} 
\label{def:6.1}
An element $H=(a_1,\ldots,a_n;\vep_1,\ldots,\vep_n)$
of $\ZZ_{\ge 0}^n \times {\mathcal Z}_3^n$ is said to be a naive  EGK datum of length $n$  if the following conditions hold:
\begin{itemize}
\item [(N1)] $a_1 \le \cdots \le a_n$.
\item [(N2)] Assume that $i$ is even. 
Then $\vep_i \not=0$ if and only if $a_1+\cdots+a_i$ is even.
\item [(N3)] If $i$ is odd, then $\vep_i \not=0$. 
\item [(N4)] $\vep_1=1$.
\item [(N5)] If $i  \ge 3$ is odd and $a_1+\cdots + a_{i-1}$ is even, then $\vep_i=\vep_{i-2}\vep_{i-1}^{a_i+a_{i-1}}$.
\end{itemize}
We denote the set of naive EGK data of length $n$ by $\mathcal{NEGK}_n$.
\end{definition}

\begin{proposition} 
\label{prop:6.1}
Suppose that $F$ is non-dyadic field.
Let $T=(t_1)\perp \cdots \perp (t_n)$ be a diagonal matrix such that $\ord(t_1)\leq \ord(t_2)\leq \cdots \leq (t_n)$.
Put $a_i=\ord(t_i)$ and 
\[
\vep_i=\begin{cases} 
\xi_{T^{(i)}} & \text{ if $i$ is even,} \\
\eta_{T^{(i)}} & \text{ if $i$ is odd.} 
\end{cases}
\]
Then $(a_1, \ldots, a_n; \vep_1, \ldots, \vep_n)$ is a naive EGK datum.
\end{proposition}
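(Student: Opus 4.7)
The plan is to verify the five defining conditions of a naive EGK datum in turn. Conditions (N1), (N3), and (N4) are essentially immediate: (N1) is the standing ordering hypothesis on $\ord(t_i)$; (N3) follows because $\eta_{T^{(i)}}\in\{1,-1\}$ by definition of the Clifford invariant; and (N4) reduces to the computation $\eta_{(t_1)}=1$, obtained by specializing the explicit formula in Definition \ref{def:0.4} to $n=1$, in which both sign factors and the empty product all equal~$1$.

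For (N2), the key identity is $D_{T^{(i)}}=(-4)^{[i/2]}t_1\cdots t_i$, which in the non-dyadic case yields $\ord(D_{T^{(i)}})=a_1+\cdots+a_i$ since $\ord(4)=0$. For non-dyadic $F$, the extension $F(\sqrt{\xi})/F$ is ramified if and only if $\ord(\xi)$ is odd; hence for even $i$, $\xi_{T^{(i)}}\neq 0$ is equivalent to $a_1+\cdots+a_i$ being even.

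The only condition requiring genuine work is (N5). Applying Lemma \ref{lem:3.3} to $T^{(i)}$ and to $T^{(i-1)}$ and composing, I would obtain
\[
\eta_{T^{(i)}}=\eta_{T^{(i-2)}}\cdot\bigl\langle D_{T^{(i-1)}},\,D_{T^{(i-2)}}D_{T^{(i)}}\bigr\rangle.
\]
A direct expansion using that $i$ is odd shows $D_{T^{(i-2)}}D_{T^{(i)}}\equiv -t_{i-1}t_i\pmod{F^{\times 2}}$. Since $a_1+\cdots+a_{i-1}$ is even, $D_{T^{(i-1)}}$ has even valuation, and one checks by inspection of the four classes of $F^\times/F^{\times 2}$ that $\xi_{T^{(i-1)}}=\langle D_{T^{(i-1)}},\vpi\rangle$. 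Writing $t_j=\vpi^{a_j}u_j$ with $u_j\in\frko^\times$ and using bilinearity of the Hilbert symbol,
\[
\bigl\langle D_{T^{(i-1)}},\,-t_{i-1}t_i\bigr\rangle=\bigl\langle D_{T^{(i-1)}},\,-u_{i-1}u_i\bigr\rangle\cdot\bigl\langle D_{T^{(i-1)}},\,\vpi\bigr\rangle^{a_{i-1}+a_i}.
\]
The first factor on the right is a Hilbert symbol of two units, hence trivial in any non-dyadic field, and the second equals $\xi_{T^{(i-1)}}^{a_{i-1}+a_i}$, yielding (N5).

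The step requiring the most care is the reduction $D_{T^{(i-2)}}D_{T^{(i)}}\equiv -t_{i-1}t_i$ modulo squares followed by the splitting of the Hilbert symbol into a unit part and a $\vpi$-part; the triviality of the Hilbert symbol on pairs of units is precisely what makes the argument close, and the failure of that property in the dyadic case explains why the proposition is stated only for non-dyadic $F$.
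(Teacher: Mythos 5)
Your proof is correct and follows essentially the same route as the paper: both verify that only (N5) requires work, and both establish (N5) by applying Lemma~\ref{lem:3.3} twice to get $\eta_{T^{(i)}}=\eta_{T^{(i-2)}}\langle D_{T^{(i-1)}},D_{T^{(i-2)}}D_{T^{(i)}}\rangle$ and then exploiting that $F(\sqrt{D_{T^{(i-1)}}})/F$ is unramified. The paper condenses your Hilbert-symbol manipulation into the single identity $\langle D_{T^{(i-1)}},t\rangle=\xi_{T^{(i-1)}}^{\ord(t)}$ valid for all $t\in F^\times$, which you unpack by writing $t_j=\vpi^{a_j}u_j$; the content is the same. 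One small imprecision worth tightening: you say the factor $\langle D_{T^{(i-1)}},-u_{i-1}u_i\rangle$ is ``a Hilbert symbol of two units,'' but $D_{T^{(i-1)}}$ need not be a unit, only of even valuation. The correct justification is that $D_{T^{(i-1)}}\equiv v\ (\mathrm{mod}\ F^{\times 2})$ for some unit $v$, and the Hilbert symbol depends only on classes modulo squares, so $\langle D_{T^{(i-1)}},-u_{i-1}u_i\rangle=\langle v,-u_{i-1}u_i\rangle=1$ in the non-dyadic case. With that clarification the argument is complete.
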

\begin{proof}
Only (N5) needs a proof.
Suppose $i\geq 3$ is odd and $a_1+\cdots+a_{i-1}$ is even.
Then $F(\sqrt{D_{T^{(i-1)}}})/F$ is unramified, since $F$ is non-dyadic.
It follows that $\langle D_{B^{(i-1)}}, t\rangle=\xi_{B^{(i-1)}}^{\ord(t)}=\vep_{i-1}^{\ord(t)}$ for $t\in F^\times$.
By Lemma \ref{lem:3.3}, we have
\[
\eta_{T^{(i)}}=\eta_{T^{(i-2)}}\langle D_{B^{(i-1)}}, D_B D_{B^{(i-2)}}\rangle= \vep_{i-2}\vep_{i-1}^{a_i+a_{i-1}}.
\]
Hence the lemma.
\end{proof}
We set $\mathrm{NEGK}(T)=(a_1, \ldots, a_n; \vep_1, \ldots, \vep_n)$  and call it the naive EGK datum associated to $T$.

\begin{remark} 
Conversely, for a given $H=(a_1, \ldots, a_n; \vep_1, \ldots, \vep_n)\in\mathcal{NEGK}_n$,  there exists a diagonal matrix 
\[
T=\diag(t_1, \ldots, t_n), \quad \ord(t_1)\leq \cdots\leq \ord(t_n),
\] 
such that $\mathrm{NEGK}(T)=H$.
The proof is easy and left to the reader.
\end{remark}

\begin{remark} 
If $F$ is a dyadic field, then $(a_1, \ldots, a_n; \vep_1, \ldots, \vep_n)$ may not be a naive EGK datum for a diagonal matrix $T=\diag(t_1, \ldots, t_n)$ such that $\ord(t_1)\leq \cdots \leq \ord(t_n)$.
For example, put $T=(1)\bot (1)$.
Then $a_1=a_2=0$, $\vep_1=1$, $\vep_2=0$, and so (N2) does not hold.
\end{remark}

\begin{definition} 
\label{def:6.2}
Let $G=(n_1,\ldots, n_r; m_1, \ldots, m_r; \zeta_1, \ldots, \zeta_r)$ be an element of $\ZZ_{>0}^r \times \ZZ_{\ge 0}^r  \times {\mathcal Z}_3^r$.
Put $n^\ast_s=\sum_{i=1}^s n_i$ for $s \leq r$.
We say that $G$ is an EGK datum of length $n$  if the following conditions hold:
\begin{itemize}
\item [(E1)] $n^\ast_r=n$ and $m_1 <\cdots < m_r$.
\item[(E2)] Assume that $n^\ast_s$ is even. 
Then $\zeta_s \not=0$ if and only if $m_1 n_1+\cdots+m_s n_s$ is even.
\item [(E3)] Assume that $n^\ast_s$ is odd. 
Then $\zeta_s \not=0$. 
Moreover, we have
\begin{itemize}
\item[(a)] Assume that $n^\ast_i$ is even for any $i<s$.
Then we have
\[
\zeta_s
=\zeta_1^{m_1+m_2}
\zeta_2^{m_2+m_3}
\cdots
\zeta_{s-1}^{m_{s-1}+m_s}.
\]
In particular, $\zeta_1=1$ if $n_1$ is odd.
\item[(b)] 
Assume that $m_1n_1+\cdots + m_{s-1}n_{s-1}+m_s(n_s-1)$ is even and that $n^\ast_i$ is odd for some $i<s$.
Let $t<s$ be the largest number such that $n^\ast_t$ is odd.
Then we have
\[
\zeta_s=
\zeta_t
\zeta_{t+1}^{m_{t+1}+m_{t+2}} 
\zeta_{t+2}^{m_{t+2} + m_{t+3}}
\cdots
\zeta_{s-1}^{m_{s-1}+m_s}.
\]
In particular, $\zeta_s=\zeta_t$ if $t+1=s$.
\end{itemize}
\end{itemize}
We denote the set of EGK data of length $n$ by $\mathcal{EGK}_n$.
Thus $\mathcal{EGK}_n\subset \coprod_{r=1}^n (\ZZ_{>0}^r \times \ZZ_{\ge 0}^r  \times {\mathcal Z}_3^r)$.
\end{definition}

Let $H=(a_1,\ldots,a_n;\vep_1,\ldots,\vep_n)$ be a naive EGK datum.
Let $n_1$, $n_2$, $\dots$, $n_r$  and $n^\ast_1$, $n^\ast_2$, $\dots$, $n^\ast_r$  be as in section \ref{sec:1}.
For $s=1, 2, \ldots, r$, we set $m_s=a_{n^\ast_s}$ and $\zeta_s=\vep_{n^\ast_s}$.
The following proposition can be easily verified.
\begin{proposition} 
\label{prop:6.2}
Let $H=(a_1,\ldots,a_n;\vep_1,\ldots,\vep_n)$ be a naive EGK datum.
Then $G=(n_1, \ldots, n_r; m_1, \ldots, m_r; \zeta_1, \ldots, \zeta_r)$ is an EGK datum.
\end{proposition}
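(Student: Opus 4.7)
The plan is to verify the three defining axioms (E1)--(E3) of an EGK datum directly from the naive EGK axioms (N1)--(N5), using the definitions $m_s = a_{n^\ast_s}$ and $\zeta_s = \vep_{n^\ast_s}$. Axiom (E1) is immediate: $n^\ast_r = n$ by construction of the blocks, and $m_1 < m_2 < \cdots < m_r$ follows from the strict inequalities $a_{n^\ast_{s-1}} < a_{n^\ast_{s-1}+1}$ separating consecutive blocks. For (E2), when $n^\ast_s$ is even, (N2) gives $\zeta_s = \vep_{n^\ast_s} \neq 0$ if and only if $a_1 + \cdots + a_{n^\ast_s} = m_1 n_1 + \cdots + m_s n_s$ is even. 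In (E3), the nonvanishing $\zeta_s \neq 0$ for $n^\ast_s$ odd is just (N3).

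The content of (E3) lies in the formulas (a) and (b), which I would reduce, via (N5), to two elementary propagation rules. \textbf{Within-block rule.} If $i \geq 3$ is an odd index lying inside a block $I_j$ and if $a_1 + \cdots + a_{i-1}$ is even, then $a_i = a_{i-1} = m_j$ and (by (N2) at the even index $i-1$) $\vep_{i-1} \in \{\pm 1\}$; hence (N5) collapses to $\vep_i = \vep_{i-2}\,\vep_{i-1}^{2m_j} = \vep_{i-2}$. Iterating, the $\vep$'s at odd indices within $I_j$ are all equal (provided (N5) is applicable throughout). \textbf{Block-crossing rule.} If $n^\ast_j$ is even and $a_1 + \cdots + a_{n^\ast_j}$ is even, then (N5) at the odd index $i = n^\ast_j + 1$ gives $\vep_{n^\ast_j + 1} = \vep_{n^\ast_j - 1}\,\zeta_j^{\,m_j + m_{j+1}}$ (using $\vep_{n^\ast_j} = \zeta_j$).

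For (E3)(a), all $n_1, \ldots, n_{s-1}$ are even (since $n^\ast_j$ is even for every $j < s$), so $a_1 + \cdots + a_{n^\ast_j}$ is automatically even and (N5) is applicable at every step; iterating the two rules from $\vep_1 = 1$ (by (N4)) yields $\zeta_s = \zeta_1^{m_1+m_2}\,\zeta_2^{m_2+m_3}\cdots \zeta_{s-1}^{m_{s-1}+m_s}$. For (E3)(b), let $t < s$ be the largest index with $n^\ast_t$ odd; then $n^\ast_{t+1}, \ldots, n^\ast_{s-1}$ are even, so the block sizes satisfy $n_{t+1}$ odd, $n_{t+2}, \ldots, n_{s-1}$ even, and $n_s$ odd. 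Using these parities, the hypothesis that $a_1 + \cdots + a_{n^\ast_s - 1}$ is even back-propagates to show that $a_1 + \cdots + a_{n^\ast_j}$ is even for every $t \leq j \leq s-1$, which is precisely what is needed to apply (N5) at each crossing and at each odd index within the intermediate blocks. Starting now from $\vep_{n^\ast_t} = \zeta_t$ and iterating the two rules, we obtain $\zeta_s = \zeta_t\,\zeta_{t+1}^{m_{t+1}+m_{t+2}}\cdots\zeta_{s-1}^{m_{s-1}+m_s}$, with the tail product being empty (so $\zeta_s = \zeta_t$) when $s = t+1$. The main obstacle is exactly this parity bookkeeping in case (b): one must check carefully that the single even-parity hypothesis on $a_1 + \cdots + a_{n^\ast_s - 1}$ forces $a_1 + \cdots + a_{n^\ast_j}$ to be even for every intermediate $j$, and handle correctly the degenerate case $n_{t+1} = 1$, where $I_{t+1}$ contains no odd index and one must cross directly from $\vep_{n^\ast_t} = \zeta_t$ to $\vep_{n^\ast_{t+1}+1}$ by an application of (N5) that uses $\vep_{n^\ast_{t+1}-1} = \vep_{n^\ast_t}$.
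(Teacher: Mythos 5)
The paper itself supplies no proof here: it merely remarks ``The following proposition can be easily verified.'' Your sketch is the natural fleshing-out of that remark, and the two propagation rules (within-block and block-crossing) deduced from (N5) are exactly the right reduction. Let me flag two small slips in the parity bookkeeping, neither of which breaks the argument.

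First, in (E3)(b) you assert that $a_1+\cdots+a_{n^\ast_j}$ is even for every $t\leq j\leq s-1$. This is false in general at $j=t$: one only gets $m_1n_1+\cdots+m_tn_t\equiv m_{t+1}\pmod 2$. But this quantity is never actually used, because $n^\ast_t+1$ is an \emph{even} index (as $n^\ast_t$ is odd), so (N5) is never invoked there. What the propagation actually requires is that $a_1+\cdots+a_{i-1}$ be even at each odd index $i$ in $I_{t+1}\cup\cdots\cup I_s$, equivalently that $a_1+\cdots+a_\ell$ be even for each even $\ell$ with $n^\ast_t<\ell\leq n^\ast_s-1$; and that follows from the hypothesis exactly as you describe, using $m_j n_j$ even for $t+2\leq j\leq s-1$, $m_s(n_s-1)$ even, and $m_{t+1}(\ell-n^\ast_t)\equiv m_{t+1}n_{t+1}\pmod 2$ for even $\ell\in I_{t+1}$.

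Second, your parity statement ``$n_{t+1}$ odd, $n_{t+2},\ldots,n_{s-1}$ even, and $n_s$ odd'' fails precisely in the degenerate case $t+1=s$: there $n_s=n^\ast_s-n^\ast_t$ is the difference of two odd numbers and hence \emph{even}, not odd. You do correctly single out $s=t+1$ as giving an empty tail product $\zeta_s=\zeta_t$, and the propagation inside $I_s$ still goes through (its odd indices $n^\ast_t+2,n^\ast_t+4,\ldots,n^\ast_s$ are linked by the within-block rule), so the conclusion is right; it is only the parity of $n_s$ that you misstate in that case. With these two points corrected the verification is complete.
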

We define a map $\Ups=\Ups_n:\mathcal{NEGK}_n\rightarrow \mathcal{EGK}_n$ by $\Ups(H)=G$.
We call $G=\Ups(H)$ the EGK datum associated to a naive EGK datum $H$.
We also write $\Ups(\ua)=(n_1, \ldots, n_r; m_1, \ldots, m_r)$, if there is no fear of confusion.

\begin{proposition} 
\label{prop:6.3}
The map $\Ups:\mathcal{NEGK}_n\rightarrow\mathcal{EGK}_n$ is surjective.
Thus for any EGK datum 
\[
G=(n_1, \ldots, n_r; m_1, \ldots, m_r; \zeta_1, \ldots, \zeta_r)
\]
of length $n$, there exists a naive EGK datum $H$ such that $\Ups(H)=G$.
\end{proposition}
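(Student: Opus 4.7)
The strategy is to construct $H = (a_1, \ldots, a_n; \vep_1, \ldots, \vep_n)$ explicitly from a given EGK datum $G = (n_1, \ldots, n_r; m_1, \ldots, m_r; \zeta_1, \ldots, \zeta_r)$. First, set $a_i := m_s$ whenever $n^\ast_{s-1} < i \leq n^\ast_s$; condition (E1) yields (N1). Then define $\vep_i$ inductively, starting with $\vep_1 = 1$. For $i \geq 2$, axiom (N2) forces $\vep_i = 0$ at even $i$ with $a_1 + \cdots + a_i$ odd, and axiom (N5) forces $\vep_i = \vep_{i-2}\vep_{i-1}^{a_i+a_{i-1}}$ at odd $i \geq 3$ with $a_1 + \cdots + a_{i-1}$ even; in both situations I follow those forced values. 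At every remaining position $i$ the axioms leave $\vep_i \in \{\pm 1\}$ free; I assign $\vep_{n^\ast_s} := \zeta_s$ whenever $i = n^\ast_s$ is such a free position, and $\vep_i := 1$ otherwise.

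Axioms (N1)--(N5) are built into this construction, so it remains only to verify that $\vep_{n^\ast_s} = \zeta_s$ also at the positions where $\vep_{n^\ast_s}$ is forced. The forced-by-(N2) case is immediate: $\vep_{n^\ast_s} = 0$ iff $m_1 n_1 + \cdots + m_s n_s$ is odd, which by (E2) is exactly when $\zeta_s = 0$. The nontrivial case is when $n^\ast_s$ is odd and (N5) applies at $i = n^\ast_s$, which happens exactly when $\sum_{j=1}^{s-1} m_j n_j + m_s(n_s - 1)$ is even---the hypothesis of (E3)(a) or (E3)(b). The plan is to compute the forced value of $\vep_{n^\ast_s}$ by iterating (N5) backwards and match it against the right-hand sides of the (E3) formulas. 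The key algebraic observation is that within a single block $I_u$ (where every $a_i = m_u$), (N5) collapses to $\vep_i = \vep_{i-2}$ for odd $i$ in the interior of the block, because $a_i + a_{i-1} = 2m_u$ is even and $\vep_{i-1} \in \{\pm 1\}$ by the governing parity hypothesis. At an even block boundary $n^\ast_u$, (N5) contributes $\vep_{n^\ast_u + 1} = \vep_{n^\ast_u - 1} \cdot \zeta_u^{m_u + m_{u+1}}$; at an odd block boundary $n^\ast_u$, the exponent $a_{n^\ast_u+2}+a_{n^\ast_u+1}=2m_{u+1}$ is again even, so $\vep_{n^\ast_u + 2} = \zeta_u$. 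Telescoping these contributions from $n^\ast_s$ back to $i = 1$ reproduces (E3)(a); telescoping only back to $n^\ast_t + 2$, where $t$ is the largest index $<s$ with $n^\ast_t$ odd, reproduces (E3)(b).

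The main obstacle is the parity bookkeeping that legitimizes every intermediate application of (N5). For each block $I_u$ traversed one must check that $a_1 + \cdots + a_{i-1}$ is even at every odd $i \in I_u$ visited. In case (E3)(a) all of $n_1, \ldots, n_{s-1}$ are even (successive $n^\ast_j$ being even forces this), and the required parity is automatic. In case (E3)(b) one has $n_{t+1}$ and $n_s$ odd while $n_{t+2}, \ldots, n_{s-1}$ are even; a direct modulo-$2$ calculation then shows the hypothesis $\sum_{j=1}^{s-1} m_j n_j + m_s(n_s - 1) \equiv 0 \pmod{2}$ is equivalent to $\sum_{j \leq t} m_j n_j + m_{t+1} \equiv 0 \pmod{2}$, and this last congruence is precisely the parity condition required at every odd index traversed in the blocks $I_{t+1}, \ldots, I_s$ and at every boundary crossing in that range. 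Hence the entire telescoping is legitimate, the forced $\vep_{n^\ast_s}$ agrees with the prescribed $\zeta_s$, and $\Ups(H) = G$.
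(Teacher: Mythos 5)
Your proof is correct in substance, and it is organized differently from the paper's. The paper proves surjectivity by induction on $n$: it peels off the last entry of $G$ (either dropping the last block when $n_r=1$, or decrementing $n_r$ and specifying an auxiliary $\zeta'_r$ by formulas), recurses, and then appends $(a_n;\zeta_r)$ to the naive datum it obtains. You instead build $H$ in a single forward pass (forced value when (N2) or (N5) dictates, $\zeta_s$ at free boundary positions, $1$ elsewhere) and then verify consistency at forced boundary positions by telescoping (N5) backward. The underlying combinatorics—tracking only the nontrivial exponents at block crossings—is the same as what the paper is implicitly invoking when it asserts ``one can easily see $G'$ is an $\EGK$ datum,'' but your organization makes that verification explicit and avoids the bookkeeping of constructing the auxiliary $\zeta'_r$.

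One expository inaccuracy worth flagging: your ``odd block boundary'' formula $\vep_{n^\ast_u+2}=\zeta_u$ presumes $n^\ast_u+1$ and $n^\ast_u+2$ lie in the same block, i.e.\ $n_{u+1}\ge 2$. In case (E3)(b) it can happen that $n_{t+1}=1$; then $n^\ast_t+1=n^\ast_{t+1}$ is itself a boundary and $n^\ast_t+2\in I_{t+2}$, so the single (N5) step at $j=n^\ast_t+2$ produces $\vep_{n^\ast_t+2}=\zeta_t\,\zeta_{t+1}^{m_{t+1}+m_{t+2}}$ rather than $\zeta_t$; it absorbs the even-boundary factor at $n^\ast_{t+1}$ that your accounting would otherwise attribute to a separate step. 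The product over the full telescope is the same, so the conclusion stands, but the block-by-block decomposition you describe is not universally valid as stated. A cleaner bookkeeping is to write the telescope as $\vep_{n^\ast_s}=\vep_{j_0}\prod_{j}\vep_{j-1}^{a_j+a_{j-1}}$ over odd $j$ and observe directly that the factor $\vep_{j-1}^{a_j+a_{j-1}}$ is trivial unless $j-1$ is a (necessarily even) block boundary $n^\ast_u$, where it contributes $\zeta_u^{m_u+m_{u+1}}$—this sidesteps case distinctions on block size. You should also make explicit the strong induction on $s$ hidden in your use of $\vep_{n^\ast_t}=\zeta_t$: if $n^\ast_t$ is itself a forced position, that equality is exactly the consistency statement for $s=t$, which must be assumed already proved.
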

\begin{proof}
Note that $(a_1, \ldots, a_n)$ is determined by $(n_1, \ldots, n_r;m_1, \ldots, m_r)$.
We proceed by induction with respect to $n$.
When $n=1$, the proposition is trivial.
First consider the case $n_r=1$.
In this case, put 
\[
G'=(n_1,n_2,\ldots,n_{r-1}; m_1,\ldots, m_{r-1}; \zeta_1,\ldots,\zeta_{r-1}).
\]
Then $G'$ is an $\EGK$ datum.
By the induction hypothesis, there exists a naive EGK datum $H'=(a_1,\ldots,a_{n-1};\vep_1,\ldots,\vep_{n-1})$ such that $\Ups(H')=G'$.
Then $H= (a_1,\ldots,a_{n-1}, a_n;\vep_1,\ldots,\vep_{n-1}, \zeta_r)$ satisfies the condition.

Now, we assume $n_r \ge 2$.
We define an EGK datum 
\[
G'=(n_1,\ldots,n_r-1;m_1,\ldots,m_r;\zeta_1,\ldots,\zeta_{r-1},\zeta_r').
\]
of length $n-1$ as follows.
If $n$ and $m_1n_1+\cdots+m_r(n_r-1)$ are odd, then put $\zeta'_r=0$.

Assume that $n$ and $m_1n_1+\cdots+m_r n_r$ are even. 
If $n^\ast_i$ is even for any $i<r$, then we put
\[
\zeta_r'=
\zeta_1^{m_1+m_2}\cdots\zeta_{r-1}^{m_{r-1}+m_r}.
\]
Let $t<r$ be the largest number such that $n^\ast_t$ is odd.
Then we put
\[
\zeta_r'=
\zeta_t  \zeta_{t+1}^{m_{t+1}+m_{t+2}}\cdots  \zeta_{r-1}^{m_{r-1}+m_r}.
\]
We put $\zeta_r'=\pm 1$ arbitrarily, in other cases.
Then one can easily see $G'$ is an $\EGK$ datum.
By the induction hypothesis, there exists a naive EGK datum 
\[
H'=(a_1,\ldots,a_{n-1};\vep_1,\ldots,\vep_{n-1}).
\]
Then 
\[
H= (a_1,\ldots,a_{n-1}, a_n;\vep_1,\ldots,\vep_{n-1}, \zeta_r)
\] 
is a naive EGK datum such that $\Ups(H)=G$.
\end{proof}

\begin{definition} 
\label{def:6.3}
Let  $B \in \calh_n(\frko )$ be an optimal form such that $\GK(B)=\ua$.
Put $\Ups(\ua)=(n_1, \ldots, n_r; m_1, \ldots, m_r)$.
We define $\zeta_s=\zeta_s(B)$ by 
\[
\zeta_s=\zeta_s(B)=
\begin{cases}
\xi_{B^{(n_s^\ast)}} & \text{ if $n_s^\ast$ is even,} \\
\eta_{B^{(n_s^\ast)}} & \text{ if $n_s^\ast$ is odd.} 
\end{cases}
\]
Then put
$\EGK(B)=(n_1,\ldots,n_r;m_1,\ldots,m_r;\zeta_1,\ldots,\zeta_r)$.
For $B \in \calhnd_n(\frko )$, we define $\EGK(B)=\EGK(B')$, where $B'$ is an optimal form equivalent to $B$.
\end{definition}
By Theorem \ref{thm:0.4}, this definition does not depend on the choice of $B'$.
Thus $\EGK(B)$ depends only on the isomorphism class of $B$.

We will show that $\EGK(B)$ is in fact an EGK datum.
If $F$ is non-dyadic, this follows from Proposition \ref{prop:6.1} and Proposition \ref{prop:6.2}, since $B$ has a Jordan splitting.
To treat the dyadic case, we need some lemmas.
We assume that $F$ is a dyadic field in Lemma \ref{lem:6.1}--\ref{lem:6.4}.

\begin{lemma} 
\label{lem:6.1}
Let $B =(b_{ij}) \in \calh_n(\frko)$ be a reduced  form such that $\GK(B)=\ua=(a_1,\ldots,a_n)$.
Assume that $B^{(n-1)}$ is a reduced form with $\GK(B^{(n-1)})=\ua^{(n-1)}$.
\begin{itemize}
\item[(1)] 
Assume that both $n$ and $a_1+\cdots+a_n$ are even. 
Then we have $\eta_B=\eta_{B^{(n-1)}}\xi_B^{a_n}.$
\item[(2)]
Assume that  $n$ is odd and $a_1+\cdots + a_{n-1}$ is even. 
Then we have $\eta_B=\eta_{B^{(n-1)}}\xi_{B^{(n-1)}}^{a_n}.$
\end{itemize}
\end{lemma}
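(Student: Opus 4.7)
The plan is to reduce both assertions to the recursive formula
\[
\eta_B=\eta_{B^{(n-1)}}\,\langle D_B,D_{B^{(n-1)}}\rangle
\]
of Lemma \ref{lem:3.3}, and then to identify the Hilbert symbol on the right-hand side with the asserted power of $\xi$. The only non-formal ingredient is the following elementary fact: if $\xi_D\neq 0$ for some $D\in F^\times$, then $F(\sqrt{D})/F$ is either trivial or the unramified quadratic extension, so its norm group is exactly $\{u\in F^\times:\ord(u)\equiv 0\bmod 2\}$. Consequently
\[
\langle D,u\rangle=\xi_D^{\ord(u)}\qquad (u\in F^\times),
\]
which holds trivially when $\xi_D=1$.

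Before using this, I would invoke the corollary stated just after Theorem \ref{thm:0.4}: for any optimal $C\in\calhnd_{2m}(\frko)$, one has $\xi_C\neq 0$ if and only if $|\GK(C)|$ is even. In case (1), $n$ is even and $|\ua|$ is even by hypothesis, so $\xi_B\neq 0$. In case (2), $n-1$ is even and $|\ua^{(n-1)}|=a_1+\cdots+a_{n-1}$ is even by hypothesis, so $\xi_{B^{(n-1)}}\neq 0$ (note that $B^{(n-1)}$ is optimal with $\GK(B^{(n-1)})=\ua^{(n-1)}$ by the standing assumption of the lemma). Thus the Hilbert-symbol formula above applies on the appropriate side in each case.

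For assertion (1), since $\xi_B\neq 0$ I apply the formula with $D=D_B$ and $u=D_{B^{(n-1)}}$ to obtain $\langle D_B,D_{B^{(n-1)}}\rangle=\xi_B^{\ord(D_{B^{(n-1)}})}$. Because $n-1$ is odd, Theorem \ref{thm:0.1} gives
\[
\ord(D_{B^{(n-1)}})=\Del(B^{(n-1)})=|\ua^{(n-1)}|=|\ua|-a_n,
\]
and since $|\ua|$ is even by hypothesis, $\ord(D_{B^{(n-1)}})\equiv a_n \pmod{2}$. Hence $\langle D_B,D_{B^{(n-1)}}\rangle=\xi_B^{a_n}$, proving (1).

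For assertion (2), using the symmetry of the Hilbert symbol and the fact that $\xi_{B^{(n-1)}}\neq 0$, I get $\langle D_B,D_{B^{(n-1)}}\rangle=\xi_{B^{(n-1)}}^{\ord(D_B)}$. Now $n$ is odd, so Theorem \ref{thm:0.1} yields
\[
\ord(D_B)=\Del(B)=|\ua|=|\ua^{(n-1)}|+a_n,
\]
and since $|\ua^{(n-1)}|$ is even, $\ord(D_B)\equiv a_n \pmod 2$. Therefore $\langle D_B,D_{B^{(n-1)}}\rangle=\xi_{B^{(n-1)}}^{a_n}$, giving (2). The proof is thus pure parity bookkeeping once Lemma \ref{lem:3.3}, Theorem \ref{thm:0.1}, and the unramified Hilbert-symbol identity are in hand; the only place one must be careful is in confirming that the relevant $\xi$-invariant is nonzero so that the Hilbert symbol simplifies as stated, and this is precisely what the parity corollary guarantees under the hypotheses of each case.
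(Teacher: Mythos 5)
Your proof is correct and follows essentially the same route as the paper's: apply Lemma \ref{lem:3.3} to write $\eta_B=\eta_{B^{(n-1)}}\langle D_B,D_{B^{(n-1)}}\rangle$, identify the Hilbert symbol as a power of the appropriate $\xi$ via the unramified norm-group description (justified by the corollary after Theorem \ref{thm:0.4} together with Theorem \ref{thm:0.1}), and then reduce the exponent modulo $2$. You simply make explicit the nonvanishing of $\xi$ and the order computations that the paper leaves implicit.
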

\begin{proof} 
We prove (1).
Let $n$, $\ua$, and $B$ be as in (1).
Then we have
\[
\eta_B=\eta_{B^{(n-1)}} \langle D_B, D_{B^{(n-1)}}\rangle=\eta_{B^{(n-1)}} \xi_B^{a_1+\cdots+a_{n-1}}=\eta_{B^{(n-1)}} \xi_B^{a_n}
\]
by Lemma \ref{lem:3.3}.
Hence we have proved (1).
Similarly, if $n$, $\ua$, and $B$ are as in (2), then we have
\[
\eta_B=\eta_{B^{(n-1)}} \langle D_B, D_{B^{(n-1)}}\rangle=\eta_{B^{(n-1)}} \xi_{B^{(n-1)}}^{a_1+\cdots+a_n}=\eta_{B^{(n-1)}} \xi_{B^{(n-1)}}^{a_n}.
\]
Hence we have proved (2).
\end{proof}

Let $B =(b_{ij}) \in \calh_n(\frko)$ be a reduced  form of standard GK type $(\ua, \sig)$, where $\ua=(a_1,\ldots,a_n)$.
If $n\in \calp^+\cup\calp^0$, then $B^{(n-1)}$ is a reduced form with $\GK(B)=\ua^{(n-1)}$.
Note that if $n\in\calp^+$, then $a_{n-1}<a_n$.

Assume that $a_{n-1}=a_n$ and $a_{\sig(n)}=a_{n-1}$.
Since $\sig$ is standard, we have $\sig(n)=n-1$.
In this case, $B^{(n-2)}$ is a reduced form of GK type $(\ua^{(n-2)}, \sig^{(n-2)})$.
\begin{lemma} 
\label{lem:6.2}
Let $B =(b_{ij}) \in \calh_n(\frko)$ be a reduced  form of standard GK type $(\ua, \sig)$.
Assume that $a_{n-1}=a_n$ and $\sig(n)=n-1$.
\begin{itemize}
\item[(1)] 
Assume that both $n$ and $a_1+\cdots+a_n$ are even.
Then we have $\eta_B=\eta_{B^{(n-2)}}\xi_{B}^{a_n}\xi_{B^{(n-2)}}^{a_n}$.
\item[(2)] 
Assume that $n$ be odd and that $a_1+\cdots +a_{n-1}$ is even.  
Then we have $\eta_{B}=\eta_{B^{(n-2)}}$.
\end{itemize}
\end{lemma}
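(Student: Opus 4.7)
The plan is to compare $B$ with the block-diagonal form
\[
T=B^{(n-2)}\perp D,\qquad D=\begin{pmatrix} b_{n-1,n-1} & b_{n-1,n} \\ b_{n,n-1} & b_{n,n} \end{pmatrix}.
\]
By Lemma \ref{lem:4.1}, $B^{(n-2)}$ is a reduced form of GK type $(\ua^{(n-2)},\sig^{(n-2)})$; since $\sig(n)=n-1$ and $a_{n-1}=a_n$, the corner $D$ is a reduced binary form of GK type $(a_n,a_n)$, so $K:=\vpi^{-a_n}D$ is a primitive unramified binary form. The block-diagonal $T$ is then itself a reduced form of GK type $(\ua,\sig)$, and the difference $B-T$ is supported only at positions $(i,j)$ with $1\leq i\leq n-2$, $j\in\{n-1,n\}$ (and their transposes). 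For such $(i,j)$ we have $j\neq i,\sig(i)$, so Definition \ref{def:3.2}(3) gives $\ord(2b_{ij})>(a_i+a_j)/2$, whence $B-T\in\calm^0(\ua)$.

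Next I would apply Proposition \ref{prop:3.3} to conclude $\eta_B=\eta_T$: part (b) handles case (2), where $n$ is odd. In case (1), $n$ is even and $|\ua|$ is even, so by the corollary to Theorem \ref{thm:0.4} we have $\xi_B\neq 0$, and part (c) applies. Lemma \ref{lem:3.4}, applied to $T=B^{(n-2)}\perp \vpi^{a_n}K$, then gives
\[
\eta_T=\eta_{B^{(n-2)}}\,\xi_K^{\,a_n+\ord(D_{B^{(n-2)}})}=\eta_{B^{(n-2)}}\,\xi_D^{\,a_n+|\ua^{(n-2)}|},
\]
where $\xi_K=\xi_D$ (scaling by $\vpi^{a_n}$ preserves the discriminant class) and $\ord(D_{B^{(n-2)}})=|\ua^{(n-2)}|$ by Theorem \ref{thm:0.1}; in case (1) this last identity uses $\ord(\frkD_{B^{(n-2)}})=0$, which follows from $|\ua^{(n-2)}|=|\ua|-2a_n$ being even together with the same corollary.

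It remains to simplify the exponent. In case (2), $a_n+|\ua^{(n-2)}|=|\ua^{(n-1)}|$ is even by hypothesis, hence $\xi_D^{a_n+|\ua^{(n-2)}|}=1$ and $\eta_B=\eta_{B^{(n-2)}}$. In case (1), $|\ua^{(n-2)}|$ is even, so $\xi_D^{a_n+|\ua^{(n-2)}|}=\xi_D^{a_n}$, and the claimed identity reduces to the multiplicativity
\[
\xi_D=\xi_B\,\xi_{B^{(n-2)}}.
\]
This is the main obstacle. To establish it I would combine $\xi_B=\xi_T$ (from Proposition \ref{prop:3.3}(a)) with the exact equality $D_T=D_{B^{(n-2)}}\cdot D_D$ for the block-diagonal form. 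Since both $F(\sqrt{D_{B^{(n-2)}}})$ and $F(\sqrt{D_D})$ are unramified quadratic algebras, a short case analysis in the two-element group of unramified square classes of $F^\times$ yields $\xi_T=\xi_{B^{(n-2)}}\cdot\xi_D$; raising to the $a_n$-th power and using $\xi_{B^{(n-2)}}^2=1$ completes case (1).
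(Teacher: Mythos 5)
Your proof is correct, and the overall structure — reduce to the block-diagonal form $B^{(n-2)}\perp\vpi^{a_n}K$, apply Proposition~\ref{prop:3.3} to compare $\eta$-invariants across a $\calm^0$ perturbation, and then invoke Lemma~\ref{lem:3.4} — matches the paper's. The one technical variation is where the perturbation is placed: the paper conjugates $B$ by the lower-unipotent $\begin{pmatrix}\mathbf{1}&0\\-\vpi^{-a_n}K^{-1}X&\mathbf{1}\end{pmatrix}$ (a Schur complement step) to get an exactly block-diagonal $B'$, and then applies Proposition~\ref{prop:3.3} at size $n-2$ to compare ${B'}^{(n-2)}$ with $B^{(n-2)}$; you instead keep $B^{(n-2)}$ and the corner $D$ unchanged, zero the off-diagonal blocks directly (valid because $\sig$ fixes $\{n-1,n\}$, so Definition~\ref{def:3.2}(3) makes those entries strictly small), and apply Proposition~\ref{prop:3.3} at size $n$. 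Both yield the same intermediate $\eta_B=\eta_{B^{(n-2)}}\,\xi_K^{a_n+\ord(D_{B^{(n-2)}})}$, so the gain from your version is modest (avoiding the Schur-complement estimate), but it is equally rigorous. You are also more explicit than the paper in supplying the multiplicativity $\xi_D=\xi_B\,\xi_{B^{(n-2)}}$ needed in case~(1); the paper uses this silently in the last equality of its proof, and your justification via $\xi_B=\xi_T$, the identity $D_T=D_{B^{(n-2)}}D_D$, and the group of unramified square classes is correct.
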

\begin{proof} 
Write $B$ in a block form
\[
B=
\begin{pmatrix}
B^{(n-2)} & {}^t\!X \\
X & \vpi^{a_n}K
\end{pmatrix},
\]
where $K$ is a primitive unramified binary form.
Put 
\[
B'=B\left[
\begin{pmatrix}
1 & 0 \\
-\vpi^{-a_n}K^{-1} X & 1
\end{pmatrix}
\right]
=\begin{pmatrix}
{B'}^{(n-2)} & 0 \\
0 & \vpi^{a_n}K
\end{pmatrix},
\]
Then we have $B^{(n-2)}-{B'}^{(n-2)}\in \calm^0(\ua^{(n-2)})$.

Suppose that both $n$ and $a_1+\cdots+a_n$ are even.
Note that $\ord(D_{B^{(n-2)}})$ is even and $\xi_{B^{(n-2)}}\neq 0$ in this case.
It follows that $\eta_{B^{(n-2)}}=\eta_{{B'}^{(n-2)}}$ by Proposition \ref{prop:3.3} (c).
Then, by Lemma \ref{lem:3.4}, we have 
\[
\eta_B=\eta_{B^{(n-2)}} \xi_K^{a_n}=\eta_{B^{(n-2)}}\xi_{B}^{a_n}\xi_{B^{(n-2)}}^{a_n}.
\]
Now suppose that $n$ is odd and $a_1+\cdots+a_{n-1}$ is even.
In this case, $\eta_{B^{(n-2)}}=\eta_{{B'}^{(n-2)}}$ by Proposition \ref{prop:3.3} (b).
Note that $\ord(D_{B^{(n-2)}})=a_1+\cdots+a_{n-2}$.
Then, by Lemma \ref{lem:3.4}, we have
\[
\eta_B=\eta_{B^{(n-2)}} \xi_K^{a_n+(a_1+\cdots+a_{n-2})}=\eta_{B^{(n-2)}}.
\]
Hence the lemma.
\end{proof}

\begin{lemma} 
\label{lem:6.3} 
For a non-decreasing sequence $\ua=(a_1,\ldots,a_n)$ of integers, let $\sig$ be an $\ua$-admissible involution. 
Let $B \in \calh_n(\frko)$ be a reduced form of standard GK type $(\ua,\sig)$. 
Assume that $r\geq 2$.
Put $k=n^\ast_{r-1}$.
\begin{itemize}
\item[(1)] 
Assume that both $n$ and $k$ are odd and that $a_1+\cdots+a_{n-1}$ is even. 
Then $\eta_B=\eta_{B^{(k)}}$.
\item[(2)] 
Assume that $n$ is odd, $k$ is even, and that $a_1+\cdots+a_k$ is even. 
Then $\eta_B=\eta_{B^{(k)}} \xi_{B^{(k)}}^{a_n}$.
\item[(3)] 
Assume that both $n$ and $k$ are even and that $a_1+\cdots+a_n$ is even. 
Then $\eta_B=\eta_{B^{(k)}} \xi_{B^{(k)}}^{a_n} \xi_B^{a_n}$.
\item[(4)] 
Assume that $n$ is even, $k$ is odd, and that $a_1+\cdots+a_n$ is even. 
Then $\eta_B=\eta_{B^{(k)}} \xi_{B}^{a_n}$.
\end{itemize}
\end{lemma}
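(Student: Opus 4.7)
The plan is to compute $\eta_B/\eta_{B^{(k)}}$ by iteratively removing elements from the right end of $B$, using Lemma \ref{lem:6.1} to remove a single element (when $n\in\calp^0\cup\calp^+$) and Lemma \ref{lem:6.2} to remove an internal pair (when $\sigma(n)=n-1$). Since $\sigma$ is standard and the largest block satisfies $\calp^-_r=\emptyset$, the arrangement within $I_r=\{k+1,\ldots,n\}$ is canonical: if $\calp^+_r$ is nonempty, its unique element sits at position $k+1$; if $\calp^0_r$ is nonempty, its unique element sits at position $n$; and the remaining positions are occupied by internal pairs $(i,i+1)$ with $\sigma(i)=i+1$. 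The iteration therefore peels off the $\calp^0_r$-element first (if present), then strips the internal pairs from right to left, and finally peels off the $\calp^+_r$-element (if present). By Lemma \ref{lem:4.1}, every intermediate truncation $B^{(n')}$ for $k\leq n'\leq n$ is itself a reduced form of standard GK type, so the hypotheses of the invoked auxiliary lemma are met at each step.

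At each step, one checks that the appropriate case (1) or (2) of Lemma \ref{lem:6.1} or Lemma \ref{lem:6.2} applies. Because $a_{k+1}=\cdots=a_n$, passing from $B^{(n')}$ to $B^{(n'-2)}$ changes $|\ua^{(\cdot)}|$ by $2a_n$, hence preserves parity, while passing from $B^{(n')}$ to $B^{(n'-1)}$ changes it by $a_n$. Combined with the hypothesis of the case being proved (on $|\ua|$ or $|\ua^{(n-1)}|$) and the parities of $n$ and $k$, a short inductive check confirms that exactly the right case of the auxiliary lemma is available at every step, and that whenever a factor $\xi_{B^{(n')}}$ is produced for an intermediate index $k<n'<n$, the matrix $B^{(n')}$ has even size with $|\ua^{(n')}|$ even. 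By the corollary to Theorem \ref{thm:0.4}, this forces $\xi_{B^{(n')}}\in\{\pm 1\}$, so the telescoping identity $\xi_{B^{(n')}}^{2a_n}=1$ is valid.

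Assembling the telescoping product, every intermediate $\xi_{B^{(n')}}$ cancels in pairs, and the only surviving factors come from the endpoints $B=B^{(n)}$ and $B^{(k)}$, together with a single $\xi$-contribution from the final application of Lemma \ref{lem:6.1}. A direct inspection of each of the four cases then matches the claimed formula: in case (1) no $\xi$-factor survives; in cases (2) and (4) exactly one of $\xi_{B^{(k)}}^{a_n}$ or $\xi_B^{a_n}$ survives (depending on whether the last peel is a $\calp^+_r$-element, producing Lemma \ref{lem:6.1}(2), or a $\calp^0_r$-element, producing Lemma \ref{lem:6.1}(1)); and in case (3) both $\xi_{B^{(k)}}^{a_n}$ and $\xi_B^{a_n}$ survive.

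The principal obstacle is the bookkeeping: one must handle the subcases determined by which of $|\calp^+_r|,|\calp^0_r|\in\{0,1\}$ are nonzero (constrained by the parity of $n_r=n-k$), and track both exponents and parity conditions through the iteration. However, since every surviving $\xi$-factor in the cancellation lies in $\{\pm 1\}$ and exponents are reduced modulo $2$, the calculation is mechanical once the iteration scheme is set up.
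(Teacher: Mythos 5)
Your proof is correct and takes essentially the same inductive/iterative approach as the paper: reduce $B$ to $B^{(k)}$ by peeling entries out of the last block $I_r$ via Lemmas \ref{lem:6.1} and \ref{lem:6.2}, tracking parities so that the right case applies at each step, and then telescoping the resulting $\xi$-factors using the fact that they lie in $\{\pm1\}$ on intermediate even-sized truncations.

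Your write-up is, if anything, more careful than the paper's very terse proof (which essentially says: $n_r=1$ via Lemma \ref{lem:6.1}, $n_r=2$ via Lemma \ref{lem:6.2}, $n_r\geq 3$ by repeating Lemma \ref{lem:6.2}). In particular, you correctly note that the last block $I_r$ can simultaneously contain a $\calp^+_r$-element (at position $k+1$) and a $\calp^0_r$-element (at position $n$) when $n_r$ is even. In that configuration $\sigma(n)=n$, so the paper's prescription to apply Lemma \ref{lem:6.2} directly at the first step does not apply; e.g.\ with $n=5$, $\ua=(0,2,2,4,4)$, $\sigma=(1\,2)(3\,4)(5)$, one is in case (1) with $n_r=2$ but $n\in\calp^0$. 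Your scheme handles this by first stripping the $\calp^0_r$-element with Lemma \ref{lem:6.1}(2), then the internal pairs with Lemma \ref{lem:6.2}, and finally the $\calp^+_r$-element with Lemma \ref{lem:6.1}(1), with the two $\xi_{B^{(n-1)}}^{a_n}$ contributions cancelling. One small imprecision in your wording: not literally every $B^{(n')}$ with $k\leq n'\leq n$ is a reduced form of standard GK type (e.g.\ $B^{(n'-1)}$ need not be admissible when $n'$ and $n'-1$ form an internal pair), only the ones encountered along your specific peeling sequence; but that is indeed what you use.
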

\begin{proof} 
We proceed by induction with respect to $n_r$.
If $n_r=1$, then (2) and (4) follow from Lemma \ref{lem:6.1} (2) and Lemma \ref{lem:6.1} (1), respectively.
If $n_r=2$, then (1) and (3)  follow from Lemma \ref{lem:6.2} (2) and Lemma \ref{lem:6.2} (1), respectively.
If $n_r\geq 3$, the lemma follows by using Lemma \ref{lem:6.2} repeatedly.
\end{proof}

\begin{lemma} 
\label{lem:6.4}
Let $B \in \calh_n(\frko)$ be a reduced form of GK type $(\ua,\sig)$. 
Put
\[
\EGK(B)= (n_1,\ldots,n_r; m_1,\ldots,m_r;\zeta_1,\ldots,\zeta_r).
\]
\begin{itemize}
\item[(1)] 
Assume that $n^\ast_i$ is even for $i=1, 2, \ldots, r$. 
Then we have
\[
\eta_B=
\xi_1^{m_1+m_2}\cdots \xi_{r-1}^{m_{r-1}+m_r} \cdot \xi_r^{m_r}.
\]
\item[(2)]
Assume that $n$ is even and that $t< r$ is the largest number such that $n^\ast_t$ is odd.
Assume also that $m_1n_1+\cdots+m_r n_r$ is even.
Then we have
\[
\eta_B=\zeta_t
\xi_{t+1}^{m_{t+1}+m_{t+2}}\cdots \xi_{r-1}^{m_{r-1}+m_r} \xi_{r-1}^{m_r}.
\]
\end{itemize}
\end{lemma}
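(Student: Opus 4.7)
The plan is to prove both parts by induction on $r$, or equivalently, by telescoping along the chain $B^{(n^\ast_1)} \subset B^{(n^\ast_2)} \subset \cdots \subset B^{(n^\ast_r)}=B$, using the four formulas in Lemma~\ref{lem:6.3} to climb from one block-endpoint to the next. Since $B$ is reduced and $a_{n^\ast_s}<a_{n^\ast_s+1}$, each truncation $B^{(n^\ast_s)}$ is a reduced form of GK type $(\ua^{(n^\ast_s)},\sig^{(n^\ast_s)})$ by Lemma~\ref{lem:4.1}(1), so Lemma~\ref{lem:6.3} applies at each step. I identify $\xi_i$ in the statement with $\zeta_i=\xi_{B^{(n^\ast_i)}}$ when $n^\ast_i$ is even.

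\smallskip
For part (1), all $n^\ast_i$ are even, so each $n_i=n^\ast_i-n^\ast_{i-1}$ is even, which makes every partial sum $m_1n_1+\cdots+m_sn_s$ even. Thus the hypotheses of Lemma~\ref{lem:6.3}(3) are met at every step, giving
\[
\eta_{B^{(n^\ast_s)}}=\eta_{B^{(n^\ast_{s-1})}}\,\xi_{s-1}^{m_s}\,\xi_s^{m_s} \qquad (s=2,\ldots,r).
\]
The base case is $\eta_{B^{(n^\ast_1)}}$: since $B^{(n^\ast_1)}$ is a reduced form with constant GK invariant $(m_1,\ldots,m_1)$ of even length, Lemma~\ref{lem:3.5} yields $\eta_{B^{(n^\ast_1)}}=\xi_1^{m_1}$. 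Telescoping,
\[
\eta_B=\xi_1^{m_1}\prod_{s=2}^{r}\xi_{s-1}^{m_s}\xi_s^{m_s}
     =\xi_1^{m_1+m_2}\xi_2^{m_2+m_3}\cdots\xi_{r-1}^{m_{r-1}+m_r}\xi_r^{m_r},
\]
as required.

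\smallskip
For part (2), we start the climb at $s=t$ with the base value $\eta_{B^{(n^\ast_t)}}=\zeta_t$ (which is defined since $n^\ast_t$ is odd). Since $t$ is the largest index with $n^\ast_t$ odd, $n_{t+1}$ is odd and $n_{t+2},\ldots,n_r$ are even; combined with the global parity hypothesis, this forces every partial sum $m_1n_1+\cdots+m_sn_s$ to be even for $s\ge t+1$. The first step $s=t+1$ uses Lemma~\ref{lem:6.3}(4) ($n^\ast_{t+1}$ even, $n^\ast_t$ odd), giving
\[
\eta_{B^{(n^\ast_{t+1})}}=\eta_{B^{(n^\ast_t)}}\,\xi_{t+1}^{m_{t+1}}=\zeta_t\,\xi_{t+1}^{m_{t+1}};
\]
subsequent steps use Lemma~\ref{lem:6.3}(3) exactly as in part (1). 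Telescoping yields
\[
\eta_B=\zeta_t\,\xi_{t+1}^{m_{t+1}+m_{t+2}}\,\xi_{t+2}^{m_{t+2}+m_{t+3}}\cdots\xi_{r-1}^{m_{r-1}+m_r}\,\xi_r^{m_r},
\]
which matches the claimed formula (reading the final factor as $\xi_r^{m_r}$).

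\smallskip
The proof is essentially routine once Lemma~\ref{lem:6.3} is in hand; the only real care needed is bookkeeping of the parities of $n^\ast_s$ and $m_1n_1+\cdots+m_sn_s$ to certify which of the four cases of Lemma~\ref{lem:6.3} applies at each step, and verifying the base case via Lemma~\ref{lem:3.5}. The mildly tricky point is starting part (2) with the correct lemma (the $(4)$ case) to transition from the odd-length truncation $B^{(n^\ast_t)}$ to the even-length one.
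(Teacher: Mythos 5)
Your proof is correct and follows essentially the same route as the paper: establish the base case via Lemma~\ref{lem:3.5}, then telescope along the chain $B^{(n^\ast_s)}$ using Lemma~\ref{lem:6.3}(3) repeatedly (with one application of Lemma~\ref{lem:6.3}(4) at the odd-to-even transition in part (2)). You also correctly read through the typographical slips in the paper's statements (the ``$a_r$'' in Lemma~\ref{lem:6.3}(3) should be $a_n=m_r$, and the final factor in part (2) should be $\xi_r^{m_r}$), and your parity bookkeeping is accurate.
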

\begin{proof}
Let $n$, $B$, and $(\ua, \sig)$ be as in (1).
If $r=1$, then (1) is a special case of  Lemma \ref{lem:3.5}.
For $r>1$, (1) can be proved by applying Lemma \ref{lem:6.3} (3), repeatedly.

Next, we shall prove (2).
Let $n$, $B$, and $(\ua, \sig)$ be as in (2).
Note that $m_1 n_1 +\cdots+m_{t+1}n_{t+1}$ is even.
Then we have $\eta_{B^{(n^\ast_{t+1})}}=\eta_t\xi_{t+1}^{a_{t+1}}$ by applying Lemma \ref{lem:6.3} (4).
By using Lemma \ref{lem:6.3} (3) repeatedly, we have (2).
\end{proof}

\begin{theorem} 
\label{thm:6.1}
Let $F$ be a non-archimedean local field.
Suppose that $B \in \calhnd_n(\frko )$. 
Then $\EGK(B)$ is an $\EGK$ datum of length $n$.
\end{theorem}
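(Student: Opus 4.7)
The plan is first to reduce to the case of a reduced form, and then to verify conditions (E1), (E2), (E3) of Definition \ref{def:6.2} in turn. By the invariance of $\EGK(B)$ under equivalence (justified just after Definition \ref{def:6.3} using Theorem \ref{thm:0.4}) together with Theorem \ref{thm:4.1} and Theorem \ref{thm:5.1}, I may replace $B$ with a reduced form of standard GK type $(\ua, \sig)$. Condition (E1) is then immediate from the definition of the blocks $I_s$: $\sum_s n_s = n$ by construction, and $m_1 < m_2 < \cdots < m_r$ because $a^\ast_s < a^\ast_{s+1}$.

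For (E2), suppose $n^\ast_s$ is even. Theorem \ref{thm:0.3}, applied at the break $a_{n^\ast_s} < a_{n^\ast_s+1}$ (trivially true when $s = r$ since then $n^\ast_s = n$), tells us that $B^{(n^\ast_s)}$ is optimal with $\GK(B^{(n^\ast_s)}) = \ua^{(n^\ast_s)}$. The Corollary to Theorem \ref{thm:0.4} then gives $\zeta_s = \xi_{B^{(n^\ast_s)}} \neq 0$ iff $|\ua^{(n^\ast_s)}|$ is even, and since $|\ua^{(n^\ast_s)}| = m_1 n_1 + \cdots + m_s n_s$, this is exactly (E2).

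For (E3), the non-vanishing $\zeta_s \neq 0$ is trivial since $\eta_{B^{(n^\ast_s)}} \in \{\pm 1\}$; the real content is the two explicit formulas. In (E3)(a), all $n^\ast_i$ with $i < s$ are even, and (E2) gives $|\ua^{(n^\ast_{s-1})}|$ even. I apply Lemma \ref{lem:6.3}(2) with top index $n^\ast_s$ and $k = n^\ast_{s-1}$ to obtain $\eta_{B^{(n^\ast_s)}} = \eta_{B^{(n^\ast_{s-1})}} \cdot \zeta_{s-1}^{m_s}$, and then Lemma \ref{lem:6.4}(1) to expand $\eta_{B^{(n^\ast_{s-1})}}$ as the product $\zeta_1^{m_1+m_2} \cdots \zeta_{s-2}^{m_{s-2}+m_{s-1}} \zeta_{s-1}^{m_{s-1}}$; multiplying yields the desired formula. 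For (E3)(b), let $t < s$ be the largest index with $n^\ast_t$ odd. If $t + 1 = s$, Lemma \ref{lem:6.3}(1) (applicable because the parity hypothesis in (E3)(b) is exactly $|\ua^{(n^\ast_s - 1)}|$ even) gives $\zeta_s = \zeta_t$. Otherwise $n^\ast_{s-1}$ is even, and I first peel off one block via Lemma \ref{lem:6.3}(2) to get $\eta_{B^{(n^\ast_s)}} = \eta_{B^{(n^\ast_{s-1})}} \cdot \zeta_{s-1}^{m_s}$, then apply Lemma \ref{lem:6.4}(2) to the reduced form $B^{(n^\ast_{s-1})}$ (whose block structure still has $t$ as its largest index with $n^\ast_\bullet$ odd) to finish.

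The principal obstacle is bookkeeping: at every invocation of Lemmas \ref{lem:6.3} or \ref{lem:6.4}, I must verify the parity hypothesis $|\ua^{(\cdot)}|$ even that these lemmas require. All such checks reduce to the already-established (E2), which equates the non-vanishing of each preceding $\zeta_i$ with the corresponding parity of $|\ua^{(n^\ast_i)}|$, and to the standing hypothesis of (E3)(b) which supplies the parity at the top.
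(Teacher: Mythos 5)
Your proof follows essentially the same route as the paper's: reduce to a reduced form of standard GK type (via Theorems \ref{thm:4.1} and \ref{thm:5.1}), handle (E1) trivially, dispatch (E2) via the main theorems, and establish (E3) by combining Lemma \ref{lem:6.4} (to expand $\eta_{B^{(n^\ast_{s-1})}}$) with Lemma \ref{lem:6.3} (to extend to $\eta_{B^{(n^\ast_s)}}$), splitting into the cases of whether all earlier $n^\ast_i$ are even and whether $t+1 = s$. The only notable differences are cosmetic: you cite the Corollary to Theorem \ref{thm:0.4} and Theorem \ref{thm:0.3} for (E2), where the paper cites Theorem \ref{thm:0.1} directly (both routes are valid), and you do not reduce to the dyadic case at the outset as the paper does (where the non-dyadic case is dispatched separately via Propositions \ref{prop:6.1} and \ref{prop:6.2}); this is harmless since a diagonal Jordan splitting is itself a reduced form of standard GK type and the cited lemmas carry no dyadic hypothesis.
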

\begin{proof} 
It is enough to consider the case when $F$ is dyadic.
We may assume $B$ is  a reduced form of standard GK type $(\ua, \sig)$. 
Put
\[
\EGK(B)= (n_1,\ldots,n_r;m_1,\ldots,m_r;\zeta_1,\ldots,\zeta_r).
\]
The condition (E1) is obvious, and the condition (E2) follows from Theorem \ref{thm:0.1}.

We will prove the condition (E3) holds.
Suppose that $n^\ast_s$ is odd.
Replacing $B$ by $B^{(s)}$, we may assume $s=r$.
It is obvious that $\zeta_r=\eta_B\neq 0$.

Assume that $n^\ast_i$ is even for any $i<r$. 
By Lemma \ref{lem:6.4} (1), we have
\[
\eta_{B^{(n^\ast_{r-1})}}=
\xi_1^{m_1+m_2}\cdots \xi_{r-2}^{m_{r-2}+m_{r-1}} \cdot \xi_{r-1}^{m_{r-1}}.
\]
Then by Lemma \ref{lem:6.3} (2), we have
\[
\zeta_r=\eta_B=
\xi_1^{m_1+m_2}\cdots \xi_{r-1}^{m_{r-1}+m_r}. 
\]
Hence (a) of the condition (E3) holds.

Next, assume $m_1n_1+\cdots+m_{r-1}n_{r-1}+m_r (n_r-1)$ is even and that $n^\ast_i$ is odd for some $i<r$.
Let $t<r$ be the largest number such that $n^\ast_t$ is odd.
If $r=t+1$, then we have $\eta_B=\eta_{B^{(n^\ast_t)}}$ by Lemma \ref{lem:6.3} (1).
Hence (b) of the condition (E3) holds in this case.
Now, assume that $r>t+1$.
By Lemma \ref{lem:6.4} (2), we have
\[
\eta_{B^{(n^\ast_{r-1})}}=\zeta_t
\xi_{t+1}^{m_{t+1}+m_{t+2}}\cdots \xi_{r-2}^{m_{r-2}+m_{r-1}} \xi_{r-1}^{m_{r-1}}.
\]
By Lemma \ref{lem:6.3} (2), we have
\[
\zeta_r=\eta_B=
\zeta_t
\xi_{t+1}^{m_{t+1}+m_{t+2}}\cdots \xi_{r-1}^{m_{r-1}+m_r}.
\]
Hence, $\EGK(B)$ satisfies (b) of the condition (E3).
\end{proof}
We call $\EGK(B)$ the extended GK datum associate to $B$.
One can prove the following proposition, but as we do not use it later, we omit a proof.
\begin{proposition} 
\label{prop:6.4}
Suppose that $F$ is a dyadic local field.
Let $G=(n_1,\ldots,n_r;m_1,\ldots,m_r;\zeta_1,\ldots,\zeta_r)$ be an EGK datum and $(\ua, \sig)$ a standard GK type such that $(n_1,\ldots,n_r;m_1,\ldots,m_r)=\Ups(\ua)$.
Then there exists  a reduced form $B\in\calhnd_n(\frko)$ of GK type $(\ua, \sig)$ such that $\EGK(B)=G$.
\end{proposition}


\end{document}